\newtheorem{thm}{Theorem}[section]
\newtheorem{prop}[thm]{Proposition}
\newtheorem{lemma}[thm]{Lemma}
\newtheorem{cor}[thm]{Corollary}
\theoremstyle{remark}
\newtheorem{remark}[thm]{Remark}
\newcommand{\id}{{\rm{id}}}
\newcommand{\Ad}{{\rm{Ad}}}
\newcommand{\Hom}{{\rm{Hom}}}
\newcommand{\Alg}{{\rm{Alg}}}
\newcommand{\BN}{\mathbf N}
\newcommand{\BC}{\mathbf C}
\newcommand{\BZ}{\mathbf Z}
\newtheorem{Def}{Definition}[section]
\title{The Rohlin property for coactions of finite dimensional $C^*$-Hopf algebras on unital $C^*$-algebras}
\author{Kazunori Kodaka and Tamotsu Teruya}
\address{Department of Mathematical Sciences, Faculty of Science, Ryukyu
\endgraf
University, Nishihara-cho, Okinawa, 903-0213, Japan}
\address{Faculty of education, Gunma University, 4-2 Aramaki-machi, Maebashi City, 
\endgraf Gunma, 371-8510,
Japan}
\address{\sl{E-mail address}: \rm{kodaka@math.u-ryukyu.ac.jp}}
\address{\sl{E-mail address}: \rm{teruya@edu.gunma-u.ac.jp}}
\begin{document}
\maketitle
\begin{abstract}
We shall introduce the approximate representability and the Rohlin property for coactions
of a finite dimensional $C^*$-Hopf algebra on
a unital $C^*$-algebra and discuss some basic properties of approximately representable
coactions and coactions with the Rohlin property of a finite dimensional $C^*$-Hopf algebra on
a unital $C^*$-algebra.
Also, we shall give an example of an approximately representable coaction of a finite dimensional
$C^*$-Hopf algebra on a simple unital $C^*$-algebra which has also the Rohlin property and
we shall give the 1-cohomology
vanishing theorem for coactions of a finite dimensional $C^*$-Hopf algebra on a unital $C^*$-algebra
and the 2-cohomology vanishing theorem for twisted coactions of a finite dimensional
$C^*$-Hopf algebra on a unital $C^*$-algebra.
Furthermore,  we shall introduce the notion of the approximately unitary
equivalence of coactions of a finite dimensional $C^*$-Hopf algebra
$H$ on a unital $C^*$-algebra $A$ and show that if $\rho$ and $\sigma$, coactions
of $H$ on a separable unital $C^*$-algebra $A$, which have the Rohlin property, are
approximately unitarily equivalent, then there is an approximately inner automorphism
$\alpha$ on $A$ such that
$$
\sigma=(\alpha\otimes\id)\circ\rho\circ\alpha^{-1} .
$$

\end{abstract}
 
\section{Introduction}\label{sec:intro}
Let $A$ be a unital $C^*$-algebra and $H$ a finite dimensional $C^*$-Hopf algebra with the
comultiplication $\Delta$. In this paper, we shall introduce the approximate representability and the Rohlin property for
coactions of $H$ on $A$ and discuss some basic properties of approximately representable
coactions and coactions with the Rohlin property of $H$ on $A$.
Also, we shall give an example of an approximately representable
coaction of a finite dimensional
$C^*$-Hopf algebra on a simple unital $C^*$-algebra which has also the Rohlin property and we shall give the following
1-cohomology vanishing theorem: Let $\rho$ be a coaction of $H$ on $A$ with the Rohlin property.
Let $v$ be a unitary element in $A\otimes H$ with
$$
(v\otimes 1)(\rho\otimes \id)(v)=(\id\otimes\Delta)(v)
$$
and let $\sigma$ be the coaction of $H$ on $A$ defined by $\sigma=\Ad(v)\circ\rho$.
Then there is a unitary element $x\in A$ such that
$$
\sigma=\Ad(x\otimes 1)\circ\rho\circ\Ad(x^* ) .
$$
Furthermore ,we shall give the following 2-cohomology vanishing theorem:
Let $(\rho, u)$ be a twisted coaction of $H^0$ on $A$ with the
Rohlin property. Then there is a unitary element $x\in A\otimes H$ such that
$$
(x\otimes 1)(\rho\otimes\id)(x)u(id\otimes\Delta)(x)^* =1\otimes 1\otimes 1.
$$
Finally,  we shall introduce the notion of the approximately unitary
equivalence of coactions of
$H$ and show that if $\rho$ and $\sigma$, coactions
of $H$ on a separable unital $C^*$-algebra $A$, which have the Rohlin property, are
approximately unitarily equivalent, then there is an approximately inner automorphism
$\alpha$ on $A$ such that
$$
\sigma=(\alpha\otimes\id)\circ\rho\circ\alpha^{-1} .
$$
For an algebra $X$, we denote by $1_X $ and $\id_X $ the unit element in $X$ and the
identity map on $X$, respectively. If no confusion arises, we denote them by $1$ and $\id$,
respectively.
\par
For projections $p, q$ in a $C^*$-algebra $C$, we write $p\sim q$ in $C$ if $p$ is
Murray-von Neumann equivalent to $q$ in $C$.
\par
For each $n\in\BN$, we denote by $M_n (\BC)$ the $n\times n$-matrix algebra over $\BC$ and $I_n$
denotes the unit element in $M_n (\BC)$.

\section{Preliminaries}\label{sec:pre}
Let $H$ be a finite dimensional $C^*$-Hopf algebra. We denote its comultiplication,
counit and antipode by $\Delta$, $\epsilon$ and $S$. We shall use Sweedler's notation
$\Delta(h)=h_{(1)}\otimes h_{(2)}$ for any $h\in H$ which suppresses a possible summation
when we write the comultiplications. We denote by $N$ the dimension of $H$.
Let $H^0 $ be the dual $C^*$-Hopf algebra of $H$. We denote its comultiplication, counit
and antipode by $\Delta^0 $, $\epsilon^0$ and $S^0 $. There is the distinguished projection $e$
in $H$. We note that $e$ is the Haar trace on $H^0$.
Also, there is the distinguished projection $\tau$ in $H^0$ which is the
Haar trace on $H$.
\par
Throughout this paper, $H$ denotes a finite dimensional $C^*$-Hopf algebra
and $H^0$ its dual $C^*$-Hopf algebra. Since $H$ is finite dimensional,
$H\cong\oplus_{k=1}^K M_{d_k }(\BC)$ as $C^*$-algebras.
Let $\{v_{ij}^k \, | \, k=1,2,\dots, K, \, i,j=1,2,\dots, d_k \}$ be a system of matrix units of $H$.
Let $\{w_{ij}^k \, | \, k=1,2,\dots, K, \, i,j=1,2,\dots, d_k \}$ be a basis of $H$ satisfying
Szyma\'nski and Peligrad \cite [Theorem 2.2,2]{SP:saturated}. We call it a system of
\it
comatrix units
of $H$.
\rm
Also, let $\{\phi_{ij}^k \, | \, k=1,2,\dots, K, \, i,j=1,2,\dots, d_k \}$ and
$\{\omega_{ij}^k \, | \, k=1,2,\dots, K, \, i,j=1,2,\dots, d_k \}$ be systems of matrix units and comatrix units
of $H^0$, respectively. Furthermore, let $\rho_H^A$ be the trivial coaction of $H$
on $A$ defined by $\rho_H^A (a)=a\otimes 1$ for any $a\in A$.
\par
Following Masuda and Tomatsu \cite{MT:minimal},
we shall define a twisted coaction of $H$ on $A$ and its exterior equivalence.

\begin{Def}\label{Def:twisted}Let $\rho$ be a weak coaction of $H$ on $A$ and $u$
a unitary element in $A\otimes H\otimes H$. The pair $(\rho, u)$ is a
\sl
twisted
\rm
coaction of $H$ on $A$ if the following conditions hold:
\newline
(1) $(\rho\otimes\id)\circ\rho=\Ad(u)\circ(\id\otimes\Delta)\circ\rho$,
\newline
(2) $(u\otimes 1)(\id\otimes\Delta\otimes\id)(u)=(\rho\otimes\id\otimes\id)(u)
(\id\otimes\id\otimes\Delta)(u)$,
\newline
(3) $(\id\otimes\phi\otimes\epsilon)(u)=(\id\otimes\epsilon\otimes\phi)(u)=
\phi(1)1$ for any $\phi\in H^0$.
\end{Def}
\begin{Def}\label{Def:equivalence}For $i=1, 2$, let $(\rho_i , u_i )$ be a twisted coaction of $H$
on $A$. We say that $(\rho_1 , u_1 )$ is
\sl
exterior
\rm
equivalent to $(\rho_2 , u_2 )$ if there is a unitary element $v$ in $A\otimes H$ satisfying following
conditions:
\newline
(1) $\rho_2 =\Ad(v)\circ\rho_1 $,
\newline
(2) $u_2 =(v\otimes 1)(\rho_1 \otimes\id)(v)u_1 (\id\otimes\Delta)(v^* )$.
\end{Def}

By routine computations, $(\id\otimes \epsilon)(v)=1$ and the above equivalence is an equivalence one.
We write  $(\rho_1 , u_1 )\sim(\rho_2 , u_2 )$ if $(\rho_1 , u_1 )$ is exterior equivalent to
$(\rho_2 , u_2 )$.
\begin{remark}\label{rem:coboundary}Let $(\rho, u)$ be a twisted coaction of $H$
on $A$ and $v$ any unitary element in $A\otimes H$ with $(\id\otimes\epsilon)(v)=1$.
Let
$$
\rho_1 =\Ad(v)\circ\rho, \quad
u_1 =(v\otimes 1)(\rho\otimes\id)(v)u(\id\otimes\Delta)(v^* ) .
$$
Then $(\rho_1 , u_1 )$ is a twisted coaction of $H$ on $A$ by easy computations.
\end{remark}
Let $\Hom(H^0 , A)$ be the linear space of all linear maps from $H^0$ to $A$.
By Sweedler \cite[pp. 69--70]{Sweedler:hopf}
it becomes a unital $*$-algebra which is also defined in \cite[Sections 2 and 3]
{KT1:inclusion}. In the same way as \cite[Sections 2 and 3]{KT1:inclusion}, we define a unital
$*$-algebra $\Hom(H^0 \otimes H^0 , A)$.
As mentioned in Blattner, Cohen and Montgomery \cite[pp. 163]{BCM:crossed},
there are an isomorphism $\imath$ of $A\otimes H$ onto $\Hom(H^0 , A)$ and an isomorphism
$\jmath $ of $A\otimes H\otimes H$ onto $\Hom(H^0 \otimes H^0 , A)$ defined by
\begin{align*}
\imath(a\otimes h)(\phi) & =\phi(h)a, \\
 \jmath(a\otimes h\otimes l)(\phi , \psi) & =\phi(h)\psi(l)a
\end{align*}
for any $a\in A$, $h, l\in H$ and $\phi, \psi\in H^0$.
For any $x\in A\otimes H$, $y\in A\otimes H\otimes H$,
we denote $\imath(x)$, $\jmath(y)$, by $\widehat{x}$, $\widehat{y}$,
respectively.
\par
For any weak coaction $\rho$ of $H$ on $A$, we can construct the weak
action $\cdot_{\rho}$ of $H^0$ on $A$
as follows: For any $a\in A$ and $\phi\in H^0$
$$
\phi\cdot_{\rho} a=\imath(\rho(a))=\rho(a)^{\widehat{}}(\phi)=(\id\otimes\phi)(\rho(a)) .
$$
If no confusion arises, we denote $\phi\cdot_{\rho}a$ by $\phi\cdot a$ for any $a\in A$ and $\phi\in H^0$.
Furthermore, if $(\rho, u)$ is a twisted coaction of $H$ on $A$, $\widehat{u}$ is a unitary cocycle
for the above weak action induced by $\rho$. We call the pair of the weak action and the unitary
cocycle $\widehat{u}$ the
\sl
twisted
\rm
action of $H^0$ on $A$ induced by $(\rho, u)$. By \cite[Section 3]{KT1:inclusion}, we can construct
the twisted crossed product of $A$ by $H^0$ which is denoted by $A\rtimes_{\rho, u}H^0$.
Let $\widehat{\rho}$ be the dual coaction of $\rho$, which is defined by for any $a\in A$, $\phi\in H^0$,
$$
\widehat{\rho}(a\rtimes_{\rho, u}\phi)=(a\rtimes_{\rho, u}\phi_{(1)})\otimes\phi_{(2)} ,
$$
where $a\rtimes_{\rho, u}\phi$ denotes the element in $A\rtimes_{\rho, u}H^0$ induced by
$a \in A$ and $\phi\in H^0$. If no confusion arises, we denote it by $a\rtimes\phi$.
\par
Let $(\rho, u)$ be a twisted coaction of $H$ on $A$ and $A\rtimes_{\rho, u}H^0$
the twisted crossed product induced by $(\rho, u)$. Let $E_1^{\rho}$ be the canonical conditional
expectation from $A\rtimes_{\rho, u}H^0$ onto $A$ defined by $E_1^{\rho} (a\rtimes\phi)=\phi(e)a$
for any $a\in  A$, $\phi\in H^0$.
We note that $E_1^{\rho} $ is faithful by \cite [Lemma 3.14]{KT1:inclusion}. Also, let $\widehat{V}$ be
an element in $\Hom(H^0 , A\rtimes_{\rho, u}H^0 )$
defined by $\widehat{V}(\phi)=1\rtimes\phi$ for any $\phi\in H^0 $. Let $V$ be an element in
$(A\rtimes_{\rho, u}H^0 )\otimes H$ induced by $\widehat{V}$.
By \cite [Lemma 3.12]{KT1:inclusion}, we can see that $V$ and $\widehat{V}$ are unitary
elements in $(A\rtimes_{\rho, u}H^0 )\otimes H$ and $\Hom(H^0 , A\rtimes_{\rho, u}H^0 )$, respectively and that
$$
u=(V\otimes 1)(\rho_H^{A\rtimes_{\rho, u}H^0 }\otimes\id)(V)(\id\otimes\Delta)(V^* ) .
$$
Thus, for any $\phi, \psi\in H^0$
\newline
(1) $\widehat{u}(\phi, \psi)=\widehat{V}(\phi_{(1)})\widehat{V}(\psi_{(1)})\widehat{V^* }(\phi_{(2)}\psi_{(2)})$,
\newline
(2) $\widehat{u^* }(\phi, \psi)=\widehat{V}(\phi_{(1)}\psi_{(1)})\widehat{V^* }(\psi_{(2)})\widehat{V^* }(\phi_{(2)})$.

\begin{lemma}\label{lem:iso}For $i=1,2$ let $(\rho_i , u_i )$ be a twisted coaction of $H$ on $A$ with
$(\rho_1 , u_1 )\sim(\rho_2 , u_2 )$. Let $E_1^{\rho_i }$ be the canonical conditional
expectation from $A\rtimes_{\rho_i , u_i }H^0$ onto $A$ for $i=1,2$. Then there is an isomorphism
$\Phi$ of $A\rtimes_{\rho_1 , u_1 }H^0$ onto $A\rtimes_{\rho_2 , u_2 }H^0$ satisfying that
$\Phi(a)=a$ for any $a\in A$ and $E_1^{\rho_1 }=E_1^{\rho_2 } \circ\Phi$, where
$A$ is identified with $A\rtimes_{\rho_i , u_i }1^0$ for $i=1,2$.
\end{lemma}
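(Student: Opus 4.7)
The plan is to construct $\Phi$ directly from the unitary $v\in A\otimes H$ witnessing $(\rho_1,u_1)\sim(\rho_2,u_2)$ via Definition~\ref{Def:equivalence}. For $i=1,2$, let $V_i\in(A\rtimes_{\rho_i,u_i}H^0)\otimes H$ denote the canonical unitary characterized by $\widehat{V_i}(\phi)=1\rtimes_{\rho_i,u_i}\phi$, as recalled in the preliminaries, and set $W:=v^*V_2\in(A\rtimes_{\rho_2,u_2}H^0)\otimes H$; this is manifestly unitary. The target is a $*$-homomorphism
$$\Phi:A\rtimes_{\rho_1,u_1}H^0\longrightarrow A\rtimes_{\rho_2,u_2}H^0,\qquad \Phi|_A=\id_A,\qquad (\Phi\otimes\id)(V_1)=W,$$
equivalently $\Phi(1\rtimes_{\rho_1,u_1}\phi)=\widehat{v^*}(\phi_{(1)})\rtimes_{\rho_2,u_2}\phi_{(2)}$.

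For such a $\Phi$ to exist, the pair $(A\hookrightarrow A\rtimes_{\rho_2,u_2}H^0,\,W)$ must satisfy in $A\rtimes_{\rho_2,u_2}H^0$ the same two structural identities that $(A,V_1)$ satisfies in $A\rtimes_{\rho_1,u_1}H^0$, namely (a)~$W(a\otimes 1)W^*=\rho_1(a)$ for every $a\in A$, and (b)~$u_1=(W\otimes 1)(\rho_H^{A\rtimes_{\rho_2,u_2}H^0}\otimes\id)(W)(\id\otimes\Delta)(W^*)$. Identity (a) is immediate: $W(a\otimes 1)W^*=v^*V_2(a\otimes 1)V_2^*v=v^*\rho_2(a)v=\rho_1(a)$, using the covariance relation $V_2(a\otimes 1)V_2^*=\rho_2(a)$ together with $\rho_2=\Ad(v)\circ\rho_1$. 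For (b), invert the exterior-equivalence formula to get $u_1=(\rho_1\otimes\id)(v^*)(v^*\otimes 1)u_2(\id\otimes\Delta)(v)$, substitute $u_2=(V_2\otimes 1)(\rho_H\otimes\id)(V_2)(\id\otimes\Delta)(V_2^*)$, and apply the commutation
$$(\rho_1\otimes\id)(v^*)(W\otimes 1)=(W\otimes 1)(\rho_H\otimes\id)(v^*),$$
which is (a) read leg-by-leg against $v^*\in A\otimes H$. The surviving factors regroup as $(W\otimes 1)(\rho_H\otimes\id)(W)(\id\otimes\Delta)(W^*)$ since $W=v^*V_2$ and $W^*=V_2^*v$.

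An inverse of $\Phi$ is produced symmetrically from the reverse exterior equivalence $(\rho_2,u_2)\sim(\rho_1,u_1)$ implemented by $v^*$, so $\Phi$ is an isomorphism. For the conditional-expectation intertwining, using $\Phi(1\rtimes_{\rho_1,u_1}\phi)=\widehat{v^*}(\phi_{(1)})\rtimes_{\rho_2,u_2}\phi_{(2)}$,
$$E_1^{\rho_2}\bigl(\Phi(1\rtimes\phi)\bigr)=\phi_{(2)}(e)\,\widehat{v^*}(\phi_{(1)})=\widehat{v^*}\bigl(\phi(\cdot\,e)\bigr)=\phi(e)\,\widehat{v^*}(\epsilon)=\phi(e)\cdot 1,$$
using the Haar property $he=\epsilon(h)e$ in $H$ (so that $h\mapsto\phi(he)$ equals $\phi(e)\epsilon$) and the normalization $(\id\otimes\epsilon)(v^*)=1$ recorded after Definition~\ref{Def:equivalence}. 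Combined with $\Phi|_A=\id_A$ and $A$-bilinearity this gives $E_1^{\rho_1}=E_1^{\rho_2}\circ\Phi$ on the whole crossed product. The main obstacle is identity (b): although the final regrouping is short once the commutation is in hand, recognizing that the required rearrangement is precisely the extended-leg form of~(a) takes care, since $W$ itself does not commute with the subalgebra $A$.
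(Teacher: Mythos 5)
Your proof is correct and constructs exactly the map the paper uses, $\Phi(a\rtimes_{\rho_1,u_1}\phi)=a\widehat{v}^{*}(\phi_{(1)})\rtimes_{\rho_2,u_2}\phi_{(2)}$, with the same symmetric inverse coming from $v^{*}$; your packaging of the "routine computations" as the covariance and cocycle identities for $W=v^{*}V_2$ is just a cleaner way of organizing the verification the paper leaves implicit. You also explicitly check $E_1^{\rho_1}=E_1^{\rho_2}\circ\Phi$ via the Haar property $he=\epsilon(h)e$ and $(\id\otimes\epsilon)(v)=1$, a step the paper's proof omits entirely, and that computation is correct.
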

\begin{proof}Since $(\rho_1 , u_1 )\sim(\rho_2 , u_2 )$, there is a unitary element in $v$ in $A\otimes H$
satisfying that
$$
\rho_2 =\Ad(v)\circ\rho_1 , \quad
u_2 =(v\otimes 1)(\rho_1 \otimes\id)(v)u_1 (\id\otimes\Delta)(v^* ) .
$$
Let $\Phi$ be a map from $A\rtimes_{\rho_1 , u_1 }H^0$ to $A\rtimes_{\rho_2 , u_2 }H^0$ defined
by $\Phi(a\rtimes_{\rho_1 , u_1 }\phi)=a\widehat{v}^* (\phi_{(1)})\rtimes_{\rho_2 , u_2} \phi_{(2)}$
for any $a\in A$, $\phi\in H^0$. Then by routine computations, $\Phi$ is a homomorphism
of $A\rtimes_{\rho_1 ,u_1}H^0$ to $A\rtimes_{\rho_2 ,u_2}H^0$. Also, let $\Psi$ be a map
from $A\rtimes_{\rho_2 ,u_2}H^0$ to $A\rtimes_{\rho_1 ,u_1}H^0$ defined by for any
$a\in A$, $\phi\in H^0$, $\Psi(a\rtimes_{\rho_2 , u_2 }\phi)=a\widehat{v} (\phi_{(1)})\rtimes_{\rho_1 , u_1} \phi_{(2)}$.
By routine computations, $\Psi$ is also a homomorphism of
$A\rtimes_{\rho_2 ,u_2}H^0$ to $A\rtimes_{\rho_1 ,u_1}H^0$
and $\Phi\circ\Psi=\id$ and $\Psi\circ\Phi=\id$. Therefore, we obtain the conclusion.
\end{proof}

Let $\rho$ be a coaction of $H$ on $A$ and $A^{\rho}$ the fixed point $C^*$-subalgebra of $A$
for $\rho$, that is,
$$
A^{\rho}=\{a\in A \, | \, \rho(a)=a\otimes 1 \} .
$$
Let $E^{\rho}$ be the canonical conditional expectation from $A$ onto
$A^{\rho}$ defined by $E^{\rho}(a)=\tau\cdot_{\rho}a=(\id\otimes\tau)(\rho(a))$ for
any $a\in A$. We note that $E^{\rho}$ is faithful by \cite [Proposition 2.12]{SP:saturated}.
\begin{Def}\label{Def:saturated}We say that $\rho$ is
\sl
saturated
\rm
if the action of $H^0$ on $A$ induced by $\rho$ is saturated in the sense of \cite {SP:saturated}.
\end{Def}

In Sections 4, 5 and 6 of \cite{KT1:inclusion}, we suppose that the action of
$H$ on $A$ is saturated. But, without saturation, all the statements in Sections 4 and 5
and Theorem 6.4 of \cite{KT1:inclusion} hold. Hence we obtain the following
proposition.

\begin{prop}\label{prop:key}Let $\rho$ be a coaction of $H$ on $A$ such that
$\widehat{\rho}(1\rtimes\tau)\sim(1\rtimes\tau)\otimes 1^0 $ in $(A\rtimes_{\rho} H^0 )\otimes H^0$.
Then there are a twisted coaction $(\alpha, u)$ of $H^0$ on $A^{\rho}$ and an
isomorphism $\pi$ of $A^{\rho}\rtimes_{\alpha, u}H$ onto $A$ such that $E_1^{\alpha}=E^{\rho}\circ\pi$
and $\rho\circ\pi=(\pi\otimes\id)\circ\widehat{\alpha}$.
\end{prop}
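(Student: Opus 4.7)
The plan is to invoke the machinery of \cite{KT1:inclusion}, essentially reading Sections 4 and 5 together with Theorem 6.4 of that paper and verifying that the only place where saturation was used can be replaced by the weaker hypothesis $\widehat{\rho}(1\rtimes\tau)\sim(1\rtimes\tau)\otimes 1^0$. The basic setup in \cite{KT1:inclusion} takes a coaction $\rho$ of $H$ on $A$ and manufactures on $A^\rho$ a twisted coaction whose crossed product recaptures $A$; saturation enters the picture only to identify a certain Jones-type projection as having full support, and this is precisely what is being postulated here.

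Concretely, I would proceed as follows. First, form the crossed product $B = A\rtimes_\rho H^0$ together with the dual coaction $\widehat{\rho}$ of $H^0$ on $B$, and set $p = 1\rtimes\tau$. A short computation (Sweedler on $\Delta^0(\tau)$) shows that $p$ is a projection in $B$ and that the map $a\mapsto ap$ is a $*$-isomorphism of $A^\rho$ onto the corner $pBp$. Next, pick a partial isometry $w\in B\otimes H^0$ implementing the equivalence $\widehat{\rho}(p)\sim p\otimes 1^0$, so that $w^*w=p\otimes 1^0$ and $ww^*=\widehat{\rho}(p)$. Using $w$, define a coaction $\alpha$ of $H^0$ on $pBp\cong A^\rho$ and a unitary $u\in A^\rho\otimes H^0\otimes H^0$ by the formulas in \cite[Sections 4--5]{KT1:inclusion}; the twisted coaction identity (Definition 2.1(1), (2), (3)) then comes out of the cocycle identity for $\widehat{\rho}$ together with the partial-isometry relations for $w$. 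Finally, construct $\pi$ by the universal property of the twisted crossed product, sending the generators of $A^\rho\rtimes_{\alpha,u}H$ to the corresponding elements in $A$ built from $w$; bijectivity, the identity $E_1^\alpha=E^\rho\circ\pi$, and the intertwining $\rho\circ\pi=(\pi\otimes\id)\circ\widehat\alpha$ then follow by the same calculations as in \cite[Theorem 6.4]{KT1:inclusion}.

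The main obstacle, and really the only substantive point, is verifying that the saturation hypothesis used in \cite{KT1:inclusion} is never invoked beyond the single equivalence $\widehat{\rho}(p)\sim p\otimes 1^0$. Concretely, I would track the uses of saturation in Sections 4 and 5 there and check that each of them reduces to the existence of such a partial isometry $w$, which is exactly our standing assumption. Once this bookkeeping is done, no new argument is required and the conclusions of \cite{KT1:inclusion} transfer verbatim, giving the claimed $(\alpha,u)$ and $\pi$.
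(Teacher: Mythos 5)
Your proposal takes essentially the same route as the paper: the paper offers no independent argument for this proposition either, merely observing in the preceding sentence that all statements of Sections 4 and 5 and Theorem 6.4 of \cite{KT1:inclusion} remain valid once saturation is dropped in favour of the hypothesis $\widehat{\rho}(1\rtimes\tau)\sim(1\rtimes\tau)\otimes 1^0$, which is exactly the transfer you describe. Your sketch of the corner $p(A\rtimes_{\rho}H^0 )p\cong A^{\rho}$, the implementing partial isometry $w$, and the resulting $(\alpha, u)$ and $\pi$ is in fact more explicit than anything the paper records, and the bookkeeping you flag as the one substantive point is likewise left unverified there.
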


\begin{cor}\label{cor:saturated}Let $\rho$ be a coaction of $H$ on $A$ such that
$\widehat{\rho}(1\rtimes\tau)\sim(1\rtimes\tau)\otimes1^0$ in $(A\rtimes_{\rho} H^0 )\otimes H^0$.
Then $\rho$ is saturated.
\end{cor}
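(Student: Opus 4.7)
The plan is to combine Proposition~\ref{prop:key} with the fact that a dual coaction on a twisted crossed product is automatically saturated. First, I apply Proposition~\ref{prop:key} under the given hypothesis to obtain a twisted coaction $(\alpha,u)$ of $H^0$ on $A^\rho$ together with an isomorphism $\pi:A^\rho\rtimes_{\alpha,u}H\to A$ satisfying $\rho\circ\pi=(\pi\otimes\id)\circ\widehat{\alpha}$. Thus $\pi$ is an equivariant isomorphism from $\widehat{\alpha}$ to $\rho$, and in particular conjugates the weak action of $H^0$ on $A^\rho\rtimes_{\alpha,u}H$ induced by $\widehat{\alpha}$ to the weak action of $H^0$ on $A$ induced by $\rho$.

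Next, since the saturation property of an $H^0$-action in the sense of \cite{SP:saturated} depends only on the action up to equivariant isomorphism, the task reduces to showing that the dual coaction $\widehat{\alpha}$ on the twisted crossed product $A^\rho\rtimes_{\alpha,u}H$ is saturated. This is the finite-dimensional $C^*$-Hopf algebra analogue of the classical statement that dual actions on crossed products are always saturated. Transporting the preliminary construction of $V$ to the present setting (with the roles of $H$ and $H^0$ interchanged, since $(\alpha,u)$ is a twisted coaction of $H^0$), one obtains a canonical unitary in $(A^\rho\rtimes_{\alpha,u}H)\otimes H$; the cocycle identity it satisfies, together with the behaviour of the Haar projection, furnishes precisely the quasi-basis data required by the saturation criterion of \cite{SP:saturated}.

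The main obstacle is this last step. Once one knows in the form used by \cite{SP:saturated} that every dual coaction $\widehat{\alpha}$ of a twisted coaction $(\alpha,u)$ on $A^\rho\rtimes_{\alpha,u}H$ is saturated, the corollary is immediate via $\pi$. Establishing this auxiliary fact, however, has to be done by direct computation with the generators $a\rtimes\phi$ and the element $V$, checking the relevant span or Murray--von Neumann condition; the subtlety is that one must do this in the \emph{twisted} setting, so that the cocycle $u$ (equivalently, $\widehat{V}$) intervenes throughout the calculation rather than dropping out as in the untwisted case.
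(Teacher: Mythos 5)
Your proposal is correct and follows essentially the same route as the paper: apply Proposition~\ref{prop:key} to realize $\rho$ as (conjugate to) the dual coaction $\widehat{\alpha}$ of a twisted coaction on $A^{\rho}$, then invoke saturation of dual coactions. The only difference is that you treat the saturation of the dual coaction of a twisted coaction as an obstacle still requiring a direct computation, whereas the paper takes it as known --- it is available in the literature (cf.\ \cite[Theorem 3.3]{JP:finite} and the quasi-basis of \cite[Proposition 3.18]{KT1:inclusion}, both cited later in Section~\ref{sec:another} for exactly this purpose), so no new twisted-cocycle calculation is needed.
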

\begin{proof}
Since the dual coaction of a twisted coaction is saturated, this is immediate by Proposition \ref{prop:key}.
\end{proof}

\section{Duality}\label{sec:duality}In this section we shall show the duality theorem for a twisted
coaction of $H^0$ on $A$. It has already proved, but we shall present it in a useful form in this paper.
\par
Let $(\rho, u)$ be a twisted coaction of $H^0$ on $A$. Let $\Lambda$ be the set of all triplets $(i,j,k)$,
where $i,j=1,2,\dots,d_k$ and $k=1,2,\dots, K$ amd $\sum_{k=1}^K d_k^2 =N$.
For any $I=(i,j,k)\in \Lambda$, let $W_I$ and $V_I$ be elements in
$A\rtimes_{\rho, u}H\rtimes_{\widehat{\rho}}H^0 $ defined by
\begin{align*}
W_I & =\sqrt{d_k}\rtimes_{\rho,u}w_{ij}^k , \\
V_I & =(1\rtimes_{\widehat{\rho}}\tau)(W_I\rtimes_{\widehat{\rho}}1^0 ) .
\end{align*}
\begin{lemma}\label{lem:orthogonal}With the above notations,
$$
V_I V_J^* =
\begin{cases}1\rtimes_{\widehat{\rho}}\tau & \text{if $I=J$} \\
0 & \text{if $I\ne J$.}
\end{cases}
$$
\end{lemma}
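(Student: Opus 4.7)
The plan is to peel off the two flanking copies of $1 \rtimes_{\widehat{\rho}} \tau$ via a Jones-projection identity and then reduce to the orthogonality of the comatrix units inside $H$. First, using $\tau^* = \tau$ and $(1^0)^* = 1^0$ one has $V_J^* = (W_J^* \rtimes_{\widehat{\rho}} 1^0)(1 \rtimes_{\widehat{\rho}} \tau)$. Since $1^0$ is the unit of $H^0$ (so $\Delta^0(1^0) = 1^0 \otimes 1^0$ and $1^0 \cdot_{\widehat{\rho}} x = x$ for any $x$), the $\widehat{\rho}$-crossed-product multiplication collapses to $(x \rtimes_{\widehat{\rho}} 1^0)(y \rtimes_{\widehat{\rho}} 1^0) = xy \rtimes_{\widehat{\rho}} 1^0$, and one obtains
$$V_I V_J^* = (1 \rtimes_{\widehat{\rho}} \tau)\bigl(W_I W_J^* \rtimes_{\widehat{\rho}} 1^0\bigr)(1 \rtimes_{\widehat{\rho}} \tau).$$

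Next I would establish the Jones-projection identity
$$(1 \rtimes_{\widehat{\rho}} \tau)(y \rtimes_{\widehat{\rho}} 1^0)(1 \rtimes_{\widehat{\rho}} \tau) = E^{\rho}(y) \rtimes_{\widehat{\rho}} \tau$$
for all $y \in A \rtimes_{\rho, u} H$, where $E^\rho = (\id \otimes \tau)\circ\widehat{\rho}\colon A\rtimes_{\rho,u}H \to A$ is the canonical faithful conditional expectation onto the $\widehat{\rho}$-fixed points, computable on generators as $E^{\rho}(a \rtimes h) = \tau(h) a$. This is a direct Sweedler-notation calculation using the Haar integral property $x\tau = \epsilon^0(x)\tau$ in $H^0$ together with the coaction axiom $\widehat{\rho}(1) = 1 \otimes 1_H$. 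The lemma thus reduces to showing $E^{\rho}(W_I W_J^*) = \delta_{IJ}\cdot 1_A$.

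To compute $E^{\rho}(W_I W_J^*) = \sqrt{d_k d_{k'}}\,E^{\rho}\bigl((1\rtimes w_{ij}^k)(1\rtimes (w_{i'j'}^{k'})^*)\bigr)$, one unfolds the twisted multiplication in $A \rtimes_{\rho, u} H$ with $a = b = 1_A$: since the $H$-action induced by $\rho$ fixes $1_A$ up to a counit factor, only the cocycle survives, giving $\widehat{u}(w_{ij(1)}^k, (w_{i'j'}^{k'})^*_{(1)}) \rtimes w_{ij(2)}^k (w_{i'j'}^{k'})^*_{(2)}$. Applying $E^{\rho}$ integrates the $H$-leg against $\tau$, and condition (3) of the twisted coaction $(\rho, u)$ forces the cocycle factor $\widehat{u}$ to collapse whenever one of its Sweedler arguments lies in the counit direction. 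What remains is a Haar pairing on comatrix units, controlled by the duality between $\{w_{ij}^k\}\subset H$ and the matrix units $\{\phi_{ij}^k\}\subset H^0$ from \cite[Theorem 2.2]{SP:saturated}; the resulting value is of the form $d_k^{-1}\delta_{kk'}\delta_{ii'}\delta_{jj'}$, which multiplies the prefactor $\sqrt{d_k d_{k'}}$ to yield $\delta_{IJ}$.

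The main obstacle will be the careful bookkeeping of the cocycle $\widehat{u}$ in the third step: one has to verify that all non-scalar pieces of $\widehat{u}$ are annihilated by the $\tau$-integration via condition (3), and that the explicit normalization $\sqrt{d_k}$ attached to $W_I$ precisely cancels the dimension factor appearing in the Haar-trace pairing of comatrix units. Once that cocycle-collapse is in place, the statement is essentially the orthogonality of comatrix units repackaged inside the basic construction.
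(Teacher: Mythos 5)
Your proposal follows essentially the same route as the paper: both reduce $V_IV_J^*$ to $E_1^{\rho}(W_IW_J^*)\rtimes_{\widehat{\rho}}\tau$ via the Haar-projection identity, then compute $W_IW_J^*$ in the twisted crossed product and conclude using the orthogonality $\tau(w_{ij}^kw_{st}^{r*})=d_k^{-1}\delta_{kr}\delta_{is}\delta_{jt}$ together with the antipode axiom and the normalization $\widehat{u}(1,h)=\epsilon(h)1$ to collapse the remaining cocycle to $\epsilon(w_{is}^k)=\delta_{is}$. The cocycle bookkeeping you flag as the main obstacle is exactly where the bulk of the paper's computation lies (and, as a small point, the $\delta_{is}$ there comes from the cocycle collapse rather than from the Haar pairing itself), but the plan is correct and matches the paper's argument.
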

\begin{proof}Let $I=(i,j,k)$ and $J=(s,t,r)$ be any elements in $\Lambda$.
Then
\begin{align*}
V_I V_J^* & =(1\rtimes_{\widehat{\rho}}\tau)(W_I \rtimes_{\widehat{\rho}}1^0 )(W_J^* \rtimes_{\widehat{\rho}}1^0 )
(1\rtimes_{\widehat{\rho}}\tau) \\
& =(1\rtimes_{\widehat{\rho}}\tau)(W_I W_J^* \rtimes_{\widehat{\rho}}1^0 )(1\rtimes_{\widehat{\rho}}\tau)
=[\tau_{(1)}\cdot_{\widehat{\rho}}W_I W_J^* ]\rtimes_{\widehat{\rho}}\tau_{(2)}\tau' \\
& =[\tau\cdot_{\widehat{\rho}}W_I W_J^* ]\rtimes_{\widehat{\rho}}\tau
=E_1^{\rho}(W_I W_J^* )\rtimes_{\widehat{\rho}}\tau ,
\end{align*}
where $\tau' =\tau$. Here, by \cite [Lemma 3.3 (1)]{KT1:inclusion} and \cite [Theorem 2.2]{SP:saturated}
\begin{align*}
W_I W_J^* & =(\sqrt{d_k }\rtimes_{\rho, u}w_{ij}^k )(\sqrt{d_r}\rtimes_{\rho, u}w_{st}^r )^*  \\
& =\sum_{t_1, t_2 }\sqrt{d_k d_r }(1\rtimes_{\rho, u}w_{ij}^k )(\widehat{u}(S(w_{t_2 t_1 }^r ), w_{st_2 }^r )^* 
\rtimes_{\rho, u}w_{t_1 t}^{r*} ) \\
& =\sum_{t_1, t_2 , j_1 , j_2 , m}\sqrt{d_k d_r}[w_{ij_2 }^k \cdot_{\rho, u}\widehat{u}(S(w_{t_2  t_1 }^r ), w_{st_2 }^r )^* ]
\widehat{u}(w_{j_2  j_1 }^k , w_{t_1 m}^{r*})\rtimes_{\rho, u}w_{j_1 j}^k w_{mt}^{r*} \\
& =\sum_{t_1, t_2 , j_1 , j_2 , m}\sqrt{d_k d_r}[w_{j_2 i}^k \cdot_{\rho, u}\widehat{u}(S(w_{t_2  t_1 }^r ), w_{st_2 }^r ) ]^*
\widehat{u}(w_{j_2  j_1 }^k , w_{t_1 m}^{r*})\rtimes_{\rho, u}w_{j_1 j}^k w_{mt}^{r*} \\
& =\sum_{t_1, t_2 , t_3 , j_1 , j_2 , j_3 , m}\sqrt{d_k d_r}[\widehat{u}(w_{j_2  j_3 }^k , S(w_{t_3 t_1 }^r ))
\widehat{u}(w_{j_3 i}^k
S(w_{t_2 t_3 }^r ), w_{st_2 }^r )]^* \\
& \times \widehat{u}(w_{j_2  j_1 }^k , w_{t_1 m}^{r*})
\rtimes_{\rho, u}w_{j_1 j}^k w_{mt}^{r*} \\
& =\sum_{t_1, t_2 , t_3 , j_1 , j_2 , j_3 , m}\sqrt{d_k d_r}\widehat{u}(w_{j_3 i}^k
S(w_{t_2 t_3 }^r ), w_{st_2 }^r )^* \widehat{u^*}(w_{j_3  j_2 }^k , w_{t_3 t_1 }^{r*} )
\widehat{u}(w_{j_2  j_1 }^k , w_{t_1 m}^{r*}) \\
& \rtimes_{\rho, u}w_{j_1 j}^k w_{mt}^{r*} \\
& =\sum_{t_2 , t_3 , j_3}\sqrt{d_k d_r}\widehat{u}(w_{j_3 i}^k S(w_{t_2 t_3 }^r ), w_{st_2 }^r )^* \rtimes_{\rho, u}
w_{j_3 j}^k w_{t_3 t}^{r*} .
\end{align*}
Thus, by \cite [Theorem 2.2]{SP:saturated}
$$
V_I V_J^* =\sum_{t_2 , t_3 , j_3}\sqrt{d_k d_r }\tau(w_{j_3 j}^k w_{t_3 t}^{r*})
\widehat{u^* }(w_{ij_3 }^k w_{t_2 t_3}^{r*}, w_{t_2 s}^r )\rtimes_{\widehat{\rho}}\tau .
$$
If $k\ne r$ or $j\ne t$, then $V_I V_J^* =0$. We suppose that $k=r$ and $j=t$. Then
$$
V_I V_J^* =\sum_{t_2 , t_3}
\widehat{u^* }(w_{it_3 }^k S(w_{t_3 t_2}^k), w_{t_2 s}^k )\rtimes_{\widehat{\rho}}\tau
=\epsilon(w_{is}^k )\rtimes_{\widehat{\rho}}\tau=\delta_{is}\rtimes_{\widehat{\rho}}\tau ,
$$
where $\delta_{is}$ is the Kronecker delta.
Therefore by \cite [Theorem 2.2]{SP:saturated}, we obtain the conclusion.
\end{proof}
Let $\Psi$ be a map from $M_N (A)$ to $A\rtimes_{\rho, u}H\rtimes_{\widehat{\rho}}H^0$ defined by
$$
\Psi([a_{IJ}])=\sum_{I,J}V_I^* (a_{IJ}\rtimes_{\rho, u}1\rtimes_{\widehat{\rho}}1^0 )V_J
$$
for any $[a_{IJ}]\in M_N (A)$. Clearly $\Psi$ is a linear map.

\begin{prop}\label{prop:map}The map $\Psi$ is an isomorphism of $M_N (A)$ onto
$A\rtimes_{\rho, u}H\rtimes_{\widehat{\rho}}H^0$.
\end{prop}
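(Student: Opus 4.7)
The plan is to establish that $\Psi$ is a $*$-homomorphism, that it is injective, and that it is surjective. The backbone of every step is the orthogonality relation $V_I V_J^* = \delta_{IJ}\,(1\rtimes_{\widehat{\rho}}\tau)$ from Lemma \ref{lem:orthogonal}: setting $p := 1\rtimes_{\widehat{\rho}}\tau$, which plays the role of a Jones-type projection for the conditional expectation $E_1^{\widehat{\rho}}$ from $A\rtimes_{\rho,u}H\rtimes_{\widehat{\rho}}H^0$ onto $A\rtimes_{\rho,u}H$, one immediately obtains the auxiliary identities $pV_I=V_I$ and $V_J^* p=V_J^*$ for every $I,J\in\Lambda$, together with the corner identity $p\,(a\rtimes_{\rho,u}1\rtimes_{\widehat{\rho}}1^0)\,p = a\,p$ for $a\in A$, the latter because $A$ lies in the fixed algebra of $\widehat{\rho}$ and the canonical conditional expectation $(\id\otimes\tau)\circ\widehat{\rho}$ restricts to the identity on $A$.

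For multiplicativity I would expand
\begin{equation*}
\Psi([a_{IJ}])\Psi([b_{IJ}])=\sum_{I,J,K,L}V_I^*(a_{IJ}\rtimes_{\rho,u}1\rtimes_{\widehat{\rho}}1^0)V_J V_K^*(b_{KL}\rtimes_{\rho,u}1\rtimes_{\widehat{\rho}}1^0)V_L.
\end{equation*}
The factor $V_J V_K^*$ collapses by Lemma \ref{lem:orthogonal} to $\delta_{JK}\,p$, and then two successive applications of the corner identity (applied to $a_{IJ}$ between the two $p$'s created by $V_I^*$ and by $V_JV_K^*$, and to $b_{JL}$ similarly) reduce the middle to $a_{IJ}b_{JL}\,p$ and leave $\sum_{I,J,L}V_I^*\,a_{IJ}b_{JL}\,V_L=\Psi([a_{IJ}][b_{IJ}])$. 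Preservation of the $*$-operation is immediate from the defining sum.

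Injectivity follows by sandwiching: if $\Psi([a_{IJ}])=0$, then multiplying on the left by $V_K$ and on the right by $V_L^*$ and applying Lemma \ref{lem:orthogonal} twice kills every summand except one, leaving $p\,(a_{KL}\rtimes_{\rho,u}1\rtimes_{\widehat{\rho}}1^0)\,p = a_{KL}\,p = 0$. Applying the faithful conditional expectation $E_1^{\widehat{\rho}}$ to $a_{KL}\,p\,a_{KL}^*$ returns a positive multiple of $a_{KL}a_{KL}^*$, which must therefore vanish, so $a_{KL}=0$ for every $(K,L)\in\Lambda\times\Lambda$.

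Surjectivity is the main obstacle. The plan is to prove the completeness identity
\begin{equation*}
\sum_{I\in\Lambda}V_I^*V_I = 1_{A\rtimes_{\rho,u}H\rtimes_{\widehat{\rho}}H^0}.
\end{equation*}
Granted this, for any $x$ in the iterated crossed product one writes
\begin{equation*}
x=\Bigl(\sum_I V_I^*V_I\Bigr)x\Bigl(\sum_J V_J^*V_J\Bigr)=\sum_{I,J}V_I^*\,(V_I x V_J^*)\,V_J,
\end{equation*}
and because $pV_I=V_I$ and $V_J^*p=V_J^*$, each $V_I x V_J^*$ lies in the corner $p\,(A\rtimes_{\rho,u}H\rtimes_{\widehat{\rho}}H^0)\,p$, which by the standard Jones-corner identification equals $A\,p$; writing $V_IxV_J^*=a_{IJ}\,p$ with $a_{IJ}\in A$ then yields $x=\Psi([a_{IJ}])$. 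The completeness identity itself is a direct Hopf-algebraic computation: expanding $V_I^*V_I=(W_I^*\rtimes_{\widehat{\rho}}1^0)(1\rtimes_{\widehat{\rho}}\tau)(W_I\rtimes_{\widehat{\rho}}1^0)$, pushing $\Delta^0(\tau)$ through the crossed-product commutation relations for $\widehat{\rho}$, and invoking the orthogonality and completeness of the comatrix unit basis from \cite[Theorem 2.2]{SP:saturated} together with the cocycle identities for $\widehat{u}$, should collapse the sum to $1\rtimes_{\rho,u}1\rtimes_{\widehat{\rho}}1^0$. I expect this last Hopf-algebraic collapse, rather than the abstract argument above, to absorb most of the work.
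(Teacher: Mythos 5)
Your multiplicativity, $*$-preservation, and injectivity arguments coincide with the paper's up to cosmetic reorganization around the corner identity $p\,(a\rtimes_{\rho,u}1\rtimes_{\widehat{\rho}}1^0)\,p=ap$ with $p=1\rtimes_{\widehat{\rho}}\tau$; your injectivity step is in fact slightly more careful, since the paper passes from $V_M\Psi([a_{IJ}])V_L^*=0$ to $a_{ML}=0$ without spelling out the appeal to faithfulness of the canonical expectation. The genuine divergence is in surjectivity. The paper never proves the completeness identity $\sum_{I}V_I^*V_I=1$ inside the proof: it invokes saturation of $\widehat{\rho}$ via \cite[Proposition 4.5]{SP:saturated} to write an arbitrary element of the double crossed product as a finite sum $\sum_i(x_i\rtimes_{\widehat{\rho}}1^0)(1\rtimes_{\widehat{\rho}}\tau)(y_i\rtimes_{\widehat{\rho}}1^0)$ with $x_i,y_i\in A\rtimes_{\rho,u}H$, and then expands $x_i,y_i$ through the quasi-basis $\{(W_I^*,W_I)\}$ of $E_1^{\rho}$ to exhibit such an element as $\Psi([E_1^{\rho}(W_Ix)E_1^{\rho}(yW_J^*)])$. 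The completeness identity appears in the paper only afterwards, as the corollary $1=\Psi(1\otimes I_N)=\sum_IV_I^*V_I$. Your plan inverts this: prove completeness first, then use $pZp=Ap$ and $x=\sum_{I,J}V_I^*(V_IxV_J^*)V_J$. That route is legitimate and, if completed, gives a self-contained proof that makes the matrix-unit structure fully explicit.

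The caveat is that the entire weight of your surjectivity argument rests on $\sum_IV_I^*V_I=1$, and you only describe how you would attempt it (``should collapse'', ``I expect''). The identity is true and the computation is feasible --- expand $(1\rtimes_{\widehat{\rho}}\tau)(W_I\rtimes_{\widehat{\rho}}1^0)=[\tau_{(1)}\cdot_{\widehat{\rho}}W_I]\rtimes_{\widehat{\rho}}\tau_{(2)}$, multiply by $W_I^*$ using the twisted product as in the proof of Lemma \ref{lem:orthogonal}, cancel the $\widehat{u}$-corrections by the cocycle identities, use $\sum_iw_{ij}^{k*}w_{it}^k=\delta_{jt}$ from \cite[Theorem 2.2]{SP:saturated}, and finish with $\sum_{j,k}d_k\,\tau(w_{jj}^k\,\cdot\,)=N\tau(e\,\cdot\,)=\epsilon$ --- but until it is carried out the proof is incomplete at its decisive step. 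Be aware also that the softer abstract proof of completeness (show $q=\sum_IV_I^*V_I$ acts as the identity on elements $zpw$ via the quasi-basis relation $\sum_IW_I^*E_1^{\rho}(W_Ib)=b$) requires density of $(A\rtimes_{\rho,u}H)\,p\,(A\rtimes_{\rho,u}H)$ in the double crossed product, which is exactly the saturation input from \cite[Proposition 4.5]{SP:saturated} that the paper uses; so if the direct Hopf-algebraic collapse resists you, you will not have bypassed that ingredient, only relocated it.
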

\begin{proof}For any $[a_{IJ}], [b_{IJ}]\in M_N (A)$,
\begin{align*}
\Psi([a_{IJ}]\Psi([b_{IJ}]) & =\sum_{I,J,L}V_I^* (a_{IJ}\rtimes_{\rho, u}1
\rtimes_{\widehat{\rho}}1^0 )(1\rtimes_{\widehat{\rho}}\tau)(b_{JL}\rtimes_{\rho, u}1\rtimes_{\widehat{\rho}}1^0 )V_L \\
& =\sum_{I,J,L}V_I^* (1\rtimes_{\widehat{\rho}}\tau)(a_{IJ}b_{JL}\rtimes_{\rho, u}1\rtimes_{\widehat{\rho}}1^0 )
V_L \\
& =\sum_{I,J,L}V_I^* (a_{IJ}b_{JL}\rtimes_{\rho, u}1\rtimes_{\widehat{\rho}}1^0 )V_L =\Psi([a_{IJ}][b_{IJ}])
\end{align*}
by Lemma \ref{lem:orthogonal}. For any $[a_{IJ}]\in M_N (A)$,
$$
\Psi([a_{IJ}])^* =\sum_{I,J}V_J^* (a_{IJ}^* \rtimes_{\rho, u}1\rtimes_{\widehat{\rho}}1^0 )V_I =\Psi([a_{JI}^* ]) .
$$
Hence $\Psi$ is a homomorphism of $M_N (A)$ to $A\rtimes_{\rho, u}H\rtimes_{\widehat{\rho}}H^0$.
Since $\widehat{\rho}$ is saturated, for any $z\in A\rtimes_{\rho, u}H\rtimes_{\widehat{\rho}}H^0$,
we can write that
$$
z=\sum_{i=1}^n (x_i \rtimes_{\widehat{\rho}}1^0 )(1\rtimes_{\widehat{\rho}}\tau)(y_i \rtimes_{\widehat{\rho}}1^0 )
$$
by \cite [Proposition 4.5]{SP:saturated}, where $x_i , y_i \in A\rtimes_{\rho, u}H$
for $i=1,2,\dots, n$. Thus, in order to prove that $\Psi$ is surjective, it suffices to show that
for any $x,y\in A\rtimes_{\rho, u}H$, there is an element $[a_{IJ}]\in M_N (A)$
such that $\Psi([a_{IJ}])=(x\rtimes_{\widehat{\rho}}1^0 )(1\rtimes_{\widehat{\rho}}\tau)(y\rtimes_{\widehat{\rho}}1^0 )$.
Since $\{(W_I^* , W_I \})$ is a quasi-basis for $E_{\rho}$ by \cite [Proposition 3.18]{KT1:inclusion},
\begin{align*}
x & =\sum_I W_I^* E_1^{\rho}(W_I x)=\sum_I W_I^* (E_1^{\rho}(W_I x)\rtimes_{\rho, u}1), \\
y & =\sum_I E_1^{\rho}(yW_I^* )W_I =\sum_I (E_1^{\rho}(yW_I^* )\rtimes_{\rho, u}1)W_I .
\end{align*}
Hence
\begin{align*}
& (x\rtimes_{\widehat{\rho}}1^0 )(1\rtimes_{\widehat{\rho}}\tau)(y\rtimes_{\widehat{\rho}}1^0 )
=\sum_{I,J}(W_I^* E_1^{\rho}(W_I x)\rtimes_{\widehat{\rho}}1^0 )(1\rtimes_{\widehat{\rho}}\tau)(E_1^{\rho}(yW_J^* )W_J
\rtimes_{\widehat{\rho}}1^0 ) \\
& =\sum_{I,J}(W_I^* \rtimes_{\widehat{\rho}}1^0 )(1\rtimes_{\widehat{\rho}}\tau)(E_1^{\rho}(W_I x)
E_1^{\rho}(yW_J^* )\rtimes_{\rho, u}1\rtimes_{\widehat{\rho}}1^0 )
(1\rtimes_{\widehat{\rho}}\tau )(W_J \rtimes_{\widehat{\rho}}1^0 ) \\
& =\sum_{I,J}V_I^* (E_1^{\rho}(W_I x)E_1^{\rho}(yW_J^* )\rtimes_{\rho, u}1\rtimes_{\widehat{\rho}}1^0 )V_J \\
& =\Psi([E_1^{\rho}(W_I x)E_1^{\rho}(yW_J^* )]_{I,J}) .
\end{align*}
Next, we shall show that $\Psi$ is injective. We suppose that for an element $[a_{IJ}]\in M_N (A)$,
$\Psi([a_{IJ}])=0$. Then $\sum_{I,J}V_I^* (a_{IJ}\rtimes_{\rho, u}1\rtimes_{\widehat{\rho}}1^0 )V_J =0$.
Thus for any $M, L \in \Lambda$,
$$
0=V_M \sum_{I,J}V_I^* (a_{IJ}\rtimes_{\rho, u}1\rtimes_{\widehat{\rho}}1^0 )V_J V_L^* 
=a_{ML}\rtimes_{\rho, u}1\rtimes_{\widehat{\rho}}1^0
$$
by Lemma \ref{lem:orthogonal}. Hence $a_{ML}=0$ for any $M,L\in\Lambda$. Therefore, $\Psi$ is injective.
\end{proof}

Since $V_I V_I^*=1\rtimes_{\widehat{\rho}}\tau$ for any $I\in\Lambda$ by Lemma \ref{lem:orthogonal},
the set $\{V_I^* V_I \}_{I\in\Lambda}$ is a family of orthogonal projections in
$A\rtimes_{\rho, u}H\rtimes_{\widehat{\rho}}H^0$.
Let $P_I =V_I^* V_I$ for any $I\in\Lambda$. By Lemma \ref{lem:orthogonal}
and Proposition \ref{prop:map}, 
$$
1=\Psi(1\otimes I_N )=\sum_{I\in\Lambda}V_I^* V_I=\sum_{I\in\Lambda}P_I ,
$$
where $I_N$ is the unit element in $M_N (\BC)$.
\par
We recall that $\widehat{V}$ is a unitary element in $\Hom (H, A\rtimes_{\rho, u}H)$
defined by $\widehat{V}(h)=1\rtimes_{\rho, u}h$ for any $h\in H$.
Let $V$ be the unitary element in $(A\rtimes_{\rho, u}H)\otimes H^0$ induced
by $\widehat{V}$. We regard $A\rtimes_{\rho, u}H$ as a $C^*$-subalgebra
$A\rtimes_{\rho, u}H\rtimes_{\widehat{\rho}}1^0 $ of $A\rtimes_{\rho, u}H\rtimes_{\widehat{\rho}}H^0$.
Thus we regard $V$ as a unitary element in $(A\rtimes_{\rho, u}H\rtimes_{\widehat{\rho}}H^0 )\otimes H^0$.
For any $I\in\Lambda$, let
$$
U_I =(V_I^* \otimes 1^0 )V\widehat{\widehat{\rho}}(V_I )\in (A\rtimes_{\rho, u}H\rtimes_{\widehat{\rho}}H^0 )\otimes H^0 .
$$
Then for any $I\in\Lambda$, $U_I U_I^* =P_I \otimes 1^0 $ and $U_I^* U_I =\widehat{\widehat{\rho}}(P_I )$
since
$$
\widehat{\widehat{\rho}}(1\rtimes_{\widehat{\rho}}\tau)=V^* [(1\rtimes_{\widehat{\rho}}\tau)\otimes 1^0 ]V
$$
by \cite [Proposition 3.19]{KT1:inclusion}. Let $U=\sum_{I\in\Lambda}U_I $. Then $U$ is a unitary element in
$(A\rtimes_{\rho, u}H\rtimes_{\widehat{\rho}}H^0 )\otimes H^0 $. Since $(\rho, u)$ is a twisted coaction of $H^0$ on $A$,
$(\rho\otimes\id_{M_N (\BC)}, u\otimes I_N )$ is also a twisted coaction of $H^0$ on $M_N (A)$.
Then by easy computations,
$$
((\Psi\otimes\id_{H^0})\circ(\rho\otimes\id_{M_N (\BC)})\circ\Psi^{-1}, \, (\Psi\otimes\id_{H^0 }\otimes\id_{H^0 })
(u\otimes I_N))
$$
is a twisted coaction of $H^0$ on $A\rtimes_{\rho, u}H\rtimes_{\widehat{\rho}}H^0$, where we identify
$A\otimes M_N (\BC)\otimes H^0 \otimes H^0$ with $A\otimes H^0 \otimes H^0 \otimes M_N (\BC)$.

\begin{thm}\label{thm:duality}Let $A$ be a unital $C^*$-algebra and $H$ a finite dimensional
$C^*$-Hopf algebra with its dual $C^*$-Hopf algebra $H^0$. Let $(\rho, u)$ be a twisted coaction
of $H^0$ on $A$. Then there is an isomorphism $\Psi$ of $M_N (A)$ onto
$A\rtimes_{\rho, u}H\rtimes_{\widehat{\rho}}H^0$ and a unitary element
$U\in (A\rtimes_{\rho, u}H\rtimes_{\widehat{\rho}}H^0 )\otimes H^0 $ such that
\begin{align*}
& \Ad(U)\circ\widehat{\widehat{\rho}}=(\Psi\otimes\id_{H^0})\circ(\rho\otimes\id_{M_N (\BC)})\circ\Psi^{-1} , \\
& (\Psi\otimes\id_{H^0}\otimes\id_{H^0})(u\otimes I_N )=(U\otimes 1^0 )(\widehat{\widehat{\rho}}\otimes\id_{H^0})(U)
(\id\otimes\Delta^0 )(U^* ) .
\end{align*}
That is, $\widehat{\widehat{\rho}}$ is exterior equivalent to the twisted coaction
$$
((\Psi\otimes\id_{H^0})\circ(\rho\otimes\id_{M_N (\BC)})\circ\Psi^{-1}, \, (\Psi\otimes\id_{H^0 }\otimes\id_{H^0 })
(u\otimes I_N)) .
$$
\end{thm}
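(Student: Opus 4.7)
The isomorphism $\Psi$ has already been furnished by Proposition~\ref{prop:map}, so the task reduces to verifying that the element $U=\sum_{I\in\Lambda}U_I$ with $U_I=(V_I^* \otimes 1^0 )V\widehat{\widehat{\rho}}(V_I )$, defined in the paragraph preceding the theorem, is unitary and satisfies the two displayed identities.

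For unitarity I plan to compute $U_I U_J^* =(V_I^* \otimes 1^0 )V\widehat{\widehat{\rho}}(V_I V_J^* )V^* (V_J \otimes 1^0 )$, use Lemma~\ref{lem:orthogonal} to kill the off-diagonal terms, and use the identity $\widehat{\widehat{\rho}}(1\rtimes_{\widehat{\rho}}\tau)=V^* [(1\rtimes_{\widehat{\rho}}\tau)\otimes 1^0 ]V$ quoted from \cite[Proposition 3.19]{KT1:inclusion} to handle the diagonal, obtaining $U_I U_J^* =\delta_{IJ}(P_I \otimes 1^0 )$; a parallel computation yields $U_I^* U_J =\delta_{IJ}\widehat{\widehat{\rho}}(P_I )$. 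Summing over $\Lambda$ and invoking $\sum_{I\in\Lambda}P_I =1$ then gives $UU^* =1\otimes 1^0 =U^* U$.

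For the first intertwining identity it is enough to check it on the spanning set $\{\Psi(e_{IJ}\otimes a)=V_I^* (a\rtimes_{\rho,u}1\rtimes_{\widehat{\rho}}1^0 )V_J : I,J\in\Lambda,\ a\in A\}$. On the right-hand side, a direct computation gives $(\Psi\otimes\id_{H^0})\circ(\rho\otimes\id_{M_N (\BC)})(e_{IJ}\otimes a)=V_I^* (a_{(0)}\rtimes 1\rtimes 1^0 )V_J \otimes a_{(1)}$, where $\rho(a)=a_{(0)}\otimes a_{(1)}$ in Sweedler notation. For the left-hand side, Lemma~\ref{lem:orthogonal} together with the diagonal identity above collapses $U\widehat{\widehat{\rho}}(V_I^* )$ to $(V_I^* \otimes 1^0 )V$ and $\widehat{\widehat{\rho}}(V_J )U^*$ to $V^* (V_J \otimes 1^0 )$, so that matching the two sides reduces to the standard fact that the canonical unitary $V$ implements $\rho$ on the copy of $A$ inside $A\rtimes_{\rho,u}H$, namely $V(a\otimes 1^0 )V^* =\rho(a)$ in $(A\rtimes_{\rho,u}H)\otimes H^0 $.

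The main obstacle is the 2-cocycle identity. The plan is to expand $(U\otimes 1^0 )(\widehat{\widehat{\rho}}\otimes\id_{H^0 })(U)(\id\otimes\Delta^0 )(U^* )$ as a triple sum indexed by $\Lambda$, use the coaction identity $(\widehat{\widehat{\rho}}\otimes\id_{H^0 })\circ\widehat{\widehat{\rho}}=(\id\otimes\Delta^0 )\circ\widehat{\widehat{\rho}}$ to reorganize the interior factors of each summand, and apply Lemma~\ref{lem:orthogonal} repeatedly to collapse two of the three summation indices. The element $u$ then enters through the identity
\[
u=(V\otimes 1)(\rho_{H^0}^{A\rtimes_{\rho,u}H}\otimes\id)(V)(\id\otimes\Delta^0 )(V^* ),
\]
which is the $H^0$-analog of the formula for $V$ recalled in Section~\ref{sec:pre} and follows from \cite[Lemma 3.12]{KT1:inclusion} by duality. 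After substitution both sides collapse to the same expression, which under the identification $M_N (A)\otimes H^0 \otimes H^0 \cong A\otimes H^0 \otimes H^0 \otimes M_N (\BC)$ is precisely $(\Psi\otimes\id_{H^0 }\otimes\id_{H^0 })(u\otimes I_N )$.
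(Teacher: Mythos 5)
Your proposal is correct and follows essentially the same route as the paper: the same $U=\sum_I(V_I^*\otimes 1^0)V\widehat{\widehat{\rho}}(V_I)$, unitarity via Lemma~\ref{lem:orthogonal} and $\widehat{\widehat{\rho}}(1\rtimes_{\widehat{\rho}}\tau)=V^*[(1\rtimes_{\widehat{\rho}}\tau)\otimes 1^0]V$, the intertwining identity checked on the spanning set using $V(a\otimes 1^0)V^*=\rho(a)$, and the cocycle identity reduced to $(V\otimes 1^0)(\widehat{\widehat{\rho}}\otimes\id)(V)(\id\otimes\Delta^0)(V^*)=u$. Your formulation of that last identity with the trivial coaction in the middle factor agrees with the paper's, since $\widehat{\widehat{\rho}}$ restricts to the trivial coaction on $A\rtimes_{\rho,u}H$, where the first leg of $V$ lives.
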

\begin{proof}Let $\Psi$ be the isomorphism of $M_N (\BC)$ onto $A\rtimes_{\rho, u}H\rtimes_{\widehat{\rho}}H^0$
defined in Proposition \ref{prop:map} and let $U$ be a unitary element in
$(A\rtimes_{\rho, u}H\rtimes_{\widehat{\rho}}H^0 )\otimes H^0 $ defined above.
Let $[a_{IJ}]_{I,J\in\Lambda}$ be any element in $M_N (A)$. Then
\begin{align*}
& (\Ad(U)\circ\widehat{\widehat{\rho}})(\Psi([a_{IJ}]))=U\widehat{\widehat{\rho}}(\sum_{I,J}V_I^* (a_{IJ}\rtimes_{\rho, u}1
\rtimes_{\widehat{\rho}}1^0 )V_J )U^* \\
& =\sum_{I,J}(V_I^* \otimes 1^0 )V\widehat{\widehat{\rho}}(1\rtimes_{\widehat{\rho}}\tau)\widehat{\widehat{\rho}}
(a_{IJ}\rtimes_{\rho, u}1\rtimes_{\widehat{\rho}}1^0 )
\widehat{\widehat{\rho}}(1\rtimes_{\widehat{\rho}}\tau)V^* (V_J\otimes 1^0 ) \\
& =\sum_{I,J}(V_I^* \otimes 1^0 )[(1\rtimes_{\widehat{\rho}}\tau)\otimes 1^0 ]
V((a_{IJ}\rtimes_{\rho, u}1\rtimes_{\widehat{\rho}}1^0 )\otimes 1^0 )V^*
[(1\rtimes_{\widehat{\rho}}\tau)\otimes 1^0 ](V_J\otimes 1^0 )
\end{align*}
since $\widehat{\widehat{\rho}}(1\rtimes_{\widehat{\rho}}\tau)=V^* [(1\rtimes_{\widehat{\rho}}\tau)\otimes 1^0 ]V$
by \cite [Proposition 3.19]{KT1:inclusion}, where we identify $A$ with
$A\rtimes_{\rho, u}1$ and $A\rtimes_{\rho, u}1\rtimes_{\widehat{\rho}}1^0$.
Hence
$$
(\Ad(U)\circ\widehat{\widehat{\rho}})(\Psi([a_{IJ}]))
=\sum_{I,J}(V_I^* \otimes 1^0 )\rho(a_{IJ}\rtimes_{\rho, u}1\rtimes_{\widehat{\rho}}1^0 )(V_J \otimes 1^0 )
$$
since $\rho(a)=V(a\otimes 1^0 )V^* $ for any $a\in A$ by \cite [Lemma 3.12(1)]{KT1:inclusion}.
On the other hand,
$$
((\Psi\otimes\id_{H^0})\circ(\rho\otimes\id))([a_{IJ}])
=(\Psi\otimes\id_{H^0})([\rho(a_{IJ}\rtimes_{\rho ,u}1\rtimes_{\widehat{\rho}}1^0 ]) .
$$
Since $\rho(a_{IJ}\rtimes_{\rho ,u}1\rtimes_{\widehat{\rho}}1^0 )\in A\otimes H^0$. we can write that
$$
\rho(a_{IJ}\rtimes_{\rho ,u}1\rtimes_{\widehat{\rho}}1^0 )
=\sum_i (b_{IJi}\rtimes_{\rho, u}1\rtimes_{\widehat{\rho}}1^0 )\otimes\phi_{IJi} ,
$$
where $b_{IJi}\in A$ and $\phi_{IJi}\in H^0 $ for any $I,J,i$.
Hence
\begin{align*}
(\Psi\otimes\id_{H^0}) & ([\rho(a_{IJ}\rtimes_{\rho, u}1\rtimes_{\widehat{\rho}}1^0 )])
=(\Psi\otimes\id_{H^0})(\sum_i (b_{IJi}\rtimes_{\rho, u}1\rtimes_{\widehat{\rho}}1^0 )\otimes\phi_{IJi}) \\
& =\sum_{i}V_I^* (b_{IJi}\rtimes_{\rho, u}1\rtimes_{\widehat{\rho}}1^0 )V_J \otimes\phi_{IJi} \\
& =\sum_{i}(V_I^* \otimes 1^0 )[(b_{IJi}\rtimes_{\rho, u}1\rtimes_{\widehat{\rho}}1^0 )
\otimes\phi_{IJi}](V_J \otimes1^0 ) \\
& =\sum_{i}(V_I^* \otimes 1^0 )\rho(a_{IJ}\rtimes_{\rho, u}1\rtimes_{\widehat{\rho}}1^0 )
(V_J \otimes1^0 ) .
\end{align*}
Thus we obtain that
$$
\Ad(U)\circ\widehat{\widehat{\rho}}\circ\Psi=(\Psi\otimes\id_{H^0})\circ(\rho\otimes\id_{M_N (\BC)}) .
$$
Next, we shall show that
$$
(\Psi\otimes\id_{H^0}\otimes\id_{H^0})(u\otimes I_N )=(U\otimes 1^0)(\widehat{\widehat{\rho}}\otimes\id_{H^0})(U)
(\id\otimes\Delta^0 )(U^* ) .
$$
Since $u\in A\otimes H^0 \otimes H^0 $, we can write that
$u=\sum_{i,j}a_{ij}\otimes\phi_i \otimes\psi_j $,
where $a_{ij}\in A$ and $\phi_i $, $\psi_j \in H^0$ for any $i,j$.
Thus for any $h,l\in H$
\begin{align*}
(\Psi\otimes\id_{H^0}\otimes\id_{H^0})(u\otimes I_N )^{\widehat{}}(h, l)
& =\sum_{I,i,j}V_I^* (a_{ij}\rtimes_{\rho, u}1\rtimes_{\widehat{\rho}}1^0 )V_I \phi_i(h)\psi_j (l) \\
& =\sum_I V_I^* (\widehat{u}(h, l)\rtimes_{\rho, u}1\rtimes_{\widehat{\rho}}1^0 )V_I .
\end{align*}
On the other hand, by Lemma \ref{lem:orthogonal} and \cite [Proposition 3.19]{KT1:inclusion}
\begin{align*}
& (U\otimes 1^0 )(\widehat{\widehat{\rho}}\otimes\id_{H^0})(U)(\id\otimes\Delta^0 )(U^* ) \\
& =[\sum_I (V_I^* \otimes1^0 \otimes1^0 )(V\otimes 1^0 )(\widehat{\widehat{\rho}}(V_I )\otimes 1^0 )]
[\sum_J (\widehat{\widehat{\rho}}(V_J^* )\otimes 1^0 )(\widehat{\widehat{\rho}}\otimes\id_{H^0 })(V) \\
& \times ((\widehat{\widehat{\rho}}\otimes\id)\circ\widehat{\widehat{\rho}})(V_J )]
[\sum_L (\id\otimes\Delta^0 )(\widehat{\widehat{\rho}}(V_L^* ))
(\id\otimes\Delta^0 )(V^* )(V_L \otimes 1^0 \otimes 1^0 )] \\
& =\sum_I (V_I^* \otimes 1^0 \otimes 1^0 )(V\otimes 1^0 )(\widehat{\widehat{\rho}}(1\rtimes_{\widehat{\rho}}\tau)
\otimes 1^0 )(\widehat{\widehat{\rho}}\otimes\id_{H^0 })(V) \\
& \times((\id\otimes\Delta^0 )\circ\widehat{\widehat{\rho}})(1\rtimes_{\widehat{\rho}}\tau)(\id\otimes\Delta^0 )(V^* )(V_I \otimes 1^0 \otimes 1^0 ) \\
& =\sum_I (V_I^* \otimes 1^0 \otimes 1^0 )((1\rtimes_{\widehat{\rho}}\tau)\otimes 1^0 \otimes 1^0 )(V\otimes 1^0 )
(\widehat{\widehat{\rho}}\otimes\id_{H^0})(V) \\
& \times (\id\otimes\Delta^0 )(V^* )((1\rtimes_{\widehat{\rho}}\tau)\otimes 1^0 \otimes 1^0 )
(V_I \otimes 1^0 \otimes 1^0 ) \\
& =\sum_I (V_I^* \otimes 1^0 \otimes 1^0 )(V\otimes 1^0 )(\widehat{\widehat{\rho}}\otimes\id_{H^0 })(V)
(\id\otimes\Delta^0 )(V^* )(V_I \otimes 1^0 \otimes 1^0 ) .
\end{align*}
Thus for any $h,l\in H$,
\begin{align*}
& [(U\otimes 1^0 )(\widehat{\rho}\otimes\id_{H^0})(U)(\id\otimes\Delta^0 )(U^* )]^{\widehat{}}(h, l) \\
& =\sum_I V_I^* [(V\otimes 1^0 )(\widehat{\widehat{\rho}}\otimes\id_{H^0})(V)(\id\otimes\Delta^0 )(V^* )]^{\widehat{}}
(h, l)V_I  .
\end{align*}
Here for any $h,l\in H$
\begin{align*}
& [(V\otimes 1^0 )(\widehat{\widehat{\rho}}\otimes\id_{H^0})(V)(\id\otimes\Delta^0 )(V^* )]^{\widehat{}}(h, l)
=\widehat{V}(h_{(1)})[h_{(2)}\cdot_{\widehat{\widehat{\rho}}}\widehat{V}(l_{(1)})]\widehat{V^* }(h_{(3)}l_{(2)}) \\
& =\widehat{V}(h_{(1)})[h_{(2)}\cdot_{\widehat{\widehat{\rho}}}(1\rtimes_{\rho ,u}l_{(1)}\rtimes_{\widehat{\rho}}1^0 )]
\widehat{V^* }(h_{(3)}l_{(2)}) \\
& =\widehat{V}(h_{1)})(1\rtimes_{\rho, u}l_{(1)}\rtimes_{\widehat{\rho}}1^0 )\widehat{V^* }(h_{(2)}l_{(2)}) \\
& =\widehat{V}(h_{(1)})\widehat{V}(l_{(1)})\widehat{V^* }(h_{(2)}l_{(2)})=\widehat{u}(h, l)
\end{align*}
by \cite [Lemma 3.12]{KT1:inclusion}. Thus
$$
(V\otimes 1^0 )(\widehat{\widehat{\rho}}\otimes\id_{H^0 })(V)(\id\otimes\Delta^0 )(V^* )=u .
$$
Therefore
$$
(\Psi\otimes\id_{H^0}\otimes\id_{H^0})(u\otimes I_N )
=(U\otimes 1^0 )(\widehat{\widehat{\rho}}\otimes\id_{H^0})(U)(\id\otimes\Delta^0 )(U^* ) .
$$
\end{proof}

\section{Approximately representable coactions}\label{sec:approximate}
For a unital $C^*$-algebra $A$, we set
\begin{align*}
c_0 (A) & =\{(a_n )\in l^{\infty}(\BN, A) \, | \, \lim_{n\to\infty}||a_n ||=0 \} , \\
A^{\infty} & =l^{\infty}(\BN , A)/ c_0 (A) .
\end{align*}
We denote an element in $A^{\infty}$ by the same symbol $(a_n )$ in $l^{\infty}(\BN, A)$.
We identify $A$ with the $C^*$-subalgebra of $A^{\infty}$ consisting of the equivalence classes
of constant sequences and set
$$
A_{\infty}=A^{\infty}\cap A' .
$$
For a weak coaction of $H^0$ on $A$, let $\rho^{\infty}$ be the weak coaction of $H^0$ on $A^{\infty}$ defined by
$\rho^{\infty}((a_n ))=(\rho(a_n ))$ for any $(a_n )\in A^{\infty}$. Hence for a twisted coaction $(\rho, u)$
of $H^0$ on $A$, we can define the twisted coaction $(\rho^{\infty}, u)$ of $H^0$ on $A^{\infty}$.
We have the following easy lemmas.

\begin{lemma}\label{lem:product}Let $(\rho, u)$ be a twisted coaction of $H^0$ on $A$ and $(\rho^{\infty}, u)$
the twisted coaction of $H^0$ on $A^{\infty}$ induced by $(\rho, u)$. Then
$$
A^{\infty}\rtimes_{\rho^{\infty}, u}H\cong (A\rtimes_{\rho, u}H)^{\infty}
$$
as $C^*$-algebras.
\end{lemma}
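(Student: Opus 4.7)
The plan is to construct the obvious natural map and verify it is a $*$-isomorphism, exploiting the fact that $H$ is finite dimensional and that the crossed product has a nice module-basis description.

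First I would fix the system of comatrix units $\{w_I\}_{I\in\Lambda}$ of $H$ (from Szyma\'nski--Peligrad). Because $H$ is finite dimensional, every element of $A\rtimes_{\rho,u}H$ can be written uniquely as a finite sum $\sum_{I\in\Lambda}(a_I\rtimes_{\rho,u}w_I)$ with $a_I\in A$, and likewise for $A^\infty\rtimes_{\rho^\infty,u}H$. This gives the obvious candidate for an isomorphism: define
$$
\Phi\Bigl(\sum_{I}(a^I_n)\rtimes_{\rho^\infty,u}w_I\Bigr)=\Bigl(\sum_{I}a^I_n\rtimes_{\rho,u}w_I\Bigr)_n.
$$
Well-definedness is clear because only finitely many $I$ are involved, and if $(a^I_n)\in c_0(A)$ for each $I$ then the corresponding sequence in $A\rtimes_{\rho,u}H$ is also in $c_0$. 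Since multiplication and involution in the twisted crossed product depend only on finitely many structure constants of $H$, on $\rho$, and on the cocycle $u$ (which is shared), the $*$-homomorphism property reduces to a coordinatewise verification that is routine.

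For injectivity I would use the quasi-basis $\{(W_I^*,W_I)\}$ for $E_1^\rho$ from \cite[Proposition 3.18]{KT1:inclusion}: if $\Phi((x_n))=0$ in $(A\rtimes_{\rho,u}H)^\infty$ where $x_n=\sum_I a^I_n\rtimes_{\rho,u}w_I$, then $\|x_n\|\to 0$, and applying $E_1^\rho$ after multiplying by appropriate $W_J$'s recovers each coefficient up to a bounded linear operation, so $(a^I_n)\in c_0(A)$ for every $I$. For surjectivity, given a bounded sequence $(y_n)$ in $A\rtimes_{\rho,u}H$, write
$$
y_n=\sum_{I}W_I^*\bigl(E_1^\rho(W_I y_n)\rtimes_{\rho,u}1\bigr)
$$
using the quasi-basis expansion, and observe that each $n\mapsto E_1^\rho(W_I y_n)$ is a bounded sequence in $A$ because $E_1^\rho$ is contractive and $\|W_I\|$ is a fixed constant. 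Repackaging with the comatrix unit expansion of $W_I^*$ presents $(y_n)$ as the image under $\Phi$ of an element of $A^\infty\rtimes_{\rho^\infty,u}H$.

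The main obstacle is really bookkeeping rather than genuine difficulty: one must make sure that the coefficient maps $A\rtimes_{\rho,u}H\to A$ arising from the comatrix basis are bounded so that bounded sequences lift to bounded sequences of coefficients, and that the lift is compatible with the twisted multiplication uniformly in $n$. Both points are handled by the quasi-basis formula above, since it expresses the coefficients as compositions of the contractive conditional expectation $E_1^\rho$ with left multiplication by the fixed elements $W_I$; hence the whole identification is norm-controlled and the two $C^*$-completions coincide.
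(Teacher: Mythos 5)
Your proposal is correct and follows essentially the same route as the paper: define the obvious coordinatewise map on the comatrix-unit (or basis) expansion, check well-definedness and the homomorphism property componentwise, and recover the coefficients via the contractive conditional expectation $E_1^{\rho}$ to get injectivity and surjectivity. The only cosmetic difference is that the paper extracts coefficients for injectivity from the identity $E_1^{\rho}\bigl((\sum_i x_{ni}\rtimes h_i)(\sum_j x_{nj}\rtimes h_j)^*\bigr)=\sum_i x_{ni}x_{ni}^*$ for a $\tau$-orthonormal basis, whereas you use the quasi-basis $\{(W_I^*,W_I)\}$; your version also makes explicit the boundedness of the coefficient sequences in the surjectivity step, which the paper leaves implicit.
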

\begin{proof}Let $\Phi$ be a map from $A^{\infty}\rtimes_{\rho^{\infty} , u}H$ to $(A\rtimes_{\rho, u}H)^{\infty}$
defined by $\Phi((a_n )\rtimes h)=(a_n \rtimes h)$ for any $(a_n )\in A^{\infty}$ and
$h\in H$. For any $(a_n ), (b_n )\in A^{\infty}$ with $(a_n )=(b_n )$ in $A^{\infty}$,
$$
||a_n \rtimes h-b_n \rtimes h ||\leq ||a_n - b_n ||||h||\to 0 \quad (n\to \infty) .
$$
Hence $\Phi$ is well-defined. Also, clearly $\Phi$ is linear. 
For $x\in A^{\infty}\rtimes_{\rho^{\infty}, u}H$, we suppose that
$\Phi(x)=0$. Then we can write that $x=\sum_i (x_{n i})\rtimes h_i $,
where $x_{ni}\in A$ and $\{h_i \}$ is a basis of $H$ such that $\tau(h_i h_j )=0$
and $\delta_{ij}$ is the Kronecker delta. Since $\Phi(x)=0$, $||\sum_i a_{ni}\rtimes_{\rho, u}h_i ||\to 0$
as $n\to\infty$. Hence
$$
||(\sum_i a_{ni}\rtimes_{\rho, u}h_i )(\sum_j a_{nj}\rtimes_{\rho, u}h_j )^* ||\to 0 \quad (n\to\infty) .
$$
Also, by the proof of \cite [Lemma 3.14]{KT1:inclusion}
$$
E_1^{\rho}((\sum_i x_{ni}\rtimes_{\rho, u}h_i )(\sum_j x_{nj}\rtimes_{\rho, u}h_j )^* )
=\sum_i x_{ni}x_{ni}^* .
$$
Thus $||\sum_i x_{ni}x_{ni}^* ||\to 0$ as $n\to\infty$. Hence for any $i$, $x_{ni}\to 0$ as
$n\to 0$. That is, $x=0$.
Thus $\Phi$ is injective. For any $x\in (A\rtimes_{\rho, u}H)^{\infty}$, we write
$x=(x_n )$, $x_n=\sum_{i,j,k}x_{n i j}^k \rtimes w_{ij}^k$, where $x_{nij}^k\in A$. Then
$y=\sum_{i,j,k}(x_{nij}^k )\rtimes w_{ij}^k$ is an element in $A^{\infty}\rtimes_{\rho^{\infty}, u}H$ and
$\Phi(y)=x$. Hence $\Phi$ is surjective. Furthermore, by routine computations, we see that $\Phi$ is a
homomorphism of $A^{\infty}\rtimes_{\rho^{\infty} , u}H$ to $(A\rtimes_{\rho, u}H)^{\infty}$.
Therefore, we obtain the conclusion.
\end{proof}

By the isomorphism defined in the above lemma, we identify $A^{\infty}\rtimes_{\rho^{\infty}, u}H$  with
$(A\rtimes_{\rho, u}H)^{\infty}$. Thus $\widehat{(\rho^{\infty})}=(\widehat{\rho})^{\infty}$.
We denote them by $\widehat{\rho}^{\infty}$.

\begin{lemma}\label{lem:subalgebra}Let $\rho$ be a coaction of $H^0$ on $A$ and $\rho^{\infty}$
the coaction of $H^0$ on $A^{\infty}$ induced by $\rho$. Then $(A^{\infty})^{\rho^{\infty}}=(A^{\rho})^{\infty}$.
\end{lemma}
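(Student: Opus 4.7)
The plan is to prove the two inclusions separately; the non-trivial direction is obtained by applying the canonical conditional expectation $E^{\rho} : A \to A^{\rho}$ term by term. Recall that for a coaction $\rho$ of $H^0$ on $A$, the relevant conditional expectation is $E^{\rho}(a) = (\id\otimes e)(\rho(a))$, where $e\in H$ is the Haar trace on $H^0$.

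For the easy inclusion $(A^{\rho})^{\infty} \subseteq (A^{\infty})^{\rho^{\infty}}$ I would take a bounded representative $(a_n)\in l^{\infty}(\BN, A^{\rho})$ of a class in $(A^{\rho})^{\infty}$. Since $\rho(a_n)=a_n\otimes 1$ for every $n$, we obtain $\rho^{\infty}((a_n))=(\rho(a_n))=(a_n)\otimes 1$, so $(a_n)\in (A^{\infty})^{\rho^{\infty}}$. Here one uses the obvious embedding of $(A^{\rho})^{\infty}$ as a $C^*$-subalgebra of $A^{\infty}$, which is well-defined because $c_0(A^{\rho})=c_0(A)\cap l^{\infty}(\BN, A^{\rho})$.

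For the reverse inclusion, I would take $(a_n) \in (A^{\infty})^{\rho^{\infty}}$, which by definition means $\|\rho(a_n) - a_n\otimes 1\|\to 0$. Define $b_n := E^{\rho}(a_n) = (\id\otimes e)(\rho(a_n))\in A^{\rho}$. Slicing the limit relation by $\id\otimes e$ and using $e(1)=1$ (which is forced by $E^{\rho}(1)=1$, since $A^{\rho}\ni 1$) yields $\|b_n - a_n\|\to 0$. Therefore $(b_n)\in l^{\infty}(\BN, A^{\rho})$ represents the same class as $(a_n)$ in $A^{\infty}$, and that class lies in $(A^{\rho})^{\infty}$.

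There is no serious obstacle; the argument is essentially a one-line application of $E^{\rho}$. The only small points to verify are the identification $(A^{\rho})^{\infty}\hookrightarrow A^{\infty}$, and that finite-dimensionality of $H^0$ gives a natural identification $A^{\infty}\otimes H^0 \cong (A\otimes H^0)^{\infty}$, so that $\rho^{\infty}$ lands in $A^{\infty}\otimes H^0$ and the slice map $\id\otimes e$ can be applied coordinate-wise.
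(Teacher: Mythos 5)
Your proof is correct and follows essentially the same route as the paper: both directions are handled by applying the canonical conditional expectation $E^{\rho}=(\id\otimes e)\circ\rho$ termwise to a representative sequence, so that $b_n=E^{\rho}(a_n)\in A^{\rho}$ satisfies $\|b_n-a_n\|\to 0$. The paper phrases this as $(a_n)=(E)^{\rho^{\infty}}((a_n))=(E^{\rho}(a_n))_n$ rather than slicing the limit relation directly, but the content is identical.
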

\begin{proof}It is clear that $(A^{\rho})^{\infty}\subset (A^{\infty})^{\rho^{\infty}}$. We shall show that $(A^{\rho})^{\infty}
\supset (A^{\infty})^{\rho^{\infty}}$. Let $E^{\rho}$ and $(E)^{\rho^{\infty}}$ be the canonical conditional expectations from
$A$ and $A^{\infty}$ onto $A^{\rho}$ and $(A^{\infty})^{\rho^{\infty}}$, respectively.
Then $(A^{\infty})^{\rho^{\infty}}=(E)^{\rho^{\infty}}(A^{\infty})$ and $A^{\rho}=E^{\rho}(A)$.
Let $(a_n )_n \in (A^{\infty})^{\rho^{\infty}}$. We note that
\begin{align*}
(a_n )_n = & (E)^{\rho^{\infty}}((a_n )_n )=e\cdot_{\rho^{\infty}} (a_n )_n =(\id\otimes e)(\rho^{\infty}((a_n )_n ))
= (\id\otimes e)((\rho(a_n ))_n ) \\
& =((\id\otimes e)(\rho(a_n )))_n =(E^{\rho}(a_n ))_n .
\end{align*}
Hence $||E^{\rho}(a_n )-a_n ||\to 0$ $(n\to\infty)$.
Let $b_n =E^{\rho}(a_n )$  for any $n\in \BN$. Since $b_n \in A^{\rho}$, $(b_n )\in (A^{\rho})^{\infty}$.
Then $||b_n -a_n ||=||E^{\rho}(a_n )-a_n ||\to 0 $ $(n\to \infty)$. Thus $(b_n )=(a_n )$
in $(A^{\rho})^{\infty}$. Therefore, $(a_n )\in (A^{\rho})^{\infty}$.
\end{proof}

Since $(A^{\infty})^{\rho^{\infty}}=(A^{\rho})^{\infty}$ by the above lemma,
we can identify $(E)^{\rho^{\infty}}$ with $(E^{\rho})^{\infty}$ the conditional expectation from
$A^{\infty}$ onto $(A^{\rho})^{\infty}$. We denote them by $E^{\rho^{\infty}}$.

\begin{Def}\label{Def:representable}Let $(\rho, u)$ be a twisted coaction of $H$ on $A$.  We say that
$(\rho, u)$ is
\sl
approximately representable
\rm
if there is a unitary element $w\in A^{\infty}\otimes H$ satisfying the following conditions:
\newline
(1) $\rho(a)=(\Ad(w)\circ\rho^A_{H} )(a)$ for any $a\in A$, 
\newline
(2) $u= (w\otimes 1)(\rho^{A^{\infty}}_{H} \otimes \id)(w)(\id\otimes\Delta)(w^* )$,
\newline
(3) $u =(\rho^{\infty}\otimes\id)(w)(w\otimes 1)(\id\otimes\Delta)(w^* )$.
\end{Def}

\begin{lemma}\label{lem:exterior}For $i=1,2$, let $(\rho_i, u_i )$ be a twisted coaction of $H$ on $A$.
We suppose that $(\rho_1 , u_1 )$ is exterior equivalent to $(\rho_2 , u_2 )$. Then $(\rho_1 , u_1 )$
is approximately representable if and only if $(\rho_2 , u_2 )$ is approximately representable.
\end{lemma}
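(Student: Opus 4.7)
The plan is to prove only one direction, since exterior equivalence is symmetric. Suppose $(\rho_1, u_1)$ is approximately representable with witness $w_1 \in A^\infty \otimes H$ as in Definition \ref{Def:representable}, and let $v \in A \otimes H$ be the unitary implementing $(\rho_1, u_1) \sim (\rho_2, u_2)$ in the sense of Definition \ref{Def:equivalence}. Viewing $v$ as a constant sequence in $A^\infty \otimes H$, I would set $w_2 := v w_1 \in A^\infty \otimes H$ and claim that $w_2$ witnesses approximate representability of $(\rho_2, u_2)$.

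Condition (1) is immediate: on $A$ one has $\Ad(w_2)\circ\rho_H^A = \Ad(v)\circ\Ad(w_1)\circ\rho_H^A = \Ad(v)\circ\rho_1 = \rho_2$.

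For conditions (2) and (3), I would substitute $w_2 = v w_1$ into each right-hand side, expand using multiplicativity of the relevant $\rho\otimes\id$, and reduce to the exterior-equivalence formula
\[
u_2 = (v \otimes 1)(\rho_1 \otimes \id)(v)\, u_1\, (\id \otimes \Delta)(v^*)
\]
after a $(w_1^*\otimes 1)(w_1\otimes 1)$ cancellation and an application of the corresponding condition on $w_1$ to collapse the middle factor to $u_1$. The key input for condition (2) is that $\rho_H^{A^\infty}$ is trivial, so $\rho_H^{A^\infty}\otimes\id$ is a homomorphism that agrees with $\rho_H^A\otimes\id$ on the constant-sequence subalgebra; combined with $\rho_1 = \Ad(w_1)\circ\rho_H^A$ this yields $(\rho_1\otimes\id)(v) = (w_1\otimes 1)(\rho_H^A\otimes\id)(v)(w_1^*\otimes 1)$. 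For condition (3) the key input is that $v \in A\otimes H$ lies in the constant-sequence subalgebra, which forces $\rho_2^\infty = \Ad(v)\circ\rho_1^\infty$ on all of $A^\infty$; this in turn lets one pull a $(v\otimes 1)$ and a $(v^*\otimes 1)$ out of $(\rho_2^\infty\otimes\id)(w_2)$, after which condition (3) for $w_1$ finishes the calculation.

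The only obstacle is algebraic bookkeeping in $A^\infty \otimes H \otimes H$; nothing conceptual arises, because every step uses only the definitions of exterior equivalence and approximate representability, multiplicativity of the maps $\rho\otimes\id$, and the fact that $v$ sits inside the constant-sequence copy of $A\otimes H$ so that it interacts correctly with both $\rho_1$ and $\rho_1^\infty$.
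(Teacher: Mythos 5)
Your proposal is correct and is exactly the paper's argument: the authors also take the exterior-equivalence unitary $v$, set $w_2 = vw_1$, and verify Conditions (1)--(3) of Definition \ref{Def:representable} by routine computation, invoking symmetry for the converse. Your additional remarks on why $(\rho_1\otimes\id)(v)=(w_1\otimes 1)(\rho_H^A\otimes\id)(v)(w_1^*\otimes 1)$ and $\rho_2^{\infty}=\Ad(v)\circ\rho_1^{\infty}$ are precisely the cancellations that make the "routine computations" go through.
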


\begin{proof}Since $(\rho_1 , u_1 )$ and $(\rho_2 , u_2 )$ are exterior equivalent,
there is a unitary element $v\in A\otimes H$ satisfying Conditions (1), (2) in Definition \ref{Def:equivalence}.
We suppose that $(\rho_1 , u_1 )$ is approximately representable. Then there is a unitary element
$w_1 \in A^{\infty}\otimes H$ satisfying Conditions (1)-(3) in
Definition \ref{Def:representable} for $(\rho_1 , u_1 )$.
Let $w_2 =vw_1 $. Then by routine computations, we can see that $w_2 $ is a unitary
element in $A^{\infty}\otimes H$ satisfying Conditions (1)-(3) in Definition \ref{Def:representable}
for $(\rho_2 , u_2 )$.
Therefore, we obtain the conclusion.
\end{proof}

\begin{lemma}\label{lem:amplification}Let $(\rho, u)$ be a twisted coaction of $H$ on $A$ and let
$(\rho\otimes\id, u\otimes I_n )$ be the twisted coaction of $H$ on $A\otimes M_n(\BC)$
induced by $(\rho, u)$, where we identify $A\otimes M_n (\BC)\otimes H$ with
$A\otimes H\otimes M_n (\BC )$. Then $(\rho, u)$ is approximately representable
if and only if $(\rho\otimes\id, u\otimes I_n )$ is approximately representable.
\end{lemma}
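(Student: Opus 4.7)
The plan is to show that both implications reduce to the observation that any unitary $W$ implementing approximate representability of $(\rho\otimes\id, u\otimes I_n)$ is forced to have the form $w\otimes I_n$, with $w$ implementing approximate representability of $(\rho, u)$. The relevant identifications are $(A\otimes M_n(\BC))^{\infty} = A^{\infty}\otimes M_n(\BC)$ (since $M_n(\BC)$ is finite dimensional) and the flip $A^{\infty}\otimes M_n(\BC)\otimes H \cong A^{\infty}\otimes H\otimes M_n(\BC)$ already used in the statement.

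For the direct implication, suppose $w\in A^{\infty}\otimes H$ is a unitary satisfying (1)--(3) of Definition \ref{Def:representable} for $(\rho, u)$, and set $\widetilde w = w\otimes I_n$, regarded as a unitary in $(A\otimes M_n(\BC))^{\infty}\otimes H$ via the above identifications. Since $I_n$ is central and $\rho\otimes\id$ is trivial on the $M_n(\BC)$ factor, each of the three equations for $(\rho\otimes\id, u\otimes I_n)$ and $\widetilde w$ becomes the corresponding equation for $(\rho, u)$ and $w$, tensored with the identity on $M_n(\BC)$. This is a routine bookkeeping verification which I omit.

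For the converse, let $W\in A^{\infty}\otimes M_n(\BC)\otimes H$ be a unitary witnessing approximate representability of $(\rho\otimes\id, u\otimes I_n)$. For any $b\in M_n(\BC)$ the element $1\otimes b$ lies in the fixed-point algebra of $\rho\otimes\id$ (because $\rho(1)=1\otimes 1_H$), so condition (1) applied at $1\otimes b$ gives $W(1\otimes b\otimes 1_H)W^{*} = 1\otimes b\otimes 1_H$. Thus $W$ commutes with $1\otimes M_n(\BC)\otimes 1_H$, and by the standard tensor-product commutant identity (the relative commutant of $\BC\otimes M_n(\BC)\otimes \BC$ in $A^{\infty}\otimes M_n(\BC)\otimes H$ is $A^{\infty}\otimes\BC\otimes H$) there exists a unitary $w\in A^{\infty}\otimes H$ with $W = w\otimes I_n$ after the flip. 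Substituting this into conditions (2) and (3) for $(\rho\otimes\id, u\otimes I_n)$ and cancelling the common central factor $I_n$ from both sides (using the injectivity of $x\mapsto x\otimes I_n$) yields exactly conditions (2) and (3) for $w$ and $(\rho, u)$; condition (1) is recovered the same way. The only step that is more than bookkeeping is this commutant identification forcing $W = w\otimes I_n$; once it is in hand, the three defining equations match up term by term under the tensor-with-$M_n(\BC)$ inclusion, and no further analytic input is needed.
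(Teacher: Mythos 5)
Your proof is correct and follows essentially the same route as the paper: the forward direction is the same tensoring with $I_n$, and the converse rests on the same key observation that condition (1) applied to $1\otimes M_n(\BC)$ forces $W$ to commute with the matrix factor. The only (cosmetic) difference is that you invoke the relative-commutant identity to write $W=w\otimes I_n$ globally, whereas the paper compresses by $p_0=1_A\otimes f\otimes 1_H$ for a minimal projection $f$ and sets $w=p_0Wp_0$; both extract the same $w$.
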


\begin{proof}We suppose that $(\rho, u)$ is approximately representable.
Then there is a unitary element $w\in A^{\infty}\otimes H$ satisfying
Conditions (1)-(3) in Definition \ref{Def:representable} for $(\rho, u)$. Let $W=w\otimes I_n $.
By routine computations, we can see that $W$ satisfies Conditions (1)-(3) in Definition
\ref{Def:representable} for $(\rho\otimes\id, u\otimes I_n )$. Next, we suppose that
$(\rho\otimes\id, u\otimes I_n )$ is approximately representable. Then there is a unitary
element $W\in A\otimes M_n (\BC)\otimes H$ satisfying Conditions (1)-(3) in
Definition \ref{Def:representable} for $(\rho\otimes\id, u\otimes I_n )$.
Let $f$ be a minimal projection in $M_n (\BC)$ and let $p_0 =1_A \otimes f \otimes 1_H $.
Let $w=p_0 Wp_0 $. Since $\rho\otimes\id_{M_n (\BC)}=\Ad(W)\circ\rho_H^{A\otimes M_n (\BC)}$
on $A\otimes M_n (\BC)$, $Wp_0 =p_0 W$. By routine computations and identifying
$A\otimes M_n (\BC)\otimes H$ with $A\otimes H\otimes M_n (\BC)$, we can
see that the element $w$ satisfies Conditions (1)-(3) in Definition \ref{Def:representable} for
$(\rho, u)$. Therefore, we obtain the conclusion.
\end{proof}

\begin{prop}\label{prop:property}Let $(\rho, u)$ be a twisted coaction of $H$ on $A$. Then
$(\rho, u)$ is approximately representable if and only if so is $\widehat{\widehat{\rho}}$.
\end{prop}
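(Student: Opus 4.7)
I would deduce the proposition from the duality theorem (Theorem \ref{thm:duality}) combined with Lemmas \ref{lem:exterior} and \ref{lem:amplification}. The key observation is that Theorem \ref{thm:duality}, applied with the roles of $H$ and $H^0$ interchanged, identifies $\widehat{\widehat{\rho}}$ up to exterior equivalence with the amplification $(\rho\otimes\id_{M_N(\BC)}, u\otimes I_N)$ on $M_N(A)$. Since both exterior equivalence and amplification preserve approximate representability, the biconditional follows at once.

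Concretely, first I would invoke the dual form of Theorem \ref{thm:duality}: for the twisted coaction $(\rho, u)$ of $H$ on $A$, there exist an isomorphism $\Psi\colon M_N(A)\to A\rtimes_{\rho,u}H^0\rtimes_{\widehat{\rho}}H$ and a unitary $U\in (A\rtimes_{\rho,u}H^0\rtimes_{\widehat{\rho}}H)\otimes H$ witnessing the exterior equivalence of $\widehat{\widehat{\rho}}$ with the $\Psi$-transported amplification of $(\rho, u)$. Approximate representability is an intrinsic property preserved by $*$-isomorphism of the ambient $C^*$-algebra, so it transfers across $\Psi$. Chaining Lemma \ref{lem:amplification} (which relates $(\rho, u)$ to $(\rho\otimes\id_{M_N(\BC)}, u\otimes I_N)$) with Lemma \ref{lem:exterior} (which relates the exterior-equivalent twisted coaction to $\widehat{\widehat{\rho}}$) then produces the desired equivalence in both directions.

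The only real issue is to justify the applicability of Theorem \ref{thm:duality} after swapping $H\leftrightarrow H^0$. Since $(H^0)^0=H$ and every ingredient used in Section \ref{sec:duality}---the matrix and comatrix units, the Haar elements $e$ and $\tau$, the Szyma\'nski--Peligrad structure theorem, and the pairing between $H$ and $H^0$---has a symmetric counterpart on the dual side, this is a matter of bookkeeping rather than new mathematics; alternatively one could reprove the duality theorem verbatim in the $H$-on-$A$ setting with only cosmetic changes. Once this symmetry is granted, no further substantive argument is needed.
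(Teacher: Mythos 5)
Your proposal is correct and follows exactly the route the paper takes: its proof of Proposition \ref{prop:property} is the one-line deduction from Theorem \ref{thm:duality} together with Lemmas \ref{lem:exterior} and \ref{lem:amplification}. Your additional remark that Theorem \ref{thm:duality} must be read with $H$ and $H^0$ interchanged (harmless, since $(H^0)^0=H$ and the whole setup is self-dual) is a point the paper leaves implicit, but it is the same argument.
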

\begin{proof}This is immediate by  Theorem \ref{thm:duality} and Lemmas \ref{lem:exterior}, \ref{lem:amplification}.
\end{proof}

In the rest of this section, we shall show that the approximate representability of coactions of
finite dimensional $C^*$-Hopf algebras is
an extension of the approximate representability of actions of
finite groups in the sense of Izumi  \cite [Remark 3.7]{Izumi:group}.
\par
Let $G$ be a finite group with the order $n$ and $\alpha$ an action of $G$ on $A$.
We consider the coaction of $C(G)$ on $A$ induced by the action $\alpha$ of $G$ on $A$.
We denote it by the same symbol $\alpha$. That is,
$$
\alpha: A\longrightarrow A\otimes C(G), \quad a\longmapsto \sum_{t\in G}\alpha_t (a)\otimes\delta_t
$$
for any $a\in A$, where for any $t\in G$, $\delta_t $ is a projection in $C(G)$ defined by
$$
\delta_t (s)=\begin{cases} 0 & \text{if $s\ne t$} \\
1 & \text{if $s=t$} \end{cases}.
$$
 
 \begin{prop}\label{prop:extension1}With the above notations, the following conditions are
 equivalent:
 \newline
 $(1)$ the action $\alpha$ of $G$ on $A$ is approximately representable,
 \newline
 $(2)$ the coaction $\alpha$ of $C(G)$ on $A$ is approximately representable.
 \end{prop}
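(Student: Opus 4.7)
My plan is to unfold Definition \ref{Def:representable} along the block decomposition $C(G)=\bigoplus_{g\in G}\BC\delta_g$. Any element $w\in A^\infty\otimes C(G)$ decomposes uniquely as $w=\sum_{g\in G}w_g\otimes\delta_g$ with $w_g\in A^\infty$; since the $\delta_g$ are mutually orthogonal projections summing to $1$, $w$ is unitary if and only if each $w_g$ is unitary in $A^\infty$. This sets up a bijection between unitaries $w\in A^\infty\otimes C(G)$ and families $(w_g)_{g\in G}$ of unitaries in $A^\infty$, and the entire proof proceeds by translating each condition in Definition \ref{Def:representable} into a condition on the $(w_g)$.

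First I would expand Condition (1): the coaction reads $\alpha(a)=\sum_g\alpha_g(a)\otimes\delta_g$ and $\rho_H^A(a)=a\otimes 1$, so $\alpha=\Ad(w)\circ\rho_H^A$ on $A$ becomes $\sum_g\alpha_g(a)\otimes\delta_g=\sum_g w_g a w_g^*\otimes\delta_g$, equivalent to $\alpha_g(a)=w_g a w_g^*$ for every $a\in A$ and $g\in G$. For Conditions (2) and (3), a direct computation using $\Delta(\delta_g)=\sum_{st=g}\delta_s\otimes\delta_t$ yields
\begin{align*}
(w\otimes 1)(\rho_H^{A^\infty}\otimes\id)(w)(\id\otimes\Delta)(w^*) & =\sum_{g,h}w_g w_h w_{gh}^*\otimes\delta_g\otimes\delta_h, \\
(\alpha^\infty\otimes\id)(w)(w\otimes 1)(\id\otimes\Delta)(w^*) & =\sum_{g,h}\alpha_h^\infty(w_g)\,w_h w_{hg}^*\otimes\delta_h\otimes\delta_g.
\end{align*}
Setting each left-hand side equal to $1\otimes 1\otimes 1$ gives, respectively, $w_g w_h=w_{gh}$ (so $g\mapsto w_g$ is a representation of $G$ in the unitary group of $A^\infty$) and $\alpha_h^\infty(w_g)w_h=w_{hg}$, i.e.\ $\alpha_h^\infty(w_g)=w_{hg}w_h^*$ (the $\alpha$-equivariance of the family $(w_g)$).

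To finish, I would identify the three conditions $\alpha_g=\Ad(w_g)$ on $A$, $w_g w_h=w_{gh}$, and $\alpha_h^\infty(w_g)=w_{hg}w_h^*$ as exactly the data defining approximate representability of the $G$-action $\alpha$ in the sense of Izumi \cite[Remark 3.7]{Izumi:group}. Conversely, given any such family $(w_g)$, assembling $w:=\sum_g w_g\otimes\delta_g$ produces a unitary in $A^\infty\otimes C(G)$ satisfying Conditions (1)--(3), so the two notions of approximate representability coincide. The main obstacle is purely bookkeeping: one must unwind the convolution structure of $C(G)$ in the correct order, using $\Delta(\delta_g)=\sum_{st=g}\delta_s\otimes\delta_t$ together with the orthogonality $\delta_s\delta_t=\delta_{s,t}\delta_s$; no genuine analytic input is required.
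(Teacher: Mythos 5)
Your proposal is correct and follows essentially the same route as the paper: both arguments identify a unitary $w\in A^{\infty}\otimes C(G)$ with a family $(w_g)_{g\in G}$ of unitaries in $A^{\infty}$ via the minimal projections $\delta_g$, and translate Conditions (1)--(3) of Definition \ref{Def:representable} into $\alpha_g=\Ad(w_g)$, the representation property $w_gw_h=w_{gh}$, and the equivariance condition (your $\alpha_h^{\infty}(w_g)=w_{hg}w_h^*$ becomes Izumi's $\alpha_h^{\infty}(w_g)=w_{hgh^{-1}}$ once the representation property is in hand, since $w_{hg}w_h^*=w_hw_gw_h^*=w_{hgh^{-1}}$). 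You merely make explicit the computations the paper dismisses as "easy".
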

\begin{proof}We suppose Condition (1). Then there is a unitary representation $u$ of $G$
in $A^{\infty}$ such that
\begin{align*}
\alpha_t (a) & =u(t)au(t)^* \quad a\in A, \, t\in G, \\
\alpha_t^{\infty}(u(s)) & =u(tst^{-1}) \quad s,t\in G ,
\end{align*}
where $\alpha^{\infty}$ is the automorphism of $A^{\infty}$ induced by
$\alpha$.
Let $w$ be a unitary element in $A^{\infty}\otimes C(G)$ defined by
$w=\sum_{t\in G}u(t)\otimes\delta_t $.
Since $u$ is a unitary representation of $G$ in $A^{\infty}$, we obtain
Condition (2) in Definition \ref{Def:representable} for the coaction $\alpha$. Also, by the above two conditions,
we obtain Conditions (1) and (3) in Definition \ref{Def:representable} for the coaction $\alpha$. Next we suppose
Condition (2). Then there is a unitary element $w\in A^{\infty}\otimes C(G)$ satisfying
Condition (1)-(3) in Definition \ref{Def:representable} for the coaction $\alpha$.
We can regard $A^{\infty}\otimes C(G)$ as the $C^*$-algebra of all $A^{\infty}$-valued
functions on $G$. Hence there is the function from $G$ to $A^{\infty}$ corresponding to
$w$. We denote it by $u$. Since $w$ is a unitary element in $A^{\infty}\otimes C(G)$,
$u(t)$ is a unitary element in $A^{\infty}$ for any $t\in G$. By easy computations, Condition (2) in
Definition \ref{Def:representable} for the coaction $\alpha$ implies that $u$ is a unitary representation of $G$ in $A^{\infty}$. Also,
Conditions (1) and (3) in Definition \ref{Def:representable} for the coaction $\alpha$ imply that
\begin{align*}
\alpha_t (a) & =u(t)au(t)^* \quad a\in A, \, t\in G, \\
\alpha_t^{\infty}(u(s)) & =u(tst^{-1}) \quad s,t\in G .
\end{align*}
Therefore, we obtain the conclusion.
\end{proof}

\section{Coactions with the Rohlin property}\label{sec:Rohlin}
In this section, we shall introduce the Rohlin property for coactions of a finite dimensional
$C^*$-Hopf algebra on a unital $C^*$-algebra.

\begin{Def}\label{Def:Rohlin}Let $(\rho, u)$ be a twisted coaction of $H^0 $ on $A$.
We say that $(\rho, u)$ has the
\sl
Rohlin
\rm
property if the dual coaction $\widehat{\rho}$ of $H$ on $A\rtimes_{\rho, u} H $
is approximately representable.
\end{Def}

First, we shall begin with the following easy propositions.

\begin{prop}\label{prop:saturated}Let $\rho$ be a coaction of $H^0 $ on $A$ with
the Rohlin property. Then $\rho$ is saturated.
\end{prop}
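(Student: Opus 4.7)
The plan is to reduce the saturation of $\rho$, via (the $H\leftrightarrow H^0$-swapped form of) Corollary \ref{cor:saturated}, to establishing the Murray--von Neumann equivalence
$$
\widehat{\rho}(1\rtimes e)\;\sim\;(1\rtimes e)\otimes 1 \quad\text{in}\quad (A\rtimes_{\rho}H)\otimes H,
$$
where $e\in H$ is the Haar trace of $H^0$. This is the exact analogue of the hypothesis of Proposition \ref{prop:key} after swapping the roles of $H$ and $H^0$, and the entire preliminary section is stated symmetrically enough for that swap to be legitimate.

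To produce this equivalence, I would feed the Rohlin hypothesis directly into Definition \ref{Def:representable}. Since $\widehat{\rho}$ is approximately representable, there is a unitary $w\in (A\rtimes_{\rho}H)^{\infty}\otimes H$ satisfying Condition~(1) of that definition applied to the coaction $\widehat{\rho}$ on $A\rtimes_{\rho}H$, i.e.\
$$
\widehat{\rho}(x)\;=\;w(x\otimes 1)w^{*}\qquad(x\in A\rtimes_{\rho}H).
$$
Specialising to $x=1\rtimes e$ yields $\widehat{\rho}(1\rtimes e)=w((1\rtimes e)\otimes 1)w^{*}$, which already exhibits the desired Murray--von Neumann equivalence inside the sequence algebra $((A\rtimes_{\rho}H)\otimes H)^{\infty}$ (identified with $(A\rtimes_{\rho}H)^{\infty}\otimes H$ thanks to finite dimensionality of $H$).

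The remaining step is the routine descent from the sequence algebra back to $(A\rtimes_{\rho}H)\otimes H$: lift $w$ to a representing sequence $(w_{n})$ of approximate unitaries, set $v_{n}:=w_{n}((1\rtimes e)\otimes 1)$, and observe that $v_{n}^{*}v_{n}\to (1\rtimes e)\otimes 1$ and $v_{n}v_{n}^{*}\to \widehat{\rho}(1\rtimes e)$ in norm. For $n$ large enough, $v_{n}^{*}v_{n}$ is invertible in the corner $((1\rtimes e)\otimes 1)\bigl((A\rtimes_{\rho}H)\otimes H\bigr)((1\rtimes e)\otimes 1)$, and setting $\widetilde{v}_{n}:=v_{n}(v_{n}^{*}v_{n})^{-1/2}$ (with the inverse square root formed in that corner) gives an honest partial isometry with initial projection $(1\rtimes e)\otimes 1$ and final projection arbitrarily norm-close to $\widehat{\rho}(1\rtimes e)$; applying the standard fact that sufficiently close projections in a unital $C^{*}$-algebra are Murray--von Neumann equivalent then closes the loop.

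The main (indeed the only) obstacle is this last perturbation step, and it is entirely standard; the real content is the automatic MvN-equivalence inside the sequence algebra that comes for free from the implementing unitary $w$ of approximate representability.
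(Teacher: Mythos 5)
Your proof is correct and takes essentially the same route as the paper: the paper's own proof simply invokes Corollary \ref{cor:saturated} (in its $H\leftrightarrow H^0$-swapped form) and leaves implicit exactly the verification you supply, namely that condition (1) of approximate representability of $\widehat{\rho}$ yields $\widehat{\rho}(1\rtimes e)=w((1\rtimes e)\otimes 1)w^{*}$ in the sequence algebra, after which the standard perturbation argument for nearby projections descends this to a Murray--von Neumann equivalence in $(A\rtimes_{\rho}H)\otimes H$. There are no gaps; your write-up merely makes explicit the step the authors label ``immediate.''
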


\begin{proof} This is immediate by Corollary \ref{cor:saturated}.
\end{proof}

\begin{prop}\label{prop:property2}Let $(\rho, u)$ be a twisted coaction of $H^0 $ on $A$.
Then $(\rho, u)$ has the Rohlin property if and only if so does $\widehat{\widehat{\rho}}$.
\end{prop}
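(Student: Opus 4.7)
The plan is to unfold each instance of the Rohlin property via Definition~\ref{Def:Rohlin} into a statement about approximate representability of a dual coaction, and then close the loop with a single application of Proposition~\ref{prop:property}.

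First I would unpack both sides. By Definition~\ref{Def:Rohlin}, the Rohlin property of $(\rho, u)$ is by definition the statement that $\widehat{\rho}$, viewed as a coaction of $H$ on $A\rtimes_{\rho, u}H$, is approximately representable. On the other side, $\widehat{\widehat{\rho}}$ is itself a coaction of $H^0$ on the iterated crossed product $A\rtimes_{\rho, u}H\rtimes_{\widehat{\rho}}H^0$ (trivially twisted), so $\widehat{\widehat{\rho}}$ has the Rohlin property precisely when its dual $\widehat{\widehat{\widehat{\rho}}}$, which is a coaction of $H$ on $A\rtimes_{\rho, u}H\rtimes_{\widehat{\rho}}H^0\rtimes_{\widehat{\widehat{\rho}}}H$, is approximately representable. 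Thus the proposition reduces to the equivalence
\[
\widehat{\rho} \text{ is approximately representable} \iff \widehat{\widehat{\widehat{\rho}}} \text{ is approximately representable}.
\]

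Second, I would apply Proposition~\ref{prop:property} with $\widehat{\rho}$ in place of $(\rho, u)$. Here $\widehat{\rho}$ is a (trivially twisted) coaction of $H$ on $A\rtimes_{\rho, u}H$, so it is exactly of the type to which Proposition~\ref{prop:property} applies, and its ``double dual'' in the sense of that proposition is $\widehat{\widehat{\widehat{\rho}}}$. Proposition~\ref{prop:property} therefore gives the desired equivalence immediately, and chaining the three equivalences yields $(\rho, u)$ has the Rohlin property iff $\widehat{\widehat{\rho}}$ does.

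The only real obstacle is bookkeeping: one must verify that the hypotheses of Proposition~\ref{prop:property} (stated for twisted coactions of $H$) are met by $\widehat{\rho}$, which means checking that the duality Theorem~\ref{thm:duality} and Lemmas~\ref{lem:exterior}, \ref{lem:amplification} all apply with $H$ and $H^0$ interchanged. This is just the observation that the whole framework of Section~\ref{sec:duality} is symmetric in $H$ and $H^0$, since both are finite dimensional $C^*$-Hopf algebras dual to one another. Once this symmetry is invoked, no further computation is needed and the proof is complete.
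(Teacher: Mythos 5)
Your proof is correct and is essentially the paper's own argument: the paper's proof of this proposition is the single line ``This is immediate by Proposition \ref{prop:property}'', and your unfolding of Definition \ref{Def:Rohlin} on both sides, followed by one application of Proposition \ref{prop:property} to the (trivially twisted) coaction $\widehat{\rho}$ of $H$ on $A\rtimes_{\rho,u}H$, is exactly what makes it immediate. Your closing remark that the machinery of Section \ref{sec:duality} must be read with $H$ and $H^0$ interchanged is the right bookkeeping observation, which the paper leaves implicit.
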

\begin{proof} This is immediate by Proposition \ref{prop:property}.
\end{proof}

Let $(\rho, u)$ be a twisted coaction of $H^0$ on $A$ with the Rohlin property.
Then there is a unitary element $w\in (A^{\infty}\rtimes_{\rho^{\infty}, u}H)\otimes H$
satisfying that
\newline
(5, 1) $\widehat{\rho}(x) =(\Ad(w)\circ\rho_H^{A\rtimes_{\rho, u}H})(x)$ for any $x\in A\rtimes_{\rho, u}H$,
\newline
(5, 2) $(w\otimes 1) (\rho_H^{A^{\infty}\rtimes_{\rho^{\infty}, u}H}\otimes\id_H )(w)
=(\id_{A^{\infty}\rtimes_{\rho^{\infty}, u}H}\otimes\Delta)(w)$,
\newline
(5, 3) $(\widehat{\rho}^{\infty}\otimes\id_H )(w)(w\otimes 1)=
(\id_{A^{\infty}\rtimes_{\rho^{\infty}, u}H}\otimes\Delta)(w)$.

\vskip 0.3cm
Let $\widehat{w}$ be the element in $\Hom (H^0 , A^{\infty}\rtimes_{\rho^{\infty}, u}H)$
induced by $w$.

\begin{lemma}\label{lem:homo}With the above notations, $\widehat{w}$ is a homomorphism
of $H^0 $ to $(A^{\infty}\rtimes_{\rho^{\infty}, u}H)\cap A'$ satisfying the following conditions:
\newline
$(1)$ $\widehat{w}(1^0 )=1_{A^{\infty}}$,
\newline
$(2)$ the element $\widehat{w}(\tau)$ is a projection in $A_{\infty}$,
\newline
$(3)$ $\widehat{w}(\tau)x\widehat{w}(\tau)=E_1^{\rho}(x)\widehat{w}(\tau)$ for any $x\in A\rtimes_{\rho, u}H$.
\end{lemma}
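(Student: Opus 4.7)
The plan is to derive all three conclusions from systematic Sweedler-calculus applications of the corepresentation identities (5,2)--(5,3) together with the Haar properties of $\tau$.

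First I would show that $\widehat{w}$ is a unital $*$-homomorphism with image in $(A^{\infty}\rtimes_{\rho^{\infty},u}H)\cap A'$. Multiplicativity follows by applying $\id\otimes\phi\otimes\psi$ to (5,2): the right side yields $\widehat{w}(\phi\psi)$ via $(\phi\otimes\psi)\circ\Delta=\phi\psi$, while the left side, after observing $(\rho_H\otimes\id)(w)=w_{13}$ and using the counit in the middle slot, yields $\widehat{w}(\phi)\widehat{w}(\psi)$. Unitality $\widehat{w}(1^{0})=1$ follows from applying $\id\otimes\epsilon\otimes\id$ to (5,2) and invoking unitarity of $w$. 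For $*$-preservation, the corepresentation identity combined with the antipode axiom gives $w\cdot(\id\otimes S)(w)=1$, hence $w^{*}=(\id\otimes S)(w)$; combining this with the $C^*$-Hopf formula $\phi^{*}=S^{0}(\bar{\phi})$, where $\bar{\phi}(h)=\overline{\phi(h^{*})}$, yields $\widehat{w}(\phi)^{*}=\widehat{w}(\phi^{*})$. For $a\in A$, $\widehat{\rho}(a)=a\otimes 1$, so (5,1) gives $w(a\otimes 1)=(a\otimes 1)w$; applying $\id\otimes\phi$ places $\widehat{w}(\phi)$ in $A'$.

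Since $\tau^{2}=\tau^{*}=\tau$, claim (1) and the projection part of claim (2) are immediate from Step 1. For the $A_{\infty}$ membership, the key is that $\tau$ is central in $H^{0}$: the Haar relation $\tau\phi=\phi\tau=\epsilon^{0}(\phi)\tau$ together with multiplicativity of $\widehat{w}$ gives $\widehat{w}(\tau)\widehat{w}(\phi)=\widehat{w}(\phi)\widehat{w}(\tau)=\epsilon^{0}(\phi)\widehat{w}(\tau)$, which (via $H^{0}$-duality) amounts to the absorption identity $(\widehat{w}(\tau)\otimes 1)w=w(\widehat{w}(\tau)\otimes 1)=\widehat{w}(\tau)\otimes 1$; unitarity of $w$ upgrades this to the same identities with $w^{*}$ in place of $w$. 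Combining (5,2) and (5,3) gives $(\widehat{\rho}^{\infty}\otimes\id)(w)=(w\otimes 1)(\rho_H\otimes\id)(w)(w^{*}\otimes 1)$; applying $\id\otimes\id\otimes\tau$, the left side collapses to $\widehat{\rho}^{\infty}(\widehat{w}(\tau))$, and the right side becomes $w(\widehat{w}(\tau)\otimes 1)w^{*}=\widehat{w}(\tau)\otimes 1$ by absorption. Hence $\widehat{w}(\tau)$ is $\widehat{\rho}^{\infty}$-invariant, and the fixed-point algebra $(A^{\infty}\rtimes_{\rho^{\infty},u}H)^{\widehat{\rho}^{\infty}}$ equals $A^{\infty}$ by Lemmas \ref{lem:product} and \ref{lem:subalgebra} combined with the standard identity $(A\rtimes_{\rho,u}H)^{\widehat{\rho}}=A$.

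For (3), by linearity and commutation with $A$, it suffices to prove $\widehat{w}(\tau)(1\rtimes_{\rho,u}h)\widehat{w}(\tau)=\tau(h)\widehat{w}(\tau)$ for $h\in H$. Applying (5,1) to $x=1\rtimes h$ gives $\sum(1\rtimes h_{(1)})\otimes h_{(2)}=w((1\rtimes h)\otimes 1)w^{*}$, and sandwiching by $\widehat{w}(\tau)\otimes 1$ on both sides, the absorption identities produce
\[
Z_{h}\otimes 1=\sum Z_{h_{(1)}}\otimes h_{(2)},\qquad Z_{h}:=\widehat{w}(\tau)(1\rtimes h)\widehat{w}(\tau).
\]
Evaluating any $\phi\in H^{0}$ on the second tensor factor gives $\epsilon^{0}(\phi)Z_{h}=\sum Z_{h_{(1)}}\phi(h_{(2)})$; viewing $h\mapsto Z_{h}$ as an element $y\in(A^{\infty}\rtimes_{\rho^{\infty},u}H)\otimes H^{0}$, this reads $y(1\otimes\phi)=\epsilon^{0}(\phi)y$ for all $\phi$, and one-dimensionality of the space of right integrals in $H^{0}$ (spanned by $\tau$) forces $Z_{h}=c\tau(h)$; evaluating at $h=1_H$ gives $c=\widehat{w}(\tau)^{2}=\widehat{w}(\tau)$.

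The main obstacles will be the interplay of (5,2) and (5,3) in Step 2 needed to land $\widehat{w}(\tau)$ in $A^{\infty}$ (rather than merely in the larger crossed product), and the Haar-integral fixed-point argument in Step 3 identifying $Z_{h}$ as a scalar multiple of $\tau$.
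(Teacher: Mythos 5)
Your proposal is correct and follows essentially the same route as the paper: multiplicativity and unitality from (5,2), $*$-preservation via $\widehat{w}^{*}=\widehat{w}\circ S^{0}$, commutation with $A$ from (5,1), membership of $\widehat{w}(\tau)$ in $A^{\infty}$ from its invariance under $\widehat{\rho}^{\infty}$ (which the paper phrases as $\phi\cdot_{\widehat{\rho}^{\infty}}\widehat{w}(\tau)=\epsilon^{0}(\phi)\widehat{w}(\tau)$, citing \cite[Lemma 3.17]{KT1:inclusion}), and part (3) from the Haar relation $\tau\phi=\phi\tau=\epsilon^{0}(\phi)\tau$. The only cosmetic difference is in (3), where the paper just multiplies $\tau(h)(a\rtimes 1)=\widehat{w}(\tau_{(1)})(a\rtimes h)\widehat{w}^{*}(\tau_{(2)})$ on the right by $\widehat{w}(\tau)$ and collapses $\widehat{w}^{*}(\tau_{(2)})\widehat{w}(\tau)=\epsilon^{0}(\tau_{(2)})\widehat{w}(\tau)$, whereas you appeal to uniqueness of integrals; note that your identity $\sum Z_{h_{(1)}}\otimes h_{(2)}=Z_{h}\otimes 1$ already gives the conclusion directly upon applying $\id\otimes\tau$, so the integral argument is not needed.
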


\begin{proof} By Equation (5, 2), $\widehat{w}\in \Alg(H^0 , A^{\infty}\rtimes_{\rho^{\infty}}H)$.
Furthermore, by \cite [Lemma 1.16]{BCM:crossed}, $\widehat{w}^* =\widehat{w}\circ S^0 $.
Thus for any $\phi\in H^0 $, 
$\widehat{w}(\phi)^* =\widehat{w}^* (S^0 (\phi^* )) =\widehat{w}(\phi^* )$.
Hence $\widehat{w}$ is a homomorphism of $H^0 $ to $A^{\infty}\rtimes_{\rho^{\infty}}H$.
Next we shall show that $\widehat{w}(\phi)(a\rtimes 1)=(a\rtimes 1)\widehat{w}(\phi)$
for any $a\in A$. By Equation (5, 1), for any $a\in A$, 
$$
(a\rtimes 1)\otimes 1=w[(a\rtimes 1)\otimes 1]w^* .
$$
Thus $[(a\rtimes 1)\otimes 1]w=w[(a\rtimes 1)\otimes 1]$. Hence for any $\phi\in H^0 $
$$
(a\rtimes 1)\widehat{w}(\phi)=\widehat{w}(\phi)(a\rtimes 1) .
$$
Hence $\widehat{w}$ is a homomorphism
of $H^0 $ to $(A^{\infty}\rtimes_{\rho^{\infty}, u}H)\cap A'$. Also, by Equation (5, 2),
\begin{align*}
(\id_{A^{\infty}\rtimes_{\rho^{\infty}, u}H}\otimes\epsilon\otimes\id_H ) &
((w\otimes 1)(\rho_H^{A^{\infty}\rtimes_{\rho^{\infty}, u}H}\otimes\id_H )(w)) \\
& =(\id_{A^{\infty}\rtimes_{\rho^{\infty}, u}H}\otimes\epsilon\otimes\id_H )
((\id_{A^{\infty}\rtimes_{\rho^{\infty}, u}H}\otimes\Delta)(w)) .
\end{align*}
Thus $[(\id_{A^{\infty}\rtimes_{\rho^{\infty}, u}H}\otimes\epsilon)(w)\otimes 1]w=w$. Since $w$ is a unitary
element in $(A^{\infty}\rtimes_{\rho^{\infty}, u}H)\otimes H$,
$(\id_{A^{\infty}\rtimes_{\rho^{\infty}, u}H}\otimes\epsilon)(w)=1 $, that is, $\widehat{w}(1^0 )=1$.
Furthermore, since $\tau$ is a projection in $H^0 $ and $\widehat{w}$ is a homomorphism of
$H^0 $ to $A^{\infty}\rtimes_{\rho^{\infty}}H$, $\widehat{w}(\tau)$ is a projection. Also,
by Equation (5, 3), for any $\phi\in H^0 $
\begin{align*}
\phi\cdot_{\widehat{\rho}^{\infty}}\widehat{w}(\tau) & =\widehat{w}(\phi_{(1)}\tau)\widehat{w}^* (\phi_{(2)})
=\widehat{w}(\tau)\widehat{w}^* (\phi)=\widehat{w}(\tau)\widehat{w}(S^0(\phi)^* )^*
=\widehat{w}(\tau)\widehat{w}(S^0 (\phi)) \\
& =\epsilon^0 (\phi)\widehat{w}(\tau) .
\end{align*}
Hence by \cite [Lemma 3.17]{KT1:inclusion}, $\widehat{w}(\tau)\in A^{\infty}\cap A' =A_{\infty}$.
Finally, we shall show that $\widehat{w}(\tau)x\widehat{w}(\tau)
=E_1^{\rho}(x)\widehat{w}(\tau)$ for any $x\in A\rtimes_{\rho, u}H$.
For any $a\in A$, $h\in H$, $\widehat{\rho}(a\rtimes h)=w[(a\rtimes h)\otimes 1]w^* $.
Thus
$$
(a\rtimes h_{(1)})\tau(h_{(2)} )=\widehat{w}(\tau_{(1)})(a\rtimes h)\widehat{w}^* (\tau_{(2)}) .
$$
That is, $\tau(h)(a\rtimes 1)=\widehat{w}(\tau_{(1)})(a\rtimes h)\widehat{w}^* (\tau_{(2)})$.
Since $E_1^{\rho}(a\rtimes h)=\tau(h)(a\rtimes 1)$ and $\widehat{w^* }=\widehat{w}\circ S^0 $,
\begin{align*}
E_1^{\rho}(a\rtimes h)\widehat{w}(\tau) & =\tau(h)(a\rtimes 1)\widehat{w}(\tau)
=\widehat{w}(\tau_{(1)})(a\rtimes h)\widehat{w}^* (\tau_{(2)})\widehat{w} (\tau) \\
& =\widehat{w}(\tau_{(1)})(a\rtimes h)\widehat{w} (\tau)\epsilon^0 (\tau_{(2)})
=\widehat{w}(\tau)(a\rtimes h)\widehat{w} (\tau) .
\end{align*}
Thus we obtain the last condition.
\end{proof}

\begin{prop}\label{prop:exterior2}For $i=1,2$, let $(\rho_i , u_i )$ be a twisted coaction of
$H^0 $ on $A$ with $(\rho_1 , u_1 )\sim(\rho_2 , u_2 )$. Then $(\rho_1 , u_1 )$ has the Rohlin
property if and only if so does $(\rho_2 , u_2 )$.
\end{prop}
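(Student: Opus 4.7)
The plan is to realize the exterior equivalence of $(\rho_1,u_1)$ and $(\rho_2,u_2)$ by an isomorphism of the crossed products that intertwines the dual coactions, and then to transfer any witness of the Rohlin property across this isomorphism.

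By interchanging the roles of $H$ and $H^0$ in Lemma \ref{lem:iso} (the proof is symmetric in the two Hopf algebras), the equivalence $(\rho_1,u_1)\sim(\rho_2,u_2)$ yields an isomorphism
$$
\Phi : A\rtimes_{\rho_1,u_1}H \longrightarrow A\rtimes_{\rho_2,u_2}H
$$
with $\Phi|_A=\id$. Concretely, if $v\in A\otimes H^0$ implements the equivalence, then $\Phi$ is given on generators by $\Phi(a\rtimes_{\rho_1,u_1}h)=a\,\widehat{v^*}(h_{(1)})\rtimes_{\rho_2,u_2}h_{(2)}$. A direct computation from this formula, using the coassociativity of $\Delta$, then shows that $\Phi$ intertwines the dual coactions:
$$
(\Phi\otimes\id_H)\circ\widehat{\rho_1}=\widehat{\rho_2}\circ\Phi .
$$
Applied termwise to sequences, this identity passes to the sequence algebras and gives the analogous intertwining for $\Phi^{\infty}$, $\widehat{\rho_1}^{\infty}$ and $\widehat{\rho_2}^{\infty}$.

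Now suppose $(\rho_1,u_1)$ has the Rohlin property, witnessed by a unitary $w_1\in(A\rtimes_{\rho_1,u_1}H)^{\infty}\otimes H$ satisfying the three conditions (5,1)--(5,3). Set
$$
w_2 := (\Phi^{\infty}\otimes\id_H)(w_1) \in (A\rtimes_{\rho_2,u_2}H)^{\infty}\otimes H .
$$
Then $w_2$ is unitary, and using the intertwining identity, the equality $\Phi|_A=\id$, and the naturality of the trivial coaction $\rho_H^{(\cdot)}$, each of the conditions (5,1)--(5,3) for $(w_2,\widehat{\rho_2})$ reduces to its counterpart for $(w_1,\widehat{\rho_1})$ after applying $\Phi^{\infty}\otimes\id_H$ (or $\Phi^{\infty}\otimes\id_H\otimes\id_H$) to both sides of the respective relation. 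The reverse implication follows by symmetry, using the inverse isomorphism $\Phi^{-1}$ coming from the symmetric exterior equivalence.

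The only delicate point is establishing the $H^0$-flavored analogue of Lemma \ref{lem:iso} with the correct Sweedler indexing and verifying the intertwining identity $(\Phi\otimes\id_H)\circ\widehat{\rho_1}=\widehat{\rho_2}\circ\Phi$; once these are in hand, the verification of the three Rohlin conditions for $w_2$ is routine bookkeeping with no new ingredient beyond the exterior equivalence formulas of Definition \ref{Def:equivalence} and the definitions of Section \ref{sec:Rohlin}.
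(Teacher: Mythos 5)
Your proposal is correct and follows essentially the same route as the paper: the authors likewise take the unitary $v$ implementing the exterior equivalence, form the isomorphism $\Phi(a\rtimes_{\rho_1,u_1}h)=a\widehat{v}^*(h_{(1)})\rtimes_{\rho_2,u_2}h_{(2)}$, record the intertwining of $\widehat{\rho_1}$ with $\widehat{\rho_2}$ together with the compatibility with the trivial coaction and with $\id\otimes\Delta$, and then transport the Rohlin unitary via $w_2=(\Phi^{\infty}\otimes\id_H)(w_1)$. No substantive difference.
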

\begin{proof}Since $(\rho_1 , u_1 )\sim (\rho_2 , u_2 )$, there is a unitary element
$v\in A\otimes H^0 $ satisfying that
$$
\rho_2 =\Ad (v)\circ\rho_1 \quad u_2 =(v\otimes 1^0 )(\rho_1 \otimes\id)(v)u_1 (\id\otimes \Delta^0 )(v^* ) .
$$
Then there is an isomorphism $\Phi$ of $A\rtimes_{\rho_1 , u_1}H$ onto $A\rtimes_{\rho_2 , u_2 }H$
defined by
$$
\Phi(a\rtimes_{\rho_1 , u_1 }h )=a\widehat{v}^* (h_{(1)})\rtimes_{\rho_2 , u_2 }h_{(2)} .
$$
By easy computations, we can see that the following conditions hold:
\newline
(1) $\widehat{\rho_2 }\circ\Phi=(\Phi\otimes\id_H )\circ\widehat{\rho_1 } $,
\newline
(2) $\rho_H^{A\rtimes_{\rho_2 , u_2 }H}\circ\Phi=(\Phi\otimes\id_H )\circ\rho_H^{A\rtimes_{\rho_1 , u_1 }H}$,
\newline
(3) $(\id_{A\rtimes_{\rho_2 ,u_2}H}\otimes\Delta)\circ(\Phi\otimes\id_H )
=(\Phi\otimes\id_H  \otimes\id_H )\circ(\id_{A\rtimes_{\rho_1 , u_1 }H}\otimes\Delta)$
\vskip 0.3cm
Let $\Phi^{\infty}$ be the isomorphism of $A^{\infty}\rtimes_{\rho_1 , u_1 }H$ onto
$A^{\infty}\rtimes_{\rho_2 , u_2 }H$ induced by $\Phi$. We suppose that $(\rho_1 ,u_1 )$ has
the Rohlin property and let $w_1 $ be a unitary element in $(A\rtimes_{\rho_1 ,u_1 }H)\otimes H$ satisfying
Equations (5, 1)-(5, 3) for the coaction $\widehat{\rho_1}$.
Let $w_2 =(\Phi^{\infty}\otimes\id_H )(w_1 )$. By Conditions (1)-(3),
we can see that $w_2 $ satisfies Equations (5, 1)-(5, 3) for the coaction $\widehat{\rho_2 }$.
Therefore we obtain the conclusion.
\end{proof}

\begin{lemma}\label{lem:projection2}For $i=1,2$, let $(\rho_i , u_i )$ be a twisted coaction of
$H^0 $ on $A$ with $(\rho_1 , u_1 )\sim(\rho_2 , u_2 )$. We suppose that $(\rho_i , u_i )$ has
the Rohlin property for $i=1,2$. Let $w_i $ be as in the above proof for $i=1,2$.
Then $\widehat{w_1 }(\tau)=\widehat{w_2}(\tau)$.
\end{lemma}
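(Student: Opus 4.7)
My plan is to exploit the explicit formula $w_2 = (\Phi^\infty\otimes\id_H)(w_1)$ from the proof of Proposition \ref{prop:exterior2}, and to show that the isomorphism $\Phi^\infty$ restricts to the identity on $A^\infty$, so that applying it to $\widehat{w_1}(\tau)\in A^\infty$ does nothing.

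The first step is the bookkeeping observation that, because the hat transform $\imath$ from Section \ref{sec:pre} is linear in the first tensor factor, applying $\Phi^\infty$ to the $A^\infty$-coefficients of $w_1$ corresponds to composing $\Phi^\infty$ with $\widehat{w_1}$. Concretely, for every $\phi\in H^0$,
$$
\widehat{w_2}(\phi) = \Phi^\infty(\widehat{w_1}(\phi)),
$$
and in particular $\widehat{w_2}(\tau) = \Phi^\infty(\widehat{w_1}(\tau))$.

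The second step is to show $\Phi|_A = \id_A$. Writing out the defining formula
$$
\Phi(a\rtimes_{\rho_1 , u_1} h) = a\,\widehat{v}^* (h_{(1)}) \rtimes_{\rho_2 , u_2} h_{(2)}
$$
at $h = 1_H$ and using $\Delta(1_H) = 1_H \otimes 1_H$ gives $\Phi(a\rtimes 1) = a\,\widehat{v}^*(1_H) \rtimes 1$. Since $v\in A\otimes H^0$ is the exterior-equivalence cocycle, the routine fact noted after Definition \ref{Def:equivalence} (applied in the $H^0$-valued setting) yields $(\id\otimes\epsilon^0)(v) = 1$. Under the identification of $1_H \in H = (H^0)^0$ with the functional $\epsilon^0$, this reads $\widehat{v}(1_H) = 1$; since $\imath$ is a $*$-algebra isomorphism between $A\otimes H^0$ and $\Hom(H, A)$, passing to adjoints gives $\widehat{v}^*(1_H) = 1$. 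Hence $\Phi(a\rtimes 1) = a\rtimes 1$, and consequently $\Phi^\infty|_{A^\infty} = \id_{A^\infty}$.

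Finally, Lemma \ref{lem:homo}(2) applied to $(\rho_1 , u_1 )$ places $\widehat{w_1}(\tau)$ in $A_\infty \subset A^\infty$, so $\Phi^\infty(\widehat{w_1}(\tau)) = \widehat{w_1}(\tau)$, which combined with the first step yields the desired equality $\widehat{w_2}(\tau) = \widehat{w_1}(\tau)$. I do not anticipate any substantial obstacle: the whole argument is essentially bookkeeping, with the only minor subtlety being a consistent treatment of $\widehat{v}^*$ (adjoint in $\Hom(H, A)$) versus $\widehat{v^*}$ (hat of the adjoint in $A\otimes H^0$), which coincide by virtue of $\imath$ being a $*$-isomorphism.
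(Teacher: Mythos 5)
Your proof is correct and follows essentially the same route as the paper's: both establish $\widehat{w_2}(\tau)=\Phi^{\infty}(\widehat{w_1}(\tau))$ from $w_2=(\Phi^{\infty}\otimes\id_H)(w_1)$ and then use Lemma \ref{lem:homo} to place $\widehat{w_1}(\tau)$ in $A_{\infty}\subset A^{\infty}$, where $\Phi^{\infty}$ acts as the identity. Your explicit verification that $\Phi(a\rtimes 1)=a\rtimes 1$ via $(\id\otimes\epsilon^0)(v)=1$ is a detail the paper leaves implicit.
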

\begin{proof}Let $w_1 =\sum_{i, j}(a_{ij}\rtimes_{\rho_1, u_1 }h_i )\otimes l_j$,
where $a_{ij}\in A^{\infty}$.
Then
$$
w_2 =\sum_{i,j}(a_{ij}\widehat{v}^* (h_{i(1)})\rtimes_{\rho_2 , u_2 }h_{i(2)})\otimes l_j ,
$$
where $v$ is a unitary element in $A\otimes H^0 $ defined in the above proof.
Thus
\begin{align*}
\widehat{w_2 }(\tau) & =\sum_{i,j}(a_{ij}\widehat{v}^* (h_{i(1)})\rtimes_{\rho_2 , u_2 }h_{i(2)}) \tau(l_j ) \\
& =\Phi(\sum_{i, j}(a_{ij}\rtimes_{\rho_1, u_1 }h_i )\tau(l_j ))=\Phi(\widehat{w_1 }(\tau)) ,
\end{align*}
where $\Phi$ is the isomorphism of $A\rtimes_{\rho_1 , u_1 }H$ onto $A\rtimes_{\rho_2 , u_2 }H$
defined in the above proof. On the other hand, since $\widehat{w_1 }(\tau) \in A_{\infty}\subset A^{\infty}$
by Lemma \ref{lem:homo}, $\widehat{w_2 }(\tau)=\Phi(\widehat{w_1 }(\tau))=\widehat{w_1 }(\tau)$.
\end{proof}

Let $(\rho, u)$ be a twisted coaction of $H$ on $A$ with
the Rohlin property. Let $w$ be a unitary
element in $(A^{\infty}\rtimes_{\rho^{\infty}, u}H)\otimes H$ satisfying Equations (5, 1)-(5, 3) for $\widehat{\rho}$.

\begin{lemma}\label{lem:projection}With the above notations,
$e\cdot\widehat{w}(\tau)=\frac{1}{N}$.
\end{lemma}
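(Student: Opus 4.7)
The quantity $e\cdot\widehat{w}(\tau)$ equals $(\id\otimes e)(\rho^\infty(\widehat{w}(\tau)))$ since $\widehat{w}(\tau)\in A^\infty$. The plan is to translate this into a computation inside $A^\infty\rtimes_{\rho^\infty,u}H$ using the implementing unitary $V$ of Section \ref{sec:duality} together with the Rohlin witness $w$, and then collapse everything via Haar-integral identities.

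First, I invoke the formula $\rho^\infty(a)=V(a\otimes 1^0)V^*$ from \cite[Lemma 3.12]{KT1:inclusion}, where $V\in (A\rtimes_{\rho,u}H)\otimes H^0$ satisfies $\widehat{V}(h)=1\rtimes h$ and its convolution inverse $\widehat{V^*}(h)=1\rtimes S(h)$. Unfolding $(\id\otimes e)$ of the triple product $V(\widehat{w}(\tau)\otimes 1^0)V^*$ via the convolution rule, and using $\sum\epsilon(e_{(2)})\,e_{(1)}\otimes e_{(3)}=\Delta(e)$ to absorb the middle $1^0$-factor, I obtain
$$
e\cdot\widehat{w}(\tau)=\sum_{(e)}(1\rtimes e_{(1)})\,\widehat{w}(\tau)\,(1\rtimes S(e_{(2)})).
$$

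Next, I convert condition (5,1) applied to $x=1\rtimes h$, namely $w((1\rtimes h)\otimes 1)=((1\rtimes h_{(1)})\otimes h_{(2)})w$, into a commutation rule by applying $(\id\otimes\tau)$:
$$
\widehat{w}(\tau)(1\rtimes h)=\sum_{(h)}(1\rtimes h_{(1)})\,\widehat{w}(\tau\leftharpoonup h_{(2)}),
$$
where $(\tau\leftharpoonup h)(x):=\tau(hx)$. Substituting $h=S(e_{(2)})$, using the antipode formula $\Delta\circ S=(S\otimes S)\circ\Delta^{\mathrm{op}}$ together with coassociativity of $\Delta^{(2)}(e)$, the resulting double Sweedler sum combines with the outer $(1\rtimes e_{(1)})$ to rewrite
$$
e\cdot\widehat{w}(\tau)=\sum_{(e)}\bigl(1\rtimes e_{(1)}S(e_{(3)})\bigr)\,\widehat{w}\bigl(\tau\leftharpoonup S(e_{(2)})\bigr).
$$

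The heart of the argument is the Haar identity $\sum_{(e)}e_{(1)}S(e_{(3)})\otimes e_{(2)}=1\otimes e$ in $H\otimes H$. I will verify it by pairing with an arbitrary $\phi\otimes\psi\in H^0\otimes H^0$: since every finite-dimensional $C^*$-Hopf algebra is Kac ($S^2=\id$), one has $\sum x_{(2)}S(x_{(1)})=\epsilon(x)\cdot 1$, whence $(\phi_{(2)}\circ S)\cdot\phi_{(1)}=\epsilon^0(\phi)\cdot 1^0$; traciality of $e$ on $H^0$ then matches $\epsilon^0(\phi)\psi(e)=(\phi\otimes\psi)(1\otimes e)$. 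Applying this identity bilinearly collapses my expression to $\widehat{w}(\tau\leftharpoonup S(e))$. Since $S(e)=e$ (Kac Haar is antipode-invariant) and $ex=\epsilon(x)e$ gives $(\tau\leftharpoonup e)(x)=\epsilon(x)\tau(e)=\epsilon(x)/N$, we have $\tau\leftharpoonup e=\tfrac{1}{N}\,1^0$ in $H^0$, and Lemma \ref{lem:homo}(1) finally produces
$$
e\cdot\widehat{w}(\tau)=\widehat{w}\bigl(\tfrac{1}{N}\,1^0\bigr)=\tfrac{1}{N}.
$$
The main obstacle I anticipate is the Sweedler-index bookkeeping in verifying the Haar identity, and in the twisted setting tracking how the cocycle $u$ perturbs the step $(1\rtimes e_{(1)})(1\rtimes S(e_{(3)}))=1\rtimes e_{(1)}S(e_{(3)})$, which is literal only for untwisted crossed products and must otherwise be corrected by the cocycle terms that, by a separate check, are killed by the counit inside the final $\widehat{w}$.
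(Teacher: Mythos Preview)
Your argument is clean and essentially complete in the \emph{untwisted} case: the commutation rule derived from (5,1), the Haar identity $\sum e_{(1)}S(e_{(3)})\otimes e_{(2)}=1\otimes e$ (which you verify correctly using traciality of evaluation at $e$ together with $S^2=\id$), and the collapse $\tau\leftharpoonup e=\frac{1}{N}1^0$ combine to give a more conceptual proof than the paper's explicit comatrix-unit computation.

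However, the lemma is stated for a \emph{twisted} coaction $(\rho,u)$, and here your proof has a real gap. Two of your steps fail literally: first, $\widehat{V^*}(h)\ne 1\rtimes S(h)$ in the twisted crossed product---rather $\widehat{V^*}(h)=\widehat{V}(S(h)^*)^*$ and $(1\rtimes l)^*$ already involves $\widehat u$ (cf.\ \cite[Lemma~3.3]{KT1:inclusion}); second, $(1\rtimes e_{(1)})(1\rtimes S(e_{(3)}))=\widehat u(e_{(1)},S(e_{(4)}))\rtimes e_{(2)}S(e_{(3)})$ rather than $1\rtimes e_{(1)}S(e_{(3)})$. You flag both issues in your final paragraph but defer them to ``a separate check'' claiming the cocycle terms are ``killed by the counit inside the final $\widehat w$''. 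This is not at all obvious: the cocycle contributions sit \emph{outside} $\widehat w$, in the $A^\infty$-slot of the crossed product, and there is no counit being applied to them. Making this precise requires exactly the kind of bookkeeping (using the cocycle identities for $\widehat u$ and the conditional expectation $E_1^{\rho^\infty}$) that the paper's proof carries out explicitly in the comatrix basis. So as written, your argument does not cover the twisted case, and the deferred check is the substance of the proof, not a detail.
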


\begin{proof}We note that $\widehat{\rho}(1\rtimes h)=w[(1\rtimes h)\otimes 1]w^* $
for any $h\in H$. Since $\widehat{w^* } =\widehat{w}\circ S^0 $, we see that for any $h\in H$,
$(1\rtimes h)\widehat{w}(S^0 (\tau))=\widehat{w}(S^0 (\tau_{(1)}))(1\rtimes h_{(1)})\tau_{(2)}(h_{(2)})$.
Hence for any $h\in H$,
$\widehat{V}(h)\widehat{w}(\tau)=\widehat{w}(S^0 (\tau_{(1)}))\widehat{V}(h_{(1)})\tau_{(2)}(h_{(2)})$.
Then
\begin{align*}
& e\cdot\widehat{w}(\tau) =\sum_{i,k}\frac{d_k}{N}[w_{ii}^k \cdot\widehat{w}(\tau)]
=\sum_{i,j,k}\frac{d_k }{N}\widehat{V}(w_{ij}^k )(\widehat{w}(\tau)\rtimes 1)\widehat{V^* }(w_{ji}^k ) \\
& =\sum_{i,j,k,j_1 }\frac{d_k }{N}\widehat{w}(S^0 (\tau_{(1)}))\widehat{V}(w_{ij_1 }^k )
\tau_{(2)}(w_{j_1 j}^k )\widehat{V^* }(w_{ji}^k ) \\
& =\sum_{i,j,k,j_1 }\frac{d_k }{N}\widehat{w}(S^0 (\tau_{(1)}))(1\rtimes w_{ij_1 }^k )(1\rtimes w_{ij}^k )^* 
\tau_{(2)}(w_{j_1 j}^k ) \\
& =\sum_{i,j,k,j_1 , j_2, j_3 }\frac{d_k }{N}\widehat{w}(S^0 (\tau_{(1)}))
\tau_{(2)}(w_{j_1 j}^k )(1\rtimes w_{ij_1 }^k )
(\widehat{u}(S(w_{j_2 j_3}^k ), w_{ij_2}^k )^* \rtimes w_{j_3 j}^{k*}) \\
& =\sum_{i,j,k,j_1 , j_2, j_3 , j_4 , j_5 , s}\frac{d_k }{N}\widehat{w}(S^0 (\tau_{(1)}))\tau_{(2)}(w_{j_1 j}^k )
[w_{ij_4 }^k \cdot\widehat{u}(S(w_{j_2 j_3}^k ), w_{ij_2}^k )^* ]\widehat{u}(w_{j_4 j_5}^k , w_{j_3 s}^{k*}) \\
& \rtimes w_{j_5 j_1}^k w_{sj}^{k*}
\end{align*}
since $w_{ij}^{k*} =S(w_{ji}^k )$ for any $i,j,k$ by \cite [Theorem 2.2 2]{SP:saturated}.
Since $e\cdot \widehat{w}(\tau)\in A^{\infty}$,
$E_1^{\rho^{\infty}}(e\cdot \widehat{w}(\tau))=e\cdot \widehat{w}(\tau)$.
Thus since $\tau(w_{ij}^k w_{st}^{r*})=\frac{1}{d_k }\delta_{kr}\delta_{is}\delta_{jt}$ 
by  \cite [Theorem 2.2, 2]{SP:saturated},
\begin{align*}
& e\cdot \widehat{w}(\tau) =\sum_{i,j,k, j_2, j_3 , j_4 , s}\frac{1 }{N}
\widehat{w}(S^0 (\tau_{(1)}))\tau_{(2)}(w_{jj}^k )
[w_{ij_4 }\cdot\widehat{u}(S(w_{j_2 j_3}^k ), w_{ij_2}^k )^* ] \\
& \times\widehat{u}(w_{j_4 s}^k , w_{j_3 s}^{k*}) \\
& =\sum_{i,j,k, j_2, j_3 , j_4 , s}\frac{1 }{N}\widehat{w}(S^0 (\tau_{(1)}))\tau_{(2)}(w_{jj}^k )
[w_{j_4 i}\cdot\widehat{u}(S(w_{j_2 j_3}^k ), w_{ij_2}^k )]^* \widehat{u}(w_{j_4 s}^k , w_{j_3 s}^{k*}) .
\end{align*}
Furthermore by \cite [Lemma 3.3(1)]{KT1:inclusion},
\begin{align*}
& e\cdot \widehat{w}(\tau)
=\sum_{i,j,k, j_2, j_3 , j_4 , s,t,r}\frac{1}{N}\widehat{w}(S^0 (\tau_{(1)}))\tau_{(2)}(w_{jj}^k )
(\widehat{u}(w_{j_4 t}^k , S(w_{rj_3 }^k )) \\
& \times\widehat{u}(w_{ti}^kS(w_{j_2 r}^k ), w_{ij_2 }^k ))^* \widehat{u}(w_{j_4 s}^k , w_{j_3 s}^{k*}) \\
& =\sum_{i,j,k, j_2, j_3 , j_4 , s, t, r}\frac{1}{N}\widehat{w}(S^0 (\tau_{(1)}))\tau_{(2)}(w_{jj}^k )
\widehat{u^* }(S(w_{ti}^{k*})w_{j_2 r}^{k*}, S(w_{ij_2 }^{k*})) \\
& \times\widehat{u^* }(S(w_{j_4 t}^{k *}), w_{rj_3 }^{k*} ) \widehat{u}(w_{j_4 s}^k , w_{j_3 s}^{k*}) \\
& =\sum_{i,j,k, j_2, j_3 , j_4 , s, t, r}\frac{1}{N}\widehat{w}(S^0 (\tau_{(1)}))\tau_{(2)}(w_{jj}^k )
\widehat{u^* }(w_{it}^kS(w_{r j_2 }^k ), w_{j_2 i}^k ) \\
& \times\widehat{u^* }(w_{tj_4 }^k , w_{rj_3 }^{k*} ) \widehat{u}(w_{j_4 s}^k , w_{j_3 s}^{k*}) \\
& =\sum_{i,j,k, j_2, s}\frac{1}{N}\widehat{w}(S^0 (\tau_{(1)}))\tau_{(2)}(w_{jj}^k )
\widehat{u^* }(w_{is}^kS(w_{s j_2 }^k ), w_{j_2 i}^k ) \\
& =\sum_{i,j,k}\frac{1}{N}\widehat{w}(S^0 (\tau_{(1)}))\tau_{(2)}(w_{jj}^k )\epsilon(w_{ii}^k ) .
\end{align*}
Since $e=\sum_{ik}\frac{d_k }{N}w_{ii}^k $ and $\epsilon(w_{ij}^k )=\delta_{ij}$
for any $k$ by \cite [Theorem 2.2, 2]{SP:saturated},
$$
e\cdot \widehat{w}(\tau) = \sum_{j,k}\frac{d_k }{N}\widehat{w}(S^0 (\tau_{(1)}))\tau_{(2)}(w_{jj}^k )
=\widehat{w}(S^0 (\tau_{(1)}))\tau_{(2)}(e)=\frac{1}{N}
$$
by Lemma \ref{lem:homo}(1). Therefore, we obtain the conclusion.
\end{proof}

By Lemmas \ref{lem:homo}(2) and  \ref{lem:projection}, we can can see that if $\rho$ is
a coaction of $H^0 $ on $A$ with the Rohlin property, then there is a projection $p\in A_{\infty}$
such that $e\cdot p=\frac{1}{N}$. We shall show the inverse direction with the assumption that
$\rho$ is saturated. Let $\rho$ be
a saturated coaction of $H^0$ on $A$.
We suppose that there is a projection $p\in A_{\infty}$ such that $e\cdot p=\frac{1}{N}$.

\begin{lemma}\label{lem:covariance}With the above notations and assumptions,
for any $x\in A\rtimes H$, $(p\rtimes 1)x(p\rtimes 1)=E_1^{\rho}(x)(p\rtimes 1)$.
\end{lemma}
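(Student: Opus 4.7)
The plan is to reduce the stated identity to a scalar equation in $A^\infty$ and then establish it using a positivity argument in $A^\infty \otimes H^0$. By linearity and the fact that $p \in A_\infty$ commutes with $A$, a direct crossed-product computation for $x = a \rtimes h$ gives
\[
(p \rtimes 1)(a \rtimes h)(p \rtimes 1) = ap(h_{(1)} \cdot p) \rtimes h_{(2)}, \qquad E_1^\rho(a \rtimes h)(p \rtimes 1) = \tau(h)\,ap \rtimes 1,
\]
so it suffices to show $p(h_{(1)} \cdot p) \rtimes h_{(2)} = \tau(h)(p \rtimes 1)$ in $A^\infty \rtimes H$ for every $h \in H$. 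Using the Haar invariance $\tau(h_{(1)}) h_{(2)} = \tau(h) \cdot 1$, this further reduces to the pointwise identity $p(h \cdot p) = \tau(h) p$ in $A^\infty$ for all $h \in H$, which by nondegeneracy of the $H$--$H^0$ pairing is equivalent to the tensor identity $(p \otimes 1)\rho^\infty(p) = p \otimes \tau$ in $A^\infty \otimes H^0$.

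The core of the argument is proving this tensor identity. I would set $Q = \rho^\infty(p)$, $T = 1 \otimes \tau$, and $P = p \otimes 1$. The two-sided integral property $\phi\tau = \tau\phi = \epsilon^0(\phi)\tau$ for $\phi \in H^0$, combined with the counit axiom $(\id \otimes \epsilon^0)\rho^\infty = \id$, yields $QT = TQ = p \otimes \tau$. Since $\tau$ is a central projection in $H^0$, $T$ commutes with $Q$, so the product $QT = p \otimes \tau$ is a subprojection of the projection $Q$; hence $Q \geq p \otimes \tau$. Conjugating by $P$ then gives $PQP \geq P(p \otimes \tau)P = p \otimes \tau$. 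Applying the faithful positive map $\id \otimes e$ and using the hypothesis $e \cdot p = 1/N$ along with the pairing identity $e(\tau) = \tau(e) = 1/N$, both $PQP$ and $p \otimes \tau$ have image $p/N$ in $A^\infty$; by faithfulness of $\id \otimes e$ on the positive cone of $A^\infty \otimes H^0$, we deduce $PQP = p \otimes \tau$.

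To upgrade this to $PQ = p \otimes \tau$, I would set $Z = PQ - p \otimes \tau$ and compute $ZZ^*$. Using self-adjointness of $P$, $Q$, and $p \otimes \tau$, the projection relation $Q^2 = Q$, and the three identities $QT = p \otimes \tau$, $PT = p \otimes \tau$, and $PQP = p \otimes \tau$ established above, each of the four cross terms $PQ^2P$, $PQ(p \otimes \tau)$, $(p \otimes \tau)QP$, and $(p \otimes \tau)^2$ reduces to $p \otimes \tau$, so $ZZ^* = 0$ and hence $Z = 0$. This is the desired tensor identity, and the lemma follows. The main obstacle is recognizing the correct reformulation $(p \otimes 1)\rho^\infty(p) = p \otimes \tau$; once this is isolated, the positivity/faithfulness machinery is clean and, incidentally, does not use the saturation hypothesis on $\rho$.
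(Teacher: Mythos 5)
Your proof is correct, but it takes a genuinely different route from the paper's. The paper only establishes the single identity $N(p\rtimes 1)(1\rtimes e)(p\rtimes 1)=p\rtimes 1$: it sets $q=N(p\rtimes 1)(1\rtimes e)(p\rtimes 1)$, checks that $q$ is a projection dominated by $p\rtimes 1$ using $e\cdot p=\frac{1}{N}$, and concludes $q=p\rtimes 1$ from $E_1^{\rho^{\infty}}(q)=E_1^{\rho^{\infty}}(p\rtimes 1)=p$ and the faithfulness of $E_1^{\rho^{\infty}}$; this verifies the lemma only on elements of the form $a(1\rtimes e)b$, and the saturation hypothesis is then invoked to get density of $A(1\rtimes e)A$ in $A\rtimes_{\rho}H$. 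You instead prove the stronger pointwise identity $(p\otimes 1)\rho^{\infty}(p)=p\otimes\tau$, equivalently $(p\rtimes 1)(1\rtimes h)(p\rtimes 1)=\tau(h)(p\rtimes 1)$ for every $h\in H$ (the paper's identity is the special case $h=e$, since $\tau(e)=\frac{1}{N}$), which yields the lemma directly on all of $A\rtimes_{\rho}H$ with no density argument. Both arguments ultimately rest on a faithfulness-plus-positivity step combined with the numerical hypothesis $e\cdot p=\frac{1}{N}$ — the paper uses faithfulness of the conditional expectation $E_1^{\rho^{\infty}}$, while you use faithfulness of the Haar state $e$ on $H^0$ via the slice map $\id\otimes e$, together with the commuting-projections observation $QT=TQ=p\otimes\tau$ and the $ZZ^{*}=0$ upgrade from $PQP$ to $PQ$. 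Your observation that saturation is not needed for this particular lemma is accurate and is a genuine (if modest) strengthening; note, however, that saturation is still used in the surrounding argument (the construction of $\pi$ and $w$ after the lemma), so it cannot be dropped from Theorem \ref{thm:equivalent} on the strength of this alone.
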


\begin{proof}Let $q=N(p\rtimes 1)(1\rtimes e)(p\rtimes 1)$. Then $q$ is a projection
in $A^{\infty}\rtimes_{\rho^{\infty}}H$. Indeed, $q^* =q$. Also,
$q^2 = N^2 (p\rtimes 1)([e\cdot p]\rtimes e)(p\rtimes 1)=q$
by the assumption. Furthermore, $E_1^{\rho^{\infty}}(q)=p=E_1^{\rho^{\infty}}(p\rtimes 1)$.
Since $q\le p$ and $E_1^{\rho^{\infty}}$ is faithful, we obtain that $p=q$. That is,
$p=N(p\rtimes 1)(1\rtimes e)(p\rtimes 1)$. For any $a, b\in A$,
\begin{align*}
(p\rtimes 1)(a\rtimes 1)(1\rtimes e)(b\rtimes 1)(p\rtimes 1)
& =(a\rtimes 1)(p\rtimes 1)(1\rtimes e)(p\rtimes 1)(b\rtimes 1) \\
& =\frac{1}{N}(ab\rtimes 1)(p\rtimes 1) .
\end{align*}
Since $\rho$ is saturated, $A(1\rtimes e)A=A\rtimes_{\rho}H$.
Hence we obtain the conclusion.
\end{proof}

By Watatani \cite [Proposition 2.2.7 and Lemma 2.2.9]{Watatani:index}
and Lemma \ref{lem:covariance}, we can see that
there is a homomorphism $\pi$ of $A\rtimes_{\rho}H\rtimes_{\widehat{\rho}}H^0 $
to $A^{\infty}\rtimes_{\rho^{\infty}}H$ such that
$$
\pi((x\rtimes 1^0 )(1_A \rtimes 1_H \rtimes\tau)(y\rtimes 1^0 ))=x(p\rtimes 1)y
$$
for any $x,y\in A\rtimes_{\rho}H$. The restriction of $\pi$ to $1_{A\rtimes_{\rho}H}\rtimes H^0 $ is a
homomorphism of $H^0 $ to $A^{\infty}\rtimes_{\rho^{\infty}}H$. Thus
there is an element $w\in (A^{\infty}\rtimes_{\rho^{\infty}}H)\otimes H$ such that
$\widehat{w}$ is the above restriction of $\pi$ to $H^0 $.
Let $\{(u_i , u_i^* )\}$ be a quasi-basis of $E_1^{\rho}$.

\begin{lemma}\label{lem:unitary0}With the above notations and assumptions, for any $\phi\in H^0 $,
$\widehat{w}(\phi)=\sum_j [\phi\cdot_{\widehat{\rho}} u_j ](p\rtimes 1)u_j^* $.
\end{lemma}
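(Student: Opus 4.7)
The plan is to compute $\widehat{w}(\phi)=\pi(1\rtimes 1\rtimes\phi)$ directly, by rewriting $1\rtimes 1\rtimes\phi$ as a sum of elementary pieces of the form $(x\rtimes 1^0)(1\rtimes 1\rtimes\tau)(y\rtimes 1^0)$, on which the defining formula of $\pi$ applies tautologically.

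First I would verify that $1\rtimes 1\rtimes\tau$ serves as a Jones projection for the inclusion $A\rtimes_{\rho}H\subset A\rtimes_{\rho}H\rtimes_{\widehat{\rho}}H^0$ relative to $E_1^{\rho}$. Using the crossed-product relation $(1\rtimes\phi)(z\rtimes 1^0)=(\phi_{(1)}\cdot_{\widehat{\rho}}z)\rtimes\phi_{(2)}$ for $z\in A\rtimes_{\rho}H$, combined with $\phi\tau=\epsilon^0(\phi)\tau$ in $H^0$ and the right-invariance $h_{(1)}\tau(h_{(2)})=\tau(h)1_H$ of the Haar trace, one obtains
$$
(1\rtimes 1\rtimes\tau)z(1\rtimes 1\rtimes\tau)=(\tau\cdot_{\widehat{\rho}}z)(1\rtimes 1\rtimes\tau)=E_1^{\rho}(z)(1\rtimes 1\rtimes\tau).
$$
Since $\{(u_j,u_j^*)\}$ is a quasi-basis for $E_1^{\rho}$, Watatani's theory of the basic construction then yields the resolution of unity
$$
\sum_j u_j(1\rtimes 1\rtimes\tau)u_j^*=1
$$
in $A\rtimes_{\rho}H\rtimes_{\widehat{\rho}}H^0$.

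Next, the same crossed-product relation together with $\phi\tau=\epsilon^0(\phi)\tau$ collapses the Sweedler sum $\phi_{(1)}\epsilon^0(\phi_{(2)})=\phi$ to give the key identity
$$
(1\rtimes 1\rtimes\phi)(z\rtimes 1^0)(1\rtimes 1\rtimes\tau)=((\phi\cdot_{\widehat{\rho}}z)\rtimes 1^0)(1\rtimes 1\rtimes\tau)
$$
for any $z\in A\rtimes_{\rho}H$. Multiplying $1\rtimes 1\rtimes\phi$ on the right by the decomposition of unity above and applying this identity termwise with $z=u_j$ produces
$$
1\rtimes 1\rtimes\phi=\sum_j((\phi\cdot_{\widehat{\rho}}u_j)\rtimes 1^0)(1\rtimes 1\rtimes\tau)(u_j^*\rtimes 1^0).
$$

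Applying $\pi$ and invoking its defining formula $\pi((x\rtimes 1^0)(1\rtimes 1\rtimes\tau)(y\rtimes 1^0))=x(p\rtimes 1)y$ then gives
$$
\widehat{w}(\phi)=\pi(1\rtimes 1\rtimes\phi)=\sum_j(\phi\cdot_{\widehat{\rho}}u_j)(p\rtimes 1)u_j^*,
$$
which is the claim. The main delicate point is the identity $\sum_j u_j(1\rtimes 1\rtimes\tau)u_j^*=1$, encoding that $A\rtimes_{\rho}H\rtimes_{\widehat{\rho}}H^0$ realizes the Jones basic construction of $A\subset A\rtimes_{\rho}H$ for $E_1^{\rho}$; this identification is already built into the paper's construction of $\pi$ via Watatani's framework, so the remainder of the argument is routine Sweedler-notation bookkeeping.
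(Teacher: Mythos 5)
Your proposal is correct and follows essentially the same route as the paper: both start from the resolution of unity $\sum_j (u_j\rtimes 1^0)(1\rtimes 1\rtimes\tau)(u_j^*\rtimes 1^0)=1$ coming from the quasi-basis in Watatani's basic construction, absorb $\phi$ into the first factor via the crossed-product relation and $\phi_{(2)}\tau=\epsilon^0(\phi_{(2)})\tau$, and then apply the defining formula of $\pi$. The only difference is that you spell out the Jones-projection property of $1\rtimes 1\rtimes\tau$, which the paper takes as given from its earlier setup.
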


\begin{proof}We note that $\tau\cdot x=E_1^{\rho}(x)$ for any $x\in A\rtimes_{\rho}H$.
Since $\sum_i (u_i \rtimes 1^0 )(1\rtimes \tau)(u_i^* \rtimes 1^0 )=1$, 
\begin{align*}
1\rtimes\phi & =\sum_i (1\rtimes\phi)(u_i \rtimes 1^0 )(1\rtimes\tau)(u_i^* \rtimes 1^0 )
=\sum_i ([\phi_{(1)}\cdot u_i ]\rtimes\phi_{(2)}\tau)(u_i^* \rtimes1^0 ) \\
& =\sum_i ([\phi\cdot u_i ]\rtimes 1^0 )(1\rtimes\tau)(u_i^* \rtimes1^0 ).
\end{align*}
Hence we obtain the conclusion by the definition of $\widehat{w}$.
\end{proof}

\begin{lemma}\label{lem:unit}With the above notations, $\widehat{w}(1^0 )=1_A$.
\end{lemma}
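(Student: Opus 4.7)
The plan is to reduce everything to the unital property of the Watatani homomorphism $\pi$ constructed just before Lemma \ref{lem:unitary0}.

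First, I apply Lemma \ref{lem:unitary0} with $\phi = 1^0$ to obtain
$$
\widehat{w}(1^0) = \sum_j [1^0 \cdot_{\widehat{\rho}^{\infty}} u_j](p \rtimes 1) u_j^* .
$$
Since $1^0 \in H^0$ is, via the canonical duality, the counit $\epsilon$ of $H$, the counit axiom of the coaction $\widehat{\rho}$, $(\id \otimes \epsilon) \circ \widehat{\rho} = \id$, forces $1^0 \cdot_{\widehat{\rho}^{\infty}} u_j = u_j$ for every $j$. Thus
$$
\widehat{w}(1^0) = \sum_j u_j (p \rtimes 1) u_j^* .
$$

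Second, I would reuse the computation carried out in the proof of Lemma \ref{lem:unitary0}. Specialising the identity
$$
1 \rtimes \phi = \sum_j ([\phi \cdot u_j] \rtimes 1^0)(1 \rtimes \tau)(u_j^* \rtimes 1^0)
$$
to $\phi = 1^0$ gives the quasi-basis expansion of the unit of the basic construction:
$$
1_{A \rtimes_{\rho} H \rtimes_{\widehat{\rho}} H^0} = \sum_j (u_j \rtimes 1^0)(1 \rtimes \tau)(u_j^* \rtimes 1^0) .
$$
Applying $\pi$ and using its defining formula $\pi((x\rtimes 1^0)(1 \rtimes \tau)(y \rtimes 1^0)) = x(p \rtimes 1)y$, we get
$$
\pi(1) = \sum_j u_j (p \rtimes 1) u_j^* = \widehat{w}(1^0) .
$$

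Third, since $\pi$ is the extension produced by Watatani's Proposition 2.2.7 and Lemma 2.2.9, and since $p \rtimes 1$ satisfies the abstract Jones-projection relation $(p \rtimes 1) x (p \rtimes 1) = E_1^{\rho}(x)(p \rtimes 1)$ for $x \in A \rtimes_{\rho} H$ (this is exactly Lemma \ref{lem:covariance}), $\pi$ restricts to the natural unital inclusion $A \rtimes_{\rho} H \hookrightarrow A^{\infty} \rtimes_{\rho^{\infty}} H$. In particular $\pi(1) = 1$, and therefore $\widehat{w}(1^0) = 1_A$.

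The only delicate point is the unital character of $\pi$. If one prefers not to appeal to this as a black box from Watatani, the alternative is to verify directly that $X := \sum_j u_j(p \rtimes 1) u_j^*$ is a projection commuting with $A \rtimes_{\rho} H$: the fact that $X^2 = X$ follows from Lemma \ref{lem:covariance} applied to $(p \rtimes 1) u_j^* u_k (p \rtimes 1) = E_1^{\rho}(u_j^* u_k)(p \rtimes 1)$ together with the quasi-basis identity $\sum_j u_j E_1^{\rho}(u_j^* u_k) = u_k$; commutation with $A \rtimes_{\rho} H$ follows from the same ingredients applied on both sides. One then concludes $X = 1$ by computing $E_1^{\rho^{\infty}}(X) = 1$ (again via the quasi-basis relation) and invoking the faithfulness of $E_1^{\rho^{\infty}}$ on the projection $1 - X$.
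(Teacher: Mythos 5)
Your reduction of $\widehat{w}(1^0)$ to $\sum_j u_j(p\rtimes 1)u_j^*=\pi(1)$ is fine, but the step ``in particular $\pi(1)=1$'' is a genuine gap: it is not a consequence of Watatani's Proposition 2.2.7 / Lemma 2.2.9, it \emph{is} the statement of the lemma. Watatani's construction only produces a homomorphism of the basic construction onto the hereditary subalgebra $\overline{(A\rtimes_{\rho}H)(p\rtimes 1)(A\rtimes_{\rho}H)}$, whose unit is precisely $\sum_j u_j(p\rtimes 1)u_j^*$; nothing in that general machinery forces this projection to be $1_{A^{\infty}\rtimes_{\rho^{\infty}}H}$ (the projection $p=0$ satisfies the covariance relation of Lemma \ref{lem:covariance} and gives $\pi(1)=0$). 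The telltale sign is that your argument never uses the standing hypothesis $e\cdot_{\rho^{\infty}}p=\frac{1}{N}$, which is exactly what distinguishes the present situation from the degenerate one and is what the lemma must exploit.

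Your fallback sketch has the right shape ($X=\sum_j u_j(p\rtimes 1)u_j^*$ is a self-adjoint idempotent by Lemma \ref{lem:covariance} and the quasi-basis identity, and $X=1$ would follow from $E_1^{\rho^{\infty}}(X)=1$ plus faithfulness), but the one step you wave at --- $E_1^{\rho^{\infty}}(X)=1$ ``again via the quasi-basis relation'' --- is where all the content sits, and the quasi-basis relation alone does not give it. The paper closes this gap by choosing the concrete quasi-basis $\{((\sqrt{d_k}\rtimes w_{ij}^k)^*,\sqrt{d_k}\rtimes w_{ij}^k)\}$ from \cite[Proposition 3.18]{KT1:inclusion} and computing directly, using $w_{ij}^{k*}=S(w_{ji}^k)$ and $\epsilon(w_{it}^k)=\delta_{it}$, that
$$
\sum_{i,j,k}d_k(1\rtimes w_{ij}^{k*})(p\rtimes w_{ij}^k)=\sum_{i,k}[S(w_{ii}^k)\cdot p]\rtimes 1=N[e\cdot p]=1 ,
$$
which is where $e\cdot p=\frac{1}{N}$ finally enters. (Since $\pi$ is well defined, $X$ is independent of the quasi-basis, so this computation also settles your general-quasi-basis version; but some such computation must be carried out.)
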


\begin{proof}By \cite [Proposition 3.18]{KT1:inclusion},
$\{((\sqrt{d_k }\rtimes w_{ij}^k)^* , \, \sqrt{d_k }\rtimes w_{ij}^k ) \}$ is a quasi-basis
of $E_1^{\rho}$. Hence by Lemma \ref{lem:unitary0},
\begin{align*}
\widehat{w}(1^0 ) & =\sum_{i,j,k}d_k (1\rtimes w_{ij}^{k *}) (p\rtimes w_{ij}^k )
=\sum_{i,j,k,t}d_k [w_{it}^{k *}\cdot p]\rtimes w_{tj}^{k *}w_{ij}^k \\
&=\sum_{i,j,k,t}[w_{it}^{k *}\cdot p]\rtimes S(w_{jt}^k )w_{ij}^k
=\sum_{i,k,t}[S(w_{ti}^k )\cdot p]\rtimes\epsilon(w_{it}^k ) \\
& =\sum_{i,k}[S(w_{ii}^k )\cdot p]\rtimes 1
=N[e\cdot p]=1.
\end{align*}
\end{proof}

\begin{lemma}\label{lem:inverse}With the above notations, the element $w$ is a unitary element
in $(A^{\infty}\rtimes_{\rho^{\infty}}H)\otimes H$ satisfying Equations $(5, 1)-(5, 3)$.
\end{lemma}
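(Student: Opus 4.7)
The plan is to verify in turn that $w$ is unitary and that it satisfies $(5, 1)$, $(5, 2)$, and $(5, 3)$, throughout using the identification $\widehat{w}(\phi)=\pi(1\rtimes 1\rtimes\phi)$. Here $\pi\colon A\rtimes_\rho H\rtimes_{\widehat{\rho}}H^0\to A^\infty\rtimes_{\rho^\infty}H$ is the homomorphism coming from Watatani's universal property of the basic construction, which extends the inclusion of $A\rtimes_\rho H$ and maps the Jones projection $1\rtimes 1\rtimes\tau$ to $p\rtimes 1$.

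Since $\widehat{\rho}(1)=1\otimes 1$, the embedding $\phi\mapsto 1\rtimes 1\rtimes\phi$ is a $*$-homomorphism from $H^0$ into $A\rtimes H\rtimes H^0$, so $\widehat{w}$ is a $*$-homomorphism from $H^0$ into $A^\infty\rtimes_{\rho^\infty} H$; it is unital by Lemma \ref{lem:unit}. By \cite[Lemma 1.16]{BCM:crossed} this gives $\widehat{w^*}=\widehat{w}\circ S^0$, and convolving with the antipode identity $\phi_{(1)}S^0(\phi_{(2)})=\epsilon^0(\phi)1^0$ yields $ww^*=w^*w=1\otimes 1$, so $w$ is unitary. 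Applying $(\id\otimes\phi\otimes\psi)$ to both sides of $(5, 2)$ reduces it to the multiplicativity of $\widehat{w}$, which we already have.

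For $(5, 1)$ I would treat separately the two spanning cases $x=a\in A$ and $x=1\rtimes h$. In the first, $a\rtimes 1\rtimes 1^0$ commutes with $1\rtimes 1\rtimes\phi$ in $A\rtimes H\rtimes H^0$ (a direct crossed product computation using $\widehat{\rho}(a)=a\otimes 1$), so $a$ commutes with $\widehat{w}(\phi)$ in $A^\infty\rtimes H$ and therefore $w(a\otimes 1)w^*=a\otimes 1=\widehat{\rho}(a)$. For the second, the antipode identity in $A\rtimes H\rtimes H^0$ gives $\sum(1\rtimes\phi_{(1)})(1\rtimes h\rtimes 1^0)(1\rtimes S^0(\phi_{(2)}))=\sum(1\rtimes h_{(1)})\phi(h_{(2)})\rtimes 1^0$. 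Applying $\pi$ and using $\widehat{w^*}=\widehat{w}\circ S^0$ converts the left-hand side into $(\id\otimes\phi)(w((1\rtimes h)\otimes 1)w^*)$ and the right-hand side into $(\id\otimes\phi)(\widehat{\rho}(1\rtimes h))$; since $\phi$ is arbitrary, $(5, 1)$ holds for $x=1\rtimes h$, and hence for all $x\in A\rtimes_\rho H$ by multiplicativity of both sides.

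Equation $(5, 3)$ is the most delicate. Unwinding it via $(\id\otimes\phi\otimes\psi)$ and using $\widehat{w^*}=\widehat{w}\circ S^0$ together with the multiplicativity of $\widehat{w}$ reduces $(5, 3)$ to the covariance relation $\phi\cdot_{\widehat{\rho}^\infty}\widehat{w}(\psi)=\widehat{w}(\mathrm{ad}(\phi)(\psi))$, where $\mathrm{ad}(\phi)(\psi)=\sum\phi_{(1)}\psi S^0(\phi_{(2)})$. The right-hand side equals $\sum\widehat{w}(\phi_{(1)})\widehat{w}(\psi)\widehat{w}(S^0(\phi_{(2)}))$, which is $\pi$ applied to the inner conjugation $\sum(1\rtimes\phi_{(1)})(1\rtimes\psi)(1\rtimes S^0(\phi_{(2)}))=1\rtimes\mathrm{ad}(\phi)(\psi)$ inside $A\rtimes H\rtimes H^0$. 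To compute the left-hand side I would expand $\widehat{w}(\psi)=\sum_j(\psi\cdot u_j)(p\rtimes 1)u_j^*$ from Lemma \ref{lem:unitary0}, use that $\widehat{\rho}^\infty(p\rtimes 1)=(p\rtimes 1)\otimes 1$ since $p\in A_\infty$, and match the two sides via an equivariance identity for the quasi-basis $\{(u_j^*,u_j)\}$. The main obstacle here is a quasi-basis calculation in the spirit of Lemma \ref{lem:projection}, invoking the explicit quasi-basis $\{(\sqrt{d_k}\rtimes w_{ij}^k)^*,\sqrt{d_k}\rtimes w_{ij}^k\}$ of \cite[Proposition 3.18]{KT1:inclusion} together with the matrix and comatrix unit relations of \cite[Theorem 2.2]{SP:saturated}.
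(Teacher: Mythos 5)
Your proposal is correct and rests on the same pillars as the paper's proof (the homomorphism $\pi$, the covariance relation of Lemma \ref{lem:covariance}, the quasi-basis formula of Lemma \ref{lem:unitary0}, Lemma \ref{lem:unit}, and $\widehat{w^*}=\widehat{w}\circ S^0$), but it routes two of the three equations differently. For unitarity and $(5,2)$ you argue exactly as the paper does: multiplicativity of $\widehat{w}$ gives $(5,2)$, and convolving with the antipode identity plus $\widehat{w}(1^0)=1$ gives unitarity. For $(5,1)$ your argument is genuinely different and cleaner: the paper expands $\widehat{w}(\phi_{(1)})(a\rtimes h)\widehat{w}^*(\phi_{(2)})$ via the quasi-basis and grinds through the covariance relation, whereas you prove the identity $(1\rtimes 1\rtimes\phi_{(1)})(a\rtimes h\rtimes 1^0)(1\rtimes 1\rtimes S^0(\phi_{(2)}))=(a\rtimes h_{(1)})\phi(h_{(2)})\rtimes 1^0$ inside $A\rtimes_{\rho}H\rtimes_{\widehat{\rho}}H^0$ and push it through $\pi$; this works uniformly for $x=a\rtimes h$ (your case split is not even needed) and trades a quasi-basis computation for an elementary crossed-product identity. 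For $(5,3)$ your reduction to $\phi\cdot_{\widehat{\rho}^{\infty}}\widehat{w}(\psi)=\widehat{w}(\phi_{(1)}\psi S^0(\phi_{(2)}))$ is a correct and reversible reformulation (multiply the paper's form $[\phi_{(1)}\cdot\widehat{w}(\psi)]\widehat{w}(\phi_{(2)})=\widehat{w}(\phi\psi)$ on the right by $\widehat{w}(S^0(\phi_{(3)}))$), but the remaining computation is only sketched; it does go through — expand $\widehat{w}(\psi)$ by Lemma \ref{lem:unitary0}, insert $1=\sum_i u_i(p\rtimes 1)u_i^*$, and collapse with $(p\rtimes 1)x(p\rtimes 1)=E_1^{\rho}(x)(p\rtimes 1)$ and $\sum_j u_jE_1^{\rho}(u_j^*u_i)=u_i$ — and this is essentially the telescoping argument the paper carries out in its (slightly more convenient) untransformed form, so no appeal to the explicit comatrix-unit quasi-basis or to the computations of Lemma \ref{lem:projection} is actually required there.
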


\begin{proof}Since $\widehat{w}$ is a homomorphism of $H^0$ to $A^{\infty}\rtimes_{\rho^{\infty}}H$,
the element $w$ satisfies Equation (5, 2). Also, for any $\phi\in H^0 $
$$
(\widehat{w}\widehat{w}^* )(\phi) =\widehat{w}(\phi_{(1)})\widehat{w}(S^0 (\phi_{(2)})^* )^*
=\widehat{w}(\phi_{(1)}S^0 (\phi_{(2)}))
=\epsilon^0 (\phi)\widehat{w}(1^0 )=\epsilon^0 (\phi)
$$
by Lemma \ref{lem:unit}. Similarly $(\widehat{w}^* \widehat{w})(\phi) =\epsilon^0 (\phi)$.
Hence $w$ is a unitary element in $(A^{\infty}\rtimes_{\rho^{\infty}}H)\otimes H$. 
Let $\{(u_i , u_i^* )\}$ be a quasi-basis of $E_1^{\rho}$. By Lemmas \ref{lem:covariance} and \ref{lem:unitary0} for any $\phi, \psi\in H^0 $,
\begin{align*}
[\phi_{(1)}\cdot_{\widehat{\rho}}\widehat{w}(\psi)]\widehat{w}(\phi_{(2)}) & =\sum_{i,j}[\phi_{(1)}\cdot([\psi\cdot u_j ]
(p\rtimes 1)u_j^* )][\phi_{(2)}\cdot u_i ](p\rtimes 1)u_i^*  \\
& =\sum_{i,j}[\phi\cdot([\psi\cdot u_j ](p\rtimes 1)u_j^* u_i )](p\rtimes 1)u_i^* \\
& =\sum_{i,j}[\phi\cdot([\psi\cdot u_j ](p\rtimes 1)u_j^* u_i (p\rtimes 1))]u_i^*  \\
& =\sum_{i,j}[\phi\cdot([\psi\cdot u_j ]E_1^{\rho}(u_j^* u_i )(p\rtimes 1))]u_i^* \\
& =\sum_{i,j}[\phi\cdot([\psi\cdot (u_j E_1^{\rho}(u_j^* u_i ))](p\rtimes 1))]u_i^* \\
& =\sum_i [\phi\cdot([\psi\cdot u_i ](p\rtimes 1))]u_i^* 
=\sum_i [(\phi\psi)\cdot u_i ](p\rtimes 1)u_i^* =\widehat{w}(\phi\psi) .
\end{align*}
Thus the element $w$ satisfies Equation (5, 3). Finally, for any $a\in A$, $h\in H$ and $\phi\in H^0 $,
\begin{align*}
\widehat{w}(\phi_{(1)})(a\rtimes h)\widehat{w}^* (\phi_{(2)}) & =\sum_{i,j}
[\phi_{(1)}\cdot u_j ](p\rtimes 1)u_j^* (a\rtimes h)[S^0 (\phi_{(2)})\cdot u_i ](p\rtimes 1)u_i^* \\
& =\sum_{i,j}[\phi_{(1)}\cdot u_j ]E_1^{\rho}(u_j^* (a\rtimes h)[S^0 (\phi_{(2)})\cdot u_i ])(p\rtimes 1)u_i^* \\
& =\sum_{i,j}[\phi_{(1)}\cdot (u_j E_1^{\rho}(u_j^* (a\rtimes h)[S^0 (\phi_{(2)})\cdot u_i ])](p\rtimes 1)u_i^*  \\
& =\sum_i [\phi_{(1)}\cdot ((a\rtimes h)[S^0 (\phi_{(2)})\cdot u_i ])](p\rtimes 1)u_i^* \\
& =\sum_i (a\rtimes h_{(1)})\phi_{(1)}(h_{(2)})[\phi_{(2)}\cdot [S^0 (\phi_{(3)})\cdot u_i ]](p\rtimes 1)u_i^* \\
& =\sum_i (a\rtimes h_{(1)})\phi(h_{(2)})u_i(p\rtimes 1)u_i^* \\
& =(a\rtimes h_{(1)})\phi(h_{(2)})\widehat{w}(1^0 )=(a\rtimes h_{(1)})\phi(h_{(2)})
\end{align*}
by Lemmas \ref{lem:unitary0} and \ref{lem:unit}. Hence $w$ satisfies Equation (5, 1).
Therefore we obtain the conclusion.
\end{proof}

\begin{thm}\label{thm:equivalent}Let $\rho$ be a coaction of a finite dimensional
 $C^*$-Hopf algebra $H$ on a unital $C^*$-algebra $A$. If $\rho$ is saturated, then the following conditions
 are equivalent:
 \newline
 $(1)$ the coaction $\rho$ has the Rohlin property,
 \newline
 $(2)$ there is a projection $p$ in $A_{\infty}$ such that $e\cdot_{\rho^{\infty}} p =\frac{1}{N}$,
 where $N=\dim H$.
 \end{thm}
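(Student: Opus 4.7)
The plan is to assemble the preceding lemmas into a short two-direction proof; essentially all the technical work has been done, and the theorem is the clean summary.

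For the direction $(1)\Rightarrow(2)$, I would take a unitary $w\in(A^{\infty}\rtimes_{\rho^{\infty}}H)\otimes H$ witnessing the approximate representability of $\widehat{\rho}$, and simply set $p:=\widehat{w}(\tau)$. Lemma \ref{lem:homo}(2) already establishes that $p$ is a projection in $A_{\infty}$, and Lemma \ref{lem:projection} gives $e\cdot_{\rho^{\infty}}p=\frac{1}{N}$. So this direction is immediate.

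For the substantive direction $(2)\Rightarrow(1)$, I would follow the chain of lemmas that have been set up: starting from a projection $p\in A_{\infty}$ with $e\cdot p=\tfrac{1}{N}$, Lemma \ref{lem:covariance} gives the Jones-type relation $(p\rtimes 1)x(p\rtimes 1)=E_1^{\rho}(x)(p\rtimes 1)$ for $x\in A\rtimes_{\rho}H$. Combined with Watatani's characterization of the basic construction in terms of a projection implementing the conditional expectation (\cite{Watatani:index}, Proposition 2.2.7 and Lemma 2.2.9), this produces a homomorphism
$$
\pi: A\rtimes_{\rho}H\rtimes_{\widehat{\rho}}H^{0}\longrightarrow A^{\infty}\rtimes_{\rho^{\infty}}H
$$
sending $(x\rtimes 1^{0})(1_{A}\rtimes 1_{H}\rtimes\tau)(y\rtimes 1^{0})$ to $x(p\rtimes 1)y$. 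Restricting $\pi$ to $1_{A\rtimes H}\rtimes H^{0}$ and using the isomorphism $\imath$ from the preliminaries, one obtains an element $w\in(A^{\infty}\rtimes_{\rho^{\infty}}H)\otimes H$ whose induced map $\widehat{w}$ is exactly this restriction. Then Lemma \ref{lem:unitary0} supplies the explicit formula $\widehat{w}(\phi)=\sum_{j}[\phi\cdot u_{j}](p\rtimes 1)u_{j}^{*}$ in terms of a quasi-basis, Lemma \ref{lem:unit} computes $\widehat{w}(1^{0})=1$, and Lemma \ref{lem:inverse} verifies that $w$ is unitary and satisfies equations (5,1)--(5,3). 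This precisely says that $\widehat{\rho}$ is approximately representable, which by Definition \ref{Def:Rohlin} is the Rohlin property for $\rho$.

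There is no genuine obstacle beyond what has already been overcome in the preparatory lemmas; the theorem is essentially bookkeeping. The only conceptual remark I would make in the writeup is that saturation of $\rho$ enters in two essential places in the reverse direction: it is used in Lemma \ref{lem:covariance} (through the equality $A(1\rtimes e)A=A\rtimes_{\rho}H$) to upgrade the compression identity from elements of the form $(a\rtimes 1)(1\rtimes e)(b\rtimes 1)$ to all of $A\rtimes_{\rho}H$, and it underlies the existence of a quasi-basis $\{(u_{i},u_{i}^{*})\}$ for $E_{1}^{\rho}$ that drives Lemma \ref{lem:unitary0} and onward. With those two inputs in hand, the construction of the Rohlin unitary from a single Rohlin projection is the natural dual to the extraction $p=\widehat{w}(\tau)$ performed in the forward direction.
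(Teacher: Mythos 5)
Your proposal is correct and follows exactly the paper's route: the forward direction is Lemma \ref{lem:homo}(2) plus Lemma \ref{lem:projection} applied to $p=\widehat{w}(\tau)$, and the reverse direction is precisely the chain Lemma \ref{lem:covariance} $\to$ Watatani's basic-construction characterization $\to$ Lemmas \ref{lem:unitary0}, \ref{lem:unit}, \ref{lem:inverse}, which the paper summarizes by citing Lemmas \ref{lem:projection} and \ref{lem:inverse}. Your remarks on where saturation enters are accurate and consistent with the paper's setup.
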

 
 \begin{proof}This is immediate by Lemmas \ref{lem:projection} and
 \ref{lem:inverse}.
 \end{proof}

In the rest of this section, we shall show that the Rohlin property of coactions of a finite
dimensional $C^*$-Hopf algebra is an extension of the Rohlin property of a finite group in the
sense of \cite [Remark 3.7]{Izumi:group}.
Let $G$ and $\alpha$ be as in the end of Section \ref{sec:approximate}.

\begin{prop}\label{prop:extension2}With the above notations, the following conditions are equivalent:
\newline
$(1)$ the action $\alpha$ of $G$ on $A$ has the Rohlin property,
\newline
$(2)$ the coaction $\alpha$ of $C(G)$ on $A$ has the Rohlin property.
\end{prop}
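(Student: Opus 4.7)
The plan is to leverage the characterization of the Rohlin property for a saturated coaction provided by Theorem~\ref{thm:equivalent}. In our setting the coaction $\alpha$ is of $H^0 = C(G)$ on $A$, so $H = \BC[G]$, $N = |G|$, the distinguished projection $e \in H$ (the Haar trace on $H^0$) is $e = \tfrac{1}{|G|}\sum_{g \in G} g$, and the induced $H$-action on $A$ is $g \mapsto \alpha_g$. Thus the condition $e \cdot_{\alpha^\infty} p = \tfrac{1}{|G|}$ of Theorem~\ref{thm:equivalent}(2) unpacks to $\sum_{g \in G} \alpha_g^\infty(p) = 1$. The work is to match this with the group-theoretic Rohlin family $\{e_g\}_{g \in G}$ of mutually orthogonal projections in $A_\infty$ summing to $1$ with $\alpha_g^\infty(e_h) = e_{gh}$.

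For (2) $\Rightarrow$ (1), I would first invoke Proposition~\ref{prop:saturated} to get that $\alpha$ is saturated, then apply Theorem~\ref{thm:equivalent} to obtain a projection $p \in A_\infty$ with $\sum_g \alpha_g^\infty(p) = 1$. The next step is to apply Lemma~\ref{lem:covariance} with $x = 1 \rtimes g$: since $E_1^{\alpha}(1 \rtimes g) = \tau(g) = \delta_{g, e_G}$, this yields $p\,\alpha_g^\infty(p) = 0$ for every $g \neq e_G$. Finally, one sets $e_g := \alpha_g^\infty(p)$; pairwise orthogonality for distinct $g, h$ follows by applying $\alpha_g^\infty$ to $p\,\alpha_{g^{-1}h}^\infty(p) = 0$, covariance is immediate from the automorphism property, and the total sum is exactly $1$.

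For (1) $\Rightarrow$ (2), given the Rohlin family $\{e_g\}_{g \in G}$, my plan is to bypass Theorem~\ref{thm:equivalent} (which would otherwise require first establishing saturation from scratch) and directly exhibit the witnessing unitary for the approximate representability of $\widehat{\alpha}$. Set
$$
w := \sum_{g \in G} e_g \otimes g \; \in \; (A^\infty \rtimes_{\alpha^\infty} \BC[G]) \otimes \BC[G].
$$
Then I would check in turn that $w$ is unitary (collapsing $ww^*$ via orthogonality of the $e_g$), that condition (5,\,2) of Definition~\ref{Def:Rohlin}-style approximate representability holds (using orthogonality together with $\Delta(g) = g \otimes g$), that (5,\,3) holds (using that each $e_g$ lies in $A^\infty$, the fixed-point algebra of $\widehat{\alpha}^\infty$, so $\widehat{\alpha}^\infty(e_g) = e_g \otimes 1$), and that (5,\,1) holds on the two families of generators of $A \rtimes_\alpha \BC[G]$: for $x = a \in A$ the verification reduces to $e_g \in A'$, and for $x = 1 \rtimes g$ the double sum telescopes using the covariance identity $\alpha_g(e_r) = e_{gr}$ together with orthogonality.

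The main technical point is in the (2) $\Rightarrow$ (1) direction: passing from the weighted summation condition $\sum_g \alpha_g^\infty(p) = 1$ to genuine pairwise orthogonality of the translates $\alpha_g^\infty(p)$. Because $A_\infty$ is not commutative in general, a naive expansion of $(\sum_g \alpha_g^\infty(p))^2 = 1$ does not immediately yield orthogonality; this is precisely where Lemma~\ref{lem:covariance}, which encodes the crossed-product/inclusion-index structure at the level of $(p \rtimes 1)$, is indispensable. The (1) $\Rightarrow$ (2) direction, by contrast, is a routine verification once one writes down the natural candidate $w = \sum_g e_g \otimes g$.
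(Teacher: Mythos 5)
Your proposal is correct, but it departs from the paper's proof in both directions, in instructive ways. For $(2)\Rightarrow(1)$ the paper likewise extracts a projection $p$ with $\sum_{t\in G}\alpha_t^{\infty}(p)=1$ from Theorem~\ref{thm:equivalent} and sets $e_t=\alpha_t^{\infty}(p)$, but it dismisses orthogonality as clear --- and rightly so: finitely many projections summing to the identity in \emph{any} $C^*$-algebra are automatically mutually orthogonal (from $p_i(1-p_i)p_i=0$ and $1-p_i=\sum_{j\ne i}p_j\ge 0$ one gets $p_ip_jp_i=0$, hence $p_jp_i=0$). So your detour through Lemma~\ref{lem:covariance} is valid (saturation is available via Proposition~\ref{prop:saturated}), but your assertion that it is ``indispensable'' because $A_{\infty}$ is noncommutative is mistaken; no commutativity is needed. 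For $(1)\Rightarrow(2)$ the paper stays inside the same framework: it observes that $e_{e_G}$ is a projection in $A_{\infty}$ with $\tau\cdot e_{e_G}=\frac{1}{n}$ and invokes Theorem~\ref{thm:equivalent} again, together with the flat (and unjustified there) assertion that the coaction $\alpha$ of $C(G)$ is saturated. Your direct construction of the witnessing unitary $w=\sum_{g\in G}e_g\otimes g$ and verification of Equations (5, 1)--(5, 3), using that the $g\in G$ are group-like and that $e_g\in A_{\infty}$ lies in the fixed-point algebra of $\widehat{\alpha}^{\infty}$, is a genuinely different and arguably cleaner route: it sidesteps the saturation hypothesis of Theorem~\ref{thm:equivalent} entirely (alternatively one could cite Corollary~\ref{cor:more2}, which drops that hypothesis), at the cost of a short computation. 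Both arguments are sound.
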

\begin{proof}We suppose Condition (1). Then there is a partition of unity
$\{e_t \}_{t\in G}$ consisting of projections in $A_{\infty}$ satisfying that
$\alpha_t^{\infty}(e_s )=e_{ts}$ for any $t,s\in G$. By easy computations,
$e_e $ is a projection in $A_{\infty}$ such that $\tau\cdot e_e=\frac{1}{n}$,
where $\tau$ is the Haar trace on $C(G)$. Since the coaction $\alpha$ of $C(G)$
on $A$ is saturated,  by Theorem \ref{thm:equivalent}
the coaction $\alpha$ has the Rohlin property. Next, we suppose
Condition (2). Then there is a projection $p\in A_{\infty}$ such that
$\tau\cdot p=\frac{1}{n}$ by Theorem \ref{thm:equivalent}. Hence
$$
(\id\otimes\tau)(\sum_{t\in G}\alpha_t^{\infty}(p)\otimes\delta_t )=\frac{1}{n} .
$$
Thus, we see that $\sum_{t\in G}\alpha_t^{\infty}(p)=1$ by the definition of $\tau$.
Let $e_t =\alpha_t^{\infty}(p)$ for any $t\in G$. Then clearly,
$\{e_t \}_{t\in G}$ is a partition of unity consisting of projections in $A_{\infty}$ satisfying
that $\alpha_t^{\infty}(e_s)=e_{ts}$.
\end{proof}

\section{Another equivalent condition}\label{sec:another}
In this section, we shall give another condition which is equivalent to the Rohlin property.
\par
Let $(\rho, u)$ be a twisted coaction of $H^0$ on $A$. We suppose that $(\rho, u)$ has the
Rohlin property. Then there is a unitary element $w\in (A^{\infty}\rtimes_{\rho^{\infty}, u}H)\otimes H$
satisfying Equations (5, 1)-(5, 3) for $(\rho, u)$. Let $\widehat{w}$ be the
unitary element in $\Hom (H^0 , A^{\infty}\rtimes_{\rho^{\infty}, u}H)$ induced by $w\in (A\rtimes_{\rho^{\infty}, u}H)
\otimes H$. By  Lemma \ref {lem:homo}, $\widehat{w}(\tau)$ is a projection in $A_{\infty}$.
By Theorem \ref{thm:duality} there are an isomorphism $\Psi$ of $M_N (A)$ onto $A\rtimes_{\rho, u}H\rtimes_{\widehat{\rho}}H^0$
and a unitary element $U$ in $(A\rtimes_{\rho, u}H\rtimes_{\widehat{\rho}}H^0 )\otimes H^0$ such that
\begin{align*}
& \Ad(U)\circ\widehat{\widehat{\rho}}=(\Psi\otimes\id_{H^0 })\circ(\rho\otimes\id_{M_N (\BC)})\circ\Psi^{-1} , \\
& (\Psi\otimes\id_{H^0 }\otimes\id_{H^0 })(u\otimes I_N )=(U\otimes 1^0 )(\widehat{\widehat{\rho}}\otimes \id_{H^0 })(U)
(\id\otimes\Delta^0 )(U^* ) .
\end{align*}
Let  $\sigma=(\Psi\otimes \id)\circ(\rho\otimes\id_{M_N (\BC)})\circ\Psi^{-1}$ and
$W=(\Psi\otimes\id_{H^0 }\otimes\id_{H^0})(u\otimes I_N )$. Then
$(\sigma, W)$ is a twisted coaction of $H^0$ on $A\rtimes_{\rho, u}H\rtimes_{\widehat{\rho}}H^0$
which is exterior equivalent to $\widehat{\widehat{\rho}}$.
Let $\widehat{\Psi}$ be the isomorphism of $M_N (A)\rtimes_{\rho\otimes\id, u\otimes I_N }H$ onto
$A\rtimes_{\rho, u}H\rtimes_{\widehat{\rho}}H^0 \rtimes_{\sigma, W}H$ induced by
$\Psi$, which is defined by $\widehat{\Psi}(x\rtimes_{\rho\otimes\id, u\otimes I_N }h)=\Psi(x)\rtimes_{\sigma ,W}h$ for any
$x\in M_N (A)$, $h\in H$.
Let $\widehat{\Psi}^{\infty}$ be the isomorphism of $M_N (A^{\infty})\rtimes_{\rho^{\infty}\otimes\id, u\otimes I_N}H$ onto
$A^{\infty}\rtimes_{\rho^{\infty}, u}H\rtimes_{\widehat{\rho}^{\infty}}H^0 \rtimes_{\sigma^{\infty}, W}H$ induced by
$\widehat{\Psi}$. By easy computations, $(\sigma, W)$ has the Rohlin property and the unitary
element $(\widehat{\Psi}^{\infty}\otimes\id_{H})(w\otimes I_N )$ is in
$(A\rtimes_{\rho, u}H\rtimes_{\widehat{\rho}}H^0 \rtimes_{\sigma, W}H)\otimes H$ and satisfies Equations (5, 1)-(5, 3)
for the twisted coaction $(\sigma, W)$. Let
$z=(\widehat{\Psi}^{\infty}\otimes\id_{H})(w\otimes I_N )$.
Then
\begin{align*}
\widehat{z}(\tau) & =((\id\otimes\tau)\circ(\widehat{\Psi}^{\infty}\otimes\id_H ))(w\otimes I_N )
=\widehat{\Psi}^{\infty}((\id\otimes\tau)(w\otimes I_N )) \\
& =\widehat{\Psi}^{\infty}(\widehat{w}(\tau)\otimes I_N )=\Psi^{\infty}(\widehat{w}(\tau)\otimes I_N ) .
\end{align*}

\begin{lemma}\label{lem:quasi1}With the above notations and assumptions,
$$
\sum_{i,j,k}(\sqrt{d_k }\rtimes_{\rho, u}w_{ij}^k )^* \widehat{w}(\tau)(\sqrt{d_k }\rtimes_{\rho, u}w_{ij}^k )=1 .
$$
\end{lemma}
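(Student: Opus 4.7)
The plan is to mimic the computation in the proof of Lemma \ref{lem:projection}. The crucial identity used there is
\[(1\rtimes h)\widehat{w}(\tau)=\widehat{w}(S^0(\tau_{(1)}))(1\rtimes h_{(1)})\tau_{(2)}(h_{(2)}),\]
which follows from Equation (5, 1) together with $S^0(\tau)=\tau$. Combining this identity with the explicit expansion of $(\sqrt{d_k}\rtimes_{\rho,u}w_{ij}^k)^*$ given in the proof of Lemma \ref{lem:orthogonal}, one moves $\widehat{w}(\tau)$ to the left of each summand $W_I^*\widehat{w}(\tau)W_I$, producing a factor $\widehat{w}(S^0(\tau_{(1)}))$ and a scalar coefficient involving $\tau_{(2)}$ paired with a comatrix unit.

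After $\widehat{w}(\tau)$ has been displaced, the sum over $I=(i,j,k)$ collapses through two reductions. First, summing over $i$ invokes the antipode axiom $m\circ(S\otimes\id)\circ\Delta=\epsilon$ applied to $\Delta(w_{lj}^k)=\sum_i w_{li}^k\otimes w_{ij}^k$, yielding $\sum_i S(w_{li}^k)w_{ij}^k=\delta_{lj}\cdot 1_H$ and eliminating the crossed-product factor. Next, the remaining sum over $j,k$ weighted by $d_k$ collects as
\[\sum_{j,k}d_k\,S(w_{jj}^k)=N\cdot S(e)=N\cdot e,\]
using $e=\sum_{j,k}\tfrac{d_k}{N}w_{jj}^k$ together with $S(e)=e$. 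The total therefore reduces to $N\cdot\widehat{w}(S^0(\tau_{(1)}))\tau_{(2)}(e)$, which equals $N\cdot\frac{1}{N}=1$ by the chain
$\widehat{w}(S^0(\tau_{(1)}))\tau_{(2)}(e)=e\cdot\widehat{w}(\tau)=\frac{1}{N}$
already established in the proof of Lemma \ref{lem:projection}.

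The main technical hurdle lies in the twisted case: the expansion of $W_I^*$ via Lemma \ref{lem:orthogonal} introduces cocycle factors $\widehat{u}(S(w_{t_2 t_1}^k),w_{it_2}^k)^*\in A$ that must be carried through the manipulation. Since by Lemma \ref{lem:homo} the map $\widehat{w}$ takes values in $(A^\infty\rtimes_{\rho^\infty,u}H)\cap A'$, these $A$-valued factors commute past $\widehat{w}(S^0(\tau_{(1)}))$ freely; they should then be absorbed by the 2-cocycle identity (Definition \ref{Def:twisted}(2)) together with the antipode relations on comatrix units, leaving the collapse described above intact.
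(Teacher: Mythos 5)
Your strategy is sound but genuinely different from the paper's. The paper proves this lemma indirectly through duality: it passes to the second dual coaction $\widehat{\widehat{\rho}}$ on $A\rtimes_{\rho,u}H\rtimes_{\widehat{\rho}}H^0\cong M_N(A)$ (Theorem \ref{thm:duality}), observes that $z=(\widehat{\Psi}^{\infty}\otimes\id_H)(w\otimes I_N)$ is a Rohlin unitary there with $\widehat{z}(\tau)=\Psi^{\infty}(\widehat{w}(\tau)\otimes I_N)$, invokes Lemmas \ref{lem:projection2} and \ref{lem:projection} at that level to get $e\cdot_{\widehat{\widehat{\rho}}}\Psi^{\infty}(\widehat{w}(\tau)\otimes I_N)=\frac{1}{N}$, and then unwinds the left-hand side through the isometries $V_I=(1\rtimes_{\widehat{\rho}}\tau)(W_I\rtimes_{\widehat{\rho}}1^0)$ to land on $\frac{1}{N}\sum_I W_I^*(\widehat{w}(\tau)\rtimes_{\rho,u}1)W_I$. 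Your computation stays inside $A^{\infty}\rtimes_{\rho^{\infty},u}H$ and needs only the displacement identity together with the tail of the proof of Lemma \ref{lem:projection}; that is a real simplification. The one place where your sketch, read literally, would fail is the collapse over $i$: you cannot use $\sum_i S(w_{li}^k)w_{ij}^k=\delta_{lj}$ to ``eliminate the crossed-product factor,'' since in the twisted crossed product $(1\rtimes_{\rho,u}h)(1\rtimes_{\rho,u}l)\neq 1\rtimes_{\rho,u}hl$, and chasing the 2-cocycle identity head-on here is painful. The clean repair is to note that the displacement identity shifts only the $H$-leg of $W_I^*$, while the cocycle coefficient in the expansion of $(\sqrt{d_k}\rtimes_{\rho,u}w_{ij}^k)^*$ from Lemma \ref{lem:orthogonal} does not depend on the second lower index $j$; hence those coefficients reassemble into the adjoint of another generator,
$$
(1\rtimes_{\rho,u}w_{ij}^k)^*\,\widehat{w}(\tau)
=\widehat{w}(S^0(\tau_{(1)}))\sum_{j_1}(1\rtimes_{\rho,u}w_{ij_1}^k)^*\,\tau_{(2)}(w_{j_1 j}^{k*}) ,
$$
and the unitarity of $V$ in the convolution algebra, $\widehat{V^*}(h_{(1)})\widehat{V}(h_{(2)})=\epsilon(h)1$ with $h=w_{j_1 j}^k$, gives $\sum_i(1\rtimes_{\rho,u}w_{ij_1}^k)^*(1\rtimes_{\rho,u}w_{ij}^k)=\delta_{j_1 j}1$ with no residual cocycle. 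Summing $d_k\tau_{(2)}(w_{jj}^{k*})$ over $j,k$ then produces $N\tau_{(2)}(e)$, and $\widehat{w}(S^0(\tau_{(1)}))\tau_{(2)}(e)=\frac{1}{N}$ exactly as at the end of the proof of Lemma \ref{lem:projection}. With that step made precise your argument is complete, and it is shorter than the paper's; what the paper's detour through the double crossed product buys is that the cocycle never has to be touched at all, at the price of the whole duality apparatus of Section \ref{sec:duality}.
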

\begin{proof}
By Proposition \ref {prop:property2}, $\widehat{\widehat{\rho}}$ has the Rohlin property.
Let $z_1 $ be a unitary element in
$(A\rtimes_{\rho, u}H\rtimes_{\widehat{\rho}}H^0 \rtimes_{\widehat{\widehat{\rho}}}H)\otimes H$
satisfying Equations (5, 1)-(5, 3) for $\widehat{\widehat{\rho}}$.
Then by Lemmas \ref {lem:projection2} and \ref {lem:projection}, $e\cdot_{\widehat{\widehat{\rho}}}\widehat{z_1 }(\tau)
=e\cdot_{\widehat{\widehat{\rho}}}\widehat{z}(\tau)=\frac{1}{N}$.
Since $\widehat{z}(\tau)=\Psi^{\infty}(\widehat{w}(\tau)\otimes I_N )$,
$$
\frac{1}{N}=e\cdot_{\widehat{\widehat{\rho}}}\Psi^{\infty}(\widehat{w}(\tau)\otimes I_N )
=\sum_I [e\cdot_{\widehat{\widehat{\rho}}}V_I^* (\widehat{w}(\tau)\rtimes_{\rho, u}1\rtimes_{\widehat{\rho}}1^0 )V_I ] .
$$
Since $V_I =(1\rtimes_{\widehat{\rho}}\tau)(W_I \rtimes_{\widehat{\rho}}1^0 )$,
$$
\frac{1}{N}=\sum_I (W_I^* \rtimes_{\widehat{\rho}}1^0 )(1\rtimes_{\widehat{\rho}}\tau_{(1)})\tau_{(2)}(e_{(1)})
(\widehat{w}(\tau)\rtimes_{\rho, u}1\rtimes_{\widehat{\rho}}1^0 )(1\rtimes_{\widehat{\rho}}\tau_{(1)}')\tau_{(2)}'(e_{(2)})
(W_I \rtimes_{\widehat{\rho}}1^0 ) ,
$$
where $\tau' =\tau$. Furthermore,
\begin{align*}
\frac{1}{N} & =\sum_I (W_I^* \rtimes_{\widehat{\rho}}1^0 )(1\rtimes_{\widehat{\rho}}\tau_{(1)})
(\widehat{w}(\tau)\rtimes_{\rho, u}1\rtimes_{\widehat{\rho}}1^0 )(1\rtimes_{\widehat{\rho}}\tau_{(1)}' )
(W_I \rtimes_{\widehat{\rho}}1^0 )(\tau_{(2)}\tau_{(2)}' )(e) \\
& =\sum_I (W_I^* \rtimes_{\widehat{\rho}}1^0 )(\widehat{w}(\tau)\rtimes_{\rho, u}1\rtimes_{\widehat{\rho}}\tau_{(1)}
\tau_{(1)}')(W_I \rtimes_{\widehat{\rho}}1^0 )(\tau_{(2)}\tau_{(2)}' )(e) \\
& =\sum_I (W_I^* \rtimes_{\widehat{\rho}}1^0 )(\widehat{w}(\tau)\rtimes_{\rho, u}1\rtimes_{\widehat{\rho}}1^0 )
(W_I \rtimes_{\widehat{\rho}}1^0 )(\tau\tau' )(e) \\
& =\frac{1}{N}\sum_I (W_I^* \rtimes_{\widehat{\rho}}1^0 )(\widehat{w}(\tau)\rtimes_{\rho, u}1\rtimes_{\widehat{\rho}}
1^0 )(W_I \rtimes_{\widehat{\rho}}1^0 ) \\
& =\frac{1}{N}\sum_I W_I^* (\widehat{w}(\tau)\rtimes_{\rho, u}1 )W_I .
\end{align*}
Therefore we obtain the conclusion.
\end{proof}

Next, we shall show the inverse direction of Lemma \ref{lem:quasi1}.

\begin{lemma}\label{lem:quasi2}Let $(\rho, u)$ be a twisted coaction of $H^0$ on $A$.
We suppose that there is a projection $p\in A_{\infty}$ such that
$$
\sum_{i,j,k}(\sqrt{d_k }\rtimes_{\rho, u}w_{ij}^k )^* (p\rtimes_{\rho, u} 1)(\sqrt{d_k }\rtimes_{\rho, u}w_{ij}^k )=1 .
$$
Then $(\rho, u)$ has the Rohlin property.
\end{lemma}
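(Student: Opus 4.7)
The plan is to run the computation of Lemma~\ref{lem:quasi1} in reverse, thereby producing a projection in the central sequence algebra of $A\rtimes_{\rho,u}H\rtimes_{\widehat\rho}H^0$ that witnesses the Rohlin property of $\widehat{\widehat\rho}$, and then to transfer back to $(\rho,u)$. More precisely, I would use the isomorphism $\Psi$ from Theorem~\ref{thm:duality} to set
\[
q \;=\; \Psi^{\infty}(p\otimes I_N) \;\in\; (A\rtimes_{\rho,u}H\rtimes_{\widehat\rho}H^0)^{\infty}.
\]
Since $p\in A_\infty$ commutes with $A$, the element $p\otimes I_N$ lies in $M_N(A)_\infty$, so $q$ is in fact a projection in $(A\rtimes_{\rho,u}H\rtimes_{\widehat\rho}H^0)_\infty$.

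Next I would establish the identity
\[
e\cdot_{\widehat{\widehat\rho}^{\infty}} q \;=\; \frac{1}{N}\sum_{I\in\Lambda} W_I^{*}(p\rtimes_{\rho,u}1)W_I
\]
by repeating verbatim the algebraic manipulation carried out in the proof of Lemma~\ref{lem:quasi1}, but with the generic projection $p$ in place of $\widehat{w}(\tau)$. The key observation is that the chain of equalities in that proof after the line $e\cdot_{\widehat{\widehat\rho}}\Psi^{\infty}(\widehat{w}(\tau)\otimes I_N)=\sum_I [e\cdot_{\widehat{\widehat\rho}}V_I^{*}(\widehat{w}(\tau)\rtimes 1\rtimes 1^{0})V_I]$ is purely algebraic: it only uses $V_I=(1\rtimes\tau)(W_I\rtimes 1^{0})$, the Hopf-algebraic identities for the Haar traces $\tau$ and $e$, and that $\widehat{w}(\tau)$ is a projection commuting with $A$. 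Each of these ingredients is available for an arbitrary $p\in A_\infty$, so the identity above holds unconditionally. By hypothesis the right-hand side equals $1/N$.

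Finally, $\widehat{\widehat\rho}$ is the dual coaction of $\widehat\rho$ and is therefore saturated, exactly as in the proof of Corollary~\ref{cor:saturated}. Hence Theorem~\ref{thm:equivalent} applies to $\widehat{\widehat\rho}$, and the projection $q$ just constructed shows that $\widehat{\widehat\rho}$ has the Rohlin property. Proposition~\ref{prop:property2} then transfers this property back to $(\rho,u)$.

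The only delicate point is the middle step, namely the careful reversal of the computation in Lemma~\ref{lem:quasi1}. The potential pitfall is that the original proof invoked Lemmas~\ref{lem:projection2} and \ref{lem:projection} at the outset to get $e\cdot\widehat{w}(\tau)=1/N$, and one must be sure that no later step secretly used anything about $\widehat{w}(\tau)$ beyond its being a projection in $A_\infty$. A careful line-by-line check of that computation (which I expect to succeed) removes this concern and completes the proof.
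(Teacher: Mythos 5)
Your proposal is correct and follows essentially the same route as the paper: transport $p\otimes I_N$ through $\Psi^{\infty}$ to a projection $q$ in $(A\rtimes_{\rho,u}H\rtimes_{\widehat\rho}H^0)_{\infty}$, rerun the purely algebraic computation of Lemma~\ref{lem:quasi1} to get $e\cdot_{\widehat{\widehat\rho}}q=\frac{1}{N}\sum_I W_I^*(p\rtimes 1)W_I=\frac{1}{N}$, invoke saturation of $\widehat{\widehat\rho}$ together with Theorem~\ref{thm:equivalent}, and transfer back to $(\rho,u)$. Your concern about the middle step is well placed but resolves exactly as you expect, and your use of Proposition~\ref{prop:property2} for the final transfer is the natural citation.
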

\begin{proof}
Le $\Psi$ be the isomorphism of $M_N (A)$ onto $A\rtimes_{\rho, u}H\rtimes_{\widehat{\rho}}H^0 $
defined in Theorem \ref{thm:duality}. Let $q=\Psi^{\infty}(p\otimes I_N )$. Then $q$ is a projection in
$(A\rtimes_{\rho, u}H\rtimes_{\widehat{\rho}}H^0 )_{\infty}$ since $p\otimes I_N \in M_N (A)_{\infty}$.
In the same way as in the proof of Lemma \ref{lem:quasi1},
$$
e\cdot_{\widehat{\widehat{\rho}}}q=e\cdot_{\widehat{\widehat{\rho}}}\Psi^{\infty}(p\otimes I_N )
=\frac{1}{N}\sum_I W_I^* (p\rtimes_{\rho, u}1 )W_I =\frac{1}{N} .
$$
Hence by Theorem \ref {thm:equivalent}, $\widehat{\widehat{\rho}}$ has the Rohlin property
since $\widehat{\widehat{\rho}}$ is saturated by Jeong and Park \cite [Theorem 3.3]{JP:finite}
and \cite [Proposition 3.18]{KT1:inclusion}. Therefore $(\rho, u)$ has the Rohlin property
by Proposition \ref {prop:exterior2}.
\end{proof}

\begin{thm}\label{thm:more1}Let $(\rho, u)$ be a twisted coaction of a finite dimensional
$C^*$-Hopf algebra $H^0$ on a unital $C^*$-algebra $A$. Let $\{w_{ij}^k \}$ be a system
of comatrix units of $H$. Then the following conditions are equivalent:
\newline
$(1)$ the twisted coaction $(\rho, u)$ has the Rohlin property,
\newline
$(2)$ there is a projection $p\in A_{\infty}$ such that
$\sum_{i,j,k}(\sqrt{d_k}\rtimes_{\rho, u}w_{ij}^k )^* p(\sqrt{d_k }\rtimes_{\rho, u}w_{ij}^k )=1$.
\end{thm}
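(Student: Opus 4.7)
The plan is to observe that Theorem \ref{thm:more1} is essentially just the combination of Lemmas \ref{lem:quasi1} and \ref{lem:quasi2}, which together handle the two implications. So the proof will simply cite these two lemmas; the real work has already been done in setting them up.

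For the implication $(1) \Rightarrow (2)$, I would proceed as follows. Assuming $(\rho,u)$ has the Rohlin property, Definition \ref{Def:Rohlin} produces a unitary $w\in (A^{\infty}\rtimes_{\rho^{\infty},u}H)\otimes H$ satisfying Equations (5,1)-(5,3). By Lemma \ref{lem:homo}, the element $p := \widehat{w}(\tau)$ is a projection in $A_{\infty}$. Lemma \ref{lem:quasi1} then asserts exactly that this projection satisfies
$$\sum_{i,j,k}(\sqrt{d_k}\rtimes_{\rho,u}w_{ij}^k)^*\, p\, (\sqrt{d_k}\rtimes_{\rho,u}w_{ij}^k)=1,$$
which is condition (2).

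For the converse $(2) \Rightarrow (1)$, the argument is even more direct: given a projection $p\in A_{\infty}$ satisfying the hypothesis of (2), Lemma \ref{lem:quasi2} immediately yields the Rohlin property for $(\rho,u)$. Note that Lemma \ref{lem:quasi2} itself goes through the duality theorem (Theorem \ref{thm:duality}) by lifting $p$ to a projection $q=\Psi^{\infty}(p\otimes I_N)$ in $(A\rtimes_{\rho,u}H\rtimes_{\widehat{\rho}}H^0)_{\infty}$, verifying $e\cdot_{\widehat{\widehat{\rho}}}q = 1/N$, invoking Theorem \ref{thm:equivalent} to get the Rohlin property for $\widehat{\widehat{\rho}}$, and then transferring back via Proposition \ref{prop:exterior2} (using the exterior equivalence from Theorem \ref{thm:duality}). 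But all of this is already packaged inside Lemma \ref{lem:quasi2}.

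Thus the proof of Theorem \ref{thm:more1} reduces to the single line: \emph{this is immediate from Lemmas \ref{lem:quasi1} and \ref{lem:quasi2}}. There is no real obstacle at this stage; the substantive difficulty lay in establishing Lemma \ref{lem:quasi1}, which required computing $e\cdot_{\widehat{\widehat{\rho}}}\widehat{z}(\tau)$ explicitly through the identification $\widehat{z}(\tau)=\Psi^{\infty}(\widehat{w}(\tau)\otimes I_N)$ and unwinding the definitions of $\Psi$ and $V_I$ in terms of the comatrix units $w_{ij}^k$.
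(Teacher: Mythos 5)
Your proposal matches the paper's proof exactly: the paper disposes of Theorem \ref{thm:more1} with the single line that it is immediate from Lemmas \ref{lem:quasi1} and \ref{lem:quasi2}, with the projection in condition (2) being $\widehat{w}(\tau)$ exactly as you describe. Your additional commentary on how Lemma \ref{lem:quasi2} routes through the duality theorem is accurate and consistent with the paper's development.
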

\begin{proof}This is immediate by Lemmas \ref{lem:quasi1} and \ref{lem:quasi2}.
\end{proof}

\begin{cor}\label{cor:more2}Let $\rho$ be a coaction of $H^0$ on $A$.Then the
following conditions are equivalent:
\newline
$(1)$ the coaction $\rho$ has the Rohlin property,
\newline
$(2)$ there is a projection $p\in A_{\infty}$ such that $e\cdot_{\rho^{\infty}}p=\frac{1}{N}$.
\end{cor}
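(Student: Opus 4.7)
The corollary is the untwisted specialization of Theorem~\ref{thm:more1}, so the plan is to translate condition~(2) of that theorem, taken with $u=1$, into the scalar relation $e\cdot_{\rho^\infty} p = \frac{1}{N}$ via the intermediate identity
$$
\sum_{i,j,k} d_k(1\rtimes w_{ij}^{k*})(p\rtimes 1)(1\rtimes w_{ij}^k) = N(e\cdot p)\rtimes 1,
$$
valid for every $p\in A^\infty$ inside $A^\infty\rtimes_{\rho^\infty}H$.

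To establish the identity, I would expand the left-hand side via the crossed-product multiplication rule $(1\rtimes w_{ij}^{k*})(p\rtimes 1) = \sum_l (w_{il}^{k*}\cdot p)\rtimes w_{lj}^{k*}$, obtaining $\sum_{i,j,k,l} d_k(w_{il}^{k*}\cdot p)\rtimes w_{lj}^{k*}w_{ij}^k$. The inner sum $\sum_j w_{lj}^{k*}w_{ij}^k = \sum_j S(w_{jl}^k)w_{ij}^k$ collapses to $\delta_{il}$; this ``reversed-order'' antipode identity is obtained from the standard axiom $\sum_j w_{ij}^k S(w_{jl}^k) = \delta_{il}$ by applying $S$ to both sides and invoking anti-multiplicativity of $S$ together with $S^2 = \id$ (valid since $H$ is a finite-dimensional $C^*$-Hopf algebra). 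The sum thereby reduces to $\sum_{i,k} d_k(w_{ii}^{k*}\cdot p)\rtimes 1$. Recognizing $e = \sum_{i,k}\frac{d_k}{N}w_{ii}^k$ from the proof of Lemma~\ref{lem:projection}, together with $e^* = e$, gives $\sum_{i,k} d_k w_{ii}^{k*} = Ne$, completing the identity.

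With the identity in hand, $(1)\Rightarrow(2)$ follows by taking $p = \widehat{w}(\tau)$ for any Rohlin witness $w$ and invoking Lemmas~\ref{lem:homo}(2) and~\ref{lem:projection}. For $(2)\Rightarrow(1)$, substituting $e\cdot p = \frac{1}{N}$ into the identity yields $\sum_{i,j,k} d_k(1\rtimes w_{ij}^{k*})(p\rtimes 1)(1\rtimes w_{ij}^k) = 1$, which is condition~(2) of Theorem~\ref{thm:more1} with $u = 1$, delivering the Rohlin property of $\rho$. Note that one cannot simply combine Proposition~\ref{prop:saturated} with Theorem~\ref{thm:equivalent}, since the latter presupposes saturation of $\rho$, an assumption absent from the hypothesis of $(2)$; routing through Theorem~\ref{thm:more1} bypasses this obstruction because the saturation used there is that of the double dual coaction.

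The main obstacle is the reversed-order antipode collapse $\sum_j S(w_{jl}^k)w_{ij}^k = \delta_{il}$, which crucially uses $S^2 = \id$ for finite-dimensional $C^*$-Hopf algebras; the rest is routine bookkeeping in the crossed product.
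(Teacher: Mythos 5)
Your proof is correct and takes essentially the same route as the paper: both directions reduce to Lemma~\ref{lem:projection} (for $(1)\Rightarrow(2)$) and Theorem~\ref{thm:more1} (for $(2)\Rightarrow(1)$) via the identity $\sum_{i,j,k}d_k(1\rtimes w_{ij}^{k*})p(1\rtimes w_{ij}^k)=N[e\cdot_{\rho^{\infty}}p]$, which the paper gets by reading the conjugation as the action of $S(w_{ii}^k)$ on $p$ together with $S(e)=e$, and you get by expanding the crossed-product multiplication and collapsing with the antipode identity --- the same computation in different bookkeeping. One nit: the reversed identity $\sum_j S(w_{jl}^k)w_{ij}^k=\delta_{il}$ follows by applying $S$ to the counit--antipode axiom $\sum_j S(w_{ij}^k)w_{jl}^k=\delta_{il}$, not to $\sum_j w_{ij}^k S(w_{jl}^k)=\delta_{il}$ as you wrote; since both axioms hold in $H$, nothing breaks.
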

\begin{proof}
(1) implies (2): This is immediate by Lemma \ref {lem:projection}.
\newline
(2) implies (1): By Theorem \ref{thm:more1}, it suffices to show that (2)
implies that
$$
\sum_{i,j,k}(\sqrt{d_k }\rtimes_{\rho}w_{ij}^k )^* p(\sqrt{d_k }\rtimes_{\rho}w_{ij}^k )=1 .
$$
Since $\rho$ is a coaction of $H^0$ on $A$,
\begin{align*}
& \sum_{i,j,k}(\sqrt{d_k}\rtimes_{\rho}w_{ij}^k )^* p(\sqrt{d_k}\rtimes_{\rho}w_{ij}^k )
=\sum_{i,j,k}d_k (1\rtimes_{\rho}w_{ij}^{k*})p(1\rtimes_{\rho}w_{ij}^k ) \\
& =\sum_{i,j,k}d_k (1\rtimes_{\rho}S(w_{ji}^k ))p(1\rtimes_{\rho}w_{ij}^k )
=N\sum_{i,j,k}\frac{d_k }{N}\widehat{V}(S(w_{ji}^k ))p\widehat{V^* }(S(w_{ij}^k )) \\
& =N\sum_{i,k}\frac{d_k }{N}[S(w_{ii}^k )\cdot_{\rho^{\infty}}p]
=N[S(e)\cdot_{\rho^{\infty}}p]=N[e\cdot_{\rho^{\infty}}p]=1.
\end{align*}
Therefore we obtain the conclusion.
\end{proof}

\section{Example}\label{sec:example}
In this section, we shall give an example of an approximately representable
coaction of a finite dimensional $C^*$-Hopf algebra on a UHF-algebra which has also the Rohlin property.
\par
We note that the comultiplication $\Delta^0$ of $H^0$ can be regarded as a coaction
of $H^0$ on a $C^*$-algebra $H^0$. Hence we can consider the crossed product $H^0 \rtimes_{\Delta^0 }H$,
which is isomorphic to $M_N (\BC)$. Let $A=H^0 \rtimes_{\Delta^0}H$.
Let $A_n =\otimes_1^n A$, the $n$- times tensor product of $A$, for any $n\in \BN$.
In the usual way, we regard $A_n$ as a $C^*$-subalgebra
of $A_{n+1}$, that is, for any $a\in A_n$, the map $\imath_n : a\mapsto a\otimes(1^0 \rtimes_{\Delta^0}1)$ is
regarded as the inclusion of $A_n$ into $A_{n+1}$. Let $B$ be the inductive limit $C^*$-algebra of $\{(A_n , \imath_n \}$.
Then $B$ can be regarded as
a UHF-algebra of type $N^{\infty}$. 
Let $\widehat{V}$ be a unitary element
in $\Hom(H, A)$ defined by $\widehat{V}(h)=1^0 \rtimes_{\Delta^0}h$ for any $h\in H$
and let $V$ be the unitary element in $A\otimes H^0$ induced by $\widehat{V}$.
Recall that $\{v_{ij}^k \}$ and $\{w_{ij}^k \}$ are systems of matrix units and comatrix units of
$H$, respectively. Also, let $\{\phi_{ij}^k \}$ and $\{\omega_{ij}^k \}$ be systems of matrix
units and comatrix units of $H^0$, respectively.
Let
$$
v_1 =V=\sum_{i,j,k}(1^0 \rtimes_{\Delta^0}w_{ij}^k )\otimes\phi_{ij}^k
=\sum_{i,j,k}(1^0 \rtimes_{\Delta^0}v_{ij}^k )\otimes\omega_{ij}^k .
$$
For any $n\in \BN$ with $n\ge 2$, let 
\begin{align*}
v_n & =(1^0 \rtimes_{\Delta^0}1)\otimes \cdots\otimes(1^0 \rtimes_{\Delta^0}1)\otimes V \\
& =\sum_{i,j,k}(1^0 \rtimes_{\Delta^0}1)\otimes \cdots\otimes(1^0 \rtimes_{\Delta^0}1)
\otimes\widehat{V}(w_{ij}^k )\otimes\phi_{ij}^k \\
& =\sum_{i,j,k}(1^0 \rtimes_{\Delta^0}1)\otimes \cdots\otimes(1^0 \rtimes_{\Delta^0}1)
\otimes\widehat{V}(v_{ij}^k )\otimes\omega_{ij}^k .
\end{align*}
Let $u_n =v_1 v_2 \cdots v_n \in A_n \otimes H^0$ for any $n\in \BN$.
Then $u_n $ is a unitary element in $A_n \otimes H^0$ for any $n\in \BN$.

\begin{lemma}\label{lem:form}With the above notations,
\begin{align*}
u_n & =\sum\widehat{V}(w_{ij_1}^k )\otimes\widehat{V}(w_{j_1 j_2}^k)\otimes\cdots
\otimes\widehat{V}(w_{j_{n-1}j})\otimes\phi_{ij}^k \\
& =\sum\widehat{V}(v_{i_1 j_1}^{k_1} )\otimes\widehat{V}(v_{i_2  j_2}^{k_2})\otimes\cdots
\otimes\widehat{V}(v_{i_n j_n }^{k_n})\otimes\omega_{i_1 j_1}^{k_1}\cdots\omega_{i_n j_n}^{k_n} ,
\end{align*}
where the above summations are taken under all indices.
\end{lemma}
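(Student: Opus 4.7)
The plan is to prove the lemma by induction on $n$, using the two expansions of $V$ given just before the lemma.

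The base case $n=1$ is immediate: $u_1 = v_1 = V$, and the two claimed formulas are precisely the two expansions
\begin{equation*}
V=\sum_{i,j,k}\widehat{V}(w_{ij}^k)\otimes \phi_{ij}^k=\sum_{i,j,k}\widehat{V}(v_{ij}^k)\otimes\omega_{ij}^k
\end{equation*}
already recorded in the text. For the inductive step, write $u_{n}=u_{n-1}v_{n}$ and observe that $v_{n}$, viewed in $A_{n}\otimes H^0$, has the form $1\otimes\cdots\otimes 1\otimes \widehat{V}(\,\cdot\,)\otimes(\,\cdot\,)$, so multiplication with the inductive expression for $u_{n-1}$ only affects the last $A$-factor and the $H^0$-factor.

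For the first (comatrix-unit) expansion, use the inductive formula
\begin{equation*}
u_{n-1}=\sum\widehat{V}(w_{ij_1}^k)\otimes\cdots\otimes\widehat{V}(w_{j_{n-2}m}^k)\otimes \phi_{im}^k
\end{equation*}
and the matrix-unit expansion $v_{n}=\sum_{s,t,r}1\otimes\cdots\otimes 1\otimes \widehat{V}(w_{st}^r)\otimes\phi_{st}^r$. The crucial ingredient is that the $\phi_{ij}^k$ are matrix units of $H^0$, so
\begin{equation*}
\phi_{im}^k\phi_{st}^r=\delta_{kr}\,\delta_{ms}\,\phi_{it}^k .
\end{equation*}
This collapses the sum over $(s,r)$ and forces the superscript $r$ to equal $k$ and the first lower index of the newly introduced comatrix unit to equal $m$, yielding the claimed chain $\widehat{V}(w_{ij_1}^k)\otimes\cdots\otimes\widehat{V}(w_{j_{n-2}m}^k)\otimes \widehat{V}(w_{mj}^k)\otimes\phi_{ij}^k$, which after relabelling $m=j_{n-1}$ is exactly the desired expression for $u_n$.

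For the second (matrix-unit) expansion, repeat the computation using instead $v_n=\sum 1\otimes\cdots\otimes 1\otimes \widehat{V}(v_{i_n j_n}^{k_n})\otimes \omega_{i_n j_n}^{k_n}$. Here nothing collapses: the $\omega_{ij}^k$ are comatrix units of $H^0$, not matrix units, so the products $\omega_{i_1j_1}^{k_1}\cdots\omega_{i_n j_n}^{k_n}$ stay as they are, and the inductive formula immediately extends to the $n$-th tensor slot. The main (and only real) point to check in both cases is thus the correct application of the matrix-unit relations for the $\phi$-expansion; the $\omega$-expansion is essentially bookkeeping. No saturation or twisted-coaction input is needed, only the structure of the two dual systems of (co)matrix units provided by \cite[Theorem 2.2]{SP:saturated}.
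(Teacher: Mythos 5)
Your proof is correct and takes essentially the same approach as the paper's: induction on $n$, with the matrix-unit relation $\phi_{im}^k\phi_{st}^r=\delta_{kr}\delta_{ms}\phi_{it}^k$ collapsing the sum in the inductive step for the first expansion, while the second expansion is pure bookkeeping since the $A$-slots of the factors $v_1,\dots,v_n$ are disjoint (the paper simply declares this part clear).
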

\begin{proof}
It is clear that the second equation holds.
We show the first equation by the induction.
We assume that
$$
u_n =\sum\widehat{V}(w_{ij_1}^k )\otimes\widehat{V}(w_{j_1 j_2}^k)\otimes\cdots
\otimes\widehat{V}(w_{j_{n-1}j})\otimes\phi_{ij}^k ,
$$
where the summation is taken under all indices.
Then
\begin{align*}
u_{n+1}=u_n v_{n+1} & =\sum\widehat{V}(w_{ij_1}^k )\otimes\widehat{V}(w_{j_1 j_2}^k )\otimes
\cdots\otimes\widehat{V}(w_{j_{n-1}j}^k )\otimes\widehat{V}(w_{st}^r )\otimes\phi_{ij}^k \phi_{st}^r \\
& =\sum\widehat{V}(w_{ij_1}^k )\otimes\widehat{V}(w_{j_1 j_2}^k )\otimes
\cdots\otimes\widehat{V}(w_{j_{n-1}j}^k )\otimes\widehat{V}(w_{jt}^k )\otimes\phi_{it}^k ,
\end{align*}
where the summations are taken under all indices. Therefore, we obtain the conclusion.
\end{proof}

For any $n\in\BN$, let $\rho_n =\Ad(u_n )\circ\rho_{H^0}^{A_n}$, that is,
for any $a\in A_n$,
$$
\rho_n (a)=u_n (a\otimes 1^0 )u_n^* .
$$

\begin{lemma}\label{lem:strong}With the above notations, $\rho_n$ is a coaction of
$H^0$ on $A_n$.
\end{lemma}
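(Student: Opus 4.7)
The plan is to verify the two defining axioms of a coaction for $\rho_n=\Ad(u_n)\circ\rho_{H^0}^{A_n}$ via the standard 1-cocycle perturbation trick. Since the trivial coaction $\rho_{H^0}^{A_n}$ is a coaction of $H^0$ on $A_n$, it suffices to exhibit $u_n$ as a unitary satisfying the cocycle identity $(u_n)_{12}(u_n)_{13}=(\id\otimes\Delta^0)(u_n)$ in $A_n\otimes H^0\otimes H^0$ together with the counit condition $(\id\otimes\epsilon^0)(u_n)=1$. Coassociativity and counit of $\rho_n$ then follow because $\rho_n(a)=u_n(a\otimes 1^0)u_n^*$, and both $\id\otimes\Delta^0$ and $\id\otimes\epsilon^0$ are $*$-homomorphisms.

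First I would establish these two identities for the single building block $V=v_1$. Since $\Delta^0$ is an untwisted coaction of $H^0$ on $H^0$ with crossed product $A=H^0\rtimes_{\Delta^0}H$, \cite[Lemma 3.12]{KT1:inclusion} applied to the trivial twist $u=1\otimes 1^0\otimes 1^0$ collapses to $(\id\otimes\Delta^0)(V)=(V\otimes 1^0)(\rho_{H^0}^A\otimes\id)(V)=V_{12}V_{13}$, and gives $(\id\otimes\epsilon^0)(V)=\widehat V(1_H)=1^0\rtimes_{\Delta^0}1_H=1$. Since $v_m$ is obtained by inserting $V$ in the $m$-th tensor slot of $A_n$ and $1$'s elsewhere, and $\id\otimes\Delta^0$, $\id\otimes\epsilon^0$ touch only the $H^0$-leg, both identities lift verbatim to every $v_m$.

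The key step is to multiply these up to $u_n=v_1v_2\cdots v_n$. The crucial combinatorial observation is that, for $m\ne m'$, the elements $(v_m)_{12}$ and $(v_{m'})_{13}$ commute in $A_n\otimes H^0\otimes H^0$: their $A_n$-parts sit in different tensor factors of $A_n$, and their nontrivial $H^0$-components sit in different tensor legs. Shuffling the legs via this commutation yields
\begin{align*}
(u_n)_{12}(u_n)_{13} &= \prod_{m=1}^n(v_m)_{12}\cdot\prod_{m=1}^n(v_m)_{13}=\prod_{m=1}^n(v_m)_{12}(v_m)_{13}\\
&=\prod_{m=1}^n(\id\otimes\Delta^0)(v_m)=(\id\otimes\Delta^0)(u_n),
\end{align*}
where the final equality uses multiplicativity of $\id\otimes\Delta^0$. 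An identical argument with $\epsilon^0$ in place of $\Delta^0$ yields $(\id\otimes\epsilon^0)(u_n)=1$.

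I expect the main obstacle to be purely bookkeeping: fixing an unambiguous leg-numbering convention for elements of $A_n\otimes H^0\otimes H^0$ and checking carefully that $(v_m)_{12}$ and $(v_{m'})_{13}$ really do commute for $m\ne m'$. Once that is in place, the coaction axioms for $\rho_n$ fall out immediately from the cocycle and counit identities for $u_n$ together with the fact that $\rho_{H^0}^{A_n}$ is already a coaction.
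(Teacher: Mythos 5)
Your proposal is correct and follows essentially the same route as the paper: both reduce the lemma to the $1$-cocycle identity $(u_n\otimes 1^0)(\rho_{H^0}^{A_n}\otimes\id)(u_n)=(\id\otimes\Delta^0)(u_n)$ and verify it using that $\widehat V$ is a $*$-homomorphism of $H$ into $A$ (equivalently $V_{12}V_{13}=(\id\otimes\Delta^0)(V)$). The only difference is bookkeeping: the paper expands $u_n$ via the explicit comatrix-unit product formula of Lemma \ref{lem:form} and multiplies out all $n$ slots at once, whereas you check the identity factor by factor using the commutation of $(v_m)_{12}$ with $(v_{m'})_{13}$ for $m\ne m'$, which is a valid and slightly cleaner organization of the same computation.
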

\begin{proof}
We have only to show that
$$
(u_n\otimes 1^0 )(\rho_{H^0}^{A_n}\otimes\id)(u_n )=(\id\otimes\Delta^0 )(u_n ) .
$$
By Lemma \ref{lem:form}, we can write that
$$
u_n =\sum\widehat{V}(v_{i_1 j_1}^{k_1} )\otimes\widehat{V}(v_{i_2  j_2}^{k_2})\otimes\cdots
\otimes\widehat{V}(v_{i_n j_n }^{k_n})\otimes\omega_{i_1 j_1}^{k_1}\cdots\omega_{i_n j_n}^{k_n},
$$
where the summation is taken under all indices.
Hence
\begin{align*}
u_n \otimes 1^0 & =\sum\widehat{V}(v_{i_1 j_1}^{k_1} )\otimes\cdots
\otimes\widehat{V}(v_{i_n j_n }^{k_n})\otimes\omega_{i_1 j_1}^{k_1}
\cdots\omega_{i_n j_n}^{k_n}\otimes 1^0 , \\
(\rho_{H^0}^{A_n}\otimes\id)(u_n )& =\sum\widehat{V}(v_{i_1 j_1}^{k_1} )
\otimes\cdots
\otimes\widehat{V}(v_{i_n j_n }^{k_n})\otimes1^0 \otimes\omega_{i_1 j_1}^{k_1}
\cdots\omega_{i_n j_n}^{k_n} ,
\end{align*}
where the summations are taken under all indices.
Thus since $\widehat{V}$ is a $C^*$-homomorphism of $H$ to $A$,
\begin{align*}
& (u_n \otimes 1^0 )(\rho_{H^0}^{A_n}\otimes\id)(u_n ) \\
& =\sum\widehat{V}(v_{i_1 j_1}^{k_1} )\widehat{V}(v_{s_1 t_1}^{r_1})\otimes\cdots
\otimes\widehat{V}(v_{i_n j_n }^{k_n})\widehat{V}(v_{s_n t_n}^{r_n})\otimes\omega_{i_1 j_1}^{k_1}
\cdots\omega_{i_n j_n}^{k_n} \otimes\omega_{s_1 t_1}^{r_1}\cdots\omega_{s_n t_n}^{r_n} \\
& =\sum\widehat{V}(v_{i_1 t_1}^{k_1} )\otimes\cdots
\otimes\widehat{V}(v_{i_n t_n }^{k_n})\otimes\omega_{i_1 j_1}^{k_1}\cdots\omega_{i_n j_n}^{k_n}
\otimes\omega_{j_1 t_1}^{k_1}\cdots\omega_{j_n t_n}^{k_n} \\
& =(\id\otimes\Delta^0 )(u_n ) ,
\end{align*}
where the summations are taken under all indices.
Therefore we obtain the conclusion.
\end{proof}

\begin{lemma}\label{lem:commutative}With the above notations, $(\imath_n \otimes\id)\circ\rho_n =\rho_{n+1}\circ\imath_n$
for any $n\in\BN$.
\end{lemma}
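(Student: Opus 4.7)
The plan is to expand both sides of the identity using the definitions and reduce the equation to a commutativity statement about the single factor $v_{n+1}$.

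For $a\in A_n$ the left-hand side becomes
\[
(\imath_n\otimes\id)(\rho_n(a))=(\imath_n\otimes\id)(u_n)\bigl(\imath_n(a)\otimes 1^0\bigr)(\imath_n\otimes\id)(u_n^*),
\]
while the right-hand side is $u_{n+1}(\imath_n(a)\otimes 1^0)u_{n+1}^*$. So it suffices to understand the relation between $(\imath_n\otimes\id)(u_n)$ and $u_{n+1}$, and then to show that the ``discrepancy'' commutes with $\imath_n(a)\otimes 1^0$.

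First I would observe the key compatibility: for $k=1,\dots,n$, the element $v_k\in A_k\otimes H^0\subset A_n\otimes H^0$ has the unit $1^0\rtimes_{\Delta^0}1$ in all tensor slots beyond the $k$-th, so under $\imath_n\otimes\id$ it is sent to the same expression regarded in $A_{n+1}\otimes H^0$. Consequently $(\imath_n\otimes\id)(u_n)=v_1v_2\cdots v_n$ as an element of $A_{n+1}\otimes H^0$, and therefore
\[
u_{n+1}=v_1v_2\cdots v_nv_{n+1}=(\imath_n\otimes\id)(u_n)\cdot v_{n+1}.
\]

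It remains to verify that $v_{n+1}$ commutes with $\imath_n(a)\otimes 1^0$ for every $a\in A_n$. By definition $v_{n+1}$ equals $1\otimes\cdots\otimes 1\otimes V$ with $V$ placed in the $(n+1)$-st $A$-slot, while $\imath_n(a)\otimes 1^0$ has $a$ in the first $n$ $A$-slots, the unit $1^0\rtimes_{\Delta^0}1$ in the $(n+1)$-st $A$-slot, and $1^0$ in the $H^0$-slot. Slotwise commutativity is then immediate: in the first $n$ tensor factors one of the two elements is the identity, in the $(n+1)$-st $A$-factor the element $\imath_n(a)$ contributes the unit of $A$, and the $H^0$-factor of $\imath_n(a)\otimes 1^0$ is $1^0$. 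Thus $v_{n+1}(\imath_n(a)\otimes 1^0)=(\imath_n(a)\otimes 1^0)v_{n+1}$.

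Combining these two observations,
\[
u_{n+1}(\imath_n(a)\otimes 1^0)u_{n+1}^*=(\imath_n\otimes\id)(u_n)\bigl(\imath_n(a)\otimes 1^0\bigr)(\imath_n\otimes\id)(u_n)^*,
\]
which is exactly $(\imath_n\otimes\id)(\rho_n(a))$, establishing the claimed intertwining. The only step that requires a little care is the identification $(\imath_n\otimes\id)(u_n)=v_1\cdots v_n$ inside $A_{n+1}\otimes H^0$; everything else is a direct unfolding of definitions.
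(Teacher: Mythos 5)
Your proof is correct, and it takes a genuinely different route from the paper's. The paper proves the lemma by brute force: it substitutes the explicit formula for $u_n$ from Lemma \ref{lem:form}, expands both $(\imath_n\otimes\id)(\rho_n(a))$ and $\rho_{n+1}(\imath_n(a))$ in terms of comatrix units, and collapses the extra $(n+1)$-st tensor slot of the latter using $w_{t_n j}^{k*}=S(w_{jt_n}^k)$ together with the antipode identity $\sum_j w_{j_n j}^k S(w_{jt_n}^k)=\epsilon(w_{j_n t_n}^k)1$. You instead exploit the multiplicative structure of the cocycle: the recursion $u_{n+1}=(\imath_n\otimes\id)(u_n)\,v_{n+1}$ (which holds because each $v_k$ with $k\le n$ carries the unit $1^0\rtimes_{\Delta^0}1$ in every $A$-slot past the $k$-th, so its image in $A_{n+1}\otimes H^0$ is exactly $(\imath_n\otimes\id)(v_k)$), plus the observation that $v_{n+1}$ and $\imath_n(a)\otimes 1^0$ are supported on disjoint tensor factors and hence commute; unitarity of $v_{n+1}$ then makes the conjugation in the extra slot trivial, and the intertwining identity falls out of $\imath_n\otimes\id$ being a homomorphism. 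Your argument is shorter and more conceptual, needing no comatrix-unit computation at all (the antipode relation is hidden inside the unitarity of $V$, which is already established); the paper's computation has the minor advantage of reusing the explicit expansion of Lemma \ref{lem:form} that it needs anyway for Lemma \ref{lem:strong}, and of verifying the slot identification in coordinates rather than appealing to it. You correctly flag the one point that needs care, namely that the implicit embedding under which the product $v_1\cdots v_n$ is formed inside $A_{n+1}\otimes H^0$ is the one induced by $\imath_n$, which is exactly the paper's convention.
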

\begin{proof}In this proof, the summations are taken under all indices. Let $a$ be any element
in $A_n$. Then by Lemma \ref{lem:form}
\begin{align*}
& \rho_n (a)=u_n (a\otimes 1^0 )u_n^* \\
& =\sum(\widehat{V}(w_{ij_1}^k )\otimes\cdots\otimes\widehat{V}(w_{j_{n-1}j}))a(\widehat{V}(w_{st_1}^r )^* \otimes
\cdots\otimes\widehat{V}(w_{t_{n-1}t}^r )^* )\otimes\phi_{ij}^k \phi_{ts}^r \\
& =\sum(\widehat{V}(w_{ij_1}^k )\otimes\cdots\otimes\widehat{V}(w_{j_{n-1}j}))a(\widehat{V}(w_{st_1}^k )^* \otimes
\cdots\otimes\widehat{V}(w_{t_{n-1}j}^k )^* )\otimes\phi_{is}^k .
\end{align*}
Hence
\begin{align*}
& ((\imath_n \otimes\id)\circ\rho_n )(a) \\
& =\sum(\widehat{V}(w_{ij_1}^k )\otimes\cdots\otimes\widehat{V}(w_{j_{n-1}j}))a(\widehat{V}(w_{st_1}^k )^* \otimes
\cdots\otimes\widehat{V}(w_{t_{n-1}j}^k )^* )\otimes(1^0 \rtimes_{\Delta^0 }1) \\
& \otimes\phi_{is}^k .
\end{align*}
On the other hand,
\begin{align*}
& (\rho_{n+1}\circ\imath_n )(a)=\rho_{n+1}(a\otimes(1^0 \rtimes_{\Delta^0}1)) \\
& =\sum(\widehat{V}(w_{ij_1}^k )\otimes\cdots\otimes\widehat{V}(w_{j_{n-1}j_n}))a(\widehat{V}(w_{st_1}^k )^* \otimes
\cdots\otimes\widehat{V}(w_{t_{n-1}t_n }^k )^* )\otimes\widehat{V}(w_{j_n j}^k w_{t_n j}^{k*}) \\
& \otimes\phi_{is}^k \\
& =\sum(\widehat{V}(w_{ij_1}^k )\otimes\cdots\otimes\widehat{V}(w_{j_{n-1}j_n}))a(\widehat{V}(w_{st_1}^k )^* \otimes
\cdots\otimes\widehat{V}(w_{t_{n-1}t_n }^k )^* ) \\
& \otimes(1\rtimes_{\Delta^0}1)\epsilon(w_{j_n t_n }^k )\otimes\phi_{is}^k  \\
& =((\imath_n \otimes\id)\circ\rho_n )(a)
\end{align*}
since $w_{t_n j}^{k*}=S(w_{jt_n }^k )$. Therefore, we obtain the conclusion.
\end{proof}

By Lemma \ref{lem:commutative}, the inductive limit of $\{(\rho_n , \imath_n )\}$ is a
homomorphism of $B$ to $B\otimes H^0$. Furthermore, by Lemma \ref{lem:strong},
it is a coaction of $H^0$ on $B$. We denote it by $\rho$.

\begin{prop}\label{prop:appro}With the above notations, $\rho$ is approximately representable.
\end{prop}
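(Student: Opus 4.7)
The plan is to exhibit the required unitary $w$ as the sequence $w=(u_n)_n$ in $B^{\infty}\otimes H^0$, using the canonical identification $B^{\infty}\otimes H^0\cong(B\otimes H^0)^{\infty}$ coming from finite dimensionality of $H^0$. Each $u_n$ is a unitary in $A_n\otimes H^0\subset B\otimes H^0$, so $w$ is a unitary in $B^{\infty}\otimes H^0$. Since $\rho$ is an (untwisted) coaction, the task is to verify the three conditions of Definition \ref{Def:representable} with trivial $2$-cocycle (and $H$ replaced by $H^0$).

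For condition (1), the key observation is that for $a\in A_m$ and any $i>m$, the unitary $v_i$ acts nontrivially only on the $i$-th $A$-tensor factor of $A_n$ and on the $H^0$-factor, hence commutes with $a\otimes 1\in A_n\otimes H^0$. Therefore for every $n\ge m$,
\[
u_n(a\otimes 1)u_n^{*}=u_m(v_{m+1}\cdots v_n)(a\otimes 1)(v_{m+1}\cdots v_n)^{*}u_m^{*}=u_m(a\otimes 1)u_m^{*}=\rho_m(a)=\rho(a),
\]
so the sequence $(u_n(a\otimes 1)u_n^{*})_n$ is eventually constant equal to $\rho(a)$. This gives $w(a\otimes 1)w^{*}=\rho(a)$ on the dense $*$-subalgebra $\bigcup_m A_m$, and then on all of $B$ by norm continuity.

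For conditions (2) and (3), the strategy is to reduce both to the single coordinatewise identity
\[
(u_n\otimes 1)(\rho_{H^0}^{A_n}\otimes\id)(u_n)=(\id\otimes\Delta^0)(u_n),
\]
which is exactly the computation already carried out inside the proof of Lemma \ref{lem:strong}. Condition (2) is the direct coordinatewise translation of this, since $\rho_{H^0}^{B^{\infty}}$ acts on each $u_n$ as $\rho_{H^0}^{A_n}$. For condition (3), I would use $\rho|_{A_n}=\rho_n=\Ad(u_n)\circ\rho_{H^0}^{A_n}$ to obtain
\[
(\rho\otimes\id)(u_n)=(u_n\otimes 1)(\rho_{H^0}^{A_n}\otimes\id)(u_n)(u_n^{*}\otimes 1);
\]
right-multiplying by $u_n\otimes 1$ cancels the final $u_n^{*}\otimes 1$ and reduces condition (3) to the same identity as condition (2). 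Passing coordinatewise to $B^{\infty}\otimes H^0\otimes H^0$ then completes the verification.

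The main obstacle is bookkeeping rather than any substantive new computation: one must carefully track the several tensor-factor identifications among $A$, $A_n$, $B$, $B^{\infty}$, and $H^0$, and check that the trivial coaction $\rho_{H^0}^{B^{\infty}}$ and the inductive-limit coaction $\rho^{\infty}$ commute with the coordinatewise passage to sequences. Beyond these identifications, everything reduces to the explicit multiplication formula for $u_n$ from Lemma \ref{lem:form} and the coaction identity inside Lemma \ref{lem:strong}.
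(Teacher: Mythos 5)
Your proposal is correct and is essentially the paper's own proof: the authors also take the unitary $(u_n)_n$ in $B^{\infty}\otimes H^0$ and verify the three conditions of Definition \ref{Def:representable} (with trivial cocycle), which the paper leaves as "we can easily show." Your reductions of conditions (1)--(3) to the commutation of $v_i$ ($i>m$) with $A_m\otimes 1^0$ and to the cocycle identity established inside Lemma \ref{lem:strong} are exactly the intended easy verifications.
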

\begin{proof}
Let $u$ be a unitary element in $B^{\infty}\otimes H^0$ defined by
$u=(u_n )$, where $A_n$ is regarded as $C^*$-subalgebra of $B$ for any $n\in\BN$.
We can easily show that $\rho$ and $u$ hold the following conditions:
\newline
(1) $\rho(x)=(\Ad(u)\circ\rho_{H^0}^B )(x)$ for any $x\in B$,
\newline
(2) $(u\otimes 1^0 )(\rho_{H^0 }^{B^{\infty}}\otimes\id)(u)=(\id\otimes\Delta^0 )(u)$,
\newline
(3) $(\rho^{\infty}\otimes\id)(u)(u\otimes 1^0 )=(\id\otimes\Delta^0 )(u)$.
\newline
Therefore, we obtain the conclusion.
\end{proof}

\begin{prop}\label{prop:example}With the above notations, $\rho$ has the Rohlin property.
\end{prop}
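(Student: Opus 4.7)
My plan is to invoke Corollary~\ref{cor:more2}, which reduces the Rohlin property to exhibiting a projection $p\in B_\infty$ satisfying $e\cdot_{\rho^\infty}p=\frac{1}{N}$. The construction of $p$ will rely on the inductive-limit structure of $B$ and on the approximate representability of $\rho$ already established in Proposition~\ref{prop:appro}.

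Concretely, I plan to choose a minimal projection $q\in A\cong M_N(\BC)$ with the property that the canonical conditional expectation $E^{\rho_1}(q)=(\id\otimes e)(\rho_1(q))$ equals the scalar $\frac{1}{N}\cdot 1_A$. The existence of such a $q$ is where the saturated-coaction structure of $\rho_1$ will be used: since $\rho_1$ is a coaction of $H^0$ on the matrix algebra $A$, its fixed-point subalgebra $A^{\rho_1}$ has Watatani index $N$, so $E^{\rho_1}$ sends minimal projections of $A$ into $A^{\rho_1}$ with normalization $1/N$, and an appropriate choice of $q$ makes the image equal to the scalar. I would then set $p_n:=1^{\otimes n}\otimes q\in A_{n+1}\subset B$. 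Each $p_n$ is a projection, and the sequence $(p_n)$ lies in $B_\infty$ because $p_n$ commutes with every element of $A_m$ for $m\le n$ (its non-trivial tensor content being in slot $n+1$).

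To compute $e\cdot_{\rho^\infty}(p_n)$, use $\rho(p_n)=\rho_{n+1}(p_n)=u_{n+1}(p_n\otimes 1^0)u_{n+1}^*$. Writing $u_{n+1}=u_n v_{n+1}$ and noting that $u_n\in A_n\otimes H^0$ has its $A$-content in the first $n$ tensor slots (disjoint from slot $n+1$ where $p_n$ lives), $u_n$ commutes with $p_n\otimes 1^0$. Thus
\[
\rho(p_n)=u_n\bigl(1^{\otimes n}\otimes \rho_1(q)\bigr)u_n^* .
\]
Applying $(\id\otimes e)$ and exploiting both the tracial/Haar-integral properties of $e$ on $H^0$ and the unitarity $u_n u_n^*=1\otimes 1^0$, combined with the explicit form of $u_n$ from Lemma~\ref{lem:form} (a sum over products of comatrix units of $H$ tensored with matrix units of $H^0$), the resulting expression reduces to $1^{\otimes n}\otimes E^{\rho_1}(q)=\frac{1}{N}\cdot 1_B$. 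Once this is verified, Corollary~\ref{cor:more2} yields the Rohlin property.

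The main obstacle is the crucial step of passing the Haar functional $e$ through the conjugation by $u_n$: namely the identity
\[
(\id\otimes e)\bigl(u_n(1^{\otimes n}\otimes\rho_1(q))u_n^*\bigr)=1^{\otimes n}\otimes E^{\rho_1}(q) .
\]
This requires careful combinatorial manipulation of the matrix-unit multiplication in $H^0$, the antipode relations $w_{ij}^{k*}=S(w_{ji}^k)$, and the two-sided integral property $he=\epsilon(h)e=eh$ for $e\in H$. The bookkeeping is nontrivial but follows the same spirit as the computation in Lemma~\ref{lem:projection}; it is here that the concrete form of $u_n$ in Lemma~\ref{lem:form} is essential, as it allows one to use the orthogonality relations for $\{\phi_{ij}^k\}$ and $\{w_{ij}^k\}$ to collapse the multi-index sum.
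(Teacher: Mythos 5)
Your overall strategy coincides with the paper's: reduce via Corollary~\ref{cor:more2} to exhibiting a projection $p=(p_n)\in B_{\infty}$ with $e\cdot_{\rho^{\infty}}p=\frac{1}{N}$, where $p_n$ is a fixed minimal projection of $A$ placed in the far tensor slot. The paper takes $p_n=(1^0\rtimes_{\Delta^0}1)\otimes\cdots\otimes(1^0\rtimes_{\Delta^0}1)\otimes(\tau\rtimes_{\Delta^0}1)$ and verifies $e\cdot_{\rho_n}p_n=\frac{1}{N}$ by a direct telescoping computation through all the slots. As written, however, your proposal has two genuine gaps.

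First, the existence of a minimal projection $q\in A$ with $E^{\rho_1}(q)=\frac{1}{N}1_A$ is asserted rather than proved, and the index argument you sketch would not deliver it. Since $\rho_1=\Ad(V)\circ\rho_{H^0}^{A}$, the fixed point algebra $A^{\rho_1}$ is the relative commutant of $1^0\rtimes_{\Delta^0}H$ in $H^0\rtimes_{\Delta^0}H\cong M_N(\BC)$, which is $N$-dimensional, not trivial; so for a generic minimal projection $q$ the element $E^{\rho_1}(q)$ is merely a positive element of $A^{\rho_1}$ of normalized trace $\frac{1}{N}$, and need not be the scalar $\frac{1}{N}1_A$. You must exhibit $q$ explicitly; the paper's choice is $q=\tau\rtimes_{\Delta^0}1$, for which
$$
E^{\rho_1}(q)=[e\cdot_{\Delta^0}\tau]\rtimes_{\Delta^0}1=\tau_{(1)}\tau_{(2)}(e)\rtimes_{\Delta^0}1=e(\tau)(1^0\rtimes_{\Delta^0}1)=\tfrac{1}{N}.
$$
Second, the identity $(\id\otimes e)\bigl(u_n(1^{\otimes n}\otimes\rho_1(q))u_n^*\bigr)=1^{\otimes n}\otimes E^{\rho_1}(q)$, which you yourself flag as the main obstacle and leave unproved, is the entire content of the proposition; deferring it leaves nothing established. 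It is in fact true, and more cleanly than by the multi-index bookkeeping you anticipate: by Lemma~\ref{lem:strong} and unitarity, $\widehat{u_n}\in\Hom(H,A_n)$ is a unital $*$-homomorphism with $\widehat{u_n^*}=\widehat{u_n}\circ S$, so for $\psi\in H^0$,
$$
(\id\otimes e)(u_n(1\otimes\psi)u_n^* )=\widehat{u_n}(e_{(1)})\psi(e_{(2)})\widehat{u_n}(S(e_{(3)}))=\widehat{u_n}\bigl(e_{(1)}S(e_{(3)})\psi(e_{(2)})\bigr),
$$
and $e_{(1)}S(e_{(3)})\psi(e_{(2)})=\psi(e)1_H$ because pairing with any $\phi\in H^0$ gives $(\phi_{(1)}\psi S^0(\phi_{(2)}))(e)=(S^0(\phi_{(2)})\phi_{(1)}\psi)(e)=\epsilon^0(\phi)\psi(e)$, using that $e$ is the tracial Haar functional on $H^0$ and that $S^0$ is involutive. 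Since the $A_{n+1}$-legs of $u_n$ and of $1^{\otimes n}\otimes\rho_1(q)$ occupy disjoint tensor slots, this slot-by-slot identity yields the claimed formula. With these two verifications supplied your route closes and is essentially the paper's argument, reorganized so that the $n$-fold computation collapses to a single-slot one.
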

\begin{proof}
By Corollary \ref{cor:more2}, it suffices to show that there is a projection $p\in B_{\infty}$
such that $e\cdot_{\rho^{\infty}}p=\frac{1}{N}$. For any $n\in\BN$, let
$$
p_n =(1^0 \rtimes_{\Delta^0}1)\otimes\cdots(1^0 \rtimes_{\Delta^0}1)
\otimes(\tau\rtimes_{\Delta^0} 1)\in A_{n-1}' \cap A_n .
$$
Also, let $p=(p_n )$. Then clearly $p$ is a projection in $B_{\infty}$.
In order to show that $e\cdot_{\rho^{\infty}}p=\frac{1}{N}$, we have only to show that
$e\cdot_{\rho_n }p_n =\frac{1}{N}$ for any $n\in\BN$. We note that
\begin{align*}
& u_n (p_n \otimes 1^0 )u_n^* \\
& =\sum\widehat{V}(w_{ij_1}^k S(w_{t_1 s}^k ))\otimes
\widehat{V}(w_{j_1 j_2}^k S(w_{t_2 t_1}^k ))\otimes\cdots\otimes\widehat{V}(w_{j_{n-2}j_{n-1}}^k S(w_{t_{n-1}t_{n-2}}^k )) \\
& \otimes\widehat{V}(w_{j_{n-1}j}^k)(\tau\rtimes_{\Delta^0}1)\widehat{V}(S(w_{jt_{n-1}}^k ))\otimes\phi_{is}^k,
\end{align*}
where the summation is taken under all indices.
Hence since $e=\sum_{f,q}\frac{d_f}{N}w_{qq}^f$,
\begin{align*}
& e\cdot_{\rho_n}p_n \\
& =\sum\frac{d_f}{N}\widehat{V}(w_{ij_1}^k S(w_{t_1 s}^k ))\otimes
\widehat{V}(w_{j_1 j_2}^k S(w_{t_2 t_1}^k ))\otimes\cdots\otimes\widehat{V}(w_{j_{n-2}j_{n-1}}^k S(w_{t_{n-1}t_{n-2}}^k )) \\
& \otimes\widehat{V}(w_{j_{n-1}j}^k )(\tau\rtimes_{\Delta^0}1)\widehat{V}(S(w_{jt_{n-1}}^k ))\phi_{is}^k (w_{qq}^f ) \\
& =\sum\frac{d_k}{N}\widehat{V}(w_{ij_1}^k S(w_{t_1 i}^k ))\otimes
\widehat{V}(w_{j_1 j_2}^k S(w_{t_2 t_1}^k ))\otimes\cdots\otimes\widehat{V}(w_{j_{n-2}j_{n-1}}^k S(w_{t_{n-1}t_{n-2}}^k )) \\
& \otimes\widehat{V}(w_{j_{n-1}j}^k )(\tau\rtimes_{\Delta^0}1)\widehat{V}(S(w_{jt_{n-1}}^k )) \\
& =\sum\frac{d_k}{N}\widehat{V}(\epsilon(w_{t_1 j_1}^k ))\otimes
\widehat{V}(w_{j_1 j_2}^k S(w_{t_2 t_1}^k ))\otimes\cdots\otimes\widehat{V}(w_{j_{n-2}j_{n-1}}^k S(w_{t_{n-1}t_{n-2}}^k )) \\
& \otimes\widehat{V}(w_{j_{n-1}j}^k )(\tau\rtimes_{\Delta^0}1)\widehat{V}(S(w_{jt_{n-1}}^k )) \\
& =\sum\frac{d_k}{N}(1^0 \rtimes_{\Delta^0}1)\otimes\widehat{V}(w_{j_1 j_2}^k S(w_{t_2 j_1}^k ))\otimes
\widehat{V}(w_{j_1 j_2}^k S(w_{t_2 t_1}^k ))\otimes\cdots \\
& \otimes\widehat{V}(w_{j_{n-2}j_{n-1}}^k S(w_{t_{n-1}t_{n-2}}^k ))
\otimes\widehat{V}(w_{j_{n-1}j}^k )(\tau\rtimes_{\Delta^0}1)\widehat{V}(S(w_{jt_{n-1}}^k )) ,
\end{align*}
where the summations are taken under all indices. Doing  this in the same way as in the above
for $n-1$ times, we can obtain that
\begin{align*}
e\cdot_{\rho_n}p_n & =\sum\frac{d_k}{N}(1^0 \rtimes_{\Delta^0}1)\otimes\cdots\otimes(1^0 \rtimes_{\Delta^0}1)
\otimes\widehat{V}(w_{j_{n-1}j}^k )(\tau\rtimes_{\Delta^0 }1)\widehat{V}(S(w_{jj_{n-1}}^k )) \\
& =\sum\frac{d_k}{N}(1^0 \rtimes_{\Delta^0}1)\otimes\cdots\otimes(1^0 \rtimes_{\Delta^0}1)
\otimes([w_{j_{n-1}j_{n-1}}\cdot_{\Delta^0}\tau]\rtimes_{\Delta^0} 1) \\
& =(1^0 \rtimes_{\Delta^0}1)\otimes\cdots\otimes(1^0 \rtimes_{\Delta^0}1)
\otimes([e\cdot_{\Delta^0}\tau]\rtimes_{\Delta^0} 1) \\
& =(1^0 \rtimes_{\Delta^0}1)\otimes\cdots\otimes(1^0 \rtimes_{\Delta^0}1)
\otimes\tau_{(1)}\tau_{(2)}(e) (1^0 \rtimes_{\Delta^0}1) \\
& =\frac{1}{N}(1^0 \rtimes_{\Delta^0}1)\otimes\cdots\otimes(1^0 \rtimes_{\Delta^0}1) ,
\end{align*}
where the summations are taken under all indices. Therefore, we obtain the conclusion.
\end{proof}

\section{1-cohomology vanishing theorem}\label{sec:vanish1}
Let $\rho$ be a coaction of $H^0 $ on $A$ with the Rohlin property.
In this section, we shall show that for any coaction $\sigma$ of $H^0$ on $A$ which is
exterior equivalent to $\rho$, there is a unitary element $x\in A\otimes H^0$
such that $\sigma=\Ad(x\otimes 1^0 )\circ\rho\circ\Ad(x^* )$.
\par
Let $\rho$ and $\sigma$ be as above. Since $\rho$ and $\sigma$ are
exterior equivalent, there is a unitary
element $v\in A\otimes H^0 $ satisfying the following conditions.
\newline
(8, 1) $\sigma=\Ad(v)\circ\rho$,
\newline
(8, 2) $(v\otimes 1^0)(\rho\otimes\id_{H^0 })(v) =(\id\otimes\Delta^0 )(v)$.
\newline
Since $\rho$ has the Rohlin property, there is a unitary element $w$
in $(A\rtimes_{\sigma}H)\otimes H$ satisfying Equations (5, 1)-(5, 3) for $\widehat{\rho}$.
By Proposition \ref{prop:exterior2}, $\sigma$ has also the Rohlin property.
Hence there is a unitary element $w_1 \in (A\rtimes_{\sigma}H)\otimes H$
satisfying Equations (5, 1)-(5, 3) for $\widehat{\sigma}$. By Lemma \ref{lem:projection2},
$\widehat{w_1 }(\tau)=\widehat{w}(\tau)$.

Let $x=N(\id\otimes e)(v\rho^{\infty}(\widehat{w}(\tau)))
=N\widehat{v}(e_{(1)})[e_{(2)}\cdot_{\rho^{\infty}}\widehat{w}(\tau)]$.

\begin{lemma}\label{lem:unitary2}With the above notations, the element $x$ is
a unitary element in $A^{\infty}$ such that
$\rho^{\infty}(x)=v^* (x\otimes 1^0 )$.
\end{lemma}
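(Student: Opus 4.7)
The plan is to split the statement into two parts and prove them in turn. First I would verify the covariance identity $\rho^\infty(x) = v^*(x \otimes 1^0)$ by combining the cocycle condition on $v$, the coassociativity of $\rho^\infty$, and Haar integration against $e$ on $H^0$. Then I would deduce that $x$ is unitary using this identity together with the Rohlin normalization $e \cdot_{\rho^\infty} \widehat{w}(\tau) = 1/N$.

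For the covariance identity, set $p = \widehat{w}(\tau)$. The cocycle identity $(v \otimes 1^0)(\rho \otimes \id)(v) = (\id \otimes \Delta^0)(v)$, together with the coassociativity relation $(\rho^\infty \otimes \id) \circ \rho^\infty = (\id \otimes \Delta^0) \circ \rho^\infty$, gives
\[
(v \otimes 1^0)(\rho^\infty \otimes \id)(v \rho^\infty(p)) = (\id \otimes \Delta^0)(v \rho^\infty(p))
\]
in $A^\infty \otimes H^0 \otimes H^0$. I would then apply $\id \otimes \id \otimes e$ to both sides. The right-hand side collapses to $(x/N) \otimes 1^0$ by right invariance of the Haar functional on $H^0$, namely $\sum \phi_{(1)} \phi_{(2)}(e) = \phi(e) 1^0$. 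The left-hand side reduces to $v \cdot \rho^\infty(x/N)$ via the coordinate identities $(\id \otimes \id \otimes e)((v \otimes 1^0) Z) = v \cdot (\id \otimes \id \otimes e)(Z)$ and $(\id \otimes \id \otimes e) \circ (\rho^\infty \otimes \id) = \rho^\infty \circ (\id \otimes e)$. Multiplying by $v^*$ yields $\rho^\infty(x) = v^*(x \otimes 1^0)$.

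For unitarity, set $Y = v\rho^\infty(p)$, so that $x = N(\id \otimes e)(Y)$. Since $v$ is unitary and $p$ is a projection, $Y^*Y = \rho^\infty(p)$ and $YY^* = \sigma^\infty(p)$, and Lemmas \ref{lem:projection} and \ref{lem:projection2} together give $(\id \otimes e)(Y^*Y) = (\id \otimes e)(YY^*) = 1/N$. Taking adjoints in the covariance identity gives $\rho^\infty(x^*) = (x^* \otimes 1^0) v$, hence $\rho^\infty(x^*x) = (x^*x) \otimes 1^0$ and $\sigma^\infty(xx^*) = (xx^*) \otimes 1^0$, so that $x^*x \in (A^\rho)^\infty$ and $xx^* \in (A^\sigma)^\infty$. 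To establish $x^*x = 1$, I would expand $x^*x$ using the Sweedler form $x/N = \widehat{v}(e_{(1)})(e_{(2)} \cdot_\rho p)$ and simplify the middle factor $\widehat{v}(e_{(1)}')^* \widehat{v}(e_{(1)})$ using the convolution identity $\widehat{v^*v}(k) = \widehat{v^*}(k_{(1)}) \widehat{v}(k_{(2)}) = \epsilon(k) 1_A$ coming from $v^*v = 1 \otimes 1^0$, then collapse the remaining expression via the Rohlin normalization $e \cdot_\rho p = 1/N$. The symmetric argument with $\sigma$ replacing $\rho$ (using Lemma \ref{lem:projection2} to identify the $\sigma$-Rohlin projection with $p$) yields $xx^* = 1$. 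The main obstacle will be this last computation, which must interleave several Sweedler summations over $\Delta(e)$ with the cocycle identity for $v$ and the Rohlin normalization, and has to be tracked carefully.
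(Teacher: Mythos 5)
Your first half is fine: the derivation of $\rho^{\infty}(x)=v^{*}(x\otimes 1^{0})$ from the cocycle identity (8,2), coassociativity, and the invariance $\phi_{(1)}\phi_{(2)}(e)=\phi(e)1^{0}$ of the Haar projection is exactly the computation the paper performs (the paper merely does it last rather than first, which is immaterial).

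The gap is in the unitarity argument. The observation that $(\id\otimes e)(Y^{*}Y)=(\id\otimes e)(YY^{*})=\frac{1}{N}$ for $Y=v\rho^{\infty}(\widehat{w}(\tau))$ carries no direct information about $x^{*}x$ or $xx^{*}$, because $\id\otimes e$ is not multiplicative; you seem to know this, since you defer to a Sweedler computation. But the computation as you sketch it would not close. In
$x^{*}x=N^{2}\,[f_{(2)}\cdot_{\rho^{\infty}}\widehat{w}(\tau)]^{*}\,\widehat{v}(f_{(1)})^{*}\widehat{v}(e_{(1)})\,[e_{(2)}\cdot_{\rho^{\infty}}\widehat{w}(\tau)]$
the arguments $f_{(1)}$ and $e_{(1)}$ of the middle factor come from two \emph{independent} copies of $\Delta(e)$, so the convolution identity $\widehat{v^{*}}(k_{(1)})\widehat{v}(k_{(2)})=\epsilon(k)1$ simply does not apply to it. What actually makes the doubled Haar summation collapse is Lemma \ref{lem:homo}(3), i.e.\ $\widehat{w}(\tau)\,z\,\widehat{w}(\tau)=E_{1}^{\rho}(z)\widehat{w}(\tau)$, equivalently $\widehat{w}(\tau)(1\rtimes_{\rho}h)\widehat{w}(\tau)=\tau(h)\widehat{w}(\tau)$: writing $[h\cdot\widehat{w}(\tau)]=\widehat{V}(h_{(1)})\widehat{w}(\tau)\widehat{V}^{*}(h_{(2)})$, the product of the two $\widehat{w}(\tau)$--factors contracts to a single one through a scalar $\tau(\,\cdot\,)$, and only after this merge do the counit/Haar identities and the normalizations $e\cdot_{\rho^{\infty}}\widehat{w}(\tau)=e\cdot_{\sigma^{\infty}}\widehat{w}(\tau)=\frac{1}{N}$ (Lemmas \ref{lem:projection} and \ref{lem:projection2}) finish the proof. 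This ingredient is absent from your outline, and without it the expression does not reduce. Note also that the paper avoids computing $x^{*}x$ directly: it shows $x^{*}=y$ where $y=N(\id\otimes e)(v^{*}\sigma^{\infty}(\widehat{w}(\tau)))$ and reuses the $yy^{*}=1$ computation with the roles of $\rho$, $\sigma$, $v$, $v^{*}$ interchanged; your plan of running the symmetric computation for $xx^{*}$ is equivalent, but in either form the one hard step requires Lemma \ref{lem:homo}(3).
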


\begin{proof}
Let $f=e$. Then by Lemmas \ref{lem:homo} and \ref{lem:projection2}
\begin{align*}
xx^* & =N^2 \widehat{v}( e_{(1)})[e_{(2)}\cdot_{\rho^{\infty}}\widehat{w}(\tau)]
[f_{(2)}\cdot_{\rho^{\infty}}\widehat{w}(\tau)]^* \widehat{v}(f_{(1)})^*  \\
& =N^2 \widehat{v}( e_{(1)})[e_{(2)}\cdot_{\rho^{\infty}}\widehat{w}(\tau)]
[S(f_{(2)})^* \cdot_{\rho^{\infty}}\widehat{w}(\tau)] \widehat{v}(f_{(1)})^*  \\
& =N^2 \widehat{v}( e_{(1)})(1\rtimes_{\rho}e_{(2)})\widehat{w}(\tau)(1\rtimes_{\rho}S(e_{(3)}))
(1\rtimes_{\rho}S(f_{(3)}^* ))\widehat{w}(\tau)(1\rtimes_{\rho}f_{(2)}^* )\widehat{v}(f_{(1)})^* \\ 
& =N^2 \widehat{v}( e_{(1)})(1\rtimes_{\rho}e_{(2)})\widehat{w}(\tau)\tau(e_{(3)}f_{(3)}^* )
(1\rtimes_{\rho}f_{(2)}^* )\widehat{v}(f_{(1)})^* \\
& =N^2 \widehat{v}(e_{(1)})(1\rtimes_{\rho}e_{(2)})\widehat{w}(\tau)(1\rtimes_{\rho}S(e_{(3)})e_{(4)}f_{(2)}^* )
\tau(e_{(5)}f_{(3)}^* )\widehat{v}(f_{(1)})^* \\
& =N^2 \widehat{v}(e_{(1)})(1\rtimes_{\rho}e_{(2)})\widehat{w}(\tau)
(1\rtimes_{\rho}S(e_{(3)}))\tau(e_{(4)}f_{(2)}^* )\widehat{v}(f_{(1)})^* \\
& =N^2 \widehat{v}(e_{(1)})(1\rtimes_{\rho}e_{(2)})\widehat{w}(\tau)(1\rtimes_{\rho}S(e_{(3)}))\widehat{v}^* (S(f_{(1)})^* )
\tau(e_{(4)}f_{(2)}^* ) \\
& =N^2 \widehat{v}(e_{(1)})(1\rtimes_{\rho}e_{(2)})\widehat{w}(\tau)(1\rtimes_{\rho}S(e_{(3)}))
\widehat{v}^* (S(S(e_{(4)})e_{(5)}f_{(1)}^* ))\tau(e_{(6)}f_{(2)}^* ) \\
& =N^2 \widehat{v}(e_{(1)})(1\rtimes_{\rho}e_{(2)})\widehat{w}(\tau)(1\rtimes_{\rho}S(e_{(3)}))
\widehat{v}^* (e_{(4)})\tau(e_{(5)}f) \\
& =N \widehat{v}(e_{(1)})(1\rtimes_{\rho}e_{(2)})\widehat{w}(\tau)(1\rtimes_{\rho}S(e_{(3)}))
\widehat{v}^* (e_{(4)}) \\
& =N(\id\otimes e)(v\rho^{\infty}(\widehat{w}(\tau))v^* ) \\
& =N[e\cdot_{\sigma^{\infty}}\widehat{w_1 }(\tau)] =1.
\end{align*}
Let $y=N(\id\otimes e)(v^* \sigma^{\infty}(\widehat{w_1 }(\tau)))
=N\widehat{v}^* (e_{(1)})[e_{(2)}\cdot_{\sigma^{\infty}}\widehat{w_1 }(\tau)]$.
Then by the above discussions, $yy^* =1$. On the other hand, by Lemmas \ref{lem:projection2}
and \ref{lem:projection}
\begin{align*}
y^* & =N[S(e_{(2)}^* )\cdot_{\sigma^{\infty}}\widehat{w}(\tau)]\widehat{v}(S(e_{(1)}^* ))
=N\widehat{v}(S(e_{(4)}^* ))[S(e_{(3)})^* \cdot_{\rho^{\infty}}\widehat{w}(\tau)]
\widehat{v}^* (S(e_{(2)}^* ))\widehat{v}(S(e_{(1)}^* )) \\
& =N\widehat{v}(S(e_{(2)}^* ))[S(e_{(1)})^* \cdot_{\rho^{\infty}}\widehat{w}(\tau)]
=N(\id\otimes S(e)^* )(v\rho^{\infty}(\widehat{w}(\tau)))=x .
\end{align*}
Thus $x^* x=1$. Hence $x$ is a unitary element in $A^{\infty}$. Finally, we shall show
that $\rho^{\infty}(x)=v^* (x\otimes 1^0 )$. Noting that $(v\otimes 1^0 )(\rho\otimes\id)(v)=(\id\otimes\Delta^0 )(v)$,
\begin{align*}
\rho^{\infty}(x) & =N\rho^{\infty}((\id\otimes e)(v\rho^{\infty}(\widehat{w}(\tau)))) \\
& =N((\id\otimes\id_{H^0 } \otimes e)\circ(\rho^{\infty}\otimes\id_{H^0 }))(v\rho^{\infty}(\widehat{w}(\tau))) \\
& =N(\id\otimes\id_{H^0 }\otimes e)((\rho^{\infty}\otimes\id_{H^0 })(v)
((\rho^{\infty}\otimes\id_{H^0 })\circ\rho^{\infty})(\widehat{w}(\tau))) \\
& =N(\id\otimes\id_{H^0 }\otimes e)((v^* \otimes 1^0 )
(\id\otimes\Delta^0 )(v)((\id\otimes\Delta^0 )\circ\rho^{\infty})(\widehat{w}(\tau)))) \\
& =Nv^* (\id\otimes\id_{H^0 }\otimes e)((\id\otimes\Delta^0 )(v\rho^{\infty}(\widehat{w}(\tau)))) \\
& =Nv^* (\id\otimes e)(v\rho^{\infty}(\widehat{w}(\tau)))\otimes 1^0 \\
& =v^* (x\otimes 1^0 ) .
\end{align*}
\end{proof}

\begin{lemma}\label{lem:appro}With the above notations, for any $\epsilon>0$ there is a unitary element
$x_0 $ in $A$ such that
$$
||v-(x_0 \otimes 1)\rho(x_0^* )||<\epsilon.
$$
\end{lemma}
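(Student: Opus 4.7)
The plan is to descend the identity produced by Lemma \ref{lem:unitary2} from the sequence algebra $A^{\infty}$ back down to $A$. From that lemma, $x$ is a unitary in $A^{\infty}$ satisfying $\rho^{\infty}(x) = v^*(x \otimes 1^0)$; since $\rho^{\infty}$ is a $*$-homomorphism this rearranges to
$$
v = (x \otimes 1^0)\,\rho^{\infty}(x^*)
$$
as an identity in $A^{\infty} \otimes H^0$.

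Since $H^0$ is finite dimensional, there is a canonical identification $A^{\infty} \otimes H^0 \cong (A \otimes H^0)^{\infty}$ under which $\rho^{\infty}$ is the map that sends a class $(a_n)$ to the class $(\rho(a_n))$, and under which an element of $A \otimes H^0$ is represented by its constant sequence. The next step is to lift $x$ to a representative $(x_n)_n \in \ell^{\infty}(\mathbf N, A)$. Unitarity of $x$ in $A^{\infty}$ gives $\|x_n^* x_n - 1\| \to 0$ and $\|x_n x_n^* - 1\| \to 0$, so for all sufficiently large $n$ the element $x_n$ is invertible, and the unitary $y_n = x_n |x_n|^{-1}$ of its polar decomposition satisfies $\|x_n - y_n\| \to 0$. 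Replacing the original representative by $(y_n)$, we may therefore assume that each $x_n$ is an honest unitary of $A$ and that $(x_n)$ still represents $x$.

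Under the identification above, the identity $v = (x \otimes 1^0)\rho^{\infty}(x^*)$ becomes
$$
\lim_{n\to\infty}\|v - (x_n \otimes 1^0)\rho(x_n^*)\| = 0 .
$$
Given $\varepsilon > 0$, pick $n$ large enough that this norm is less than $\varepsilon$, and set $x_0 = x_n$. Then $x_0$ is a unitary of $A$ with $\|v - (x_0 \otimes 1)\rho(x_0^*)\| < \varepsilon$, which is the desired conclusion. The only delicate point in the argument is the replacement of the approximately unitary representing sequence by an honestly unitary one, but as indicated this is handled uniformly via the polar decomposition; everything else is a straightforward translation between $A^{\infty} \otimes H^0$ and $(A\otimes H^0)^{\infty}$.
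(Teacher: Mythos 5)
Your proposal is correct and follows the same route as the paper: both deduce from Lemma \ref{lem:unitary2} the exact identity $v=(x\otimes 1^0)\rho^{\infty}(x^*)$ for a unitary $x\in A^{\infty}$ and then approximate $x$ by a unitary of $A$. The paper leaves the lifting step implicit, whereas you correctly supply the standard detail (choosing a representing sequence, correcting it to genuine unitaries via polar decomposition, and passing through the identification $A^{\infty}\otimes H^0\cong (A\otimes H^0)^{\infty}$), so there is nothing to object to.
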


\begin{proof}By Lemma \ref{lem:unitary2}, there is a unitary element $x\in A^{\infty}$
such that $v=(x\otimes 1^0 )\rho^{\infty}(x^* )$. Since $x$ is a unitary element in $A^{\infty}$,
for any $\epsilon>0$, there is a unitary element $x_0 \in A$ such that
$||v-(x_0 \otimes 1)\rho(x_0^* )||<\epsilon$.
\end{proof}

\begin{thm}\label{thm:vanish1}Let $\rho$ and $\sigma$ be coactions of $H^0 $ on $A$ which are
exterior equivalent.
We suppose that $\rho$ has the Rohlin property.
Then there is a unitary element $x\in A$ such that
$$
\sigma=\Ad(x\otimes 1^0 )\circ\rho\circ\Ad(x^* ) .
$$
\end{thm}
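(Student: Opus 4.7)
The plan is to reduce the theorem to producing a single unitary $x\in A$ with the exact identity $v=(x\otimes 1^0 )\rho(x^* )$, and then to obtain such $x$ by iterating Lemma \ref{lem:appro} in an Evans--Kishimoto-style intertwining. The reduction is immediate: if $v=(x\otimes 1^0 )\rho(x^* )$ then $\sigma=\Ad(v)\circ\rho=\Ad((x\otimes 1^0 )\rho(x^* ))\circ\rho=\Ad(x\otimes 1^0 )\circ\rho\circ\Ad(x^* )$ by direct expansion.

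The first step is a quantitative refinement of Lemma \ref{lem:appro}: for every $\delta>0$ there is $\eta>0$ such that whenever $v'\in A\otimes H^0 $ is a cocycle implementing an exterior equivalence between $\rho$ and some coaction and $||v'-1\otimes 1^0 ||<\eta$, the approximating unitary $y\in A$ produced by Lemma \ref{lem:appro} can additionally be chosen to satisfy $||y-1||<\delta$. To see this I would reopen the construction of Lemma \ref{lem:unitary2}: the unitary $x'=N(\id\otimes e)(v' \rho^{\infty}(\widehat{w}(\tau)))\in A^{\infty}$ obeys $v'=(x'\otimes 1^0 )\rho^{\infty}((x')^* )$, and subtracting the same formula with $v'$ replaced by $1\otimes 1^0 $ (whose value is $1$ by $N(e\cdot_{\rho^{\infty}}\widehat{w}(\tau))=1$, Lemma \ref{lem:projection}) yields the norm estimate $||x'-1||\leq N||v'-1\otimes 1^0 ||$ in $A^{\infty}$. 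A representing sequence of $x'$ in $A$ whose entries are close to $1$, polar-decomposed to become genuinely unitary, then provides the required $y$.

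With this refinement in hand the iteration is standard. Fix a summable sequence $\delta_n >0$ (say $\delta_n =2^{-n}$) and let $\eta_n $ be the corresponding tolerances. Set $v_0 =v$, $X_0 =1$, and recursively define unitaries $y_n \in A$ and cocycles $v_n :=(y_n^* \otimes 1^0 )v_{n-1}\rho(y_n )$ by applying Lemma \ref{lem:appro} at step $n=1$ and the refinement at steps $n\geq 2$, arranging $||y_n -1||<\delta_n $ for $n\geq 2$ and $||v_n -1\otimes 1^0 ||<\min(\delta_{n+1}, \eta_{n+1})$. That $v_n $ is again a cocycle is a direct computation using the cocycle identity for $v_{n-1}$ and $(\rho\otimes\id)\circ\rho=(\id\otimes\Delta^0 )\circ\rho$. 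Setting $X_n =y_1 \cdots y_n $, one has the telescoping identity $v_n =(X_n^* \otimes 1^0 )v\rho(X_n )$ for each $n$. Summability of $||y_n -1||$ makes $X_n $ converge in norm to a unitary $x\in A$, and $v_n \to 1\otimes 1^0 $, so passing to the limit yields $v=(x\otimes 1^0 )\rho(x^* )$.

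The main obstacle is the quantitative refinement of Lemma \ref{lem:appro}---making the approximating unitary close to $1$ when the cocycle is close to $1\otimes 1^0 $---and the associated unitarisation of a near-unitary element in $A$. Once that Lipschitz-type estimate is in place, the infinite telescoping product argument and the final limit are routine.
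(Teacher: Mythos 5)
Your proof is correct, but it takes a genuinely different route from the paper's. The paper applies Lemma \ref{lem:appro} exactly once, with tolerance $1$, to replace $v$ by the cocycle $v_2 = v v_1^*$ for the conjugated coaction $\rho_1=\Ad(x_0\otimes 1^0)\circ\rho\circ\Ad(x_0^*)$ with $\|v_2-1\|<1$; it then trivializes $v_2$ \emph{exactly} by an averaging trick: $y=(\id\otimes e)(v_2)$ satisfies $\rho_1(y)=v_2^*(y\otimes 1^0)$ by Haar invariance of $e$ together with the cocycle identity for $v_2$, it is invertible since $\|1-y\|<1$, and $\rho_1(y^*y)=y^*y\otimes 1^0$ forces the polar part $x$ of $y$ to satisfy $\rho_1(x)=v_2^*(x\otimes 1^0)$ on the nose --- no iteration and no quantitative control of the approximants is needed. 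You instead run an Evans--Kishimoto-type infinite intertwining, whose crux is the Lipschitz estimate $\|x'-1\|\le N\|v'-1\otimes 1^0\|$ for the canonical unitary $x'=N(\id\otimes e)(v'\rho^{\infty}(\widehat{w}(\tau)))$ of Lemma \ref{lem:unitary2}; that estimate is valid, since $(\id\otimes e)$ is a slice by the Haar state (hence contractive), $\rho^{\infty}(\widehat{w}(\tau))$ is a projection, and $N[e\cdot_{\rho^{\infty}}\widehat{w}(\tau)]=1$ by Lemma \ref{lem:projection}. Your remaining steps also check out: the modified element $(y_n^*\otimes 1^0)v_{n-1}\rho(y_n)$ is again a $\rho$-cocycle (using $(\rho\otimes\id)\circ\rho=(\id\otimes\Delta^0)\circ\rho$), its distance to $1\otimes 1^0$ equals $\|v_{n-1}-(y_n\otimes 1^0)\rho(y_n^*)\|$, and summability of $\|y_n-1\|$ gives convergence of the telescoping product. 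What the paper's argument buys is brevity and the absence of any bookkeeping; what yours buys is uniformity with the twisted case --- it is essentially the same scheme the authors themselves use for the 2-cohomology vanishing theorem (Theorem \ref{thm:vanish2} via Lemma \ref{lem:coboundary2}), where the exact one-shot averaging trick is not available.
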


\begin{proof}Let $v$ be a unitary element in $A\otimes H^0$ satisfying 
Equations (8, 1) and (8, 2). By Lemma \ref{lem:appro}, there is a unitary element $x_0 \in A$ such that
$$
||v-(x_0 \otimes 1)\rho(x_0^* )||<1 .
$$
Let
$$
\rho_1 =\Ad(x_0 \otimes 1)\circ\rho\circ\Ad(x_0^* )=\Ad((x_0 \otimes 1^0 )\rho(x_0^* ))\circ\rho .
$$
Let $v_1 =(x_0 \otimes 1^0 )\rho(x_0^* )$. Then $\rho_1 $ is a coaction of $H^0 $ on $A$.
Also, $\sigma=\Ad(vv_1^* )\circ\rho_1$. Let $v_2 =vv_1^* $. Then $v_2$ is a unitary
element in $A\otimes H^0 $ with
$$
||v_2 -1||=||v-v_1 ||=||v-(x_0 \otimes 1^0 )\rho(x_0^* )||<1 .
$$
Furthermore, since $v_1 =(x_0 \otimes 1^0 )\rho(x_0^* )$,
\begin{align*}
(v_2 \otimes 1^0 )(\rho_1 \otimes\id)(v_2 ) & =(v_2 \otimes 1^0 )(v_1 \otimes 1^0 )(\rho\otimes\id)(v_2 )
(v_1^* \otimes 1^0 ) \\
& =(v\otimes 1^0 )(\rho\otimes\id)(v_2 )(v_1^* \otimes 1^0 ) \\
& =(v\otimes 1^0 )(\rho\otimes\id)(v)(\rho\otimes\id)(v_1^* )(v_1^* \otimes 1^0  ) \\
& =(\id\otimes\Delta^0 )(v)(\rho\otimes\id)(v_1^* )(v_1^* \otimes 1^0 ) \\
& =(\id\otimes \Delta^0 )(v_2 )(\id\otimes\Delta^0 )(v_1 )(\rho\otimes\id)(v_1^* )(v_1^* \otimes 1^0 ) \\
& =(\id\otimes \Delta^0 )(v_2 )(x_0 \otimes 1^0 \otimes 1^0 )((\id\otimes\Delta^0 )\circ\rho)(x_0^* ) \\
& \times ((\rho\otimes\id)\circ\rho)(x_0 )(\rho(x_0^* )\otimes 1^0 )(\rho(x_0 )\otimes 1^0 )
(x_0^* \otimes 1^0 \otimes 1^0 ) \\
& =(\id\otimes \Delta^0 )(v_2 )(x_0 \otimes 1^0 \otimes 1^0 )
((\rho\otimes\id)\circ\rho)(x_0^* x_0 )(x_0^* \otimes 1^0 \otimes 1^0 ) \\
& =(\id\otimes \Delta^0 )(v_2 ) .
\end{align*}
Thus
$(v_2 \otimes 1^0 )(\rho_1 \otimes\id)(v_2 )=(\id_A \otimes\Delta^0 )(v_2 )$.
Let $y=(\id_A \otimes e)(v_2 )$. Then
\begin{align*}
\rho_1 (y) & =(\id_A \otimes\id_{H^0 } \otimes e)((\rho_1 \otimes\id_{H^0 })(v_2 )) \\
& =(\id_A \otimes\id_{H^0 } \otimes e)((v_2^* \otimes 1^0 )(\id_A \otimes\Delta^0 )(v_2 )) \\
& =v_2^* [(\id_A \otimes e)(v_2 )\otimes 1^0 ]=v_2^* (y\otimes 1^0 ) .
\end{align*}
Since $||1-y||=||(\id_A \otimes e)(1-v_2 )||\leq ||1-v_2 ||<1$, $y$ is invertible.
Let $y=x|y|$ be the polar decomposition of $y$. Then $x$ is a unitary element in $A$ and
$$
\rho_1 (y)=v_2^* (y\otimes 1^0 )=v_2^* (x\otimes 1^0 )(|y|\otimes 1^0 ) .
$$
Hence
$$
\rho_1 (x)\rho_1 (|y|)=v_2^* (x\otimes 1^0 )(|y|\otimes 1^0 ) .
$$
Also,
$$
\rho_1 (y^* y)=(y^* \otimes 1^0 )v_2 v_2^* (y\otimes 1)=y^* y\otimes 1 .
$$
Thus $\rho_1 (|y|)=|y|\otimes 1^0 $. Hence $\rho_1 (x)=v_2^* (x\otimes 1^0 )$.
It follows that
\begin{align*}
\Ad(x\otimes 1^0 )\circ\rho_1 \circ\Ad(x^* ) & =\Ad((x\otimes 1^0 )\rho_1 (x^*))\circ\rho_1 \\
& =\Ad(v_2 )\circ\rho_1 \\
& =\Ad(v)\circ\rho =\sigma .
\end{align*}
Since $\rho_1 =\Ad(x_0 \otimes 1^0 )\circ\rho \circ\Ad(x_0^* )$, we obtain the conclusion.
\end{proof}

\section{2-cohomology vanishing theorem}\label{sec:vanish2}
Let $(\rho, u)$ be a twisted coaction of $H^0$ on $A$ with the Rohlin
property. Let $w$ be a unitary element in $(A^{\infty}\rtimes_{\rho^{\infty}, u}H)\otimes H$
satisfying Equation (5, 1)-(5, 3) and let $\widehat{w}$ be the unitary element in
$\Hom (H^0, A^{\infty}\rtimes_{\rho^{\infty}, u}H)$ induced by $w$. In this section, we shall show that
there is a unitary element $x\in A\otimes H^0$ such that
$$
(x\otimes 1^0 )(\rho\otimes\id)(x)u(\id\otimes\Delta^0 )(x^* )=1\otimes1^0 \otimes 1^0 .
$$
We recall that in Section \ref{sec:duality}, we construct a system of matrix units of $M_N (\BC)$,
$$
\{(W_I^* \rtimes_{\widehat{\rho}}1^0 )(1\rtimes_{\rho, u}1\rtimes_{\widehat{\rho}}\tau)
(W_J \rtimes_{\widehat{\rho}}1^0 )\}_{I, J\in\Lambda}
$$
which is in contained in
$A^{\infty}\rtimes_{\rho^{\infty}, u}H$, where $W_I =\sqrt{d_k}\rtimes_{\rho, u}w_{ij}^k$
for any $I=(i, j, k)\in\Lambda$. By Lemmas \ref{lem:homo} and \ref{lem:quasi1}, we obtain the following lemma.

\begin{lemma}\label{lem:matrix}With the above notations and assumptions,
$\{W_I^* \widehat{w}(\tau)W_J \}_{I,J\in\Lambda}$ is a system of matrix units of
$M_N (\BC)$, which is contained in $A^{\infty}\rtimes_{\rho^{\infty}, u}H$.
\end{lemma}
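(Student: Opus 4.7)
The plan is to verify the three defining relations of matrix units for the family $e_{IJ}:=W_I^*\widehat{w}(\tau)W_J$, drawing on Lemmas \ref{lem:orthogonal}, \ref{lem:homo}, and \ref{lem:quasi1}.

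First, I would check $e_{IJ}^*=e_{JI}$. Since Lemma \ref{lem:homo}(2) asserts that $\widehat{w}(\tau)$ is a (self-adjoint) projection, taking adjoints yields $e_{IJ}^*=W_J^*\widehat{w}(\tau)^*W_I=W_J^*\widehat{w}(\tau)W_I=e_{JI}$ immediately.

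The central step is the multiplicative relation $e_{IJ}e_{KL}=\delta_{JK}\,e_{IL}$. I would expand $e_{IJ}e_{KL}=W_I^*\bigl[\widehat{w}(\tau)(W_J W_K^*)\widehat{w}(\tau)\bigr]W_L$ and apply Lemma \ref{lem:homo}(3) with $x=W_J W_K^*\in A\rtimes_{\rho,u}H$ to collapse the bracketed factor to $E_1^{\rho}(W_J W_K^*)\widehat{w}(\tau)$. It then remains to identify $E_1^{\rho}(W_J W_K^*)=\delta_{JK}\cdot 1_A$. This is the main technical point, but it is already essentially contained in Lemma \ref{lem:orthogonal}: the computation there shows $V_I V_J^*=E_1^{\rho}(W_I W_J^*)\rtimes_{\widehat{\rho}}\tau$, and the lemma's conclusion is that this product equals $1\rtimes_{\widehat{\rho}}\tau$ when $I=J$ and $0$ otherwise. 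Combined with injectivity of the map $a\mapsto a\rtimes_{\widehat{\rho}}\tau$ from $A$ into $A\rtimes_{\rho,u}H\rtimes_{\widehat{\rho}}H^0$ (which follows from faithfulness of the canonical conditional expectations), this forces $E_1^{\rho}(W_I W_J^*)=\delta_{IJ}\cdot 1_A$. Substituting back gives the desired product rule $e_{IJ}e_{KL}=\delta_{JK}\,e_{IL}$.

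Finally, I would appeal to Lemma \ref{lem:quasi1} to conclude $\sum_{I\in\Lambda}e_{II}=\sum_I W_I^*\widehat{w}(\tau)W_I=1$. Since $|\Lambda|=\sum_k d_k^2=N$, the family $\{e_{IJ}\}_{I,J\in\Lambda}$ consists of $N^2$ elements satisfying the matrix-unit relations with diagonal entries summing to the identity, so they span a unital $*$-subalgebra of $A^{\infty}\rtimes_{\rho^{\infty},u}H$ isomorphic to $M_N(\BC)$. The only genuinely nontrivial ingredient is the identification $E_1^{\rho}(W_I W_J^*)=\delta_{IJ}$; everything else is routine once that identification and Lemma \ref{lem:homo}(3) are in place.
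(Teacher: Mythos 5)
Your proof is correct and follows essentially the same route as the paper: Lemma \ref{lem:homo}(3) collapses $\widehat{w}(\tau)W_JW_K^*\widehat{w}(\tau)$ to $E_1^{\rho}(W_JW_K^*)\widehat{w}(\tau)$, the identity $E_1^{\rho}(W_IW_J^*)=\delta_{IJ}$ comes from the computation in Lemma \ref{lem:orthogonal}, and Lemma \ref{lem:quasi1} gives $\sum_I e_{II}=1$. The only (harmless) difference is that the paper re-runs the explicit comatrix-unit computation of $\widehat{w}(\tau)W_IW_J^*\widehat{w}(\tau)$, whereas you extract $E_1^{\rho}(W_IW_J^*)=\delta_{IJ}$ from the conclusion of Lemma \ref{lem:orthogonal} via injectivity of $a\mapsto a\rtimes_{\widehat{\rho}}\tau$, a legitimate shortcut.
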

\begin{proof}By the proof of Lemma \ref{lem:orthogonal}, for any $I=(i, j, k), J=(s, t, r)\in\Lambda$,
$$
W_I W_J^* =\sum_{t_2 , t_3 , j_3}\sqrt{d_k , d_r}\widehat{u}(w_{j_3 i}^k
S(w_{t_2 t_3}^r ), w_{s t_2}^r )^*\rtimes_{\rho, u}w_{j_3 j}^k w_{t_3 t}^{r*}.
$$
Hence by Lemma \ref{lem:homo} and \cite [Theorem 2.2]{SP:saturated},
$$
\widehat{w}(\tau)W_I W_J^* \widehat{w}(\tau)=\sum_{t_2 , t_3 , j_3 }\sqrt{d_k d_r}\tau(w_{j_3 j}^k w_{t_3 t}^{r*} )
\widehat{u}^* (w_{ij_3 }^k w_{t_2 t_3}^{r*}, w_{t_2 s}^r )\widehat{w}(\tau).
$$
If $k\ne r$ or $j\ne t$, then $\widehat{w}(\tau)W_I W_J^* \widehat{w}(\tau)=0$. We suppose that
$k=r$ and $j=t$.
$$
\widehat{w}(\tau)W_I W_J^* \widehat{w}(\tau) =\sum_{t_2 , t_3}\widehat{u}^*
(w_{it_3 }^k S(w_{t_3 t_2 }^k), w_{t_2 s}^k )\widehat{w}(\tau)
=\epsilon(w_{is}^k )\widehat{w}(\tau)=\delta_{is}\widehat{w}(\tau),
$$
where $\delta_{is}$ is the Kronecker delta.
Thus for any $K, L, I, J\in\Lambda$,
$$
W_K^* \widehat{w}(\tau)W_I W_J^* \widehat{w}(\tau)W_L =0
$$
if $I\ne J$. We suppose that $I=J$. Then since $\widehat{w}(\tau)W_I W_I^* \widehat{w}(\tau)=\widehat{w}(\tau)$,
$$
W_K^* \widehat{w}(\tau)W_I  W_I^* \widehat{w}(\tau)W_L =W_K^* \widehat{w}(\tau)W_L .
$$
Furthermore,
$$
\sum_{I\in \Lambda}W_I^* \widehat{w}(\tau)W_I W_I^* \widehat{w}(\tau)W_I
=\sum_{I\in \Lambda}W_I^* \widehat{w}(\tau)W_I =1
$$
by Lemma \ref{lem:quasi1}. Therefore we obtain the conclusion.
\end{proof}

We suppose that the $C^*$-Hopf algebra $H^0$ acts on a unital $C^*$-algebra
$\BC$ trivially. Then by the discussions before Lemma \ref{lem:matrix},
the set $\{(W_{0 I}^* \rtimes_{\Delta}1^0 )(1\rtimes 1\rtimes_{\Delta}\tau)
(W_{0 J} \rtimes_{\Delta}1^0 )\}_{I, J\in\Lambda}$ is a system of matrix units of
$\BC\rtimes H\rtimes_{\Delta}H^0$ which is isomorphic to $M_N (\BC)$, where
$W_{0 I}=\sqrt{d_k}\rtimes w_{ij}^k \in \BC\rtimes H$ for any $I=(i,j,k)\in\Lambda$.
Thus we obtain the following homomorphism $\theta$ of $\BC\rtimes H\rtimes_{\Delta}H^0$
into $A^{\infty}\rtimes_{\rho^{\infty}, u}H$. For any $I, J\in\Lambda$,
$$
\theta((W_{0 I}^* \rtimes_{\Delta}1^0 )(1\rtimes 1\rtimes_{\Delta}\tau)
(W_{0 J} \rtimes_{\Delta}1^0 ))=W_I^* \widehat{w}(\tau)W_J .
$$

\begin{lemma}\label{lem:homo11}With the above notations, for any $h\in H$,
$$
\theta(1\rtimes h)=\sum_{i,j,k}d_k (1\rtimes_{\rho, u} w_{ij}^k )^* \widehat{w}(\tau)(1\rtimes_{\rho, u} w_{ij}^k h) .
$$
\end{lemma}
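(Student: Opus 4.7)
The plan is to expand $1\rtimes h\rtimes 1^0 \in \BC\rtimes H\rtimes_\Delta H^0$ in the matrix units $e_{IJ}=V_{0I}^*V_{0J}=(W_{0I}^*\rtimes 1^0)(1\rtimes 1\rtimes_\Delta\tau)(W_{0J}\rtimes 1^0)$, apply the homomorphism $\theta$, and then match the result with the claimed formula using the quasi-basis $\{(W_I^*,W_I)\}$ for $E_1^\rho$.

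First I would compute the scalar coefficients $c_{KL}(h)$ such that $1\rtimes h\rtimes 1^0=\sum_{K,L}c_{KL}(h)e_{KL}$. The trivial-case analogue of Lemma \ref{lem:orthogonal} (applied with $A=\BC$, trivial coaction and $u=1$) gives $V_{0K}V_{0L}^*=\delta_{KL}(1\rtimes_\Delta\tau)$, so $c_{KL}(h)(1\rtimes_\Delta\tau)=V_{0K}(1\rtimes h\rtimes 1^0)V_{0L}^*$. Since the action of $H^0$ on $\BC$ is trivial, multiplication in $\BC\rtimes H$ reduces to that of $H$; combining this with the Jones-projection-type identity
$(1\rtimes_\Delta\tau)(1\rtimes g\rtimes 1^0)(1\rtimes_\Delta\tau)=\tau(g)(1\rtimes_\Delta\tau)$
for $g\in H$ (itself a consequence of $(\id\otimes\tau)\Delta(g)=\tau(g)\cdot 1_H$ together with the Sweedler calculation for $\cdot_\Delta$ and the integral identity $\tau_{(1)}\epsilon^0(\tau_{(2)})=\tau$), a direct computation yields $c_{KL}(h)=\sqrt{d_{k_K}d_{k_L}}\tau(w_{i_Kj_K}^{k_K}h\,w_{i_Lj_L}^{k_L*})$. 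Applying $\theta$ to the expansion then gives
\[
\theta(1\rtimes h\rtimes 1^0)=\sum_{K,L}\sqrt{d_{k_K}d_{k_L}}\,\tau(w_{i_Kj_K}^{k_K}h\,w_{i_Lj_L}^{k_L*})\,W_K^*\widehat{w}(\tau)W_L.
\]

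To match this with the statement, I would rewrite the right-hand side of the lemma as $X=\sum_I W_I^*\widehat{w}(\tau)\xi_I$ where $\xi_I=\sqrt{d_{k_I}}\rtimes_{\rho,u}w_{i_Ij_I}^{k_I}h$, insert $1=\sum_L W_L^*\widehat{w}(\tau)W_L$ (Lemma \ref{lem:quasi1}) on the right of each summand, and apply Lemma \ref{lem:homo}(3) to turn $\widehat{w}(\tau)\,\xi_IW_L^*\,\widehat{w}(\tau)$ into $E_1^\rho(\xi_IW_L^*)\widehat{w}(\tau)$. Since $E_1^\rho(\xi_IW_L^*)\in A$ commutes with $\widehat{w}(\tau)\in A_\infty$, this gives $X=\sum_{I,L}W_I^* E_1^\rho(\xi_IW_L^*)\widehat{w}(\tau)W_L$, so the lemma reduces to the equality $E_1^\rho(\xi_IW_L^*)=\sqrt{d_{k_I}d_{k_L}}\,\tau(w_{i_Ij_I}^{k_I}h\,w_{i_Lj_L}^{k_L*})$, i.e.\ to the identity $E_1^\rho((1\rtimes g)(1\rtimes g')^*)=\tau(gg'^*)$ for $g,g'\in H$.

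The main obstacle will be verifying this last identity inside the twisted crossed product $A\rtimes_{\rho,u}H$, where $(1\rtimes g')^*$ is not simply $1\rtimes g'^*$ but is corrected by the cocycle $\widehat{u}$. Fortunately, both sides are bilinear over $\BC$ in $(g,\overline{g'})$, so it is enough to check the identity for comatrix-unit pairs $g=w_{i_Ij_I}^{k_I}$, $g'=w_{i_Jj_J}^{k_J}$. There both sides equal $\tfrac{1}{d_{k_I}}\delta_{IJ}$: the left-hand side by the quasi-basis orthogonality $E_1^\rho(W_IW_J^*)=\delta_{IJ}\cdot 1_A$, which is an immediate consequence of $\{(W_I^*,W_I)\}$ being a quasi-basis for $E_1^\rho$ (\cite[Proposition 3.18]{KT1:inclusion}), and the right-hand side by the Szyma\'nski-Peligrad orthogonality $\tau(w_{ij}^kw_{st}^{r*})=\tfrac{1}{d_k}\delta_{kr}\delta_{is}\delta_{jt}$ from \cite[Theorem 2.2]{SP:saturated}. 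Extension by linearity then yields the desired formula and completes the proof.
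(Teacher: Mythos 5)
Your proof is correct, but it takes a genuinely different and more computational route than the paper's. The paper also starts from the completeness of the matrix units in $\BC\rtimes H\rtimes_{\Delta}H^0$, writing $1\rtimes h=\sum_I (W_{0I}^*\rtimes 1^0)(1\rtimes 1\rtimes_{\Delta}\tau)(W_{0I}\rtimes 1^0)(1\rtimes h\rtimes 1^0)$, but then it never computes the matrix coefficients of $1\rtimes h$: it simply observes that $W_{0I}(1\rtimes h)=\sqrt{d_k}\rtimes w_{ij}^k h$ is a scalar combination $\sum_{s,t,r}(c_{ij}^k)_{st}^r w_{st}^r$ of the comatrix units, applies $\theta$ termwise by its definition, and then reabsorbs the (never explicitly evaluated) scalars using only the linearity of $h\mapsto 1\rtimes_{\rho,u}h$. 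This sidesteps everything in your second and third paragraphs: no insertion of $1=\sum_L W_L^*\widehat{w}(\tau)W_L$, no appeal to Lemma \ref{lem:homo}(3), and no identity $E_1^{\rho}((1\rtimes g)(1\rtimes g')^*)=\tau(gg'^*)$ inside the twisted crossed product. What your approach buys is an explicit formula for the matrix of $\theta(1\rtimes h)$ in the system $\{W_I^*\widehat{w}(\tau)W_J\}$, which is more information than the lemma asks for; the cost is the extra verification of the orthogonality $E_1^{\rho}(W_IW_J^*)=\delta_{IJ}$. On that point one correction: this orthogonality is \emph{not} an immediate formal consequence of $\{(W_I^*,W_I)\}$ being a quasi-basis (a quasi-basis need not be orthonormal in general); it is, however, exactly what the computation in the proof of Lemma \ref{lem:orthogonal} establishes, since there $V_IV_J^*=E_1^{\rho}(W_IW_J^*)\rtimes_{\widehat{\rho}}\tau$ is shown to equal $\delta_{IJ}(1\rtimes_{\widehat{\rho}}\tau)$. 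With that citation repaired, your argument goes through: the sesquilinearity reduction to comatrix-unit pairs is sound, and the Szyma\'nski--Peligrad formula $\tau(w_{ij}^kw_{st}^{r*})=\frac{1}{d_k}\delta_{kr}\delta_{is}\delta_{jt}$ matches the two sides.
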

\begin{proof}Let $h$ be any element in $H$. Then by Lemma \ref{lem:quasi1},
\begin{align*}
1\rtimes h & =\sum_{I\in\Lambda}(W_I\rtimes_{\Delta}1^0 )^*
(1\rtimes 1\rtimes_{\Delta}\tau)(W_I \rtimes_{\Delta}1^0 )(1\rtimes h\rtimes_{\Delta}1^0 ) \\
& =\sum_{i, j, k}d_k (1\rtimes w_{ij}^k \rtimes_{\Delta}1^0 )^* (1\rtimes 1\rtimes_{\Delta}\tau)
(1\rtimes w_{ij}^k h \rtimes_{\Delta}1^0 ) .
\end{align*}
Since $\{w_{ij}^k \}$ is a system of comatrix units of $H$, for any $i, j,k$ there are elements
$(c_{ij}^k )_{st}^r \in \BC$ such that $w_{ij}^k h=\sum_{st}^r (c_{ij}^k )_{st}^r w_{st}^r$.
Hence
$$
1\rtimes h=\sum_{i,j,k,s,t,r}d_k (c_{ij}^k )_{st}^r (1\rtimes w_{ij}^k \rtimes_{\Delta}1^0 )^*
(1\rtimes 1\rtimes_{\Delta}\tau)(1\rtimes w_{st}^r \rtimes_{\Delta}1^0 ).
$$
Thus by the definition of $\theta$,
\begin{align*}
\theta(1\rtimes h) & =\sum_{i,j,k,r,s,t}d_k (c_{ij}^k )_{st}^r (1\rtimes_{\rho, u}w_{ij}^k )^* \widehat{w}(\tau)
(1\rtimes_{\rho, u}w_{st}^r ) \\
& =\sum_{i,j,k}d_k (1\rtimes_{\rho, u}w_{ij}^k )^* \widehat{w}(\tau)(1\rtimes_{\rho, u}\sum_{s, t, r}
(c_{ij}^k )_{st}^r w_{st}^r ) \\
& =\sum_{i,j,k}d_k (1\rtimes_{\rho, u}w_{ij}^k )^* \widehat{w}(\tau)(1\rtimes_{\rho, u}w_{ij}^k h) .
\end{align*}
\end{proof}

The restriction of $\theta$ to $1\rtimes H$, the $C^*$-subalgebra of $\BC\rtimes H\rtimes_{\Delta}H^0$
is a homomorphism of $H$ to $A^{\infty}\rtimes_{\rho^{\infty}, u}H$. Hence there is a unitary
element $v\in (A^{\infty}\rtimes_{\rho^{\infty}, u}H)\otimes H^0$ such that $\theta|_{1\rtimes H}=\widehat{v}$.
We recall the definitions $V$ and $\widehat{V}$. Let $\widehat{V}$ be a linear map from $H$ to
$A\rtimes_{\rho, u}H$ defined by $\widehat{V}(h)=1\rtimes_{\rho, u}h$ for any $h\in H$ and
let $V$ be the element in $(A\rtimes_{\rho, u}H)\otimes H^0$ induced by $\widehat{V}$. Then
$V$ and $\widehat{V}$ are unitary elements in $(A\rtimes_{\rho, u}H)\otimes H^0$ and
$\Hom (H, A\rtimes_{\rho, u}H)$, respectively. Let $x$ be a unitary element in
$(A^{\infty}\rtimes_{\rho^{\infty}, u}H)\otimes H^0$ defined by $x=vV^*$.

\begin{lemma}\label{lem:unitary11}With the above notations, $\widehat{x}(h)\in A^{\infty}$ for any
$h\in H$.
\end{lemma}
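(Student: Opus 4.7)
The plan is to prove $\widehat{x}(h)\in A^{\infty}$ by showing $\widehat{\rho^{\infty}}(\widehat{x}(h)) = \widehat{x}(h)\otimes 1$, and invoking the standard characterization $A^{\infty} = (A^{\infty}\rtimes_{\rho^{\infty}, u}H)^{\widehat{\rho^{\infty}}}$ of $A^{\infty}$ as the fixed-point subalgebra under the dual coaction. For the $\widehat{V}$-factor, since $\widehat{V}(h) = 1\rtimes h$, the dual coaction yields $\widehat{\rho^{\infty}}(\widehat{V}(h)) = \widehat{V}(h_{(1)})\otimes h_{(2)}$ directly; applying this to the convolution identity $\widehat{V}(h_{(1)})\widehat{V^*}(h_{(2)}) = \epsilon(h)\cdot 1$ together with the antipode-counit relation produces $\widehat{\rho^{\infty}}(\widehat{V^*}(h)) = \widehat{V^*}(h_{(2)})\otimes S(h_{(1)})$.

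The heart of the proof is establishing the analogous formula $\widehat{\rho^{\infty}}(\widehat{v}(h)) = \widehat{v}(h_{(1)})\otimes h_{(2)}$. I apply $\widehat{\rho^{\infty}}$ to the explicit expression $\widehat{v}(h) = \sum_{i,j,k} d_k (1\rtimes w_{ij}^k)^*\widehat{w}(\tau)(1\rtimes w_{ij}^k h)$ from Lemma \ref{lem:homo11}. Using that $\widehat{w}(\tau) \in A_{\infty} \subset A^{\infty}$ is fixed by $\widehat{\rho^{\infty}}$, and expanding $\widehat{\rho^{\infty}}((1\rtimes w_{ij}^k)^*) = \sum_l (1\rtimes w_{il}^k)^*\otimes S(w_{jl}^k)$ and $\widehat{\rho^{\infty}}(1\rtimes w_{ij}^k h) = \sum_m (1\rtimes w_{im}^k h_{(1)})\otimes w_{mj}^k h_{(2)}$ (via $\Delta(w_{ij}^k) = \sum_l w_{il}^k\otimes w_{lj}^k$), the $H$-slot of the product becomes $\sum_j S(w_{jl}^k)w_{mj}^k h_{(2)}$. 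The key identity $\sum_j S(w_{jl}^k)w_{mj}^k = \delta_{lm}\cdot 1_H$ follows from the standard antipode-counit relation $\sum_j S(w_{mj}^k)w_{jl}^k = \epsilon(w_{ml}^k) = \delta_{ml}$ by applying $S$ and using $S^2 = \id$ (valid for any finite-dimensional $C^*$-Hopf algebra). This collapses the sum to $l = m$, and after relabeling yields $\widehat{v}(h_{(1)})\otimes h_{(2)}$.

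Combining via the convolution product in $(A^{\infty}\rtimes_{\rho^{\infty}, u}H)\otimes H$ and using coassociativity together with the Sweedler identity $h_{(2)}S(h_{(3)}) = \epsilon(h_{(2)})$:
\begin{align*}
\widehat{\rho^{\infty}}(\widehat{x}(h)) &= \widehat{\rho^{\infty}}(\widehat{v}(h_{(1)}))\widehat{\rho^{\infty}}(\widehat{V^*}(h_{(2)}))\\
&= \widehat{v}(h_{(1)})\widehat{V^*}(h_{(4)})\otimes h_{(2)}S(h_{(3)})\\
&= \widehat{v}(h_{(1)})\widehat{V^*}(h_{(2)})\otimes 1 = \widehat{x}(h)\otimes 1.
\end{align*}
Therefore $\widehat{x}(h)\in A^{\infty}$. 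The main obstacle is the careful bookkeeping of Sweedler-notation indices throughout the computation of $\widehat{\rho^{\infty}}(\widehat{v}(h))$; once the comatrix-unit summation identity is in place, the remainder of the argument is routine manipulation in the convolution algebra $\Hom(H, A^{\infty}\rtimes_{\rho^{\infty}, u}H)$.
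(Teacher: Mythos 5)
Your proof is correct, but it proceeds by a genuinely different route from the paper's. The paper proves the lemma by brute force: starting from $\widehat{x}(h)=\widehat{v}(h_{(1)})\widehat{V}(S(h_{(2)}^*))^*$ and the formula of Lemma \ref{lem:homo11}, it pushes all the crossed-product generators $1\rtimes_{\rho,u}w_{ij}^k$ through using the multiplication rule of $A\rtimes_{\rho,u}H$ and the cocycle identities (1) and (2) of Section \ref{sec:pre}, until everything collapses to the explicit expression $\widehat{x}(h)=\sum d_k\,\widehat{u}^*(w_{j_1 j_2}^{k*},w_{j_1 i}^k)[w_{j_2 j_3}^{k*}\cdot_{\rho,u}\widehat{w}(\tau)]\widehat{u}(w_{j_3 j_4}^{k*},w_{ij_4}^k h)$, which is manifestly a sum of products of elements of $A$ and $A^{\infty}$. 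You instead verify that $\widehat{x}(h)$ is fixed by the dual coaction $\widehat{\rho^{\infty}}$ and identify $A^{\infty}$ with the fixed-point algebra $(A^{\infty}\rtimes_{\rho^{\infty},u}H)^{\widehat{\rho^{\infty}}}$ (which is legitimate here: $E_1^{\rho}=(\id\otimes\tau)\circ\widehat{\rho}$ is the canonical conditional expectation onto that fixed-point algebra and its range is $A$). Your key computations check out: the covariance $\widehat{\rho^{\infty}}(\widehat{v}(h))=\widehat{v}(h_{(1)})\otimes h_{(2)}$ follows from the explicit form of $\widehat{v}=\theta|_{1\rtimes H}$ together with $\sum_j S(w_{jl}^k)w_{mj}^k=\delta_{lm}1$ (obtained from the antipode axiom and $S^2=\id$, which holds since $H$ is a finite dimensional $C^*$-Hopf algebra), the formula $\widehat{\rho^{\infty}}(\widehat{V^*}(h))=\widehat{V^*}(h_{(2)})\otimes S(h_{(1)})$ is correct using $S(h^*)^*=S(h)$, and the final collapse via $h_{(2)}S(h_{(3)})=\epsilon(h_{(2)})1$ is routine. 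Your argument is shorter and more conceptual, and avoids the heavy Sweedler bookkeeping; what it does not deliver is the closed-form expression for $\widehat{x}(h)$ that the paper's computation produces. That explicit formula is not needed for the statement of the lemma itself, and the later norm estimate in Lemma \ref{lem:coboundary2} only quotes the first step of the paper's computation (the expression coming from Lemma \ref{lem:homo11} and $x=vV^*$), so nothing downstream is lost by your approach.
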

\begin{proof}By Lemma \ref{lem:homo11} and \cite [Theorem 2.2]{SP:saturated}, for any $h\in H$,
\begin{align*}
\widehat{x}(h) & =\widehat{v}(h_{(1)})\widehat{V}(S(h_{(2)}^* ))^* \\
& =\sum_{i,j,k}d_k (1\rtimes_{\rho, u}w_{ij}^k )^* \widehat{w}(\tau)(1\rtimes_{\rho, u}w_{ij}^k h_{(1)})
(1\rtimes_{\rho, u}S(h_{(2)})^* )^* \\
& =\sum_{i,j,k, j_1 , j_2}d_k (\widehat{u}(S(w_{j_1 j_2}^k ), w_{ij_1}^k )^* \rtimes_{\rho, u}w_{j_2 j}^{k*})
(\widehat{w}(\tau)\rtimes_{\rho, u}w_{ij}^k h_{(1)}) \\
& \times (\widehat{u}(h_{(3)}^* , S(h_{(4)})^* )^* \rtimes_{\rho, u}S(h_{(2)})) \\
& =\sum_{i,j,k, j_1 , j_2 , j_3 , j_4 , i_1}d_k (\widehat{u}(S(w_{j_1 j_2}^k ), w_{ij_1}^k )^* 
[w_{j_2 j_3}^{k*}\cdot_{\rho, u}\widehat{w}(\tau)]\widehat{u}(w_{j_3 j_4}^{k*}, w_{ii_1}^k h_{(1)}) \\
& \rtimes_{\rho, u} w_{j_4 j}^{k*}w_{i_1 j}^k h_{(2)})(\widehat{u}(h_{(4)}^* , S(h_{(5)})^* )^* \rtimes_{\rho, u}S(h_{(3)})) \\
& =\sum_{i,j,k, j_1 ,j_2 , j_4 , i_1}d_k (\widehat{u}^* (w_{j_1 j_2}^{k*}, w_{j_1 i}^k )[w_{j_2 j_3}^{k*}\cdot_{\rho, u}
\widehat{w}(\tau)]\widehat{u}(w_{j_3 j_4}^{k*}, w_{ii_1}^k h_{(1)}) \\
& \rtimes_{\rho, u}S(w_{jj_4}^k )w_{i_1 j}^k h_{(2)})(\widehat{u}^* (S(h_{(4)}), h_{(5)})\rtimes_{\rho, u}S(h_{(3)})) \\
& =\sum_{i,k,j_1 , j_2 , j_3 , j_4}d_k \widehat{u}^* (w_{j_1 j_2}^{k*}, w_{j_1 i}^k )
[w_{j_2 j_3 }^{k*}\cdot_{\rho, u}\widehat{w}(\tau)]\widehat{u}(w_{j_3 j_4}^{k*}, w_{ij_4}^k h_{(1)})\rtimes_{\rho, u}h_{(2)}) \\
& \times (\widehat{u}^* (S(h_{(4)}), h_{(5)})\rtimes_{\rho, u}S(h_{(3)})) \\
& =\sum_{i,k,j_1 , j_2 , j_3 , j_4}d_k \widehat{u}^* (w_{j_1 j_2}^{k*}, w_{j_1 i}^k )[w_{j_2 j_3}^{k*}\cdot_{\rho, u}
\widehat{w}(\tau)]\widehat{u}(w_{j_3 j_4}^{k*}, w_{ij_4}^k h_{(1)}) \\
& \times [h_{(2)}\cdot_{\rho, u}\widehat{u}^* (S(h_{(7)}), h_{(8)})]\widehat{u}(h_{(3)}, S(h_{(6)}))\rtimes_{\rho, u}h_{(4)}
S(h_{(5)}) \\
& =\sum_{i,k,j_1 , j_2 , j_3 , j_4}d_k \widehat{u}^* (w_{j_1 j_2}^{k*}, w_{j_1 i}^k )[w_{j_2 j_3}^{k*}\cdot_{\rho, u}
\widehat{w}(\tau)]\widehat{u}(w_{j_3 j_4}^{k*}, w_{ij_4}^k h_{(1)}) \\
& \times [h_{(2)}\cdot_{\rho, u}\widehat{u}^* (S(h_{(5)}), h_{(6)})]\widehat{u}(h_{(3)}, S(h_{(4)})).
\end{align*}
Furthermore, using the Equations (1) and (2) in Section \ref{sec:pre}, we can see that for any $h\in H$,
$$
\widehat{x}(h)=\sum_{i,k,j_1 , j_2 , j_3 , j_4}d_k \widehat{u}^* (w_{j_1 j_2}^{k*}, w_{j_1 i}^k )[w_{j_2 j_3}^{k*}\cdot_{\rho, u}
\widehat{w}(\tau)]\widehat{u}(w_{j_3 j_4}^{k*}, w_{ij_4}^k h).
$$
Since $w_{j_2 j_3 }^{k*}\cdot_{\rho, u}\widehat{w}(\tau)\in A^{\infty}$ for any $j_2 , j_3 , k$, we obtain the conclusion.
\end{proof}

By the above lemma, we can see that $x$ is a unitary element in $A^{\infty}\otimes H^0$.
We recall that $\rho_{H^0}^{A\rtimes_{\rho, u}H}$ is the trivial coaction of $H^0$ on
$A\rtimes_{\rho, u}H$ defined by $\rho_{H^0}^{A\rtimes_{\rho, u}H}(a)=a\otimes 1^0$
for any $a\in A\rtimes_{\rho, u}H$. Also, we note that $\rho=\Ad(V)\circ\rho_{H^0}^{A\rtimes_{\rho, u}H}$
by \cite [Lemma 3.12]{KT1:inclusion}, where we regard $A$ as a $C^*$-subalgebra of $A\rtimes_{\rho, u}H$.
Furthermore, since $\widehat{v}$ is a homomorphism of $H$ to $A^{\infty}\rtimes_{\rho^{\infty}, u}H$,
$$
(v\otimes 1^0 )(\rho_{H^0}^{A^{\infty}\rtimes_{\rho^{\infty}, u}H}\otimes\id)(v)=(\id\otimes\Delta^0 )(v).
$$
\begin{prop}\label{prop:coboundary}With the above notations,
$$
(x\otimes 1^0 )(\rho^{\infty}\otimes\id)(x)u(\id\otimes\Delta^0 )(x^* )=1\otimes 1^0 \otimes 1^0.
$$
\end{prop}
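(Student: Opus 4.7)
The plan is a short algebraic manipulation based on three relations already present in the preceding pages. First, by the construction before Lemma \ref{lem:unitary11}, $\widehat{v}\colon H\to A^{\infty}\rtimes_{\rho^{\infty},u}H$ is a unital $*$-homomorphism, which is equivalent to the trivial-cocycle identity
$$
(v\otimes 1^{0})(\rho_{0}\otimes\id)(v)=(\id\otimes\Delta^{0})(v) ,
$$
where $\rho_{0}$ abbreviates the trivial $H^{0}$-coaction $a\mapsto a\otimes 1^{0}$ on $A^{\infty}\rtimes_{\rho^{\infty},u}H$. Second, \cite[Lemma 3.12]{KT1:inclusion} gives $\rho(a)=V(a\otimes 1^{0})V^{*}$ for $a\in A$, and this extends pointwise to $\rho^{\infty}(a)=V(a\otimes 1^{0})V^{*}$ for $a\in A^{\infty}$. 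Third, the same reference (with the roles of $H$ and $H^{0}$ exchanged, as used throughout Section \ref{sec:duality}) yields
$$
u=(V\otimes 1^{0})(\rho_{0}\otimes\id)(V)(\id\otimes\Delta^{0})(V^{*}) ,
$$
which I would rearrange, by isolating $(\rho_{0}\otimes\id)(V)$ and taking adjoints, to $(\rho_{0}\otimes\id)(V^{*})(V^{*}\otimes 1^{0})=(\id\otimes\Delta^{0})(V^{*})u^{*}$.

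With $x=vV^{*}$ and Lemma \ref{lem:unitary11} allowing me to view $x$ as a unitary in $A^{\infty}\otimes H^{0}$, the conjugation formula for $\rho^{\infty}$ yields $(\rho^{\infty}\otimes\id)(x)=(V\otimes 1^{0})(\rho_{0}\otimes\id)(x)(V^{*}\otimes 1^{0})$ inside $(A^{\infty}\rtimes_{\rho^{\infty},u}H)\otimes H^{0}\otimes H^{0}$. Substituting $x=vV^{*}$ the internal $V^{*}V$ telescopes, and $(x\otimes 1^{0})(\rho^{\infty}\otimes\id)(x)$ becomes
$$
(v\otimes 1^{0})(\rho_{0}\otimes\id)(v)\cdot(\rho_{0}\otimes\id)(V^{*})(V^{*}\otimes 1^{0}) .
$$
The left half is $(\id\otimes\Delta^{0})(v)$ by the cocycle identity and the right half is $(\id\otimes\Delta^{0})(V^{*})u^{*}$ by the rearranged formula for $u$. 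Hence $(x\otimes 1^{0})(\rho^{\infty}\otimes\id)(x)=(\id\otimes\Delta^{0})(x)u^{*}$, and right-multiplying by $u(\id\otimes\Delta^{0})(x^{*})$ collapses the result to $(\id\otimes\Delta^{0})(xx^{*})=1\otimes 1^{0}\otimes 1^{0}$.

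The only subtlety I anticipate is one of ambient algebra: the intermediate manipulation naturally takes place in $(A^{\infty}\rtimes_{\rho^{\infty},u}H)\otimes H^{0}\otimes H^{0}$, whereas the target identity is an equation in $A^{\infty}\otimes H^{0}\otimes H^{0}$. This is harmless because both sides of the claimed equation already sit in the smaller tensor product (by Lemma \ref{lem:unitary11} for $x$ and because $u$ itself lies in $A\otimes H^{0}\otimes H^{0}$), so the work inside the larger crossed product is just a bookkeeping convenience. No analytic or limiting input is needed; the whole proof should reduce to a few lines of identity chasing once the three relations above are in place.
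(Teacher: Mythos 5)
Your proof is correct and follows essentially the same route as the paper's: both rest on the three identities $\rho^{\infty}=\Ad(V)\circ\rho_{0}$, the trivial-cocycle identity for $v$ coming from $\widehat{v}$ being a homomorphism, and $u=(V\otimes 1^{0})(\rho_{0}\otimes\id)(V)(\id\otimes\Delta^{0})(V^{*})$, combined with the telescoping of $V^{*}V$. The paper merely arranges the computation as $(\rho^{\infty}\otimes\id)(x^{*})(x^{*}\otimes 1^{0})(\id\otimes\Delta^{0})(x)=u$, which is the adjoint-rearranged form of your identity $(x\otimes 1^{0})(\rho^{\infty}\otimes\id)(x)=(\id\otimes\Delta^{0})(x)u^{*}$.
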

\begin{proof}
Since $x=vV^*$ and $\rho=\Ad(V)\circ\rho_{H^0}^{A\rtimes_{\rho, u}H}$,
\begin{align*}
& (\rho^{\infty}\otimes\id)(x^* )(x^* \otimes 1^0 )(\id\otimes\Delta^0 )(x) \\
& =(\rho^{\infty}\otimes\id)(Vv^* )(Vv^* \otimes 1^0 )(\id\otimes\Delta^0 )(vV^* ) \\
& =(V\otimes 1^0 )(\rho_{H^0}^{A^{\infty}\rtimes_{\rho^{\infty}, u}H}
\otimes\id)(Vv^* )(v^* \otimes1^0 )(\id\otimes\Delta^0 )(vV^* ).
\end{align*}
Since $(v\otimes 1^0 )(\rho_{H^0}^{A^{\infty}\rtimes_{\rho^{\infty}, u}H}\otimes\id)(v)=(\id\otimes\Delta^0 )(v)$,
$$
(\rho^{\infty}\otimes\id)(x^* )(x^* \otimes 1^0 )(\id\otimes\Delta^0 )(x)=(V\otimes 1^0 )(\rho_{H^0}^{A\rtimes_{\rho, u}H}
\otimes\id)(V)(\id\otimes\Delta^0 )(V^* )
=u
$$
by \cite [Lemma 3.12]{KT1:inclusion}.
\end{proof}

We recall that $\{\phi_{ij}^k \}$ is a system of matrix units of $H^0$.

\begin{lemma}\label{lem:coboundary2}Let $(\rho, u)$ be a twisted coaction of $H^0$ on $A$ with the Rohlin property.
Then for any $\epsilon>0$, there is a unitary element $x\in A\otimes H^0$ satisfying that
\begin{align*}
|| & (x\otimes 1^0)(\rho\otimes\id)(x)u(\id\otimes\Delta^0 )(x^* )-1\otimes 1^0 \otimes 1^0 ||<\epsilon \\
|| & x-1\otimes 1^0 ||<\epsilon +L||u-1\otimes 1^0 \otimes 1^0 ||,
\end{align*}
where
$$
L=\max \{ \sum_{i,j,k,t,r,t_1 , t_2 , j_1}d_k ||\widehat{V}(w_{ij}^k )^* ||\,
||\widehat{V}(w_{j_1 j}^k w_{t_2 t_1} ^r )\widehat{V}^* (w_{t_1 t}^r )|| \, , \, 1\}
$$
\end{lemma}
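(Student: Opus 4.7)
The plan is to derive the finite-$A$ statement from the $A^{\infty}$-version in Proposition \ref{prop:coboundary} by combining a representative-choice argument with an a priori norm estimate for the unitary $x = vV^*$ constructed in Section \ref{sec:vanish2}. By Proposition \ref{prop:coboundary} the element $x \in A^{\infty} \otimes H^0$ is a unitary satisfying the exact cocycle identity, and any representative of $x$ by a sequence of unitaries $(x_n) \subset A \otimes H^0$ (obtained via polar decomposition once the entries are close enough to being unitary) satisfies the first inequality as soon as $n$ is large enough.

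The heart of the matter is the a priori estimate
$$\|x - 1 \otimes 1^0\|_{A^{\infty} \otimes H^0} \leq L\,\|u - 1 \otimes 1^0 \otimes 1^0\|,$$
from which the second inequality follows by taking a representative and absorbing the approximation error into $\epsilon$. I would start from the explicit formula established in Lemma \ref{lem:unitary11},
$$\widehat{x}(h) = \sum_{i,k,j_1,j_2,j_3,j_4} d_k\,\widehat{u^*}(w_{j_1 j_2}^{k*}, w_{j_1 i}^k)\,[w_{j_2 j_3}^{k*} \cdot_{\rho, u} \widehat{w}(\tau)]\,\widehat{u}(w_{j_3 j_4}^{k*}, w_{ij_4}^k h).$$
Writing $u_0 := 1 \otimes 1^0 \otimes 1^0$, so that $\widehat{u_0}(h, l) = \epsilon(h)\epsilon(l) \cdot 1_A$, a direct computation using $\epsilon(w_{ij}^k) = \delta_{ij}$, the identity $\sum_{i,k} d_k w_{ii}^k = N e$, and Lemma \ref{lem:projection} ($e \cdot \widehat{w}(\tau) = 1/N$) shows that formally replacing $u$ by $u_0$ in the formula yields $\epsilon(h) \cdot 1$, that is, $\widehat{x}(h) = (1 \otimes 1^0)^{\widehat{}}(h)$.

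For general $u$, I would expand
$$\widehat{u} = \widehat{u_0} + (\widehat{u} - \widehat{u_0}), \qquad \widehat{u^*} = \widehat{u_0^*} + (\widehat{u^*} - \widehat{u_0^*}),$$
and bilinearly distribute, so that $\widehat{x}(h) - \epsilon(h)\cdot 1$ decomposes into terms each containing at least one difference factor. Using the identities
$$\widehat{u}(h, l) = \widehat{V}(h_{(1)})\widehat{V}(l_{(1)})\widehat{V^*}(h_{(2)} l_{(2)}), \qquad \widehat{u^*}(h, l) = \widehat{V}(h_{(1)} l_{(1)})\widehat{V^*}(l_{(2)})\widehat{V^*}(h_{(2)})$$
from the end of Section \ref{sec:pre}, together with the fact that $\widehat{w}(\tau)$ is a projection (hence of norm at most one), each difference term is bounded by $\|u - u_0\|$ times an appropriate product of values of $\widehat{V}$ and $\widehat{V^*}$ on comatrix units. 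Re-indexing and summing over the indices, with $h$ ranging over the basis $\{w_{ab}^c\}$ of $H$, yields the combinatorial sum defining the constant $L$.

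The main obstacle is the algebraic bookkeeping required to isolate $\widehat{u} - \widehat{u_0}$ and $\widehat{u^*} - \widehat{u_0^*}$ as distinguished factors while forcing the remaining structure to assemble into the specific eight-fold sum
$$\sum_{i,j,k,t,r,t_1,t_2,j_1} d_k\,\|\widehat{V}(w_{ij}^k)^*\|\,\|\widehat{V}(w_{j_1 j}^k w_{t_2 t_1}^r)\widehat{V^*}(w_{t_1 t}^r)\|$$
appearing in the definition of $L$. This step requires controlled use of the coproduct relations for the comatrix units $\{w_{ij}^k\}$, the antipode identity $w_{ij}^{k*} = S(w_{ji}^k)$ from \cite[Theorem 2.2]{SP:saturated}, and the multiplicativity of $\epsilon$ on the resulting products.
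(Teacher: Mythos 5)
Your overall architecture coincides with the paper's: take the exact solution $x_0\in A^{\infty}\otimes H^0$ from Proposition \ref{prop:coboundary}, establish the a priori bound $||x_0-1\otimes 1^0||\le L\,||u-1\otimes 1^0\otimes 1^0||$, and then pass to a representative in $A\otimes H^0$ (the first inequality then holds for a far enough term of the sequence, as you say). The divergence, and the point you leave open, is in how the a priori bound is obtained. The paper does not work with the fully expanded expression for $\widehat{x_0}(h)$ carrying two $\widehat{u}$-factors that you take as your starting point; it uses the first line of the computation in Lemma \ref{lem:unitary11}, namely $\widehat{x_0}(h)=\sum_{i,j,k}d_k\,\widehat{V}(w_{ij}^k)^*\widehat{w}(\tau)\widehat{V}(w_{ij}^k h_{(1)})\widehat{V}^*(h_{(2)})$, and then invokes Lemma \ref{lem:quasi1}, $\sum_{i,j,k}d_k\,\widehat{V}(w_{ij}^k)^*\widehat{w}(\tau)\widehat{V}(w_{ij}^k)=1$, to write $1\otimes 1^0$ in exactly the same shape. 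Subtracting, the only discrepancy per term is $\widehat{V}(w_{ij}^k w_{st_1}^r)-\widehat{V}(w_{ij}^k)\widehat{V}(w_{st_1}^r)$, and expanding the product via $\widehat{V}(h)\widehat{V}(l)=\widehat{u}(h_{(1)},l_{(1)})\widehat{V}(h_{(2)}l_{(2)})$ leaves precisely \emph{one} factor $\epsilon(w_{ij_1}^k)\epsilon(w_{st_2}^r)-\widehat{u}(w_{ij_1}^k,w_{st_2}^r)$ in each summand. That single-difference-factor structure is what makes the estimate come out linear in $||u-1\otimes 1^0\otimes 1^0||$ with exactly the constant $L$ of the statement.

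By contrast, your bilinear expansion $\widehat{u}=\widehat{u_0}+(\widehat{u}-\widehat{u_0})$, $\widehat{u^*}=\widehat{u_0^*}+(\widehat{u^*}-\widehat{u_0^*})$ applied to the doubly decorated formula necessarily produces a cross term quadratic in $||u-u_0||$ and a constant assembled from a different collection of norms; there is no reason it should reduce to the stated eight-fold sum, and you explicitly flag this bookkeeping as the unresolved obstacle. This matters because $L$ is used quantitatively in Theorem \ref{thm:vanish2} (the iteration is run against $1/(2L)^n$ and the second inequality of the present lemma with this specific $L$), so the proof must deliver the declared constant, not merely \emph{some} constant. Your verification that substituting $u_0$ for $u$ collapses the formula to $\epsilon(h)\cdot 1$ is correct, so your route is not wrong in principle, but the decisive step of the second inequality is not closed; replacing it with the Lemma \ref{lem:quasi1} comparison described above is the direct way to finish.
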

\begin{proof}Modifying the proof of Izumi \cite [Lemma 3.12]{Izumi:group},
we shall prove this lemma. By Proposition \ref {prop:coboundary}, there is a unitary
element $x_0 \in A^{\infty}\otimes H^0$ satisfying that
$$
(x_0 \otimes 1^0 )(\rho^{\infty}\otimes\id)(x_0 )u(\id\otimes\Delta^0 )(x_0^* )=1\otimes1^0 \otimes 1^0 .
$$
By the proof of Lemma \ref{lem:unitary11}, for any $h\in H$,
$$
\widehat{x_0}(h)=\sum_{i,j,k}d_k \widehat{V}(w_{ij}^k )^* \widehat{w}(\tau)\widehat{V}(w_{ij}^k h_{(1)})
\widehat{V}^* (h_{(2)}).
$$
Thus
$$
x_0 =\sum_{i,j,k,s,t,r,t_1}d_k \widehat{V}(w_{ij}^k )^* \widehat{w}(\tau)\widehat{V}(w_{ij}^k w_{st_1}^r )
\widehat{V}^* (w_{t_1 t}^r )\otimes\phi_{st}^r .
$$
Since $\sum_{i,j,k}d_k \widehat{V}(w_{ij}^k )^* \widehat{w}(\tau)\widehat{V}(w_{ij}^k )=1$ by Lemma \ref{lem:quasi1},
$$
1\otimes 1^0 =\sum_{i,j,k,s,t,r,t_1}d_k \widehat{V}(w_{ij}^k )^* \widehat{w}(\tau)\widehat{V}(w_{ij}^k )
\widehat{V}(w_{st_1}^r )\widehat{V}^* (w_{t_1 t}^r )\otimes\phi_{st}^r .
$$
Thus
$$
x_0 -1\otimes 1^0 =\sum_{i,j,k,s,t,r,t_1}d_k \widehat{V}(w_{ij}^k )^* \widehat{w}(\tau)
[\widehat{V}(w_{ij}^k w_{st_1}^r )-\widehat{V}(w_{ij}^k )
\widehat{V}(w_{st_1}^r )]\widehat{V}^* (w_{t_1 t}^r )\otimes\phi_{st}^r .
$$
Since $u=(V\otimes 1^0 )(\rho_{H^0}^{A\rtimes_{\rho, u}H}\otimes\id)(V)(\id\otimes\Delta^0 )(V^* )$
by \cite [Lemma 3.12]{KT1:inclusion},
$$
\widehat{V}(w_{ij}^k )\widehat{V}(w_{st_1}^r )
=\sum_{j_1 , t_2}\widehat{u}(w_{ij_1}^k , w_{st_2}^r )\widehat{V}(w_{j_1 j}^k w_{t_2 t_1}^r )
$$
for any $i,j,k,s,t_1 , r$. Hence
\begin{align*}
x_0 -1\otimes 1^0 
& =\sum_{i,j,k,s,t,r,t_1 , t_2 , j_1}d_k \widehat{V}(w_{ij}^k )^* \widehat{w}(\tau)
[\epsilon(w_{ij_1}^k )\epsilon(w_{st_2}^r )
-\widehat{u}(w_{ij_1}^k , w_{st_2}^r )] \\
& \times \widehat{V}(w_{j_1 j}^k w_{t_2 t_1}^r )
\widehat{V}^* (w_{t_1 t}^r )\otimes\phi_{st}^r .
\end{align*}
Since
$||\epsilon(w_{ij_1}^k )\epsilon(w_{st_2}^r )-\widehat{u}(w_{ij_1}^k , w_{st_2}^r )||
\le ||1\otimes 1^0 \otimes1^0 -u ||$ for any $i,j_1 , k, s,t_2 , r$,
\begin{align*}
||x_0 -1\otimes1\otimes1^0 || & \le\sum_{i,j,k,s,t,r,t_1 , t_2 , j_1}
d_k ||\widehat{V} (w_{ij}^k ) ||\, ||\widehat{V}(w_{j_1 j}^k w_{t_2 t_1}^r )\widehat{V}^* (w_{t_1 t}^r )|| \\
& \times ||1\otimes 1^0 \otimes1^0 -u||.
\end{align*}
Since $x_0$ is a unitary element in $A^{\infty}\otimes H^0$, we can choose a desired unitary element $x$
in $A\otimes H^0$.
\end{proof}

\begin{thm}\label{thm:vanish2}Let $(\rho, u)$ be a twisted coaction of a finite dimensional $C^*$-Hopf
algebra $H^0 $ on a unital $C^*$-algebra $A$ with the Rohlin property. Then there is a unitary element
$x\in A\otimes H^0 $such that
$$
(x\otimes 1^0 )(\rho\otimes\id)(x)u(\id\otimes\Delta^0 )(x^* )=1\otimes 1^0 \otimes 1^0 .
$$
\end{thm}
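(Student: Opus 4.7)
\textbf{Proof plan for Theorem~\ref{thm:vanish2}.}
The strategy is an iterative approximation scheme built on Lemma~\ref{lem:coboundary2}, analogous in spirit to the proof of Theorem~\ref{thm:vanish1} but requiring a countable iteration rather than a single polar-decomposition step. The crucial enabling feature is that Proposition~\ref{prop:exterior2} guarantees the Rohlin property is preserved along exterior equivalences, so that Lemma~\ref{lem:coboundary2} can be applied at every stage of the iteration.

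Starting from $(\rho_0, u_0):=(\rho, u)$, I will inductively construct twisted coactions $(\rho_n, u_n)$ of $H^0$ on $A$ and unitaries $x_n\in A\otimes H^0$ as follows. Assume $(\rho_n, u_n)$ has the Rohlin property. Choose a small $\epsilon_{n+1}>0$ (to be specified) and apply Lemma~\ref{lem:coboundary2} to $(\rho_n,u_n)$ to obtain a unitary $x_{n+1}\in A\otimes H^0$ such that, setting
$$
u_{n+1}=(x_{n+1}\otimes 1^0)(\rho_n\otimes\id)(x_{n+1})u_n(\id\otimes\Delta^0)(x_{n+1}^*),\qquad \rho_{n+1}=\Ad(x_{n+1})\circ\rho_n,
$$
one has $\|u_{n+1}-1\otimes 1^0\otimes 1^0\|<\epsilon_{n+1}$ and $\|x_{n+1}-1\otimes 1^0\|<\epsilon_{n+1}+L_n\|u_n-1\otimes 1^0\otimes 1^0\|$. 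By construction $(\rho_{n+1},u_{n+1})\sim(\rho_n,u_n)$, and transitivity of exterior equivalence propagates the Rohlin property through Proposition~\ref{prop:exterior2}, so the induction continues.

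Setting $y_n=x_n x_{n-1}\cdots x_1$, transitivity of exterior equivalence yields the single equation
$$
u_n=(y_n\otimes 1^0)(\rho\otimes\id)(y_n)\,u\,(\id\otimes\Delta^0)(y_n^*).
$$
The inductive bounds give $\|u_n-1\otimes 1^0\otimes 1^0\|<\epsilon_n$ and $\|x_{n+1}-1\otimes 1^0\|<\epsilon_{n+1}+L_n\epsilon_n$, so the choice $\epsilon_n=2^{-n}/(L+1)$ (once the $L_n$ are known to be bounded by a uniform $L$) makes $\sum_n\|x_{n+1}-1\otimes 1^0\|<\infty$. Hence $y_n$ is norm Cauchy in $A\otimes H^0$ and converges to a unitary $y\in A\otimes H^0$, while $u_n\to 1\otimes 1^0\otimes 1^0$. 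Passing to the limit in the displayed identity produces the desired unitary $x:=y$.

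The main technical obstacle is verifying that the constant $L_n$ from Lemma~\ref{lem:coboundary2} remains uniformly bounded along the iteration, even though $\widehat{V}$ is defined inside the changing twisted crossed product $A\rtimes_{\rho_n,u_n}H$. Inspection of the explicit formula for $L$ shows that it is a finite sum, indexed only by the comatrix unit indices of the fixed $H$, of products of norms of elements of the form $1\rtimes_{\rho_n,u_n}h$. Since such norms are bounded by a quantity depending only on $h\in H$ (not on the choice of twisted coaction), one obtains $L_n\le L$ for a constant $L$ depending only on $H$; with this uniform bound in hand the summability argument above closes the proof.
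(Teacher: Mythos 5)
Your proposal is correct and follows essentially the same route as the paper: iterate Lemma \ref{lem:coboundary2}, use Proposition \ref{prop:exterior2} to keep the Rohlin property along the exterior equivalences, and sum the geometric bounds to get norm convergence of the partial products of unitaries and of the cocycles $u_n\to 1\otimes 1^0\otimes 1^0$. Your extra remark that the constant $L$ can be taken uniform in $n$ (because $\|1\rtimes_{\rho_n,u_n}h\|$ is controlled by $h$ and $H$ alone, via the unitarity of $V$) is a point the paper passes over silently, and it is a worthwhile clarification.
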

\begin{proof}We shall prove this lemma modifying the proof of \cite [Lemma 3.12]{Izumi:group}.
Let $u_0 =u$ and $\rho_0 =\rho$. By Lemma \ref{lem:coboundary2}, for $\frac{1}{2L}$, there is a unitary element
$y_0 \in A\otimes H^0 $ such that
$$
||1\otimes 1^0 \otimes 1^0 -(y_0 \otimes 1^0 )(\rho_0 \otimes\id)(y_0 )u_0 (\id\otimes\Delta^0 )(y_0^* )||
<\frac{1}{2L}<\frac{1}{2} .
$$
Let 
$$
\rho_1 =\Ad(y_0 )\circ\rho_0 , \quad u_1=(y_0 \otimes 1^0 )(\rho_0 \otimes \id)(y_0 )u_0 (\id\otimes\Delta^0 )(y_0^* ) .
$$
Then since $(\rho_1 , u_1 )$ is a twisted coaction of $H^0 $ on $A$ which is exterior equivalent to
$(\rho_ 0 , u_0 )$, by Proposition \ref{prop:exterior2} $(\rho_1 , u_1 )$ has the Rohlin property. Thus
by Lemma \ref{lem:coboundary2}, for $\frac{1}{(2L)^2 }$, there is a unitary element $y_1 \in A\otimes H^0 $
such that
\begin{align*}
& ||1\otimes 1^0 \otimes 1^0 -(y_1 \otimes 1^0 )(\rho_1 \otimes\id)(y_1 )u_1 (\id\otimes\Delta^0 )(y_1^* )||
<\frac{1}{(2L)^2 }<\frac{1}{2^2 }, \\
& ||y_1 -1\otimes 1^0 ||<\frac{1}{(2L)^2 }+L||u_1 -1\otimes 1^0 \otimes 1^0 ||
<\frac{1}{(2L)^2 }+\frac{1}{2}
<\frac{1}{2^2 }+\frac{1}{2}=\frac{3}{2^2}
\end{align*}
since $u_1=(y_0 \otimes 1^0 )(\rho_0 \otimes \id)(y_0 )u_0 (\id\otimes\Delta^0 )(y_0^* )$.
Let 
$$
\rho_2 =\Ad(y_1 )\circ\rho_1 , \quad u_2=(y_1 \otimes 1^0 )(\rho_1 \otimes \id)(y_1 )u_1 (\id\otimes\Delta^0 )(y_1^* ) .
$$
Then since $(\rho_2 , u_2 )$ is a twisted coaction of $H^0 $ on $A$ which is exterior equivalent to
$(\rho_ 1 , u_1 )$, by Proposition \ref{prop:exterior2} $(\rho_2 , u_2 )$ has the Rohlin property. Thus
by Lemma \ref{lem:coboundary2}, for $\frac{1}{(2L)^3 }$, there is a unitary element $y_2 \in A\otimes H^0 $
such that
\begin{align*}
& ||1\otimes 1^0 \otimes 1^0 -(y_2 \otimes 1^0 )(\rho_2 \otimes\id(y_2 )u_2 (\id\otimes\Delta^0 )(y_2^* )||
<\frac{1}{(2L)^3 }<\frac{1}{2^3 }, \\
& ||y_2 -1\otimes 1^0 ||<\frac{1}{(2L)^3 }+L||u_2 -1\otimes 1^0 \otimes 1^0 ||<\frac{1}{(2L)^3 }+\frac{1}{2^2 }
<\frac{1}{2^3 }+\frac{1}{2^2}=\frac{3}{2^3 } .
\end{align*}
It follows that by induction that there are sequences $\{(\rho_n , u_n )\}$ of twisted coactions of $H^0 $ on $A$
and $\{y_n \}$ of unitary elements in $A\otimes H^0 $ satisfying that for any $n\in \BN$,
$$
||1\otimes 1^0 \otimes 1^0 -u_n ||<\frac{1}{(2L)^n }<\frac{1}{2^n} , \quad
||1\otimes 1^0 -y_n ||<\frac{1}{2^{n+1}}+\frac{1}{2^n }=\frac{3}{2^{n+1}} .
$$
Let $x_n =y_n y_{n-1}\cdots y_0 \in A\otimes H^0 $ for any $n\in \BN\cup\{0\}$.
Then $x_n $ is a unitary element in $A\otimes H^0 $ satisfying that
$$
u_{n+1}=(x_n \otimes 1^0 )(\rho\otimes \id)(x_n )u_0 (\id\otimes\Delta^0 )(x_n^* )
$$
for any $\in\BN\cup\{0\}$ by routine computations. Furthermore,
$$
||u_n -1\otimes 1^0 \otimes 1^0 ||<\frac{1}{2^n }\longrightarrow 0 \quad (n \longrightarrow+\infty)
$$
Also, since by easy computations, we see that $\{x_n \}$ is a Cauchy sequence, there is a unitary element
$x\in A\otimes H^0 $ such that $x_n \longrightarrow x$ $(n\longrightarrow +\infty)$.
Therefore, we obtain that
$$
1\otimes 1^0 \otimes 1^0 =(x\otimes 1^0 )(\rho\otimes\id)(x)u(\id\otimes\Delta^0 )(x^* ) .
$$
\end{proof}

\section{Approximately unitary equivalence of coactions}\label{sec:appro}
Let $\rho$ be a coaction of $H^0$ on $A$ with the Rohlin property.
Let $w$ be a unitary element in $(A\rtimes_{\rho}H)\otimes H$ satisfying Equations
(5, 1)-(5, 3) for $\widehat{\rho}$.
Let $(\rho_1 , u )$ be a twisted coaction $H^0$ on $A$ which is exterior equivalent to $\rho$.
Let $v$ be a unitary element in $A\otimes H^0$ satisfying Conditions (1), (2) in
Definition \ref{Def:equivalence}, that is,
\newline
(1) $\rho_1 =\Ad(v)\circ\rho $,
\newline
(2) $u=(v\otimes 1)(\rho \otimes\id)(v)(\id\otimes\Delta)(v^* )$.
\newline
By Proposition \ref{prop:exterior2},
$(\rho_1 , u)$ has the Rohlin property. Let $w_1$ be a unitary element in
$(A\rtimes_{\rho_1 , u}H)\otimes H$ satisfying Equations (5, 1)-(5, 3)
for $\widehat{\rho_1}$. By Lemma \ref{lem:projection2},
$\widehat{w}(\tau)=\widehat{w_1}(\tau)$. Let
$$
x=N(\id\otimes e)(v\rho^{\infty}(\widehat{w}(\tau)))=N\widehat{v}(e_{(1)})[e_{(2)}\cdot_{\rho^{\infty}}\widehat{w}(\tau)] .
$$
We have the following lemma which is similar to Lemma \ref{lem:unitary2}.

\begin{lemma}\label{lem:unitary}With the above notations and assumptions, $x$ is a unitary element in $A^{\infty}$.
\end{lemma}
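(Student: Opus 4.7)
The plan is to adapt the proof of Lemma \ref{lem:unitary2} to the present setting, where the second coaction is a twisted coaction $(\rho_1,u)$ rather than an ordinary coaction. Observe that the formula defining $x$ involves only $v$, $\rho^\infty$, $\widehat{w}(\tau)$, and the Haar elements $e,\tau$; the cocycle $u$ enters only indirectly, through the exterior-equivalence relation satisfied by $v$ and through the fact that $(\rho_1,u)$ inherits the Rohlin property from $\rho$ via Proposition \ref{prop:exterior2}. Accordingly, the unitarity argument should essentially be the one already used for Lemma \ref{lem:unitary2}.

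First I would verify $xx^*=1$. Using a second dummy copy $f=e$, expand
\[
xx^* = N^2\,\widehat{v}(e_{(1)})\,[e_{(2)}\cdot_{\rho^\infty}\widehat{w}(\tau)]\,[f_{(2)}\cdot_{\rho^\infty}\widehat{w}(\tau)]^*\,\widehat{v}(f_{(1)})^*.
\]
Applying $\widehat{v}(h)^* = \widehat{v}(S(h)^*)$, the idempotence of $\widehat{w}(\tau)$ (Lemma \ref{lem:homo}(2)), and repeated use of the Haar-trace identities for $\tau$, the middle factor collapses and the expression simplifies to $N(\id\otimes e)(v\rho^\infty(\widehat{w}(\tau))v^*)$. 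Since $\rho_1=\Ad(v)\circ\rho$, this equals $N[e\cdot_{\rho_1^\infty}\widehat{w_1}(\tau)]$, where I am using $\widehat{w_1}(\tau)=\widehat{w}(\tau)$ from Lemma \ref{lem:projection2}. Lemma \ref{lem:projection} applied to $(\rho_1,u)$ (Rohlin by Proposition \ref{prop:exterior2}) then gives $e\cdot_{\rho_1^\infty}\widehat{w_1}(\tau)=1/N$, so $xx^*=1$.

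To obtain $x^*x=1$, I would introduce the symmetric element
\[
y = N(\id\otimes e)(v^*\rho_1^\infty(\widehat{w_1}(\tau))) = N\,\widehat{v}^*(e_{(1)})\,[e_{(2)}\cdot_{\rho_1^\infty}\widehat{w_1}(\tau)],
\]
and run the analogous computation with the roles of $\rho$ and $\rho_1$ swapped, together with $v$ replaced by $v^*$, to obtain $yy^*=1$. A direct calculation using $\widehat{v^*}(h)=\widehat{v}(S(h)^*)$, together with $\widehat{w_1}(\tau)=\widehat{w}(\tau)$ and the relation between $\rho^\infty$ and $\rho_1^\infty$ on $A^\infty$ coming from $\rho_1=\Ad(v)\circ\rho$, identifies $y^*$ with $x$; hence $x^*x = yy^* = 1$.

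The main obstacle is a small bookkeeping issue rather than a genuine difficulty: Lemma \ref{lem:projection2} is stated for two twisted coactions, whereas in our situation $\rho$ is an ordinary coaction and $(\rho_1,u)$ is twisted. This is handled by regarding $\rho$ as the twisted coaction $(\rho,1\otimes 1\otimes 1)$; an inspection of the proof of Lemma \ref{lem:projection2} shows that it only uses the existence of the intertwining isomorphism $\Phi$ on crossed products and the fact that $\widehat{w}(\tau)\in A_\infty$, both of which remain available in this mixed setting. With that remark in place, no new ideas beyond those of Lemma \ref{lem:unitary2} are required, and the nontrivial cocycle $u$ leaves no trace in the final computation.
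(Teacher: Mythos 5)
Your argument for $xx^*=1$ is fine and coincides with the paper's: that computation lives entirely in the untwisted crossed product $A\rtimes_{\rho}H$, and the final step $N(\id\otimes e)(v\rho^{\infty}(\widehat{w}(\tau))v^*)=N[e\cdot_{\rho_1,u}\widehat{w}(\tau)]=1$ uses Lemmas \ref{lem:projection2} and \ref{lem:projection} for the twisted coaction $(\rho_1,u)$, exactly as you say. The gap is in your argument for $x^*x=1$. The symmetric-element trick of Lemma \ref{lem:unitary2} depends essentially on Equation (8, 2), i.e.\ on $v$ being a cocycle for $\rho$ --- and that is precisely what fails here, since $u=(v\otimes 1^0)(\rho\otimes\id)(v)(\id\otimes\Delta^0)(v^*)$ is nontrivial. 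Concretely: rewriting $y=N\widehat{v^*}(e_{(1)})[e_{(2)}\cdot_{\rho_1^{\infty}}\widehat{w}(\tau)]$ via $h\cdot_{\rho_1}a=\widehat{v}(h_{(1)})[h_{(2)}\cdot_{\rho}a]\widehat{v^*}(h_{(3)})$ gives $y=N[e_{(1)}\cdot_{\rho^{\infty}}\widehat{w}(\tau)]\widehat{v^*}(e_{(2)})$, so $y^*=x$ is indeed easy; but then $yy^*=x^*x$ and you have gained nothing, because the product $yy^*$ has the factors $\widehat{v^*}(e_{(2)})\widehat{v}(S(f_{(2)})^*)$ in the middle rather than $[e_{(2)}\cdot_{\rho^{\infty}}\widehat{w}(\tau)][S(f_{(2)})^*\cdot_{\rho^{\infty}}\widehat{w}(\tau)]$. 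The latter collapses via Lemma \ref{lem:homo}(3) (the $E_1^{\rho}$-covariance of $\widehat{w}(\tau)$, which produces the $\tau(e_{(3)}f_{(3)}^*)$ factor tying the two copies of $e$ together); the former would collapse only via the cocycle identity for $v$, which you do not have. Alternatively, running the computation intrinsically for $(\rho_1,u)$ forces you into the twisted crossed product $A\rtimes_{\rho_1,u}H$, where $h\mapsto 1\rtimes h$ is no longer multiplicative and every step of the Lemma \ref{lem:unitary2} computation acquires $\widehat{u}$-correction terms. Your assertion that ``the nontrivial cocycle $u$ leaves no trace in the final computation'' is exactly the point requiring proof, and it is not supplied.

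The paper takes a different route for $x^*x$: it computes $x^*x$ directly inside the untwisted crossed product $A\rtimes_{\rho}H$ (never invoking $\rho_1$ or $u$), arrives at
$$
x^*x=N\,[S(e_{(4)}^*)\cdot_{\rho^{\infty}}[e_{(2)}^*\cdot_{\rho}\widehat{v^*}(S(e_{(1)}^*))\widehat{v}(S(e_{(3)}^*))]\widehat{w}(\tau)],
$$
and then applies the faithful conditional expectation $E^{\rho^{\infty}}$ onto $(A^{\rho})^{\infty}$: one checks $E^{\rho^{\infty}}(x^*x)=1$ using $e\cdot_{\rho^{\infty}}\widehat{w}(\tau)=\frac{1}{N}$ (Lemma \ref{lem:projection}), and since $xx^*=1$ makes $x^*x$ a projection, $1-x^*x\ge 0$ with $E^{\rho^{\infty}}(1-x^*x)=0$ forces $x^*x=1$ by faithfulness. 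Some such device --- or else an honest computation carried out in the twisted crossed product --- is needed; as written, your proposal does not establish $x^*x=1$.
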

\begin{proof}In the same way as in the proof of Lemma \ref{lem:unitary2},
we can see that $xx^* =1$. Next we shall show that $x^* x=1$.
Let $f=e$.
\begin{align*}
& x^* x=N^2 [e_{(2)}\cdot_{\rho^{\infty}}\widehat{w}(\tau)]^* \widehat{v}(e_{(1)})^*
\widehat{v}(f_{(1)})[f_{(2)}\cdot_{\rho^{\infty}}\widehat{w}(\tau)] \\
& =N^2 [S(e_{(2)}^* )\cdot_{\rho^{\infty}}\widehat{w}(\tau)]\widehat{v^* }(S(e_{(1)})^* )
\widehat{v}(f_{(1)})[f_{(2)}\cdot_{\rho^{\infty}}\widehat{w}(\tau)] \\
& =N^2 \widehat{V}(S(e_{(3)}^* ))\widehat{w}(\tau)\widehat{V^* }(S(e_{(2)}^* ))\widehat{v^* }(S(e_{(1)}^* ))
\widehat{v}(f_{(1)})\widehat{V}(f_{(2)})\widehat{w}(\tau)\widehat{V^* }(f_{(3)}) \\
& =N^2 \widehat{V}(S(e_{(3)}^* ))\widehat{w}(\tau)(1\rtimes_{\rho}e_{(2)}^* )\widehat{v^* }(S(e_{(1)}^* ))
\widehat{v}(f_{(1)})(1\rtimes_{\rho}f_{(2)})\widehat{w}(\tau)\widehat{V^* }(f_{(3)}) \\
& =N^2 \widehat{V}(S(e_{(4)}^* ))\widehat{w}(\tau)([e_{(2)}^* \cdot_{\rho}\widehat{v^* }(S(e_{(1)}^* ))
\widehat{v}(f_{(1)})]\rtimes_{\rho}e_{(3)}^* f_{(2)})\widehat{w}(\tau)\widehat{V^* }(f_{(3)}) \\
& =N^2 \widehat{V}(S(e_{(4)}^* ))[e_{(2)}^* \cdot_{\rho}\widehat{v^* }(S(e_{(1)}^* ))
\widehat{v}(f_{(1)})]\tau(e_{(3)}^* f_{(2)})\widehat{w}(\tau)\widehat{V^* }(f_{(3)}) \\
& =N^2 \widehat{V}(S(e_{(6)}^* ))[e_{(2)}^* \cdot_{\rho}\widehat{v^* }(S(e_{(1)}^* ))
\widehat{v}(S(e_{(3)}^* )e_{(4)}^* f_{(1)})]\tau(e_{(5)}^* f_{(2)})\widehat{w}(\tau)\widehat{V^* }(f_{(3)}) \\
& =N^2 \widehat{V}(S(e_{(5)}^* ))[e_{(2)}^* \cdot_{\rho}\widehat{v^* }(S(e_{(1)}^* ))
\widehat{v}(S(e_{(3)}^* ))]\tau(e_{(4)}^* f_{(1)})\widehat{w}(\tau)\widehat{V^* }(f_{(2)}) \\
& =N^2 \widehat{V}(S(e_{(7)}^* ))[e_{(2)}^* \cdot_{\rho}\widehat{v^* }(S(e_{(1)}^* ))
\widehat{v}(S(e_{(3)}^* ))]\tau(e_{(4)}^* f_{(1)})\widehat{w}(\tau)\widehat{V^* }(S(e_{(6)}^* )e_{(5)}^* f_{(2)}) \\
& =N^2 \widehat{V}(S(e_{(6)}^* ))[e_{(2)}^* \cdot_{\rho}\widehat{v^* }(S(e_{(1)}^* ))
\widehat{v}(S(e_{(3)}^* ))]\tau(e_{(4)}^* f)\widehat{w}(\tau)\widehat{V^* }(S(e_{(5)}^* )) \\
& =N \widehat{V}(S(e_{(5)}^* ))[e_{(2)}^* \cdot_{\rho}\widehat{v^* }(S(e_{(1)}^* ))
\widehat{v}(S(e_{(3)}^* ))]\widehat{w}(\tau)\widehat{V^* }(S(e_{(4)}^* )) \\
& =N [S(e_{(4)}^* )\cdot_{\rho^{\infty}}[e_{(2)}^* \cdot_{\rho}\widehat{v^* }(S(e_{(1)}^* ))
\widehat{v}(S(e_{(3)}^* ))]\widehat{w}(\tau)] .
\end{align*}
Let $E^{\rho^{\infty}}$ be the conditional expectation from $A^{\infty}$ onto $(A^{\rho})^{\infty}$.
Then since $e=\sum_{i,k}\frac{d_k}{N}w_{ii}^k$,
\begin{align*}
E^{\rho^{\infty}}(x^* x) & =f\cdot_{\rho^{\infty}}x^* x 
=N [fS(e_{(4)}^* )\cdot_{\rho^{\infty}}[e_{(2)}^* \cdot_{\rho}\widehat{v^* }(S(e_{(1)}^* ))
\widehat{v}(S(e_{(3)}^* ))]\widehat{w}(\tau)] \\
& =N [f\cdot_{\rho^{\infty}}[e_{(2)}^* \cdot_{\rho}\widehat{v^* }(S(e_{(1)}^* ))
\widehat{v}(S(e_{(3)}^* ))]\widehat{w}(\tau)] \\
& =\sum_{i,j,j_1 ,k}d_k [f\cdot_{\rho^{\infty}}[w_{jj_1}^{k*}\cdot_{\rho}\widehat{v^*}(S(w_{ij}^{k*}))
\widehat{v}(S(w_{j_1 i}^{k*}))]\widehat{w}(\tau)] \\
& =\sum_{i,j,j_1 ,k}d_k [f\cdot_{\rho^{\infty}}[w_{jj_1}^{k*}\cdot_{\rho}\widehat{v^*}(w_{ji}^k)
\widehat{v}(w_{ij_1}^k)]\widehat{w}(\tau)] \\
& =\sum_{j,j_1 ,k}d_k [f\cdot_{\rho^{\infty}}[w_{jj_1}^{k*}\cdot_{\rho}\epsilon(w_{jj_1}^k )]
\widehat{w}(\tau)]
=\sum_{j,k}d_k [f\cdot_{\rho^{\infty}}[w_{jj}^{k*}\cdot_{\rho}1]\widehat{w}(\tau)] \\
& =N[f\cdot_{\rho^{\infty}}\epsilon(e)\widehat{w}(\tau)]=N[f\cdot_{\rho^{\infty}}\widehat{w}(\tau)]=1
\end{align*}
by Lemma \ref{lem:projection}.
Since $E^{\rho^{\infty}}$ is faithful, we obtain the conclusion.
\end{proof}

\begin{Def}\label{Def:approximate}
Coactions $\rho$ and $\sigma$ of $H^0$ on $A$ are
\sl
approximately unitarily equivalent
\rm
if there is a unitary element $v\in A^{\infty}\otimes H^0$ such that
$$
\sigma(a)=v\rho(a)v^*
$$
for any $a\in A$.
\end{Def}

Let $\rho$ and $\sigma$ be coactions of $H^0$ on $A$ which are approximately unitarily
equivalent. Then there is a unitary element $v$ in $A^{\infty}\otimes H^0$ such that
$\sigma(a)=v\rho(a)v^*$ for any $a\in A$.
We write $v=(v_n )$, where $v_n$ is a unitary element in $A$.
Then since $a(\id\otimes\epsilon^0 )(v)=(\id\otimes\epsilon^0 )(v)a$ for
any $a\in A$, $(\id\otimes\epsilon^0 )(v)$ is a unitary element in $A_{\infty}$.
Let $z=(\id\otimes\epsilon^0 )(v)$ and $w=v(z^* \otimes 1^0)$.
Then $w$ is a unitary element in $A^{\infty}\otimes H^0 $ and
$$
w\rho(a)w^* =v(z^* \otimes 1^0 )\rho(a)(z\otimes 1^0 )v^* =v\rho(a)v^* =\sigma(a)
$$
for any $a\in A$. Furthermore, $(\id\otimes\epsilon^0 )(w)=zz^*=1$.
Hence if we write $w=(w_n )$, where $w_n $ is a unitary element in $A\otimes H^0 $,
then $w_n =v_n ((\id\otimes \epsilon^0 )(v_n^*)\otimes1^0 )$. Thus
$(\id\otimes\epsilon)(w_n )=1$. Therefore, we may assume that $(\id\otimes\epsilon^0 )(v_n )=1$
for any $n\in\BN$.
We shall show the following lemma.

\begin{lemma}\label{lem:key0}Let $\sigma$ and $\rho$ be coactions of $H^0$ on $A$.
We suppose that $\rho$ has the Rohlin property and that $\sigma$ is approximately unitary
equivalent to $\rho$. Then for each finite subset $F$ of $A$ and any positive number $\epsilon>0$,
there is a unitary element $x\in A$ such that
\begin{align*}
& ||\sigma(a)-(\Ad(x\otimes 1^0 )\circ\rho\circ\Ad(x^* ))(a)||<\epsilon, \\
& ||xa-ax||<\epsilon+L\max_{i,j,k}||\sigma([S(w_{ij}^k )\cdot_{\rho}a])-\rho([S(w_{ij}^k )\cdot_{\rho}a])||
\end{align*}
for any $a\in F$, where $L=\sum_{i,j,k}d_k ||\id\otimes w_{ij}^k ||$.
\end{lemma}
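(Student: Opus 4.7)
The plan is to adapt the constructions of Lemma \ref{lem:unitary2} and Lemma \ref{lem:appro} to the approximate setting. Write the implementing unitary as $v \in A^{\infty} \otimes H^0$ with $\sigma(a) = v\rho(a)v^*$ for $a \in A$ and $(\id\otimes\epsilon^0)(v)=1$, and let $p = \widehat{w}(\tau) \in A_\infty$ be the Rohlin projection for $\rho$. Set
\[
y := N(\id \otimes e)(v\rho^{\infty}(p)) = \sum_{i,j,k} d_k\,\widehat{v}(w_{ij}^k)\,(w_{ji}^k \cdot_{\rho^{\infty}} p) \in A^{\infty},
\]
using $\Delta(e) = \sum \tfrac{d_k}{N}\,w_{ij}^k \otimes w_{ji}^k$. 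This element will serve as an $A^{\infty}$-level candidate for the unitary $x$.

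First I would show that $y$ is a unitary in $A^{\infty}$ satisfying $\rho^{\infty}(y) = v^*(y \otimes 1^0)$, by repeating the computations of Lemma \ref{lem:unitary2} and Lemma \ref{lem:unitary} almost verbatim. The inputs needed are: (i) $v\rho^{\infty}(a) = \sigma^{\infty}(a)v$ for every $a \in A^{\infty}$, which holds since $\sigma^{\infty} = \Ad(v) \circ \rho^{\infty}$ extends from $A$ to $A^{\infty}$; (ii) the Rohlin pinching $\widehat{w}(\tau)\,z\,\widehat{w}(\tau) = E_1^{\rho}(z)\widehat{w}(\tau)$ of Lemma \ref{lem:homo}(3), which does not involve an exact cocycle hypothesis on $v$; and (iii) that the same $p$ is a Rohlin projection for $\sigma$, i.e.\ $e \cdot_{\sigma^{\infty}} p = 1/N$. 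Point (iii) follows because approximate unitary equivalence transfers the Rohlin property from $\rho$ to $\sigma$, and by the argument of Lemma \ref{lem:projection2} transposed to the $A^{\infty}$-level implementing unitary, the Rohlin projection is canonical. From $\rho^{\infty}(y) = v^*(y \otimes 1^0)$ one deduces $\sigma(a) = (y \otimes 1^0)\rho(a)(y^* \otimes 1^0)$ in $A^{\infty}\otimes H^0$ for $a \in A$. A representing sequence $(y_n) \subset A$ combined with a polar decomposition produces unitaries $x_n \in A$ with $\|x_n - y_n\| \to 0$, and for sufficiently large $n$ the conjugation estimate $\|\sigma(a) - (x_n \otimes 1^0)\rho(a)(x_n^* \otimes 1^0)\| < \epsilon$ holds for every $a \in F$.

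The commutation estimate is the principal technical step. Starting from
\[
[y, a] = N(\id \otimes e)\bigl([v\rho^{\infty}(p),\, a \otimes 1^0]\bigr),
\]
I would expand the inner commutator via $v\rho(a) = \sigma(a)v$ and $[\rho^{\infty}(p), \rho(a)] = 0$, insert the expansion $\Delta(e) = \sum \tfrac{d_k}{N}\,w_{ij}^k \otimes w_{ji}^k$, and repeatedly apply the antipode-counit identity $\sum_l w_{il}^k S(w_{lj}^k) = \delta_{ij}1_H$ to shift the $H$-action through the Sweedler components. The rearrangement should ultimately express $[y, a]$ as a sum of terms of the shape
\[
d_k\,(\id \otimes w_{ij}^k)\bigl(\sigma(S(w_{ij}^k)\cdot_{\rho} a) - \rho(S(w_{ij}^k)\cdot_{\rho} a)\bigr)\,Z_{ijk}
\]
with $\|Z_{ijk}\| \leq 1$. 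Taking norms, summing, and bounding by the maximum yields $L\max_{i,j,k}\|\sigma(S(w_{ij}^k)\cdot_{\rho}a) - \rho(S(w_{ij}^k)\cdot_{\rho}a)\|$; the passage from $y$ to the lifted unitary $x$ absorbs the extra $\epsilon$. The main obstacle is exactly this reorganization: isolating the factors $\sigma(S(w_{ij}^k)\cdot_{\rho}a) - \rho(S(w_{ij}^k)\cdot_{\rho}a)$ from $[y, a]$ requires careful Sweedler-index bookkeeping with antipodes and comatrix units, and is the non-trivial algebraic content of the lemma.
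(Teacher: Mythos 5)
Your candidate unitary is the paper's own (the element $N(\id\otimes e)(v\rho^{\infty}(\widehat{w}(\tau)))$ appears verbatim in the proof), but there is a genuine gap in your choice of $v$ and in the exact identity you claim for $y$. You take $v$ to be the exact implementing unitary in $A^{\infty}\otimes H^0$ and assert $\rho^{\infty}(y)=v^*(y\otimes 1^0)$ by repeating Lemma \ref{lem:unitary2} ``almost verbatim.'' That computation uses the cocycle identity $(v\otimes 1^0)(\rho\otimes\id)(v)=(\id\otimes\Delta^0)(v)$ in an essential way, and approximate unitary equivalence gives you no such identity: all you know about $v$ is $\sigma(a)=v\rho(a)v^*$ for $a\in A$. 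Your input (i) is also false as stated: writing $v=(v_n)$, the identity $\Ad(v)\circ\rho^{\infty}=\sigma^{\infty}$ on all of $A^{\infty}$ would require $\|v_n\rho(a_n)v_n^*-\sigma(a_n)\|\to 0$ for arbitrary bounded sequences $(a_n)$, whereas approximate unitary equivalence gives only pointwise convergence on $A$; so the relation holds on $A$ but not on $A^{\infty}$, and you cannot freely move $v$ past $\rho^{\infty}(p)$ for $p\in A_{\infty}$. A sanity check that something must break: if $\rho^{\infty}(y)=v^*(y\otimes 1^0)$ held with $y$ unitary, you would obtain the exact conjugacy $\sigma=\Ad(y\otimes 1^0)\circ\rho\circ\Ad(y^*)$ in one step, which is far stronger than the lemma and would make the intertwining argument of Theorem \ref{thm:main} pointless.

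The paper avoids this by working at the finite level: given $F$ and $\epsilon$, it chooses a unitary $v\in A\otimes H^0$ with $(\id\otimes\epsilon^0)(v)=1$ such that $\|\sigma(b)-v\rho(b)v^*\|<\epsilon$ for $b$ ranging over $F$ and over the finitely many elements $S(w_{ij}^k)\cdot_{\rho}a$ and $S(w_{ij}^k)\cdot_{\sigma}a$ with $a\in F$. This $v$ is not a cocycle either, but $(\Ad(v)\circ\rho,\,u)$ with $u$ the coboundary of $v$ is an exact twisted coaction exterior equivalent to $\rho$ (Remark \ref{rem:coboundary}), so Lemma \ref{lem:unitary} --- which, unlike Lemma \ref{lem:unitary2}, covers the twisted case and concludes only unitarity of $x$, not any intertwining relation --- applies. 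Both estimates of the lemma are then extracted by the explicit norm computations of Lemmas \ref{lem:key1}--\ref{lem:key4}, where the $\epsilon$-errors on the finite set enter linearly with the constants $N$ and $L$. Your outline of the commutator estimate is in the right spirit (it parallels Lemma \ref{lem:key4}), but it too must be run with the finite-level $v$ and with approximate rather than exact intertwining; as written, the argument rests on an identity that is not available.
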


We shall prove this lemma by showing a series of several lemmas.
Since $\rho$ and $\sigma$ are approximately unitarily equivalent,
there is a unitary element $v_0 \in A^{\infty}\otimes H^0$ such that
$\sigma(a)=v_0 \rho(a)v_0^* $ for any $a\in A$. Let $F$ be any finite subset
of $A$ and $\epsilon$ any positive number. Then there is a unitary element
$v\in A\otimes H^0$ with $(\id\otimes\epsilon^0 )(v)=1$ such that
\begin{align*}
& ||\sigma(a)-v\rho(a)v^* ||<\epsilon, \\
& ||\sigma([S(w_{ij}^k )\cdot_{\sigma} a])-v\rho([S(w_{ij}^k )\cdot_{\sigma}a])v^* ||<\epsilon , \\
& ||\sigma([S(w_{ij}^k )\cdot_{\rho}a])-v\rho([S(w_{ij}^k )\cdot_{\rho}a])v^* ||<\epsilon
\end{align*}
for any $a\in F$ and $i,j=1,2,\dots,d_k$, $k=1,2,\dots,K$. Let $x=N(\id\otimes e)(v\rho^{\infty}(\widehat{w}(\tau)))$.
Let $\rho_1 =\Ad(v)\circ\rho$ and $u=(v\otimes 1^0 )(\rho\otimes\id)(v)(\id\otimes\Delta^0 )(v^* )$.
Then $(\rho_1 , u)$ is a twisted coaction of $H^0$ on $A$ which is exterior
equivalent to $\rho$. Hence by Lemma \ref{lem:unitary}, $x$ is a unitary element in $A^{\infty}$.

\begin{lemma}\label{lem:key1}With the above notations and assumptions, for any $a\in F$,
$$
||\rho(x)(x^* \otimes 1^0 )v\rho(a)-N(\id\otimes e)((\rho\otimes\id)(v)(\id\otimes\Delta^0 )
(\rho^{\infty}(\widehat{w}(\tau))v^* ))\sigma(a)v||<N\epsilon.
$$
\end{lemma}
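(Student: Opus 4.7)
The plan is to prove the inequality by a direct computation, reducing to the identity $(\id\otimes e)(v\rho^{\infty}(\widehat w(\tau))D) = 0$ for $D=(x^*\otimes 1^0)-v^*$, together with one application of the approximation $\|v\rho(a)-\sigma(a)v\|<\epsilon$. First, by the argument used in the proof of Lemma~\ref{lem:unitary2}, applying $\rho$ to $x=N(\id\otimes e)(v\rho^{\infty}(\widehat w(\tau)))$ and using $(\rho^{\infty}\otimes\id)\circ\rho^{\infty}=(\id\otimes\Delta^0)\circ\rho^{\infty}$ gives
$$\rho(x)=N(\id\otimes\id\otimes e)\bigl((\rho\otimes\id)(v)(\id\otimes\Delta^0)(\rho^{\infty}(\widehat w(\tau)))\bigr).$$
Using the identity $(\id\otimes\id\otimes e)(Y)Z=(\id\otimes\id\otimes e)(Y(Z\otimes 1^0))$ for $Y\in A^{\infty}\otimes H^0\otimes H^0$ and $Z\in A^{\infty}\otimes H^0$, together with $(\id\otimes\Delta^0)(x^*\otimes 1^0)=x^*\otimes 1^0\otimes 1^0$, both sides of the lemma can be reformulated as $N(\id\otimes\id\otimes e)$ of triple-tensor expressions: the LHS becomes $N(\id\otimes\id\otimes e)((\rho\otimes\id)(v)(\id\otimes\Delta^0)(\rho^{\infty}(\widehat w(\tau)))(x^*v\rho(a)\otimes 1^0))$, while the RHS becomes the analogous expression with $\rho^{\infty}(\widehat w(\tau))$ replaced by $\rho^{\infty}(\widehat w(\tau))v^*$ and $x^*v\rho(a)$ replaced by $\sigma(a)v$.

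Next, substituting $v\rho(a)=\sigma(a)v+(v\rho(a)-\sigma(a)v)$ in the LHS, the contribution from the error term is bounded in norm by $N\epsilon$ (using $\|v\rho(a)-\sigma(a)v\|<\epsilon$, the unit-norm bounds on the surrounding operators, and $\|(\id\otimes\id\otimes e)\|\le 1$). The remaining piece, by $(\id\otimes\Delta^0)((x^*\otimes 1^0)-v^*)=(x^*\otimes 1^0\otimes 1^0)-(\id\otimes\Delta^0)(v^*)$, reduces to showing
$$N(\id\otimes\id\otimes e)\bigl((\rho\otimes\id)(v)(\id\otimes\Delta^0)(\rho^{\infty}(\widehat w(\tau))D)(\sigma(a)v\otimes 1^0)\bigr)=0.$$
This is verified via the Haar identity $(\id\otimes\id\otimes e)((\id\otimes\Delta^0)(y)(Z\otimes 1^0))=(\id\otimes e)(y)Z$ after moving $(\rho\otimes\id)(v)$ past via the cocycle relation; the key input is $(\id\otimes e)(v\rho^{\infty}(\widehat w(\tau))D)=0$, which holds because $(\id\otimes e)(v\rho^{\infty}(\widehat w(\tau))(x^*\otimes 1^0))=\frac{1}{N}$ (from $(\id\otimes e)(v\rho^{\infty}(\widehat w(\tau)))=x/N$ and $xx^*=1$) and $(\id\otimes e)(v\rho^{\infty}(\widehat w(\tau))v^*)=\frac{1}{N}$ (by the Rohlin property of the exterior equivalent twisted coaction $(\Ad(v)\circ\rho,u)$ with the same witness $\widehat w(\tau)$, via Proposition~\ref{prop:exterior2} and Lemma~\ref{lem:projection2}).

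The main obstacle is managing the twisted cocycle $u=(v\otimes 1^0)(\rho\otimes\id)(v)(\id\otimes\Delta^0)(v^*)$ in the final Haar-based reduction: in the untwisted cocycle case ($u=1$), substituting $(\rho\otimes\id)(v)=(v^*\otimes 1^0)(\id\otimes\Delta^0)(v)$ collapses the expression directly to $Nv^*(\id\otimes e)(v\rho^{\infty}(\widehat w(\tau))D)\cdot\sigma(a)v=0$, but in the approximate setting $u$ introduces a residue. I would control this by exploiting the fact that the approximation $\|v\rho(b)-\sigma(b)v\|<\epsilon$ also holds on the auxiliary elements $b=S(w_{ij}^k)\cdot_{\rho}a$ (as is assumed in the proof of Lemma~\ref{lem:key0}), forcing the relevant matrix coefficients of $u$ to be controlled by $\epsilon$, with the contribution absorbed into the $N\epsilon$-bound coming from the leading coefficient $N$ in both sides.
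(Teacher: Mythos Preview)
Your overall strategy matches the paper's: both sides of the inequality differ by $N(\id\otimes\id\otimes e)(\cdots)(v\rho(a)-\sigma(a)v)$ once one establishes the \emph{identity}
\[
\rho^{\infty}(x)(x^*\otimes 1^0)
= N(\id\otimes\id\otimes e)\bigl((\rho\otimes\id)(v)(\id\otimes\Delta^0)(\rho^{\infty}(\widehat w(\tau))v^*)\bigr),
\]
after which a single use of $\|v\rho(a)-\sigma(a)v\|<\epsilon$ and the contractivity of $\id\otimes e$ give the $N\epsilon$ bound. Your reformulation and the reduction to showing
\[
(\id\otimes\id\otimes e)\bigl((\rho\otimes\id)(v)(\id\otimes\Delta^0)(\rho^{\infty}(\widehat w(\tau))D)\bigr)=0,\qquad D=(x^*\otimes 1^0)-v^*,
\]
are correct, and the factor $\sigma(a)v$ indeed drops out.

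The gap is in how you propose to prove this identity. Substituting $(\rho\otimes\id)(v)=(v^*\otimes 1^0)u(\id\otimes\Delta^0)(v)$ introduces the twisting unitary $u$, and your Haar identity $(\id\otimes\id\otimes e)((\id\otimes\Delta^0)(y)(Z\otimes 1^0))=((\id\otimes e)(y)\otimes 1^0)Z$ no longer applies to $(\id\otimes\id\otimes e)(u(\id\otimes\Delta^0)(\cdot))$. Your proposed remedy---controlling ``the relevant matrix coefficients of $u$'' via the approximation $\|v\rho(b)-\sigma(b)v\|<\epsilon$ on the auxiliary elements $b=S(w_{ij}^k)\cdot_{\rho}a$---is incorrect: $u=(v\otimes 1^0)(\rho\otimes\id)(v)(\id\otimes\Delta^0)(v^*)$ depends only on $v$, not on $a$ or $\epsilon$, and the identity you need is purely a relation among $x$, $v$, $\rho$, and $\widehat w(\tau)$, with no $a$ present. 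There is no mechanism by which the finite-set approximation forces $u$ close to $1$.

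The paper avoids this obstruction entirely: it does not use the cocycle relation at all. Instead, it expands $\rho^{\infty}(x)(x^*\otimes 1^0)v\rho(a)$ in Sweedler notation and repeatedly applies the Rohlin relation $\widehat w(\tau)(c\rtimes_\rho h)\widehat w(\tau)=\tau(h)(c\rtimes_\rho 1)\widehat w(\tau)$ from Lemma~\ref{lem:homo}(3), together with the absorption $\tau(S(h)e_{(1)})\cdots e_{(2)}\cdots = \cdots h\cdots$ coming from the Haar projection, to collapse the $x^*$-contribution into a $v^*$ inside the $(\id\otimes\Delta^0)$. This is the missing idea: the identity holds exactly because of the special structure of $\widehat w(\tau)$, not because of any cocycle property of $v$.
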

\begin{proof}
We note that
\begin{align*}
x & =N(\id\otimes e)(v\rho^{\infty}(\widehat{w}(\tau))) = \sum_{i,k}d_k (\id\otimes w_{ii}^k )(v\rho^{\infty}(\widehat{w}(\tau))) \\
& =\sum_{i,j,k}d_k \widehat{v}(w_{ij}^k )[w_{ji}^k \cdot_{\rho^{\infty}}\widehat{w}(\tau)].
\end{align*}
Also, $x^* =N[e_{(1)}\cdot_{\rho^{\infty}}\widehat{w}(\tau)]\widehat{v^* }(e_{(2)})$
since $x=N(\id\otimes S(e^* ))(v\rho^{\infty}(\widehat{w}(\tau)))$.
Then by Lemma \ref{lem:homo} for any $h\in H$,
\begin{align*}
& (\rho(x)(x^* \otimes 1^0 )v\rho(a))^{\widehat{}}(h)=[h_{(1)}\cdot_{\rho^{\infty}}x]x^* \widehat{v}(h_{(2)})[h_{(3)}\cdot_{\rho}a] \\
& =N\sum_{i,j,k}d_k [h_{(1)}\cdot_{\rho}\widehat{v}(w_{ij}^k )][h_{(2)}w_{ji}^k \cdot_{\rho^{\infty}}\widehat{w}(\tau)]
[e_{(1)}\cdot_{\rho^{\infty}}\widehat{w}(\tau)]\widehat{v^* }(e_{(2)})\widehat{v}(h_{(3)})[h_{(4)}\cdot_{\rho}a] \\
& =N\sum_{i,j,k,t}d_k [h_{(1)}\cdot_{\rho}\widehat{v}(w_{ij}^k )]\widehat{V}(h_{(2)}w_{jt}^k )\widehat{w}(\tau)\widehat{V^* }
(h_{(3)}w_{ti}^k )\widehat{V}(e_{(1)})\widehat{w}(\tau)\widehat{V^* }(e_{(2)})\widehat{v^* }(e_{(3)}) \\
& \times\widehat{v}(h_{(4)})[h_{(5)}\cdot_{\rho}a] \\
& =N\sum_{i,j,k,t}d_k [h_{(1)}\cdot_{\rho}\widehat{v}(w_{ij}^k )]\widehat{V}(h_{(2)}w_{jt}^k )\widehat{w}(\tau)
(1\rtimes_{\rho}S(h_{(3)}w_{ti}^k )e_{(1)})\widehat{w}(\tau)\widehat{V^* }(e_{(2)})\widehat{v^* }(e_{(3)}) \\
& \times\widehat{v}(h_{(4)})[h_{(5)}\cdot_{\rho}a] \\
& =N\sum_{i,j,k,t}d_k [h_{(1)}\cdot_{\rho}\widehat{v}(w_{ij}^k )]\widehat{V}(h_{(2)}w_{jt}^k )
\tau(S(h_{(3)}w_{ti}^k )e_{(1)})\widehat{w}(\tau)\widehat{V^* }(e_{(2)})\widehat{v^* }(e_{(3)}) \\
& \times\widehat{v}(h_{(4)})[h_{(5)}\cdot_{\rho}a] \\
& =N\sum_{i,j,k,t, t_1 , t_2 }d_k [h_{(1)}\cdot_{\rho}\widehat{v}(w_{ij}^k )]\widehat{V}(h_{(2)}w_{jt}^k )
\widehat{w}(\tau)\widehat{V^* }(h_{(3)}w_{tt_1}^k S(h_{(4)}w_{t_1 t_2}^k )e_{(2)}) \\
& \times\tau(S(h_{(5)}w_{t_2 i}^k )e_{(1)})\widehat{v^* }(e_{(3)})
\widehat{v}(h_{(6)})[h_{(7)}\cdot_{\rho}a] \\
& =N\sum_{i,j,k,t, t_1 }d_k [h_{(1)}\cdot_{\rho}\widehat{v}(w_{ij}^k )]\widehat{V}(h_{(2)}w_{jt}^k )
\widehat{w}(\tau)\widehat{V^* }(h_{(3)}w_{tt_1}^k )
\tau(S(h_{(4)}w_{t_1 i}^k )e_{(1)})\widehat{v^* }(e_{(2)}) \\
& \times\widehat{v}(h_{(5)})[h_{(6)}\cdot_{\rho}a] \\
& =N\sum_{i,j,k,t, t_1 ,t_2 , t_3 }d_k [h_{(1)}\cdot_{\rho}\widehat{v}(w_{ij}^k )]\widehat{V}(h_{(2)}w_{jt}^k )
\widehat{w}(\tau)\widehat{V^* }(h_{(3)}w_{tt_1}^k ) \\
& \times\widehat{v^* }(h_{(4)}w_{t_1  t_2 }^k S(h_{(5)}w_{t_2 t_3 }^k )e_{(2)})\tau(S(h_{(6)}w_{t_3 i}^k )e_{(1)})
\widehat{v}(h_{(7)})[h_{(8)}\cdot_{\rho}a] \\
& =N\sum_{i,j,k,t, t_1 ,t_2 }d_k [h_{(1)}\cdot_{\rho}\widehat{v}(w_{ij}^k )]\widehat{V}(h_{(2)}w_{jt}^k )
\widehat{w}(\tau)\widehat{V^* }(h_{(3)}w_{tt_1}^k )
\widehat{v^* }(h_{(4)}w_{t_1  t_2 }^k ) \\
& \times\tau(S(h_{(5)}w_{t_2  i}^k )e)
\widehat{v}(h_{(6)})[h_{(7)}\cdot_{\rho}a] \\
& =\sum_{i,j,k,t, t_1 }d_k [h_{(1)}\cdot_{\rho}\widehat{v}(w_{ij}^k )]\widehat{V}(h_{(2)}w_{jt}^k )
\widehat{w}(\tau)\widehat{V^* }(h_{(3)}w_{tt_1}^k )
\widehat{v^* }(h_{(4)}w_{t_1  i}^k )
\widehat{v}(h_{(5)})[h_{(6)}\cdot_{\rho}a] \\
& =\sum_{i,j,k, t_1 }d_k [h_{(1)}\cdot_{\rho}\widehat{v}(w_{ij}^k )][h_{(2)}w_{jt_1 }^k \cdot_{\rho^{\infty}}
\widehat{w}(\tau)]
\widehat{v^* }(h_{(3)}w_{t_1  i}^k )
\widehat{v}(h_{(4)})[h_{(5)}\cdot_{\rho}a] .
\end{align*}
Thus
\begin{align*}
\rho(x)(x^* \otimes 1^0 )v\rho(a) & =\sum_{i,k}d_k (\id\otimes w_{ii}^k )((\rho\otimes\id)(v)
(\id\otimes\Delta^0 )(\rho^{\infty}(\widehat{w}(\tau))v^* ))v\rho(a) \\
& =N(\id\otimes e)((\rho\otimes\id)(v)(\id\otimes\Delta^0 )(\rho^{\infty}(\widehat{w}(\tau))v^* ))v\rho(a) .
\end{align*}
Hence
\begin{align*}
& ||\rho(x)(x^* \otimes 1^0 )v\rho(a)-
N(\id\otimes e)((\rho\otimes\id)(v)(\id\otimes\Delta^0 )(\rho^{\infty}(\widehat{w}(\tau))v^* ))\sigma(a)v|| \\
& =N||(\id\otimes e)((\rho\otimes\id)(v)(\id\otimes\Delta^0 )(\rho^{\infty}(\widehat{w}(\tau))v^* ))(v\rho(a)-\sigma(a)v)|| \\
&\le N ||v\rho(a)-\sigma(a)v||=N||v\rho(a)v^* -\sigma(a)||<N\epsilon .
\end{align*}
\end{proof}

\begin{lemma}\label{lem:key2}With the above notations and assumptions, for any $a\in F$,
\begin{align*}
& ||N(\id\otimes e)((\rho\otimes\id)(v)(\id\otimes\Delta^0 )(\rho^{\infty}(\widehat{w}(\tau))v^* ))\sigma(a)v \\
& -\sum_{i,j,k}d_k (\id\otimes w_{ij}^k )((\rho\otimes\id)(v)(\id\otimes\Delta^0 )(\rho^{\infty}(\widehat{w}(\tau))\rho
([S(w_{ji}^k )\cdot_{\sigma}a])v^* ))v||<L \epsilon,
\end{align*}
where
$L=\sum_{i,j,k} d_k ||\id\otimes w_{ij}^k ||$.
\end{lemma}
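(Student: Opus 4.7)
The plan is to compute both sides in the ``hat'' representation (as in the proof of Lemma \ref{lem:key1}) and to exhibit their difference as a sum, indexed by triples $(i,j,k)$, of terms each containing a factor of the form $\sigma([S(w_{ji}^k)\cdot_\sigma a])-v\rho([S(w_{ji}^k)\cdot_\sigma a])v^*$. Applying the hypothesis $\|\sigma([S(w_{ji}^k)\cdot_\sigma a])-v\rho([S(w_{ji}^k)\cdot_\sigma a])v^*\|<\epsilon$ (guaranteed by the choice of $v$ at the beginning of the proof of Lemma \ref{lem:key0}) term by term will then yield the required bound $L\epsilon$.

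Concretely, I first expand $N(\id\otimes e)=\sum_{i,k}d_k(\id\otimes w_{ii}^k)$ on the left-hand side via $e=\sum_{i,k}(d_k/N)w_{ii}^k$. Using the comatrix-unit coproduct $\Delta(w_{ii}^k)=\sum_j w_{ij}^k\otimes w_{ji}^k$ from \cite[Theorem 2.2]{SP:saturated}, together with the product rule $\widehat{xy}(h)=\widehat{x}(h_{(1)})\widehat{y}(h_{(2)})$, I distribute the factor $\sigma(a)v$ onto each summand. A Peter--Weyl-type reconstruction of the hat of $\sigma(a)$ in terms of its weak-action components $w_{ji}^k\cdot_\sigma a$, combined with the iterated coproduct $\Delta^{(3)}(w_{ij}^k)=\sum_{t_1,t_2,t_3}w_{it_1}^k\otimes w_{t_1t_2}^k\otimes w_{t_2t_3}^k\otimes w_{t_3j}^k$ and the antipode relation $S(w_{ij}^k)=w_{ji}^{k*}$, then converts the hat of the left-hand side into the hat of an expression of exactly the same shape as the right-hand side, except with $\sigma([S(w_{ji}^k)\cdot_\sigma a])$ sitting in the ``middle slot'' in place of $v\rho([S(w_{ji}^k)\cdot_\sigma a])v^*$.

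Replacing $\sigma([S(w_{ji}^k)\cdot_\sigma a])$ by $v\rho([S(w_{ji}^k)\cdot_\sigma a])v^*$ for each $(i,j,k)$ then matches the right-hand side exactly; the surrounding $v$'s and $v^*$'s recombine correctly via the cancellation identities for $\widehat{v^*}(\alpha)\widehat v(\beta)$ coming from $v^*v=vv^*=1$. The error introduced per triple is at most $d_k\|\id\otimes w_{ij}^k\|\,\epsilon$, and weighted summation over $(i,j,k)$ gives total error at most $\sum_{i,j,k}d_k\|\id\otimes w_{ij}^k\|\,\epsilon=L\epsilon$.

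The main obstacle is the detailed algebraic manipulation in the second step, which is intricate but entirely parallel in spirit to the hat-computation carried out in the proof of Lemma \ref{lem:key1}: it requires tracking several iterated coproducts of both $h$ and $w_{ij}^k$, multiple applications of the antipode $S$, and the duality pairing between $H$ and $H^0$ via the Szyma\'nski--Peligrad relations $\tau(w_{ij}^k w_{st}^{r*})=\delta_{kr}\delta_{is}\delta_{jt}/d_k$ and $S(w_{ij}^k)=w_{ji}^{k*}$.
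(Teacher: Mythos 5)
Your proposal follows essentially the same route as the paper's proof: expand $N(\id\otimes e)$ as $\sum_{i,k}d_k(\id\otimes w_{ii}^k)$, compute in the hat picture, insert $\epsilon(w_{t_2 i}^k)=\sum_{t_3}w_{t_2t_3}^kS(w_{t_3i}^k)$ to rewrite $[h\cdot_\sigma a]$ in terms of the components $[S(w_{t_3i}^k)\cdot_\sigma a]$, and then estimate the swap $v^*\sigma(b)\leftrightarrow\rho(b)v^*$ term by term using the inequality $\|\sigma(b)-v\rho(b)v^*\|<\epsilon$ fixed in the setup, giving the weighted sum $L\epsilon$. This matches the paper's argument, so the proposal is correct and not a genuinely different approach.
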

\begin{proof}
Since $e=\sum_{i,k}\frac{d_k }{N}w_{ii}^k $,
\begin{align*}
& N(\id\otimes e)((\rho\otimes\id)(v)(\id\otimes\Delta^0 )(\rho^{\infty}(\widehat{w}(\tau))v^* ))\sigma(a)v \\
& =\sum_{i,k}d_k (\id\otimes w_{ii}^k )((\rho\otimes\id)(v)(\id\otimes\Delta^0 )(\rho^{\infty}(\widehat{w}(\tau))v^* ))
\sigma(a)v .
\end{align*}
Thus for any $h\in H$
\begin{align*}
& [N(\id\otimes e)((\rho\otimes\id)(v)(\id\otimes\Delta^0 )(\rho^{\infty}(\widehat{w}(\tau))v^* ))\sigma(a)v]^{\widehat{}}(h) \\
& =\sum_{i,j,k,t_1 }d_k [h_{(1)}\cdot_{\rho}\widehat{v}(w_{ij}^k )][h_{(2)}w_{jt_1 }^k \cdot_{\rho^{\infty}}\widehat{w}(\tau)]
\widehat{v^* }(h_{(3)}w_{t_1 i}^k )[h_{(4)}\cdot_{\sigma}a]\widehat{v}(h_{(5)}) \\
& =\sum_{i,j,k,t_1 , t_2 }d_k [h_{(1)}\cdot_{\rho}\widehat{v}(w_{ij}^k )][h_{(2)}w_{jt_1 }^k \cdot_{\rho^{\infty}}\widehat{w}(\tau)]
\widehat{v^* }(h_{(3)}w_{t_1 t_2 }^k )[h_{(4)}\epsilon(w_{t_2 i}^k )\cdot_{\sigma}a]\widehat{v}(h_{(5)}) \\
& =\sum_{i,j,k,t_1 , t_2 , t_3 }d_k [h_{(1)}\cdot_{\rho}\widehat{v}(w_{ij}^k )]
[h_{(2)}w_{jt_1 }^k \cdot_{\rho^{\infty}}\widehat{w}(\tau)]
\widehat{v^* }(h_{(3)}w_{t_1 t_2 }^k ) \\
& \times [h_{(4)}w_{t_2 t_3 }^k \cdot_{\sigma}[S(w_{t_3 i}^k )\cdot_{\sigma}a]]
\widehat{v}(h_{(5)}) .
\end{align*}
Thus
\begin{align*}
& N(\id\otimes e)((\rho\otimes\id)(v)(\id\otimes\Delta^0 )(\rho^{\infty}(\widehat{w}(\tau))v^* ))\sigma(a)v \\
& =\sum_{i,t_3 ,k}d_k (\id\otimes w_{it_3 }^k )((\rho\otimes\id)(v)(\id\otimes\Delta^0 )(\rho^{\infty}(\widehat{w}(\tau))
v^*\sigma([S(w_{t_3 i}^k )\cdot_{\sigma}a])))v .
\end{align*}
Hence
\begin{align*}
& ||N(\id\otimes e)((\rho\otimes\id)(v)(\id\otimes\Delta^0 )(\rho^{\infty}(\widehat{w}(\tau))v^* ))\sigma(a)v \\
& -\sum_{i,t_3 ,k}d_k (\id\otimes w_{it_3 }^k )((\rho\otimes\id)(v)(\id\otimes\Delta^0 )(\rho^{\infty}(\widehat{w}(\tau))
\rho([S(w_{t_3 i}^k )\cdot_{\sigma}a])v^* ))v|| \\
& =||\sum_{i,t_3 ,k}d_k (\id\otimes w_{it_3 }^k )((\rho\otimes\id)(v)(\id\otimes\Delta^0 )(\rho^{\infty}(\widehat{w}(\tau))
[v^* \sigma([S(w_{t_3 i}^k )\cdot_{\sigma}a]) \\
& -\rho([S(w_{t_3 i}^k )\cdot_{\sigma}a])v^* ))v|| \\
& \le \sum_{i,t_3 ,k}d_k ||\id\otimes w_{it_3 }^k || \, ||v^* \sigma([S(w_{t_3 i}^k )\cdot_{\sigma}a])
-\rho([S(w_{t_3 i}^k )\cdot_{\sigma}a])v^* || \\
& <\sum_{i, t_3 , k}d_k ||\id\otimes w_{it_3}^k ||\epsilon<L \epsilon .
\end{align*}
\end{proof}

\begin{lemma}\label{lem:key3}With the above notations and assumptions, for any $a\in A$,
\begin{align*}
& \sum_{i,j,k}d_k (\id\otimes w_{ij}^k )((\rho\otimes\id)(v)(\id\otimes\Delta^0 )
(\rho^{\infty}(\widehat{w}(\tau))\rho([S(w_{ji}^k )\cdot_{\sigma}a]v^* ))v \\
& =\rho(a)\rho^{\infty}(x)(x^* \otimes1^0 )v .
\end{align*}
\begin{proof}
We shall show the above equation by routine computations. For any $h\in H$
\begin{align*}
& [\sum_{i,t_3 ,k}d_k (\id\otimes w_{it_3 }^k )((\rho\otimes\id)(v)(\id\otimes\Delta^0 )
(\rho^{\infty}(\widehat{w}(\tau))\rho([S(w_{t_3 i}^k )\cdot_{\sigma}a]v^* ))v]^{\widehat{}}(h) \\
& =\sum_{i,j,k,t_1 , t_2 , t_3 }d_k [h_{(1)}\cdot_{\rho}\widehat{v}(w_{ij}^k )]
[h_{(2)}w_{jt_1 }^k \cdot_{\rho^{\infty}}\widehat{w}(\tau)][h_{(3)}w_{t_1 t_2 }^k\cdot_{\rho}[S(w_{t_3 i}^k )\cdot_{\sigma}a]] \\
& \times\widehat{v^* }(h_{(4)}w_{t_2 t_3 }^k )\widehat{v}(h_{(5)}) \\
& =\sum_{i,j,k , t_2 , t_3 }d_k [h_{(1)}\cdot_{\rho}\widehat{v}(w_{ij}^k )]
[h_{(2)}w_{jt_2 }^k \cdot_{\rho^{\infty}}\widehat{w}(\tau)[S(w_{t_3 i}^k )\cdot_{\sigma}a]]
\widehat{v^* }(h_{(3)}w_{t_2 t_3 }^k )\widehat{v}(h_{(4)}) \\
& =\sum_{i,j,k , t_2 , t_3 }d_k [h_{(1)}\cdot_{\rho}\widehat{v}(w_{ij}^k )]
[h_{(2)}w_{jt_2 }^k \cdot_{\rho^{\infty}}[S(w_{t_3 i}^k )\cdot_{\sigma}a]\widehat{w}(\tau)]
\widehat{v^* }(h_{(3)}w_{t_2 t_3 }^k )\widehat{v}(h_{(4)}) \\
& =\sum_{i,j,k , t_2 , t_3 }d_k [h_{(1)}\cdot_{\rho}\widehat{v}(w_{ij}^k )
[w_{jt_2 }^k \cdot_{\rho^{\infty}}[S(w_{t_3 i}^k )\cdot_{\sigma}a]\widehat{w}(\tau)]]
\widehat{v^* }(h_{(2)}w_{t_2 t_3 }^k )\widehat{v}(h_{(3)}) \\
& =\sum_{i,j,k , t_2 , t_3 , j_1 }d_k [h_{(1)}\cdot_{\rho}\widehat{v}(w_{ij}^k )
[w_{jj_1 }^k \cdot_{\rho}[S(w_{t_3 i}^k )\cdot_{\sigma}a]][w_{j_1 t_2 }^k \cdot_{\rho^{\infty}}\widehat{w}(\tau)]]
\widehat{v^* }(h_{(2)}w_{t_2 t_3 }^k ) \\
& \times\widehat{v}(h_{(3)}) \\
& =\sum_{i,j,k , t_2 , t_3 , j_1 }d_k [h_{(1)}\cdot_{\rho}[w_{ij}^k \cdot_{\sigma}
[S(w_{t_3 i}^k )\cdot_{\sigma}a]]\widehat{v}(w_{jj_1}^k )
[w_{j_1 t_2 }^k \cdot_{\rho^{\infty}}\widehat{w}(\tau)]]
\widehat{v^* }(h_{(2)}w_{t_2 t_3 }^k ) \\
& \times\widehat{v}(h_{(3)}) \\
& =\sum_{j, k , t_2 , t_3 , j_1 }d_k [h_{(1)}\cdot_{\rho}[\epsilon(w_{t_3 j}^k )\cdot_{\sigma}a]\widehat{v}(w_{jj_1}^k )
[w_{j_1 t_2 }^k \cdot_{\rho^{\infty}}\widehat{w}(\tau)]]
\widehat{v^* }(h_{(2)}w_{t_2 t_3 }^k )\widehat{v}(h_{(3)}) \\
& =\sum_{k , t_2 , t_3 , j_1 }d_k [h_{(1)}\cdot_{\rho}a\widehat{v}(w_{t_3 j_1}^k )
[w_{j_1 t_2 }^k \cdot_{\rho^{\infty}}\widehat{w}(\tau)]]
\widehat{v^* }(h_{(2)}w_{t_2 t_3 }^k )\widehat{v}(h_{(3)}) \\
& =\sum_{k , t_2 , t_3 , j_1 }d_k [h_{(1)}\cdot_{\rho}a][h_{(2)}\cdot_{\rho}\widehat{v}(w_{t_3 j_1}^k )]
[h_{(3)}w_{j_1 t_2 }^k \cdot_{\rho^{\infty}}\widehat{w}(\tau)]]
\widehat{v^* }(h_{(4)}w_{t_2 t_3 }^k )\widehat{v}(h_{(5)}) .
\end{align*}
On the other hand by Lemma \ref{lem:homo} for any $h\in H$,
\begin{align*}
& [\rho(a)\rho^{\infty}(x)(x^* \otimes 1^0 )v]^{\widehat{}}(h)
=[h_{(1)}\cdot_{\rho}a][h_{(2)}\cdot_{\rho^{\infty}}x]x^* \widehat{v}(h_{(3)}) \\
& =N\sum_{i,j,k}d_k [h_{(1)}\cdot_{\rho}a][h_{(2)}\cdot_{\rho}\widehat{v}(w_{ij}^k )][h_{(3)}w_{ji}^k \cdot_{\rho^{\infty}}
\widehat{w}(\tau)][e_{(1)}\cdot_{\rho^{\infty}}\widehat{w}(\tau)]\widehat{v^* }(e_{(2)})\widehat{v}(h_{(4)}) \\
& =N\sum_{i,j,k, i_1 }d_k [h_{(1)}\cdot_{\rho}a][h_{(2)}\cdot_{\rho}\widehat{v}(w_{ij}^k )]
\widehat{V}(h_{(3)}w_{ji_1 }^k )
\widehat{w}(\tau)(1\rtimes_{\rho}S(h_{(4)}w_{i_1 i}^k ))(1\rtimes_{\rho}e_{(1)}) \\
& \times\widehat{w}(\tau)\widehat{V^* }(e_{(2)})\widehat{v^* }(e_{(3)})\widehat{v}(h_{(5)}) \\
& =N\sum_{i,j,k, i_1 }d_k [h_{(1)}\cdot_{\rho}a][h_{(2)}\cdot_{\rho}\widehat{v}(w_{ij}^k )]
\widehat{V}(h_{(3)}w_{ji_1 }^k )
\tau(S(h_{(4)}w_{i_1 i}^k )e_{(1)})
\widehat{w}(\tau)\widehat{V^* }(e_{(2)}) \\
& \times\widehat{v^* }(e_{(3)})\widehat{v}(h_{(5)}) \\
& =N\sum_{i,j,k, i_1 , i_2 , i_3 }d_k [h_{(1)}\cdot_{\rho}a][h_{(2)}\cdot_{\rho}\widehat{v}(w_{ij}^k )]
\widehat{V}(h_{(3)}w_{ji_1 }^k )\widehat{w}(\tau)
\tau(S(h_{(6)}w_{i_3 i}^k )e_{(1)}) \\
& \times\widehat{V^* }(h_{(4)}w_{i_1 i_2 }^k S(h_{(5)}w_{i_2 i_3 }^k )e_{(2)})
\widehat{v^* }(e_{(3)})\widehat{v}(h_{(7)}) \\
& =N\sum_{i,j,k, i_1 , i_2 }d_k [h_{(1)}\cdot_{\rho}a][h_{(2)}\cdot_{\rho}\widehat{v}(w_{ij}^k )]
\widehat{V}(h_{(3)}w_{ji_1 }^k )\widehat{w}(\tau)
\tau(S(h_{(5)}w_{i_2 i}^k )e_{(1)}) \\
& \times\widehat{V^* }(h_{(4)}w_{i_1 i_2 }^k )
\widehat{v^* }(e_{(2)})\widehat{v}(h_{(6)})
\end{align*}
\begin{align*}
& =N\sum_{i,j,k, i_1 , i_2 , t, t_1 }d_k [h_{(1)}\cdot_{\rho}a][h_{(2)}\cdot_{\rho}\widehat{v}(w_{ij}^k )]
\widehat{V}(h_{(3)}w_{ji_1 }^k )\widehat{w}(\tau)\widehat{V^* }(h_{(4)}w_{i_1 i_2 }^k ) \\
& \times\tau(S(h_{(7)}w_{t_1 i}^k )e_{(1)})
\widehat{v^* }(h_{(5)}w_{i_2 t}^k S(h_{(6)}w_{t t_1 }^k )e_{(2)})\widehat{v}(h_{(8)}) \\
& =N\sum_{i,j,k, i_1 , i_2 , t}d_k [h_{(1)}\cdot_{\rho}a][h_{(2)}\cdot_{\rho}\widehat{v}(w_{ij}^k )]
\widehat{V}(h_{(3)}w_{ji_1 }^k )\widehat{w}(\tau)\widehat{V^* }(h_{(4)}w_{i_1 i_2 }^k ) \\
& \times\tau(S(h_{(6)}w_{t i}^k )e)
\widehat{v^* }(h_{(5)}w_{i_2 t}^k )\widehat{v}(h_{(7)}) \\
& =\sum_{j,k, i_1 , i_2 , t}d_k [h_{(1)}\cdot_{\rho}a][h_{(2)}\cdot_{\rho}\widehat{v}(w_{tj}^k )]
\widehat{V}(h_{(3)}w_{ji_1 }^k )\widehat{w}(\tau)\widehat{V^* }(h_{(4)}w_{i_1 i_2 }^k )
\widehat{v^* }(h_{(5)}w_{i_2 t}^k ) \\
& \times\widehat{v}(h_{(6)}) \\
& =\sum_{j,k, i_1 , i_2 , t}d_k [h_{(1)}\cdot_{\rho}a][h_{(2)}\cdot_{\rho}\widehat{v}(w_{tj}^k )]
[h_{(3)}w_{ji_2 }^k \cdot_{\rho^{\infty}}\widehat{w}(\tau)]\widehat{v^* }(h_{(4)}w_{i_2 t}^k )\widehat{v}(h_{(5)}) .
\end{align*}
Therefore, we obtain the conclusion.
\end{proof}
\end{lemma}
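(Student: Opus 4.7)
The plan is to verify the identity by applying the hat-isomorphism to both sides and evaluating at arbitrary $h \in H$ in $A^\infty \rtimes_{\rho^\infty, u} H$, following the computational template used in the proofs of Lemmas \ref{lem:key1} and \ref{lem:key2}. The product rule $\widehat{AB}(h) = \widehat{A}(h_{(1)}) \widehat{B}(h_{(2)})$ together with $\widehat{\rho(a)}(h) = [h \cdot_\rho a]$ and $\widehat{A \otimes 1^0}(h) = \epsilon(h) A$ reduces the problem to matching two sums over Sweedler indices of $h$ and over the comatrix-unit indices.

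For the LHS, the expansion produces the sextuple sum already appearing at the conclusion of the proof of Lemma \ref{lem:key2}:
\begin{align*}
& \sum_{i,j,k,t_1,t_2,t_3} d_k [h_{(1)} \cdot_\rho \widehat{v}(w_{ij}^k)] [h_{(2)} w_{jt_1}^k \cdot_{\rho^\infty} \widehat{w}(\tau)] \\
& \quad \times [h_{(3)} w_{t_1 t_2}^k \cdot_\rho [S(w_{t_3 i}^k) \cdot_\sigma a]] \widehat{v^*}(h_{(4)} w_{t_2 t_3}^k) \widehat{v}(h_{(5)}).
\end{align*}
I would reduce this in three steps. First, fuse the two adjacent action terms by summing over $t_1$ and applying the coproduct $\Delta(w_{jt_2}^k) = \sum_{t_1} w_{jt_1}^k \otimes w_{t_1 t_2}^k$ together with the multiplicativity $[g_{(1)} \cdot x][g_{(2)} \cdot y] = [g \cdot xy]$ of the action. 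Second, use the centrality $\widehat{w}(\tau) \in A^\infty \cap A'$ from Lemma \ref{lem:homo}(2) to commute the $a$-containing factor past $\widehat{w}(\tau)$, and then apply the intertwining relation $\widehat{v}(g_{(1)}) [g_{(2)} \cdot_\rho b] = [g_{(1)} \cdot_\sigma b] \widehat{v}(g_{(2)})$ coming from $\sigma = \Ad(v) \circ \rho$ to move $\widehat{v}(w_{ij}^k)$ past the $a$-factor. Third, collapse the sum over $i$ via the antipode-comatrix identity (the specialization of $\sum h_{(1)} S(h_{(2)}) = \epsilon(h)$ to the comatrix coproduct $\Delta(w_{t_3 j}^k) = \sum_i w_{t_3 i}^k \otimes w_{ij}^k$), producing $[\epsilon(w_{t_3 j}^k) \cdot_\sigma a] = \delta_{t_3 j} a$, which extrudes $a$ to the leftmost position as the free factor $[h_{(1)} \cdot_\rho a]$.

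On the RHS, direct expansion gives $\widehat{\rho(a)\rho^\infty(x)(x^*\otimes 1^0)v}(h) = [h_{(1)}\cdot_\rho a][h_{(2)}\cdot_{\rho^\infty} x] x^* \widehat{v}(h_{(3)})$. Substituting the explicit formula $x = N\sum d_k \widehat{v}(w_{ij}^k)[w_{ji}^k\cdot_{\rho^\infty}\widehat{w}(\tau)]$ together with the formula $x^* = N[e_{(1)}\cdot_{\rho^\infty}\widehat{w}(\tau)]\widehat{v^*}(e_{(2)})$ derived in the proofs of Lemmas \ref{lem:unitary2} and \ref{lem:unitary}, and applying the Haar-trace collapse $\tau(S(\cdot)e_{(1)})e_{(2)}$-simplification used in the proof of Lemma \ref{lem:key1}, produces a matching sextuple sum. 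The main obstacle is the disciplined bookkeeping of Sweedler indices through these successive reductions—in particular verifying that the antipode-comatrix collapse aligns correctly with the residual index structure so that the reduced LHS and the expanded RHS coincide term-by-term; the essential non-formal ingredients are the centrality of $\widehat{w}(\tau)$ in $A_\infty$ and the intertwining relation coming from $\sigma = \Ad(v) \circ \rho$, without which neither the commutation of $a$ past $\widehat{w}(\tau)$ nor the conversion of $\widehat{v}$-factors into $\sigma$-actions would be available.
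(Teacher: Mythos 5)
Your proposal follows the paper's own proof essentially step for step: both sides are evaluated under the hat-isomorphism at an arbitrary $h\in H$; the left side is reduced by fusing the adjacent action terms over $t_1$, commuting the $a$-factor past $\widehat{w}(\tau)$ using $\widehat{w}(\tau)\in A_{\infty}$ (Lemma \ref{lem:homo}), applying the intertwining relation from $\sigma=\Ad(v)\circ\rho$, and collapsing the sum over $i$ with the antipode--comatrix identity; and the right side is expanded via the explicit formulas for $x$ and $x^*$ together with the Haar-projection simplifications of Lemma \ref{lem:key1}. The only remaining work is the Sweedler-index bookkeeping that the paper carries out explicitly, so the approach is correct and the same as the paper's.
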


\begin{lemma}\label{lem:key4}With the above notations and assumptions, for any $a\in F$,
$$
||xa-ax||<L\epsilon+L\max_{i,j,k}||\sigma([S(w_{ij}^k )\cdot_{\rho}a])-\rho([S(w_{ij}^k )\cdot_{\rho}a])||,
$$
where $L=\sum_{i,j,k}d_k ||\id\otimes w_{ij}^k ||$.
\end{lemma}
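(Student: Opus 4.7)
The strategy is to chain the three preceding lemmas into a near-commutation of $\rho^{\infty}(x)(x^*\otimes 1^0)v$ with $\rho(a)$, and then convert this into a bound on $||xa - ax||$ by a Sweedler expansion.

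Set $T := N(\id\otimes e)((\rho\otimes\id)(v)(\id\otimes\Delta^0)(\rho^{\infty}(\widehat{w}(\tau))v^*))$. Specialising the exact equality inside the proof of Lemma \ref{lem:key1} to $a = 1_A$ yields $T = \rho^{\infty}(x)(x^*\otimes 1^0)$; in particular $T$ is unitary, so the estimate of Lemma \ref{lem:key1} can be sharpened from $N\epsilon$ to $\epsilon$. Concatenating with the $L\epsilon$-estimate of Lemma \ref{lem:key2} and the exact identity of Lemma \ref{lem:key3} then gives
\[
||\rho^{\infty}(x)(x^*\otimes 1^0)v\rho(a) - \rho(a)\rho^{\infty}(x)(x^*\otimes 1^0)v|| < L\epsilon,
\]
the residual $\epsilon$ being absorbed into $L\epsilon$ (one has $L \ge 1$).

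Because $\rho^{\infty}$ is an isometric $*$-homomorphism with left inverse $\id\otimes\epsilon^0$, one has $||xa - ax|| = ||\rho^{\infty}(x)\rho(a) - \rho(a)\rho^{\infty}(x)||$. Right-multiplying the displayed inequality by the unitary $v^*(x\otimes 1^0)$ and replacing $v\rho(a)v^*$ by $\sigma(a)$ (with error $\epsilon$ coming from condition (1) on $v$) produces
\[
||\rho^{\infty}(x)(x^*\otimes 1^0)\sigma(a)(x\otimes 1^0) - \rho(a)\rho^{\infty}(x)|| < L\epsilon.
\]
One then expands the left-hand side using the explicit formula for $x$, the centrality of $\widehat{w}(\tau) \in A_{\infty}$ with respect to $A$, the antipode identities $w_{ij}^{k*} = S(w_{ji}^k)$ and $\sum_{t} w_{jt}^k S(w_{ti}^k) = \delta_{ij}$ from \cite[Theorem 2.2]{SP:saturated}, and the coaction Leibniz rule. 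A Sweedler-style rearrangement, parallel to the calculations in Lemmas \ref{lem:key1}--\ref{lem:key3}, rewrites the left-hand side as $\rho^{\infty}(x)\rho(a) - R$, where
\[
R = \sum_{i,j,k} d_k\,(\id\otimes w_{ij}^k)\bigl(\sigma([S(w_{ij}^k)\cdot_{\rho} a]) - \rho([S(w_{ij}^k)\cdot_{\rho} a])\bigr).
\]
Since $||R|| \le L\max_{i,j,k}||\sigma([S(w_{ij}^k)\cdot_{\rho} a]) - \rho([S(w_{ij}^k)\cdot_{\rho} a])||$, the claimed bound follows by the triangle inequality.

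The main obstacle is this last expansion: producing precisely the residual $R$, with the correct use of $\cdot_{\rho}$ (rather than $\cdot_{\sigma}$, as appears in Lemmas \ref{lem:key2}--\ref{lem:key3}), requires a delicate Sweedler bookkeeping analogous to, but more intricate than, the corresponding computations in the preceding three lemmas.
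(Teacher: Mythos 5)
Your proposal reroutes the proof through Lemmas \ref{lem:key1}--\ref{lem:key3}, which the paper uses only for the \emph{other} half of Lemma \ref{lem:key0}; the paper's own proof of Lemma \ref{lem:key4} is an independent, self-contained Sweedler computation that evaluates $xax^*$ exactly as
$$
\sum_{i,k,i_2}d_k(\id\otimes w_{ii_2}^k)\bigl(v\rho^{\infty}(\widehat{w}(\tau))\rho([S(w_{i_2 i}^k)\cdot_{\rho}a])v^*\bigr),
$$
then replaces $\rho(\cdot)v^*$ by $v^*\sigma(\cdot)$ at cost $L\epsilon$, and uses $N[e\cdot_{\rho_1,u}\widehat{w}(\tau)]=1$ to recognize the $\rho$-version of the sum as exactly $a$ (note $\|xa-ax\|=\|xax^*-a\|$ since $x$ is unitary). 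The difficulty with your route is that it never performs the one computation that carries the content of the lemma: you assert that $\rho^{\infty}(x)(x^*\otimes 1^0)\sigma(a)(x\otimes 1^0)$ equals $\rho^{\infty}(x)\rho(a)-R$ with $R=\sum_{i,j,k}d_k(\id\otimes w_{ij}^k)(\sigma([S(w_{ij}^k)\cdot_{\rho}a])-\rho([S(w_{ij}^k)\cdot_{\rho}a]))$, and you yourself flag this ``delicate Sweedler bookkeeping'' as the main obstacle. That bookkeeping \emph{is} the proof; leaving it unexecuted leaves the lemma unproved. Moreover the asserted residual is not what the computation actually yields: the term that appears is
$$
\sum_{i,k,i_2}d_k(\id\otimes w_{ii_2}^k)\bigl(v\rho^{\infty}(\widehat{w}(\tau))v^*\,[\sigma(b_{i_2 i}^k)-\rho(b_{i_2 i}^k)]\bigr),\qquad b_{i_2 i}^k=[S(w_{i_2 i}^k)\cdot_{\rho}a],
$$
and the factor $v\rho^{\infty}(\widehat{w}(\tau))v^*$ cannot be dropped from the identity (it is only harmless for the final norm estimate). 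An exact identity with your $R$, which involves neither $v$ nor $\widehat{w}(\tau)$, is not plausible.

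There are also quantitative slippages that show the detour buys nothing. Chaining your sharpened Lemma \ref{lem:key1} with Lemmas \ref{lem:key2} and \ref{lem:key3} gives an error $(1+L)\epsilon$, not $L\epsilon$, and the subsequent substitution $v\rho(a)v^*\mapsto\sigma(a)$ adds another $\epsilon$; so even granting your expansion you reach $(2+L)\epsilon+\|R\|$ rather than the stated $L\epsilon+L\max_{i,j,k}\|\sigma([S(w_{ij}^k)\cdot_{\rho}a])-\rho([S(w_{ij}^k)\cdot_{\rho}a])\|$. This could be patched by rescaling $\epsilon$, but the direct evaluation of $xax^*$ is both shorter and delivers the stated constant; I recommend carrying out that computation rather than bootstrapping from the three preceding lemmas.
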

\begin{proof}
For any $a\in F$
\begin{align*}
& xax^* =N\sum_{i,j,k}d_k \widehat{v}(w_{ij}^k )[w_{ji}^k \cdot_{\rho^{\infty}}\widehat{w}(\tau)]
a[e_{(1)}\cdot_{\rho^{\infty}}\widehat{w}(\tau)]\widehat{v^* }(e_{(2)}) \\
& =N\sum_{i,j,k,i_1 }d_k \widehat{v}(w_{ij}^k )\widehat{V}(w_{ji_1 })\widehat{w}(\tau)(1\rtimes_{\rho}S(w_{i_1 i}^k ))
(a\rtimes_{\rho}e_{(1)})\widehat{w}(\tau)\widehat{V^* }(e_{(2)})\widehat{v^* }(e_{(3)}) \\
& =N\sum_{i,j,k,i_1 , i_2 }d_k \widehat{v}(w_{ij}^k )\widehat{V}(w_{ji_1 })\widehat{w}(\tau)
([S(w_{i_2 i }^k )\cdot_{\rho}a]\rtimes_{\rho}S(w_{i_1 i_2 }^k )e_{(1)})
\widehat{w}(\tau)\widehat{V^* }(e_{(2)}) \\
& \times\widehat{v^* }(e_{(3)}) \\
& =N\sum_{i,j,k,i_1 , i_2 }d_k \widehat{v}(w_{ij}^k )\widehat{V}(w_{ji_1 })
[S(w_{i_2 i }^k )\cdot_{\rho}a]\tau(S(w_{i_1 i_2 }^k )e_{(1)})
\widehat{w}(\tau)\widehat{V^* }(e_{(2)})\widehat{v^* }(e_{(3)}) \\
& =N\sum_{i,j,k,i_1 , i_2 , t, t_1 }d_k \widehat{v}(w_{ij}^k )\widehat{V}(w_{ji_1 })
[S(w_{i_2 i }^k )\cdot_{\rho}a]
\widehat{w}(\tau)\widehat{V^* }(w_{i_1 t}^k S( w_{t t_1 }^k )e_{(2)}) \\
& \times\tau(S(w_{t_1 i_2 }^k )e_{(1)})\widehat{v^* }(e_{(3)}) \\
& =N\sum_{i,j,k,i_1 , i_2 , t }d_k \widehat{v}(w_{ij}^k )\widehat{V}(w_{ji_1 })
[S(w_{i_2 i }^k )\cdot_{\rho}a]
\widehat{w}(\tau)\widehat{V^* }(w_{i_1 t}^k )\tau(S(w_{t i_2 }^k )e_{(1)})\widehat{v^* }(e_{(2)}) \\
& =N\sum_{i,j,k,i_1 , i_2 , t, s, s_1 }d_k \widehat{v}(w_{ij}^k )\widehat{V}(w_{ji_1 })
[S(w_{i_2 i }^k )\cdot_{\rho}a]
\widehat{w}(\tau)\widehat{V^* }(w_{i_1 t}^k )\tau(S(w_{s_1 i_2 }^k )e_{(1)}) \\
& \times\widehat{v^* }(w_{ts}^k S(w_{ss_1 }^k )e_{(2)}) \\
& =N\sum_{i,j,k,i_1 , i_2 , t, s }d_k \widehat{v}(w_{ij}^k )\widehat{V}(w_{ji_1 })
[S(w_{i_2 i }^k )\cdot_{\rho}a]
\widehat{w}(\tau)\widehat{V^* }(w_{i_1 t}^k )\tau(S(w_{s i_2 }^k )e)\widehat{v^* }(w_{ts}^k ) \\
& =\sum_{i,j,k,i_1 , i_2 , t}d_k \widehat{v}(w_{ij}^k )\widehat{V}(w_{ji_1 })
[S(w_{i_2 i }^k )\cdot_{\rho}a]
\widehat{w}(\tau)\widehat{V^* }(w_{i_1 t}^k )\widehat{v^* }(w_{ti_2 }^k ) \\
& =\sum_{i,j,k,i_1 , i_2 , t}d_k \widehat{v}(w_{ij}^k )\widehat{V}(w_{ji_1 })
\widehat{w}(\tau)[S(w_{i_2 i }^k )\cdot_{\rho}a]
\widehat{V^* }(w_{i_1 t}^k )\widehat{v^* }(w_{ti_2 }^k ) \\
& =\sum_{i,k, i_2 }d_k (\id\otimes w_{ii_2 }^k )(v\rho^{\infty}(\widehat{w}(\tau))\rho([S(w_{i_2 i }^k )\cdot_{\rho}a])v^* ) .
\end{align*}
Hence
\begin{align*}
& ||xax^* -\sum_{i,k, i_2 }d_k (\id\otimes w_{ii_2 }^k )(v\rho^{\infty}(\widehat{w}(\tau))v^*
\sigma([S(w_{i_2 i }^k )\cdot_{\rho}a]) ) || \\
& =|| \sum_{i,k, i_2 }d_k (\id\otimes w_{ii_2 }^k )(v\rho^{\infty}(\widehat{w}(\tau))[\rho([S(w_{i_2 i }^k )\cdot_{\rho}a])v^*
 -v^* \sigma([S(w_{i_2 i}^k )\cdot_{\rho}a ])]) || \\
& \leq \sum_{i,k, i_2 }d_k ||\id\otimes w_{ii_2 }^k || \, ||\rho([S(w_{i_2 i }^k )\cdot_{\rho}a])v^*
 -v^* \sigma([S(w_{i_2 i}^k )\cdot_{\rho}a ])]) || \\
& \leq \sum_{i,k, i_2 }d_k ||\id\otimes w_{ii_2 }^k ||\epsilon=L\epsilon
\end{align*}
Furthermore,
\begin{align*}
& \sum_{i,k, i_2 }d_k (\id\otimes w_{ii_2 }^k )(v\rho^{\infty}(\widehat{w}(\tau))v^* \rho([S(w_{i_2 i}^k )\cdot_{\rho}a])) \\
&=\sum_{i,k, i_2 , t}d_k (v\rho^{\infty}(\widehat{w}(\tau))v^* )^{\widehat{}}(w_{it}^k )
[w_{ti_2 }^k S(w_{i_2 i}^k )\cdot_{\rho}a] \\
& =\sum_{i,k}d_k (v\rho^{\infty}(\widehat{w}(\tau))v^* )^{\widehat{}}(w_{ii}^k )a
=N(\id\otimes e)(v\rho^{\infty}(\widehat{w}(\tau))v^* )a .
\end{align*}
We recall that $\rho_1 =\Ad(v)\circ\rho$, $u=(v\otimes 1^0 )(\rho\otimes\id)(v)(\id\otimes\Delta^0 )(v^* )$
and that $(\rho_1 , u)$ is a twisted coaction of $H^0 $ on $A$ which is exterior equivalent to
$\rho$. Then by Lemmas \ref{lem:projection2} and \ref{lem:projection},
$$
N(\id\otimes e)(v\rho^{\infty}(\widehat{w}(\tau))v^* )=N[e\cdot_{\rho_1 ,u}\widehat{w}(\tau)]=1 .
$$
Hence
$$
\sum_{i,k, i_2 }d_k (\id\otimes w_{ii_2 }^k )(v\rho^{\infty}(\widehat{w}(\tau))v^* 
\rho([S(w_{i_2 i}^k )\cdot_{\rho}a]))=a .
$$
It follows that
\begin{align*}
& ||xax^* -a|| \\
& =||xax^* -\sum_{i,k, i_2 }d_k (\id\otimes w_{ii_2 }^k )(v\rho^{\infty}(\widehat{w}(\tau))v^* 
\sigma([S(w_{i_2 i}^k )\cdot_{\rho}a]))|| \\
& +\sum_{i,k, i_2 }d_k (\id\otimes w_{ii_2 }^k )(v\rho^{\infty}(\widehat{w}(\tau))v^* 
\sigma([S(w_{i_2 i}^k )\cdot_{\rho}a])) \\
& -\sum_{i,k, i_2 }d_k (\id\otimes w_{ii_2 }^k )(v\rho^{\infty}(\widehat{w}(\tau))v^* 
\rho([S(w_{i_2 i}^k )\cdot_{\rho}a])) || \\
& <L \epsilon +\sum_{i,k,i_2 }d_k ||\id\otimes w_{ii_2 }^k || \, ||\sigma([S(w_{i_2 i}^k )\cdot_{\rho}a])
-\rho([S(w_{i_2 i}^k )\cdot_{\rho}a]) || \\
& <L \epsilon +\sum_{i,j,k }d_k ||\id\otimes w_{ij }^k || \max_{i,j,k}||\sigma([S(w_{ij}^k )\cdot_{\rho}a])
-\rho([S(w_{ij}^k )\cdot_{\rho}a]) || \\
& <L \epsilon +L \max_{i,j,k}||\sigma([S(w_{ij}^k )\cdot_{\rho}a])
-\rho([S(w_{ij}^k )\cdot_{\rho}a]) ||,
\end{align*}
where $L=\sum_{i,j,k }d_k ||\id\otimes w_{ij }^k ||$.
Then we obtain the conclusion.
\end{proof}
By Lemmas \ref{lem:key1}, \ref{lem:key2}, \ref{lem:key3} and  \ref{lem:key4}, we obtain Lemma \ref{lem:key0}.
We note that the constant positive number $L$ in the above proofs, does not depend on
coactions $\rho$ and $\sigma$ but depends on only $H^0$. Also, we note that if a coaction $\rho$
of $H^0$ on $A$ has the Rohlin property, then a coaction $(\alpha\otimes\id)\circ\rho\circ\alpha^{-1}$
of $H^0$ on $A$ has also the Rohlin property for any automorphism $\alpha$ of $A$.

\begin{thm}\label{thm:main}Let $A$ be a separable unital $C^*$-algebra and let
$\rho$ and $\sigma$ be coactions of a finite dimensional $C^*$-Hopf algebra $H^0$ on $A$
with the Rohlin property . We suppose that $\rho$ and $\sigma$ are approximately
unitarily equivalent. Then there is an approximately inner automorphism $\theta$ such that
$$
\sigma=(\theta\otimes\id)\circ\rho\circ\theta^{-1} .
$$
\end{thm}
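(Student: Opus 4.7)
The plan is to run an Evans--Kishimoto style intertwining argument, using Lemma \ref{lem:key0} as the one-step approximation device. Fix a dense sequence $\{a_n\}_{n\ge 1}$ in $A$ (available by separability) and a summable sequence of positive numbers $\{\epsilon_n\}$ with $\sum \epsilon_n < \infty$ and $\epsilon_n < 2^{-n}$. I will inductively construct unitaries $x_n\in A$ together with finite sets $F_n\subset A$ so that the partial products $y_n=x_n x_{n-1}\cdots x_1$ and the coactions $\rho_n=(\theta_n\otimes\id)\circ\rho\circ\theta_n^{-1}$, where $\theta_n=\Ad(y_n)$, satisfy two parallel convergences: $\rho_n(a_m)\to\sigma(a_m)$ in norm for each $m$, and $\theta_n(a_m)$ is Cauchy in norm for each $m$.

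For the inductive step, note that $\rho_n$ has the Rohlin property (since $\theta_n$ is inner, the projection witnessing the Rohlin property for $\rho$ yields one for $\rho_n$) and remains approximately unitarily equivalent to $\sigma$ (the composition with $\theta_n\otimes\id$ of the unitary witnessing equivalence between $\rho$ and $\sigma$ still works). Choose
\[
F_n=\{a_1,\dots,a_{n+1}\}\cup\{\theta_n(a_m):m\le n\}\cup\{[S(w_{ij}^k)\cdot_{\rho_n} b]: b\in\text{previous},\ i,j,k\}.
\]
Apply Lemma \ref{lem:key0} with $\rho$ replaced by $\rho_n$, with finite set $F_n$ and tolerance $\epsilon_{n+1}$, to obtain a unitary $x_{n+1}\in A$ satisfying
\begin{align*}
\|\sigma(a)-\rho_{n+1}(a)\|&<\epsilon_{n+1},\\
\|x_{n+1}a-ax_{n+1}\|&<\epsilon_{n+1}+L\max_{i,j,k}\|\sigma([S(w_{ij}^k)\cdot_{\rho_n}a])-\rho_n([S(w_{ij}^k)\cdot_{\rho_n}a])\|,
\end{align*}
for all $a\in F_n$, where $\rho_{n+1}=\Ad(x_{n+1}\otimes 1^0)\circ\rho_n\circ\Ad(x_{n+1}^*)=(\theta_{n+1}\otimes\id)\circ\rho\circ\theta_{n+1}^{-1}$. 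By construction the elements $S(w_{ij}^k)\cdot_{\rho_n}\theta_n(a_m)$ were already placed in $F_{n-1}$ at the previous step, so the maximum in the second bound is at most $\epsilon_n$; hence $\|x_{n+1}\theta_n(a_m)-\theta_n(a_m)x_{n+1}\|<\epsilon_{n+1}+L\epsilon_n$. Consequently
\[
\|\theta_{n+1}(a_m)-\theta_n(a_m)\|=\|[x_{n+1},\theta_n(a_m)]\|<\epsilon_{n+1}+L\epsilon_n,
\]
which is summable; so $\theta(a_m):=\lim_n\theta_n(a_m)$ exists for every $m$. By the same argument applied to $a_m\in F_n$ directly (rather than to $\theta_n(a_m)$), the sequence $\theta_n^{-1}(a_m)=y_n^*a_m y_n$ is also Cauchy. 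Extending by density and uniform continuity, $\theta$ and its analogue extend to mutually inverse $*$-automorphisms of $A$; since each $\theta_n$ is inner and $\theta_n\to\theta$ point-norm, $\theta$ is approximately inner.

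Finally, passing to the limit in $\|\sigma(a_m)-\rho_{n+1}(a_m)\|<\epsilon_{n+1}$ gives $\sigma(a_m)=\lim_n \rho_n(a_m)=\lim_n(\theta_n\otimes\id)(\rho(\theta_n^{-1}(a_m)))=(\theta\otimes\id)(\rho(\theta^{-1}(a_m)))$ for every $m$, using continuity of $\rho$ and of the tensor product map; density of $\{a_m\}$ yields $\sigma=(\theta\otimes\id)\circ\rho\circ\theta^{-1}$.

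The main obstacle, as in the finite-group analogue of Izumi, is the bookkeeping of the finite sets $F_n$: one must anticipate at step $n$ exactly which elements will be probed at step $n+2$ (namely $S(w_{ij}^k)\cdot_{\rho_{n+1}}\theta_{n+1}(a_m)$) so that the error term $L\max_{i,j,k}\|\sigma([\,\cdot\,])-\rho_n([\,\cdot\,])\|$ in Lemma \ref{lem:key0}(ii) is controlled by the previous $\epsilon_n$ rather than by something ungoverned. The use of the fixed universal constant $L$ (depending only on $H^0$, not on the coactions) is essential here, since it would otherwise grow with $n$ and destroy summability. Once this nested scheme is set up correctly, everything else is a routine Cauchy-convergence argument.
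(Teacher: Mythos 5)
Your overall strategy (an Evans--Kishimoto intertwining driven by Lemma \ref{lem:key0}) is the right one and is what the paper does, but your version is \emph{one-sided}: you only ever conjugate $\rho$, keeping $\sigma$ fixed, and this is exactly where the argument breaks. The second estimate in Lemma \ref{lem:key0}, applied at stage $n+1$ to the pair $(\rho_n,\sigma)$, bounds $\|x_{n+1}a-ax_{n+1}\|$ by $\epsilon_{n+1}+L\max_{i,j,k}\|\sigma([S(w_{ij}^k)\cdot_{\rho_n}a])-\rho_n([S(w_{ij}^k)\cdot_{\rho_n}a])\|$, so to make it useful you need $\|\sigma-\rho_n\|$ to be already small on the elements $[S(w_{ij}^k)\cdot_{\rho_n}a]$. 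The only control you have on $\|\sigma-\rho_n\|$ is the first estimate from the \emph{previous} application of the lemma, which holds on $F_{n-1}$. Your claim that ``the elements $S(w_{ij}^k)\cdot_{\rho_n}\theta_n(a_m)$ were already placed in $F_{n-1}$ at the previous step'' is circular: $F_{n-1}$ must be fixed \emph{before} $x_n$ is produced (it is the input to the lemma that produces $x_n$), whereas $\rho_n$ and $\theta_n$ are only defined \emph{after} $x_n$ has been chosen, so $F_{n-1}$ cannot contain elements defined in terms of $\rho_n$. The maximum in the commutator bound is therefore not controlled, the increments $\|\theta_{n+1}(a_m)-\theta_n(a_m)\|$ need not be summable, and the induction does not close. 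This is not a cosmetic bookkeeping issue; it is precisely the reason the classical Evans--Kishimoto argument, and the paper's proof, alternate sides.

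The paper resolves this by conjugating $\rho$ and $\sigma$ in turn: at the odd step one conjugates $\sigma_{2n+1}$ towards $\rho_{2n+2}$, and the dot-action appearing in the commutator estimate is $\cdot_{\sigma_{2n+1}}$, where $\sigma_{2n+1}$ was fixed \emph{two} steps earlier; the intervening even step, which establishes $\|\sigma_{2n+1}-\rho_{2n+2}\|<2^{-2n}$ on $G_{2n}$, takes place after $\sigma_{2n+1}$ is known, so $G_{2n}$ can legitimately be enlarged to contain $\bigcup_{i,j,k}[S(w_{ij}^k)\cdot_{\sigma_{2n+1}}F_{2n}]$. The alternation buys exactly the one extra step of lookahead that your scheme lacks (and the limit automorphism is then assembled from the unitaries on both sides, in the manner of Izumi's Theorem 3.5, rather than as your single one-sided product). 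Your one-sided scheme could conceivably be repaired by replacing $[S(w_{ij}^k)\cdot_{\rho_n}a]$ with $[S(w_{ij}^k)\cdot_{\sigma}a]$, which depends only on data known in advance, together with the perturbation estimate $\|[S(w_{ij}^k)\cdot_{\rho_n}a]-[S(w_{ij}^k)\cdot_{\sigma}a]\|\le\|\id\otimes S(w_{ij}^k)\|\,\|\rho_n(a)-\sigma(a)\|$; but you do not carry this out, and as written the key step of your induction is unjustified.
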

\begin{proof}We shall show this theorem in the same strategy as in the proof of \cite [Theorem 3.5]{Izumi:group}.
We choose an increasing family $\{F_n \}_{n=0}^{\infty}$ of finite subsets of $A$ whose union is dense in $A$.
By induction using Lemma \ref{lem:key0}, we can construct an increasing family $\{G_n \}_{n=0}^{\infty}$ of
finite subsets of $A$ whose union is dense in $A$, a sequence $\{x_n \}$ of unitary elements in $A$ and
a family of coactions $\rho_{2n}$, $\sigma_{2n+1}$, $n=0, 1,2,\dots$, of $H^0$ on $A$ satisfying the following
conditions:
\begin{align*}
& \rho_0 =\rho, \quad\sigma_1 =\sigma \\
& \rho_{2n+2}=\Ad(x_{2n}\otimes 1^0 )\circ\rho_{2n}\circ\Ad(x_{2n}^* ), \quad n=0,1,2, \dots, \\
& \sigma_{2n+1}=\Ad(x_{2n-1}\otimes 1^0 )\circ\sigma_{2n-1}\circ\Ad(x_{2n-1}^* ),  \quad n=1,2,\dots, \\
& F_{2n}^1 =\bigcup_{i,j,k}[S(w_{ij}^k )\cdot_{\sigma_{2n+1}}F_{2n}] ,\quad n=0,1,\dots, \\
& F_{2n+1}^1 =\bigcup_{i,j,k}[S(w_{ij}^k )\cdot_{\sigma_{2n+2}}F_{2n+1}] ,\quad n=0,1,\dots, \\
& G_0 =F_0 \cup F_0^1 \\
& G_{2n+1}=G_{2n}\cup F_{2n+1}\cup F_{2n+1}^1 , \quad n=0,1,\dots, \\
& G_{2n+2}=G_{2n+1}\cup F_{2n+2}\cup F_{2n+2}^1 , \quad n=0,1,\dots, \\
& ||\sigma_{2n+1}(a)-\rho_{2n+2}(a)||<\frac{1}{2^{2n}}, \quad a\in G_{2n} , \quad n=0,1,\dots, \\
& ||\sigma_{2n+3}(a)-\rho_{2n+2}(a)||<\frac{1}{2^{2n+1}} , \quad a\in G_{2n+1}, \quad n=0,1,\dots, \\
& ||x_{2n+1}a-ax_{2n+1}||<\frac{1}{2^{2n+1}}+L\max_{i,j,k}||\rho_{2n+2}([S(w_{ij}^k )\cdot_{\sigma_{2n+1}}a]) \\
& -\sigma_{2n+1}([(w_{ij}^k )\cdot_{\sigma_{2n+1}}a])|| <\frac{2+L}{2^{2n}}, \quad a\in G_{2n}, \quad n=0,1,\dots,\\
& ||x_{2n}a-ax_{2n}||<\frac{1}{2^{2n}}+L\max_{i,j,k}||\sigma_{2n+1}([S(w_{ij}^k )\cdot_{\rho_{2n}}a]) \\
& -\rho_{2n}([S(w_{ij}^k )\cdot_{\rho_{2n}}a])||<\frac{2+L}{2^{2n-1}}, \quad a\in G_{2n-1}, \quad n=1,2\dots,
\end{align*}
In the same way as in the proof of \cite [Theorem 3.5]{Izumi:group}, we can obtain the conclusion.
\end{proof}

In the rest of this section, we shall study coactions having the Rohlin property
of a finite dimensional $C^*$-Hopf algebra on
a UHF-algebra of type $N^{\infty}$. Let $A$ be a UHF-algebra of type $N^{\infty}$.
Let $M_n (\BC)$ be the $n\times n$-matrix algebra over $\BC$ and $\{f_{ij} \}$
a system of matrix units of $M_n (\BC)$.

\begin{lemma}\label{lem:homo1}Let $\rho$ be a unital homomorphism of $A$
to $A\otimes M_n (\BC)$ and $\rho_* $ the homomorphism of $K_0 (A)$
to $K_0 (A\otimes M_n (\BC))$ induced by $\rho$.
Then $\rho_* ([\frac{1}{N^l }])=n[\frac{1}{N^l }]$ for any $l\in\BN\cup\{0\}$.
\end{lemma}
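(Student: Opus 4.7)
The plan is to use the fact that for a UHF-algebra $A$ of type $N^{\infty}$, the $K_0$-group is the ordered group $\mathbf{Z}[\tfrac{1}{N}]$, which is torsion-free and in which the symbol $[\tfrac{1}{N^l}]$ literally denotes the element $\tfrac{1}{N^l}\in \mathbf{Z}[\tfrac{1}{N}]$, realized concretely as the class of any rank-one projection in the subfactor $A_l\cong M_{N^l}(\mathbf{C})$ of $A$.

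First I would fix the identifications. Under the standard isomorphism $K_0(A\otimes M_n(\mathbf{C}))\cong K_0(A)=\mathbf{Z}[\tfrac{1}{N}]$ coming from Morita equivalence (sending $[p\otimes e_{11}]$ to $[p]$, where $e_{11}$ is a minimal projection in $M_n(\mathbf{C})$), the unit $1_{A\otimes M_n(\mathbf{C})}=1_A\otimes I_n$ corresponds to the integer $n\in\mathbf{Z}[\tfrac{1}{N}]$, since $I_n$ is a sum of $n$ mutually orthogonal minimal projections. With this identification, the induced homomorphism $\rho_*:K_0(A)\to K_0(A\otimes M_n(\mathbf{C}))$ becomes a group homomorphism $\mathbf{Z}[\tfrac{1}{N}]\to\mathbf{Z}[\tfrac{1}{N}]$.

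Next I would use that $\rho$ is unital: $\rho_*([1_A])=[1_{A\otimes M_n(\mathbf{C})}]=n$. This settles the case $l=0$. For general $l$, I would apply $\mathbf{Z}$-linearity of $\rho_*$ to the relation $N^l\cdot[\tfrac{1}{N^l}]=[1_A]$ in $K_0(A)$, obtaining
\[
N^l\,\rho_*([\tfrac{1}{N^l}])=\rho_*(N^l\cdot[\tfrac{1}{N^l}])=\rho_*([1_A])=n
\]
in $\mathbf{Z}[\tfrac{1}{N}]$. Since $\mathbf{Z}[\tfrac{1}{N}]$ is torsion-free and $N^l$ is invertible in it, this uniquely determines $\rho_*([\tfrac{1}{N^l}])=\tfrac{n}{N^l}=n\,[\tfrac{1}{N^l}]$, which is the desired identity.

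There is no real obstacle here; the only subtle point is making the identifications of $K_0$-groups and of the symbol $[\tfrac{1}{N^l}]$ explicit so that the computation $\rho_*([1_A])=n\,[1_A]$ reads off correctly from unitality. Once this bookkeeping is in place the argument is a one-line divisibility computation in $\mathbf{Z}[\tfrac{1}{N}]$.
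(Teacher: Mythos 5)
Your proof is correct and follows essentially the same route as the paper's: use unitality to get $\rho_*([1])=n[1]$, then combine the relation $N^l\,[\tfrac{1}{N^l}]=[1]$ with torsion-freeness of $K_0(A)=\BZ[\tfrac{1}{N}]$ to divide by $N^l$. The paper writes this out only for $l=1$, while you carry out the (identical) argument for general $l$ and make the $K_0$-identifications explicit, but there is no substantive difference.
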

\begin{proof}Since $\rho(1)=1\otimes I_n $,
$\rho_* ([1])=[1\otimes I_n ]=n[1\otimes f_{11} ]=n[1]$.
Hence $N\rho_* ([\frac{1}{N}])=\rho_* ([1])=n[1]$. Since $K_0 (A)=\BZ[\frac{1}{N}]$
is torsion-free, $\rho_* ([\frac{1}{N}])=n[\frac{1}{N}]$.
\end{proof}

\begin{lemma}\label{lem:homo2}Let $\rho$ be a unital homomorphism of $A$ to
$A\otimes M_n (\BC)$. Then there is a sequence $\{u_k \}$ of unitary elements in
$A\otimes M_n (\BC)$ such that for any $x\in A$
$$
\rho (x)=\lim_{k\to\infty} u_k (x\otimes I_n )u_k^* .
$$
\end{lemma}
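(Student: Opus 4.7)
The plan is to approximate $\rho$ \emph{exactly} on an increasing chain of matrix subalgebras of $A$, then pass to the limit. Write $A=\overline{\bigcup_{m\ge 0}A_m}$ where $A_m\cong M_{N^m}(\BC)$ and the inclusions $A_m\hookrightarrow A_{m+1}$ are the standard UHF embeddings; set $B:=A\otimes M_n(\BC)$ and write $\iota(x)=x\otimes I_n$. For each $m$, I will construct a unitary $u_m\in B$ such that $\rho(x)=u_m\iota(x)u_m^*$ for every $x\in A_m$. Given such $u_m$, for any fixed $x\in A$ and $\varepsilon>0$ I pick $m_0$ and $y\in A_{m_0}$ with $\|x-y\|<\varepsilon/3$; then for every $k\ge m_0$ one has $y\in A_k$, so $u_k\iota(y)u_k^*=\rho(y)$ and $\|\rho(x)-u_k\iota(x)u_k^*\|\le 2\|x-y\|<\varepsilon$, yielding the claimed convergence.

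To construct $u_m$, fix a system of matrix units $\{e_{ij}\}_{i,j=1}^{N^m}$ of $A_m$. Both $\rho|_{A_m}$ and $\iota|_{A_m}$ are unital $*$-homomorphisms $M_{N^m}(\BC)\to B$, so it suffices to arrange $u_m(e_{ij}\otimes I_n)u_m^*=\rho(e_{ij})$ for all $i,j$. The standard interweaving construction delivers such a unitary provided the minimal projections $\rho(e_{11})$ and $e_{11}\otimes I_n$ are Murray--von Neumann equivalent in $B$: choosing a partial isometry $v\in B$ with $v^*v=e_{11}\otimes I_n$ and $vv^*=\rho(e_{11})$, the element
$$u_m=\sum_{i=1}^{N^m}\rho(e_{i1})\,v\,(e_{1i}\otimes I_n)$$
is easily verified to be a unitary intertwining $\iota|_{A_m}$ with $\rho|_{A_m}$.

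The heart of the proof is therefore verifying $\rho(e_{11})\sim e_{11}\otimes I_n$ in $B$. Since $B=A\otimes M_n(\BC)$ is itself a UHF-algebra, projections in $B$ are classified up to Murray--von Neumann equivalence by their $K_0$-classes, and $K_0(B)\cong\BZ[1/N]$ is torsion-free. By Lemma \ref{lem:homo1}, $\rho_*([1_A])=n[1_A]=[1_B]$ in $K_0(B)$; comparing
$$[1_B]=\sum_{i=1}^{N^m}[e_{ii}\otimes I_n]=N^m[e_{11}\otimes I_n],\qquad [1_B]=\sum_{i=1}^{N^m}[\rho(e_{ii})]=N^m[\rho(e_{11})],$$
torsion-freeness forces $[\rho(e_{11})]=[e_{11}\otimes I_n]$, as required. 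The main obstacle is thus the $K_0$-calculation together with the appeal to classification of projections in a UHF-algebra; the rest is bookkeeping with matrix units and an $\varepsilon/3$-style density argument.
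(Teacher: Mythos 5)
Your proposal is correct and follows essentially the same route as the paper: an exact intertwining unitary $u_k=\sum_i\rho(e_{i1})\,v\,(e_{1i}\otimes I_n)$ on each finite matrix subalgebra, with the key equivalence $\rho(e_{11})\sim e_{11}\otimes I_n$ obtained from the $K_0$-computation of Lemma \ref{lem:homo1} together with cancellation in the UHF-algebra, followed by a density argument. The only cosmetic difference is that you rerun the torsion-freeness division inside this proof rather than quoting the $l=m$ case of Lemma \ref{lem:homo1} directly.
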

\begin{proof}Modifying the proof of Blackadar \cite[7.7 Exercises and Problems]
{Blackadar:K-Theory} we can prove this lemma. 
Let $\{A_k \}$ be a increasing sequence of full matrix algebras
over $\BC$ with $\overline{\cup_k A_k }=A$. Let $\{e_{ij} \}$ be a system of matrix
units of $A_k $. Since $A$ has the cancellation property, by Lemma \ref{lem:homo1},
$\rho(e_{11})\sim e_{11}\otimes I_n $ in
$A\otimes M_n (\BC)$. Hence there is a
partial isometry $w\in A \otimes M_n (\BC)$ such that
$$
w^* w=E_{11}, \quad ww^* =\rho(e_{11}) ,
$$
where $E_{ij}=e_{ij}\otimes I_n $ for any $i,j$.
Let $u_k =\sum_i \rho(e_{i1})wE_{1i}$. Then $u_k $ is a unitary element in
$A \otimes M_n (\BC)$ by easy computations. Let $x\in A_k $. Then we can write
that $x=\sum_{i,j}\lambda_{ij}e_{ij}$, where $\lambda_{ij}\in\BC$.
Hence by easy computations, we can see that $\rho(x)=u_k (x\otimes I_n )u_k^* $.
Since $\overline{\cup_k A_k }=A$, we obtain that for any $x\in A$,
$\rho(x)=\lim_{k\to\infty}u_k (x\otimes I_n )u_k^* $ by routine computations.
\end{proof}

\begin{lemma}\label{lem:homo3}Let $\rho$ be a unital homomorphism
of $A$ to $A\otimes H^0$, where $H^0$ is a finite dimensional $C^*$-algebras.
Then there is a sequence $\{u_k \}$ of unitary elements in
$A\otimes H^0$ such that for any $x\in A$
$$
\rho(x)=\lim_{k\to\infty}u_k (x\otimes 1)u_k^* .
$$
\end{lemma}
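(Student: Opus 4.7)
The plan is to reduce the result to Lemma \ref{lem:homo2} by decomposing the finite dimensional $C^*$-algebra $H^0$ into its matrix summands and working one block at a time. Write $H^0 = \oplus_{k=1}^K M_{d_k}(\BC)$ and let $p_1, \dots, p_K$ be the central projections in $H^0$ corresponding to this decomposition, so that $\sum_k p_k = 1$ and $p_k H^0 p_k \cong M_{d_k}(\BC)$.

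For each $k$, define $\rho_k : A \to A \otimes p_k H^0 p_k$ by $\rho_k(x) = \rho(x)(1 \otimes p_k)$. Since $1 \otimes p_k$ is central in $A \otimes H^0$ and $\rho$ is a homomorphism, $\rho_k$ is a homomorphism; since $\rho(1) = 1 \otimes 1$, we have $\rho_k(1) = 1 \otimes p_k$, so $\rho_k$ is unital as a map into $A \otimes p_k H^0 p_k \cong A \otimes M_{d_k}(\BC)$. Applying Lemma \ref{lem:homo2} to each $\rho_k$, for every $k$ there is a sequence $\{u_k^{(j)}\}_j$ of unitary elements in $A \otimes p_k H^0 p_k$ (with unit $1 \otimes p_k$) such that
$$
\rho_k(x) = \lim_{j \to \infty} u_k^{(j)}(x \otimes p_k)(u_k^{(j)})^*
$$
for every $x \in A$.

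Next I set $u^{(j)} = \sum_{k=1}^K u_k^{(j)} \in A \otimes H^0$. Because the $u_k^{(j)}$ have mutually orthogonal range and source projections $1 \otimes p_k$, the element $u^{(j)}$ is a unitary in $A \otimes H^0$, and for each $x \in A$ the cross terms in $u^{(j)}(x \otimes 1)(u^{(j)})^*$ vanish, leaving
$$
u^{(j)}(x \otimes 1)(u^{(j)})^* = \sum_{k=1}^K u_k^{(j)}(x \otimes p_k)(u_k^{(j)})^* \longrightarrow \sum_{k=1}^K \rho_k(x) = \rho(x).
$$

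The only genuine point to be checked is the block-diagonal cancellation: since $u_k^{(j)} = (1 \otimes p_k) u_k^{(j)} (1 \otimes p_k)$, one has $u_k^{(j)}(x \otimes p_l)(u_m^{(j)})^* = \delta_{kl}\delta_{lm}\, u_k^{(j)}(x \otimes p_k)(u_k^{(j)})^*$, which reduces the double sum to the single sum above. I do not anticipate a real obstacle here; the content is entirely in Lemma \ref{lem:homo2}, and what remains is the bookkeeping of the central decomposition of $H^0$ and the verification that the resulting $u^{(j)}$ is a unitary in $A \otimes H^0$ rather than merely in a matrix algebra containing it.
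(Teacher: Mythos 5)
Your proof is correct and follows exactly the paper's own argument: decompose $H^0$ via its minimal central projections, cut $\rho$ by $1\otimes p_k$ to get unital homomorphisms into the matrix blocks, apply Lemma \ref{lem:homo2} blockwise, and assemble the block unitaries into a single unitary in $A\otimes H^0$. The paper leaves the orthogonality bookkeeping as ``easy computations,'' which you have simply written out.
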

\begin{proof}Let $\{p_l \}$ be a family of minimal central projections in $H^0$.
For any $l$ and $x\in A$, let
$$
\rho_l (x)=\rho(x)(1\otimes p_l  ) .
$$
Then by Lemma \ref{lem:homo2}, there is a sequence $\{u_k^{(l)}\}$ of unitary elements
in $A\otimes p_l H^0$ such that $\rho_l (x)=\lim_{k\to\infty}u_k^{(l)}(x\otimes p_l )u_k^{(l)}$
for any $x\in A$. Let $u_k =\oplus_l u_k^{(l)} $. Then  we can see that
$\{u_k \}$ is a desired sequence by easy computations.
\end{proof}

\begin{cor}\label{cor:unique}Let $H$ be a finite dimensional $C^*$-Hopf algebra with
dimension $N$ and let $A$ be a UHF-algebra of type $N^{\infty}$. Let $\rho$ be a coaction
of $H^0$ on $A$ with the Rohlin property constructed in Section \ref{sec:example}.
Then for any coaction $\sigma$ of $H^0$ on $A$ with the Rohlin property,
there is an approximately inner automorphism $\theta$ of $A$ such that
$$
\sigma= (\theta\otimes\id)\circ\rho\circ\theta^{-1} .
$$
\end{cor}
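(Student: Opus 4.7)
The plan is to show that any coaction $\sigma$ of $H^0$ on $A$ (with the Rohlin property or not) is approximately unitarily equivalent to $\rho$ in the sense of Definition \ref{Def:approximate}, and then invoke Theorem \ref{thm:main} to upgrade approximate unitary equivalence to cocycle conjugacy by an approximately inner automorphism. The Rohlin property of $\sigma$ is needed only for the second step.

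For the first step, I would use Lemma \ref{lem:homo3}, which is the essential $K$-theoretic fact about UHF-algebras: both $\rho$ and $\sigma$ are unital homomorphisms from $A$ to $A\otimes H^0$, so there exist sequences of unitaries $\{u_k\}$ and $\{v_k\}$ in $A\otimes H^0$ such that for every $x\in A$,
\[
\rho(x)=\lim_{k\to\infty}u_k(x\otimes 1^0)u_k^*,\qquad
\sigma(x)=\lim_{k\to\infty}v_k(x\otimes 1^0)v_k^*.
\]
Setting $w_k=v_k u_k^*$, each $w_k$ is a unitary in $A\otimes H^0$, and a routine estimate gives $\|\sigma(x)-w_k\rho(x)w_k^*\|\to 0$ for every $x\in A$. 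Since $H^0$ is finite dimensional, $(A\otimes H^0)^{\infty}$ is canonically identified with $A^{\infty}\otimes H^0$, so the sequence $w=(w_k)$ defines a unitary element in $A^{\infty}\otimes H^0$ satisfying $\sigma(x)=w\rho(x)w^*$ for every $x\in A$. Thus $\sigma$ and $\rho$ are approximately unitarily equivalent in the sense of Definition \ref{Def:approximate}.

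For the second step, $\rho$ has the Rohlin property by Proposition \ref{prop:example}, and $\sigma$ has the Rohlin property by hypothesis. Moreover $A$ is separable (being a UHF-algebra). Therefore all hypotheses of Theorem \ref{thm:main} are met, and we conclude that there is an approximately inner automorphism $\theta$ of $A$ with
\[
\sigma=(\theta\otimes\id)\circ\rho\circ\theta^{-1}.
\]

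There is no real obstacle beyond verifying that the approximation of each $\rho(x)$ and $\sigma(x)$ by inner unitary conjugates in $A\otimes H^0$ pastes together into a single unitary in $A^{\infty}\otimes H^0$ implementing $\sigma=w\rho(\cdot)w^*$; this is just the diagonal-sequence argument that justifies the identification $(A\otimes H^0)^{\infty}\cong A^{\infty}\otimes H^0$ in the finite dimensional case. Once that is in place, the corollary is just a concatenation of Lemma \ref{lem:homo3}, Proposition \ref{prop:example}, and Theorem \ref{thm:main}.
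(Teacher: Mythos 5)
Your proposal is correct and follows essentially the same route as the paper: the paper's own proof simply cites Lemma \ref{lem:homo3} to conclude that $\sigma$ is approximately unitarily equivalent to $\rho$ and then applies Theorem \ref{thm:main}. You have merely made explicit the composition $w_k=v_ku_k^*$ and the passage to a single unitary in $A^{\infty}\otimes H^0$, which is exactly the intended argument (and, as you note, requires nothing beyond the observation that each $\sigma(a)-w_k\rho(a)w_k^*$ tends to zero in norm).
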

\begin{proof}By Lemma \ref{lem:homo3}, $\sigma$ is approximately unitarily equivalent to
$\rho$. Hence by Theorem \ref{thm:main}, we obtain the conclusion.
\end{proof}

\section{Appendix}\label{sec:appendix}
In the previous paper \cite{OKT:rohlin}, we introduced the Rohlin property for weak coactions of a finite dimensional
$C^*$-Hopf algebra on a unital $C^*$-algebra. In this section,
we shall show that if there is a weak coaction with the Rohlin property in the sense of
\cite{OKT:rohlin} of a finite dimensional $C^*$-Hopf algebra $H$ on
a unital $C^*$-algebra $A$, then $H$ is commutative.
Recall that a weak coaction $\rho$ of $H$ on $A$ has the Rohlin property in the sense of
\cite{OKT:rohlin} if there is 
a monomorphism
$\pi$ of $H$ into $A_{\infty}$
such that for any $h\in H$, $\rho^{\infty}(\pi(h))=\pi(h_{(1)})\otimes h_{(2)}$.
Let $\{w_{ij}^k \}$ be a system of comatrix units of $H$.

\begin{lemma}\label{lem:compact}With the above notations,
$(H\otimes 1)\Delta(H)=H\otimes H$.
\end{lemma}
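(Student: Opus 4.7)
The plan is to prove the claimed equality by exhibiting, for each simple tensor $x \otimes y \in H \otimes H$, an explicit preimage in $(H \otimes 1)\Delta(H)$ using the antipode. The inclusion $(H \otimes 1)\Delta(H) \subseteq H \otimes H$ is immediate, so only the reverse inclusion requires work, and this reverse inclusion is a standard identity in any Hopf algebra.

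Concretely, I would show the Sweedler-notation identity
$$x \otimes y = (xS(y_{(1)}) \otimes 1)\Delta(y_{(2)}) \qquad (x, y \in H).$$
To verify this, expand the right-hand side as $xS(y_{(1)})y_{(2)(1)} \otimes y_{(2)(2)}$, then invoke coassociativity to rewrite $y_{(1)} \otimes y_{(2)(1)} \otimes y_{(2)(2)} = y_{(1)(1)} \otimes y_{(1)(2)} \otimes y_{(2)}$, and finally apply the antipode axiom $S(y_{(1)(1)})y_{(1)(2)} = \epsilon(y_{(1)})1$ and the counit axiom $\epsilon(y_{(1)})y_{(2)} = y$. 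Since every element of $H \otimes H$ is a sum of such simple tensors, this yields $H \otimes H \subseteq (H \otimes 1)\Delta(H)$, and the lemma follows.

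There is really no significant obstacle here; the only care needed is to track Sweedler indices correctly through coassociativity. The identity used is independent of the choice of system of comatrix units $\{w_{ij}^k\}$ and holds in any (finite-dimensional) $C^*$-Hopf algebra. The statement is purely algebraic and does not use the $C^*$-structure.
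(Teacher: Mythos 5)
Your proof is correct, but it takes a genuinely different route from the paper's. The paper works with the comatrix units: from $\Delta(w_{ij}^k)=\sum_t w_{it}^k\otimes w_{tj}^k$ and the orthogonality relation $\sum_i w_{it}^{k*}w_{is}^k=\delta_{ts}1$ (from Szyma\'nski--Peligrad) it computes $\sum_i (w_{it}^{k*}\otimes 1)\Delta(w_{ij}^k)=1\otimes w_{tj}^k$, so that $1\otimes H\subseteq (H\otimes 1)\Delta(H)$ and hence $H\otimes H=(H\otimes 1)(1\otimes H)\subseteq (H\otimes 1)\Delta(H)$. You instead use the antipode to write down the basis-free identity $x\otimes y=\bigl(xS(y_{(1)})\otimes 1\bigr)\Delta(y_{(2)})$, which is the standard surjectivity argument for the Galois map $a\otimes b\mapsto (a\otimes 1)\Delta(b)$; your Sweedler bookkeeping (coassociativity, then the antipode and counit axioms) is carried out correctly, and the implicit summation over Sweedler indices is harmless since $(H\otimes 1)\Delta(H)$ is understood as a linear span. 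Your argument is more general --- it works in any Hopf algebra and never invokes the comatrix-unit structure or the $*$-operation --- and it produces an explicit preimage for each simple tensor; the paper's argument has the mild advantage of staying within the comatrix-unit formalism used throughout and of exhibiting $1\otimes w_{tj}^k$ directly, which is the form in which the lemma is actually applied in Lemma \ref{lem:effective} and Proposition \ref{prop:commutant2}. Either proof is acceptable.
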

\begin{proof}For any $i, j, k$, $\Delta(w_{ij}^k )=\sum_t w_{it}^k \otimes w_{tj}^k$.
Since $\sum_i w_{it}^{k*} w_{is}^k=\begin{cases} 1 & \text{if $s=t$} \\
0 & \text{if $s\ne t$}\end{cases}$ for any $k$ by \cite [Theorem 2.2, 2]{SP:saturated}, we can obtain that
$$
\sum_i (w_{it}^{k*}\otimes 1)\Delta(w_{ij}^k ) =\sum_{i, s} w_{it}^{k*} w_{is}^k \otimes w_{sj}^k
=1\otimes w_{tj}^k .
$$
Thus we obtain the conclusion.
\end{proof}

\begin{lemma}\label{lem:effective}With the above notations, let $\rho$ be a weak
coaction of $H$ on $A$ with the Rohlin property in the sense
of \cite{OKT:rohlin}. Then $\overline{(A\otimes 1)\rho(A)}=A\otimes H$.
\end{lemma}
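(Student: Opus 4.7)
The plan is to show that every elementary tensor $a\otimes h \in A\otimes H$ lies in $\overline{(A\otimes 1)\rho(A)}$; by linearity this gives the density. The idea is to reverse-engineer the decomposition afforded by Lemma~\ref{lem:compact} using the Rohlin monomorphism $\pi\colon H\to A_\infty$.

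First, fix $h\in H$ and apply Lemma~\ref{lem:compact} to write
\[
1\otimes h = \sum_{i=1}^n (h_i\otimes 1)\,\Delta(k_i)
\]
for some $h_1,\dots,h_n,k_1,\dots,k_n\in H$. Applying $\pi\otimes\id$ (noting that $\pi(1)=1$ since $\pi$ is a unital $*$-monomorphism) and using the defining Rohlin identity $(\pi\otimes\id)(\Delta(k_i)) = \rho^\infty(\pi(k_i))$, this becomes
\[
1\otimes h = \sum_{i=1}^n (\pi(h_i)\otimes 1)\,\rho^\infty(\pi(k_i))
\]
in $A^\infty\otimes H$. Multiplying on the left by $a\otimes 1$ and using that $\pi(h_i)\in A_\infty$ commutes with $a$, I obtain
\[
a\otimes h = \sum_{i=1}^n (a\pi(h_i)\otimes 1)\,\rho^\infty(\pi(k_i)).
\]

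Finally, lift this identity to representing sequences. Write $\pi(h_i)=(x_{i,m})_m$ and $\pi(k_i)=(y_{i,m})_m$ with $x_{i,m},y_{i,m}\in A$. The equation above, interpreted in $A^\infty\otimes H$, is the norm-convergence statement
\[
\Bigl\|\sum_{i=1}^n (ax_{i,m}\otimes 1)\,\rho(y_{i,m}) - a\otimes h\Bigr\| \longrightarrow 0 \quad (m\to\infty).
\]
Each approximant lies in $(A\otimes 1)\rho(A)$, so $a\otimes h\in\overline{(A\otimes 1)\rho(A)}$, completing the proof.

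There is no real obstacle: the argument is essentially a one-line manipulation once one recognizes that the Rohlin condition $\rho^\infty\circ\pi = (\pi\otimes\id)\circ\Delta$ lets one transport the algebraic identity $(H\otimes 1)\Delta(H)=H\otimes H$ of Lemma~\ref{lem:compact} into $(A^\infty\otimes 1)\,\rho^\infty(A^\infty) \supset 1\otimes H$, whence density follows by approximating with representing sequences. The only mild point to verify is that $\pi$ is unital, which follows because $\pi(1)$ is a projection in $A_\infty$ fixed by $\rho^\infty$ and $\pi$ is injective with $H$ unital.
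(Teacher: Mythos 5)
Your proof is correct and takes essentially the same route as the paper's: both transport the algebraic identity $(H\otimes 1)\Delta(H)=H\otimes H$ of Lemma~\ref{lem:compact} into $A^{\infty}\otimes H$ via the Rohlin monomorphism $\pi$ and the intertwining relation $\rho^{\infty}\circ\pi=(\pi\otimes\id)\circ\Delta$, and then pass to representing sequences to get density. (Your parenthetical justification that $\pi(1)=1$ is not actually a proof --- a projection fixed by $\rho^{\infty}$ need not be the unit --- but the paper's own argument relies on the same unitality of $\pi$, which is implicit in the definition of the Rohlin monomorphism in \cite{OKT:rohlin}.)
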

\begin{proof}Since $\rho$ has the Rohlin property in the sense of \cite{OKT:rohlin},
there is a monomorphism $\pi$ of $H$ into $A_{\infty}$. First, we show that
$(A^{\infty}\otimes 1)\rho^{\infty}(A^{\infty})=A^{\infty}\otimes H$.
Since $\rho^{\infty}\circ\pi=(\pi\otimes\id)\circ\Delta$,
\begin{align*}
(\pi(H)\otimes 1)\rho^{\infty}(\pi(H))& =(\pi(H)\otimes 1)(\pi\otimes\id)(\Delta(H)) \\
& =(\pi\otimes\id)((H\otimes 1)\Delta(H))=\pi(H)\otimes H
\end{align*}
by Lemma \ref{lem:compact}. Since $1\otimes w_{ij}^k \in \pi(H)\otimes H$,
$1\otimes w_{ij}^k \in (A^{\infty}\otimes 1)\rho^{\infty}(A^{\infty})$.
Thus we can see that $(A^{\infty}\otimes 1)\rho^{\infty}(A^{\infty})=A^{\infty}\otimes H$.
For any $x\in A\otimes H$, there are $a_1, \dots, a_n , b_1, \dots b_n \in A^{\infty}$
such that $x=\sum_{i=1}^n (a_i \otimes 1)\rho^{\infty}(b_i )$. That is,
$$
||x-\sum_{i=1}^n (a_i^{(k)} \otimes 1)\rho^{\infty}(b_i^{(k)} )||\to 0 \quad (k\to\infty),
$$
where $a_i =(a_i^{(k)}), b_i =(b_i^{(k)})$ and $a_i^{(k)}, b_i^{(k)}\in A$ for any $k, i$.
Hence $x\in \overline{(A\otimes 1)\rho(A)}$.
\end{proof}

\begin{prop}\label{prop:commutant}Let $\rho$ be a weak coaction of $H$ on $A$ with the Rohlin property
in the sense of \cite{OKT:rohlin}
and $\pi$ a monomorphism of $H$ to $A_{\infty}$. Then
$\rho^{\infty}(\pi(H))\subset (A\otimes H)' \cap(A^{\infty}\otimes H)$.
\end{prop}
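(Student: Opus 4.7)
The plan is to reduce the statement to checking commutation on the dense subset $(A\otimes 1)\rho(A)$ of $A\otimes H$ provided by Lemma \ref{lem:effective}, and then verify commutation on the two types of generators separately using the defining property of $\pi$ and the fact that $\pi(H)\subset A_{\infty}=A^{\infty}\cap A'$.

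First I would note that $\rho^{\infty}(\pi(h))\in A^{\infty}\otimes H$ is automatic, since $\pi(h)\in A^{\infty}$ and $\rho^{\infty}$ maps $A^{\infty}$ to $A^{\infty}\otimes H$. Next, by Lemma \ref{lem:effective}, every element of $A\otimes H$ is a norm-limit of finite sums of elementary products $(a\otimes 1)\rho(b)$ with $a,b\in A$, so by continuity of multiplication it suffices to show that for every $h\in H$ and every $a,b\in A$,
\[
\rho^{\infty}(\pi(h))\,(a\otimes 1)=(a\otimes 1)\,\rho^{\infty}(\pi(h)),\qquad
\rho^{\infty}(\pi(h))\,\rho(b)=\rho(b)\,\rho^{\infty}(\pi(h)).
\]

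For the first identity I would use the Rohlin-type relation $\rho^{\infty}(\pi(h))=\pi(h_{(1)})\otimes h_{(2)}$: since $\pi(h_{(1)})\in A_{\infty}\subset A'$, it commutes with $a$ inside $A^{\infty}$, and the computation reduces to this single fact, giving the desired equality in the first tensor leg (the second leg is untouched).

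For the second identity, the key observation is that $\rho^{\infty}$ extends $\rho$, so $\rho(b)=\rho^{\infty}(b)$, and hence both sides become images under the homomorphism $\rho^{\infty}$ of $\pi(h)b$ and $b\pi(h)$ respectively; these agree because $\pi(h)\in A_{\infty}\subset A'$ commutes with $b\in A$. This step is essentially free once one notices that $\rho^{\infty}$ is multiplicative. There is no genuine obstacle here: the whole proof is a direct combination of Lemma \ref{lem:effective}, the definition of the Rohlin property in the sense of \cite{OKT:rohlin}, and the definitional inclusion $\pi(H)\subset A_{\infty}$.
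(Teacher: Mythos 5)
Your proof is correct and follows essentially the same route as the paper: both arguments reduce to the dense set $(A\otimes 1)\rho(A)$ via Lemma \ref{lem:effective}, use the Rohlin relation $\rho^{\infty}(\pi(h))=\pi(h_{(1)})\otimes h_{(2)}$ together with $\pi(H)\subset A_{\infty}\subset A'$ to commute past $a\otimes 1$, and use multiplicativity of $\rho^{\infty}$ to commute past $\rho(b)$. The only cosmetic difference is that you check the two commutation identities separately whereas the paper strings them into a single chain of equalities.
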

\begin{proof}Let $a, b\in A$ and $h\in H$. Then
\begin{align*}
\rho^{\infty}(\pi(h))(a\otimes 1)\rho(b) & =(\pi(h_{(1)})\otimes h_{(2)})(a\otimes 1)\rho(b)
=(a\pi(h_{(1)})\otimes h_{(2)})\rho(b) \\
& =(a\otimes 1)\rho^{\infty}(\pi(h))\rho(b)=(a\otimes 1)\rho^{\infty}(\pi(h)b) \\
& =(a\otimes 1)\rho(b)\rho^{\infty}(\pi(h)).
\end{align*}
Therefore we obtain the conclusion by Lemma \ref{lem:effective}.
\end{proof}

\begin{prop}\label{prop:commutant2}Let $\rho$ be a weak coaction of $H$ on $A$ with the Rohlin property.
Let $x$ be any element in $A\otimes H$.
Then for any $h\in H$, $(1\otimes h)x=x(1\otimes h )$.
\end{prop}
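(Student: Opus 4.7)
My plan is to extract the commutativity of $H$ from Proposition \ref{prop:commutant} and then read off the conclusion. By that proposition, for every $h \in H$ and every $x \in A \otimes H$,
\[
\rho^\infty(\pi(h)) \, x = x \, \rho^\infty(\pi(h)),
\]
and the Rohlin property gives $\rho^\infty(\pi(h)) = \pi(h_{(1)}) \otimes h_{(2)}$. Testing this commutation against the special element $x = 1 \otimes g$ for an arbitrary $g \in H$ and cancelling yields
\[
\sum_{(h)} \pi(h_{(1)}) \otimes (h_{(2)} g - g h_{(2)}) = 0
\]
in $A^\infty \otimes H$.

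The next step is to descend this identity from $A^\infty \otimes H$ to $H \otimes H$. Since $\pi$ is a monomorphism and $H$ is finite-dimensional, the linear map $\pi \otimes \id_H : H \otimes H \to A^\infty \otimes H$ is injective: one expands an element in a fixed basis of the second tensor factor and uses injectivity of $\pi$ on the coefficients. Hence
\[
\sum_{(h)} h_{(1)} \otimes (h_{(2)} g - g h_{(2)}) = 0
\]
in $H \otimes H$, which says exactly that $1 \otimes g$ commutes with $\Delta(h)$ for every $g, h \in H$. I would then invoke Lemma \ref{lem:compact}: $1 \otimes g$ also commutes trivially with $H \otimes 1$, so it commutes with the product $(H \otimes 1)\Delta(H) = H \otimes H$. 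Specializing to commutation with $1 \otimes h$ gives $gh = hg$ for all $g, h \in H$, so $H$ is commutative.

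Once $H$ is commutative, the proposition is immediate: $1 \otimes h$ commutes with $A \otimes 1$ by tensorial reasons and with $1 \otimes H$ by commutativity of $H$, hence with every simple tensor $a \otimes g$, and therefore with every $x \in A \otimes H$ by linearity and continuity. The only mildly delicate ingredient is the injectivity lifting of $\pi \otimes \id_H$ in the middle paragraph, but finite-dimensionality of $H$ makes this routine, so no real analytical obstacle arises.
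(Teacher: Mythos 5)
Your proof is correct, but it reorganizes the argument in a way that inverts the paper's logical order. The paper proves the proposition directly in $A^{\infty}\otimes H$: it observes (from the proof of Lemma \ref{lem:effective}) that $1\otimes H\subset(\pi(H)\otimes 1)\rho^{\infty}(\pi(H))$, and then notes that both factors commute with any $x\in A\otimes H$ --- the first because $\pi(H)\subset A_{\infty}\subset A'$, the second by Proposition \ref{prop:commutant} --- so $1\otimes h$ does too; commutativity of $H$ is only extracted afterwards, as Corollary \ref{cor:commutative}. You instead descend to $H\otimes H$: testing Proposition \ref{prop:commutant} against $x=1\otimes g$, pulling back through the injective map $\pi\otimes\id_H$ (legitimate, by the basis argument you sketch, since $H$ is finite dimensional and $\pi$ is injective), and invoking Lemma \ref{lem:compact} to conclude that $1\otimes g$ is central in $H\otimes H$, hence that $H$ is commutative; the proposition then follows trivially. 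Both arguments rest on exactly the same two pillars (Lemma \ref{lem:compact} and Proposition \ref{prop:commutant}), so the mathematical content is the same; what your version buys is that the real structural reason for the statement --- commutativity of $H$ --- is made explicit up front rather than appearing as an afterthought, at the modest cost of the extra injectivity step for $\pi\otimes\id_H$. No gaps.
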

\begin{proof}Let $\pi$ be a monomorphism of $H$ into $A_{\infty}$
such that $\rho^{\infty}(\pi(h))=\pi(h_{(1)})\otimes h_{(2)}$ for any $h\in H$.
By the proof of Lemma \ref{lem:effective}, we can see that
$$
1\otimes H\subset \pi(H)\otimes H=(\pi(H)\otimes 1)\rho^{\infty}(\pi(H)) .
$$
Hence it suffices to show that for any $h\in H$,
\newline
(1) $(\pi(h)\otimes 1)x=x(\pi(h)\otimes 1)$,
\newline
(2) $\rho^{\infty}(\pi(h))x=x\rho^{\infty}(\pi(h))$.
\newline
Indeed, since $x\in A\otimes H$ and $\pi(h)$ commute with any element in $A$ for any
$h\in H$, we obtain (1). Also, we can obtain (2) by Proposition \ref{prop:commutant}
\end{proof}

\begin{cor}\label{cor:commutative}Let $A$ be a unital $C^*$-algebra and $H$ a finite
dimensional $C^*$-Hopf algebra. If there is a weak coaction of $H$ on $A$ with the Rohlin property
in the sense of \cite{OKT:rohlin}, then $H$ is commutative.
\end{cor}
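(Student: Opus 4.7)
The corollary follows essentially at once from Proposition \ref{prop:commutant2}, so my plan is to show that the universal commutation statement established there, when specialized to elements of the form $1 \otimes g$, forces $H$ itself to be abelian.

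Concretely, I would argue as follows. Suppose $\rho$ is a weak coaction of $H$ on $A$ with the Rohlin property in the sense of \cite{OKT:rohlin}. Fix any $g \in H$ and set $x = 1_A \otimes g$; this is an element of $A \otimes H$, so Proposition \ref{prop:commutant2} applies and yields $(1_A \otimes h) x = x (1_A \otimes h)$ for every $h \in H$. Expanding both sides, this reads $1_A \otimes hg = 1_A \otimes gh$ in $A \otimes H$. Since $A$ is unital, the map $H \to A \otimes H$, $k \mapsto 1_A \otimes k$, is injective (it has the obvious left inverse $a \otimes k \mapsto \epsilon_A(a) k$ using any state of $A$, or simply because $1_A$ is nonzero). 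Therefore $hg = gh$ in $H$ for all $g, h \in H$, i.e., $H$ is commutative.

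There is no real obstacle here: the substantive content is already contained in Proposition \ref{prop:commutant2}, whose proof in turn rests on Lemma \ref{lem:effective} and Proposition \ref{prop:commutant}. The only thing to check is the triviality that $1_A \otimes \cdot$ is injective on $H$, which I stated above. I would keep the write-up to two or three lines, simply invoking Proposition \ref{prop:commutant2} with $x = 1_A \otimes g$ and cancelling the $1_A$ factor.
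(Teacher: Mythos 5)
Your proof is correct and is exactly the argument the paper intends: the paper's own proof is the single line ``This is immediate by Proposition \ref{prop:commutant2},'' and your specialization to $x = 1_A \otimes g$ together with the injectivity of $k \mapsto 1_A \otimes k$ is precisely the routine verification being left to the reader.
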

\begin{proof}This is immediate by Proposition \ref{prop:commutant2}.
\end{proof}


\begin{thebibliography}{99}

\bibitem{Blackadar:K-Theory}B. Blackadar,
{\it K-theory for operator algebras},
M. S. R. I. Publications 5, 2nd Edition,
Cambridge Univ. Press, Cambridge, 1998.

\bibitem{BCM:crossed}R. J. Blattner, M. Cohen and S. Montgomery,
{\it Crossed products and inner actions of Hopf algebras},
Trans. Amer. Math. Soc.,
{\bf 298}
(1986),
671--711.

\bibitem{HJ:model}R. H. Herman and V. F. R. Jones,
{\it Models of finite group actions},
Math. Scand.,
{\bf 52}
(1983),
312--320.

\bibitem{Izumi:group}M. Izumi,
{\it Finite group actions on $C^*$-algebras with the Rohlin property-I},
Duke Math. J.,
{\bf 122}
(2004),
233--280.
 
\bibitem{JP:finite}J. A Jeong and G. Park,
{\it Saturated actions by finite dimensional Hopf $*$-algebras on $C^*$-algebras},
Internat. J. Math.,
{\bf 19}
(2008),
125-144.

\bibitem{KT1:inclusion}K. Kodaka and T. Teruya,
{\it Inclusions of unital $C^*$-algebras of index-finite type with depth 2 induced by saturated
actions of finite dimensional $C^*$-Hopf algebras},
Math. Scand.,
{\bf 104}
(2009),
221--248.

\bibitem{MT:minimal}T. Masuda and  R. Tomatsu,
{\it Classification of minimal actions of a compact Kac algebra with the amenable dual},
J. Funct. Anal.,
{\bf  258}
(2010),
1965-2025

\bibitem{OKT:rohlin}H. Osaka, K. Kodaka and T. Teruya,
{\it The Rohlin property for inclusions of $C^*$-algebras with a finite Watatani index},
Operator structures and dynamical systems, 177--195, Contemp. Math.,
\bf
503
\rm
Amer. Math. Soc., Providence, RI, 2009.

\bibitem{Sweedler:hopf}M. E. Sweedler,
{\it Hopf algebras},
Benjamin, New York,
1969.

\bibitem{SP:saturated}W. Szyma\'nski and C. Peligrad,
{\it Saturated actions of finite dimensional Hopf {\rm *}-algebras on  
$C^*$-algebras},
Math. Scand.,
{\bf 75}
(1994),
217--239.

\bibitem{Watatani:index}Y. Watatani,
{\it Index for $C^*$-subalgebras},
Mem. Amer. Math. Soc.,
{\bf 424}
(1990).

\bibitem{Woronowicz:compact}S. L. Woronowicz,
{\it Compact matrix pseudogroups},
Comm. Math. Phys.,
{\bf 111},
(1987),
613--665.

\end{thebibliography}
\end{document}